\setlist{noitemsep,topsep=0pt,parsep=0pt,partopsep=0pt}
\numberwithin{equation}{section}
\newtheorem{thm}{Theorem}[section]
\newtheorem{lem}[thm]{Lemma}
\newtheorem{prop}[thm]{Proposition}
\newtheorem{cor}[thm]{Corollary}
\newtheorem{exer}[thm]{Exercise}
\theoremstyle{definition}
\newtheorem{defn}[thm]{Definition}
\newtheorem{exam}[thm]{Example}
\theoremstyle{remark}
\newtheorem{rmk}[thm]{Remark}
\theoremstyle{plain}
\newtheorem{itheorem}{Theorem}
\renewcommand{\theitheorem}{\Alph{itheorem}}
\newtheorem{icor}[itheorem]{Corollary}
\long\def\pretheorem#1#2#3{%
	\expandafter\long\expandafter\edef\csname pretheorem:#2\endcsname{%
	\noexpand\begingroup
		\noexpand\setcounter{itheorem}{\the\value{itheorem}}%
		\unexpanded{\begin{itheorem}\label{#2}#3\end{itheorem}}%
	\noexpand\endgroup}%
	\stepcounter{itheorem}%
	\theoremstyle{plain}%
	\newtheorem*{PreTheorem\theitheorem}{#1~\ref{#2}}%
	\begin{PreTheorem\theitheorem}#3\end{PreTheorem\theitheorem}
}
\def\insertpretheorem#1{\csname pretheorem:#1\endcsname}
\newenvironment{hint}%
	{\small\textnormal\bgroup[Hint: \ignorespaces}%
	{]\egroup}%
\definecolor{internalLink}{rgb}{0,0,0.5}
\definecolor{citeLink}{rgb}{0,0.5,0}
\definecolor{urlLink}{rgb}{0,0.5,0.5}
\tikzset{%
	mid-arrow/.style 2 args={%
		decoration={%
			markings,
			mark=at position #1 with {\arrow{#2}}%
		},
		postaction={decorate}
	},
	->-/.style={mid-arrow={#1}{>}},           ->-/.default=0.5,
	-<-/.style={mid-arrow={#1}{<}},           -<-/.default=0.5,
	-^-/.style={mid-arrow={#1}{Bar[left]}},   -^-/.default=0.5,
	-v-/.style={mid-arrow={#1}{Bar[right]}},  -v-/.default=0.5,
}%
\colorlet{V1lineColor}{blue}
\colorlet{V2lineColor}{red!80!blue}
\colorlet{V0lineColor}{red!80!blue}
\colorlet{V2innerColor}{red!30}
\colorlet{BCshade}{black!15}
\tikzset{%
	V0/.style={%
		V0lineColor,
		line width=0.75pt,
		>={To[width=7pt, length=5pt]},
		dashed
	},
	V1/.style={%
		V1lineColor,
		line width=1.25pt,
		>={To[width=7pt, length=5pt]}
	},
	V2/.style={%
		V2innerColor,
		draw=V2lineColor,
		double=V2innerColor,
		double distance=1pt,
		line width=0.7pt,
		>={To[width=9pt]}
	},
	V1dot/.style={%
		V1lineColor,
		line width=1.25pt
	},
	V2dot/.style={%
		fill=V2innerColor,
		draw=V2lineColor,
		line width=0.75pt
	},
	BCedge/.style={%
		V1lineColor,
		line width=1.25pt,
		>={To[width=7pt, length=5pt]}
	},
	BCedge+/.style={%
		V2lineColor,
		line width=1.25pt,
		>={To[width=7pt, length=5pt]}
	}
}%
\colorlet{seamColor}{red!50!blue}
\colorlet{seamInnerColor}{seamColor!60!white}
\tikzset{%
	1facetFront/.style={blue!60!black, fill opacity=0.5, fill=blue!50},
	2facetFront/.style={red!80!blue,   fill opacity=0.5, fill=red!60},
	1facetBack/.style= {blue!60!black, fill opacity=0.5, fill=blue!30},
	2facetBack/.style= {red!80!blue,   fill opacity=0.5, fill=red!40},
	2facetInner/.style={red!80!blue,   fill opacity=0.5, fill=red!20},
	1facetLine/.style= {thin, draw=blue!60!black},
	2facetLine/.style= {thin, draw=red!80!blue},
	seam/.style={
		draw=seamColor,
		thin
	}
}
\tikzset{%
	web/bdry/.style={draw=black!80,thin},
	web/bg/.style={fill=black!10}
}
\tikzset{%
	arc hline/.style={
		color=black!60,
		thin
	},
	arc platform/.style={
		color=red,
		dashed,
		very thick
	},
	help line/.style={
		dashed,
		thin,
		color=black!60
	}
}%
\def\centertikz{\vcenter\bgroup\hbox\bgroup\tikzpicture}
\def\endcentertikz{\endtikzpicture\egroup\egroup}
\def\input{pictures/#.tex}1{\input{pictures/#1.tex}}
\newcommand*\ric{\sb{}\kern-\scriptspace }
\newcommand*\rics{\!}
\def\arXiv#1{\arXiv@#1.}
\def\arXiv@#1.#2#3#4#5.{%
	\ifx\relax#5\relax
		\href{http://front.math.ucdavis.edu/#1.5#2#3#4}{arXiv:math/#1#2#3#4}%
	\else
		\href{http://front.math.ucdavis.edu/#1.#2#3#4#5}{arXiv:#1.#2#3#4#5}%
	\fi
}
\def\blank{-}
	\def\definering#1#2{\protected\gdef#1{\@ifstar{#2^*}{#2}}}%
	\definering\Fld{\mathbb F}
	\definering\Z{\mathbb Z}
	\definering\Q{\mathbb Q}
	\definering\R{\mathbb R}
	\definering\C{\mathbb C}
	\definering\Zq{\mathbb Z[q^{\pm1}]}
	\definering\Zhtq{\mathbb Z[h,t,q^{\pm1}]}
\newcommand*\LieSL[1][2]{\mathfrak{sl}_{#1}}
\newcommand*\LieGL[1][2]{\mathfrak{gl}_{#1}}
\newcommand*{\Uqsl}[1][2]{\mathcal U_q(\mathfrak{sl}_{#1})}
\def\Disk{\@ifnextchar^{\mathbb D}{\mathbb D^2}}
\def\Ball{\@ifnextchar^{B}{B^3}}
\def\S{\@ifnextchar^{\mathbb S}{\mathbb S^1}}
\def\Ann{\mathbb A}
\def\mfld{\@ifnextchar^{M}{M^n}}%
\def\RxI{\R\times[0,1]}
\def\DxI{\Disk\times[0,1]}
\def\setcomponent@#1{%
	\@ifnextchar[{\setcomponent@weight#1}{\setcomponent@weight#1[]}
}
\def\setcomponent@weight#1[#2]#3{%
	\setcomponent@draw#1[#2]#3||\relax
}
\def\setcomponent@draw#1[#2]#3|#4|#5\relax{%
	\mathcal{#1}^{#4}_{#3}%
	\ifx\relax#2\relax\else(#2)\fi
}
\def\tangles@#1#2|#3|#4\relax{%
	{#1}^{#3}_{#2}%
}
\def\tangles#1{\tangles@{\mathcal T\!\mathit{an}}#1||\relax}
\def\ftangles#1{\tangles@{\mathcal{FT}\!\mathit{an}}#1||\relax}
\def\dtangles#1{\tangles@{\widetilde{\mathcal T}\!\mathit{an}}#1||\relax}
\def\gmatchings{\setcomponent@{GM}}
\def\revmatching#1{#1^!}
\DeclareMathOperator{\rk}{rk}		
\DeclareMathOperator{\tr}{tr}		
\def\tensor{\@ifstar\tensor@{\tensor@\otimes}}
\def\tensor@#1#2{\let\tensor@symb#1\tensor@@#2,\tensor@end,}
\def\tensor@@#1,{\tensor@expand#1\relax\tensor@@@}
\def\tensor@@@#1,{%
	\ifx\tensor@end#1\relax\else
		\tensor@symb\tensor@expand#1\relax
	\expandafter\tensor@@@\fi
}
\def\tensor@expand#1#2\relax{\ifx*#1#2\else#1_{#2}\fi}
\DeclareMathOperator\dcbtensor\heartsuit
\DeclareMathOperator\bdcbtensor\spadesuit
\def\utimes{\mytimes@\otimes}
\def\udtimes{\mytimes@{\hat\otimes}}
\def\mytimes@#1#2{%
	\mathchoice
		{\mytimes@@{#1}{#2}}%
		{\mytimes@@@{#1}{#2}\scriptstyle}%
		{\mytimes@@@{#1}{#2}\scriptscriptstyle}%
		{\mytimes@@@{#1}{#2}\scriptscriptstyle}%
}
\def\mytimes@clap#1#2{%
	\mathchoice
		{\mytimes@@clap{#1}{#2}}%
		{\mytimes@@@{#1}{#2}\scriptstyle}%
		{\mytimes@@@{#1}{#2}\scriptscriptstyle}%
		{\mytimes@@@{#1}{#2}\scriptscriptstyle}%
}
\def\mytimes@@#1#2{\mkern\thinmuskip\underset{\mkern-1mu#2}{#1}\mkern\thinmuskip}
\def\mytimes@@clap#1#2{\mkern\thinmuskip\underset{\mathclap{#2}}{#1}\mkern\thinmuskip}
\def\mytimes@@@#1#2#3{\mkern\thinmuskip{#1}_{\mkern-2mu\raisebox{-1pt}{$\m@th#3#2$}}\mkern\thinmuskip}
\def\shdw{\@ifnextchar[{\shdw@@}{\shdw@}}
\def\shdw@#1{\langle\!\langle#1\rangle\!\rangle}
\def\shdw@@[#1]#2{%
	\langle\!\langle#2\rangle\!\rangle
	\raisebox{-0.75ex}{$\scriptstyle\!#1$}%
}
\def\DeclareStructure#1#2{\@ifnextchar[%
	{\DeclareStructure@{#1}{#2}}%
	{\DeclareStructure@{#1}{#2}[]}%
}
\def\DeclareStructure@#1#2[#3]#4{\expandafter\def\csname #1:#2\endcsname#3{#4}}
\def\DeclareCategory{\DeclareStructure{category}}
\newcommand*\set[1]{%
	\ifcsname set:#1\endcsname
		\csname set:#1\expandafter\endcsname
	\else
		\mathit{#1}%
	\fi
}
\newcommand*\cat[1]{%
	\ifcsname category:#1\endcsname
		\csname category:#1\expandafter\endcsname
	\else
		{\normalfont\textbf{#1}}%
	\fi
}
\newcommand*\ccat[1]{%
	\ifcsname bicategory:#1\endcsname
		\csname bicategory:#1\expandafter\endcsname
	\else
		\mathbf{#1}%
	\fi
}
	\def\defname#1{\expandafter\protected\expandafter\gdef\csname#1\endcsname}%
	\def\defmodulecats#1#2{%
		\defname{#1}{#2}%
		\defname{l#1}##1{
			\raisebox{-0.35ex}{$\scriptstyle##1$}#2%
		}
		\defname{r#1}##1{
			#2\raisebox{-0.35ex}{$\scriptstyle##1$}%
		}
		\defname{b#1}##1##2{
			\raisebox{-0.35ex}{$\scriptstyle##1$}%
			#2%
			\raisebox{-0.35ex}{$\scriptstyle##2$}%
		}
	}%
	\def\defbicats#1#2{%
		\defname{#1}{\ccat{#2}}%
		\defname{D#1}{\mathcal D(\ccat{#2})}%
		\defname{DD#1}{\mathcal D^-(\ccat{#2})}%
		\defname{g#1}{\@ifstar{\ccat{g#2}_0}{\ccat{g#2}}}%
		\defname{gD#1}{\mathcal D(\@ifstar{\ccat{g#2}_0)}{\ccat{g#2})}}%
		\defname{gDD#1}{\mathcal D^-(\@ifstar{\ccat{g#2}_0)}{\ccat{g#2})}}%

		\defname{e#1}{%
			\mathrlap{\hskip-0.15em\widetilde{\phantom{\rule{1.25em}{1.5ex}}}}%
			\ccat{#2}}%
		\defname{eD#1}{\mathcal D(%
			\mathrlap{\hskip-0.15em\widetilde{\phantom{\rule{1.25em}{1.5ex}}}}%
			\ccat{#2})}%
		\defname{eDD#1}{\mathcal D^-(%
			\mathrlap{\hskip-0.15em\widetilde{\phantom{\rule{1.25em}{1.5ex}}}}%
			\ccat{#2})}%
		\defname{eg#1}{%
			\mathrlap{\phantom{\ccat g}\hskip-0.15em\widetilde{\phantom{\rule{1.25em}{1.5ex}}}}%
			\@ifstar{\ccat{g#2}_0}{\ccat{g#2}}}%
		\defname{egD#1}{\mathcal D(%
			\mathrlap{\phantom{\ccat g}\hskip-0.15em\widetilde{\phantom{\rule{1.25em}{1.5ex}}}}%
			\@ifstar{\ccat{g#2}_0)}{\ccat{g#2})}}%
		\defname{egDD#1}{\mathcal D^-(%
			\mathrlap{\phantom{\ccat g}\hskip-0.15em\widetilde{\phantom{\rule{1.25em}{1.5ex}}}}%
			\@ifstar{\ccat{g#2}_0)}{\ccat{g#2})}}%
	}%
\protected\def\Com{\mathrm{Com}}
\def\Com{\@ifnextchar^{\Com@}{\Com@@}}
\def\Com@^#1(#2){\mathit{Com}^{#1}(#2)}
\def\Com@@(#1){\mathit{Com}(#1)}
\def\HCom{\@ifnextchar^{\HCom@}{\HCom@@}}
\def\HCom@^#1(#2){\mathit{Com}^{#1}_{\!\raisebox{0.3ex}{$\scriptstyle/$}\mkern-2mu h}(#2)}
\def\HCom@@(#1){\mathit{Com}_{\!\raisebox{0.3ex}{$\scriptstyle/$}\mkern-2mu h}(#1)}
\def\SimplMod(#1){\cat{SMod}_{#1}}
\def\HoSimplMod(#1){\cat{HoSMod}_{#1}}
\newcommand*{\Hom}{\mathrm{Hom}}
\DeclareMathOperator\Tr{Tr}
\def\qHHSymb{\mathit{qHH}}
\newcommand*{\qHoHom}{\qHHSymb}
\newcommand*{\FKh}{\mathcal{F}_{\!\textit{Kh}}}
\newcommand*{\Fweb}{\mathcal{F}_{\mkern-3mu\mathit w}}
\def\KhBracket{\@ifstar\KhBracketScaled\KhBracketSimple}
\def\KhCube{\@ifstar\KhCubeScaled\KhCubeSimple}
\newcommand*{\KhCubeScaled}[1]{\mathcal{I}\left(#1\right)}
\newcommand*{\KhCubeSimple}[1]{\mathcal{I}(#1)}
\newcommand*{\KhBracketScaled}[1]{\left\llbracket#1\right\rrbracket}
\newcommand*{\KhBracketSimple}[1]{\llbracket#1\rrbracket}
\def\wKhBracket{\@ifstar\wKhBracketScaled\wKhBracketSimple}
\def\wKhCube{\@ifstar\wKhCubeScaled\wKhCubeSimple}
\newcommand*{\wKhCubeScaled}[1]{\mathcal{I}_{\mathrm F}\left(#1\right)}
\newcommand*{\wKhCubeSimple}[1]{\mathcal{I}_{\mathrm F}(#1)}
\newcommand*{\wKhBracketScaled}[1]{\left\llbracket#1\right\rrbracket_{\mathrm F}}
\newcommand*{\wKhBracketSimple}[1]{\llbracket#1\rrbracket_{\mathrm F}}
\newcommand*{\KhCom}{\mathit{CKh}}
\newcommand*{\Kh}{\mathit{Kh}}
\newcommand*{\qAKh}{\Kh_{\mkern-2mu\Ann_q}}
\newcommand*{\WebCom}{\mathit{C}_{\mathfrak W}}
\def\twosubs#1#2#3{%
	\ifx.#1%
		#3_{#2}%
	\else
		\raisebox{-0.85ex}{$\scriptstyle#1$}%
		#3%
		\raisebox{-0.85ex}{$\scriptstyle#2$}%
	\fi
}
\def\arcalg{\@ifnextchar|{\arcalg@comp}{\arcalg@whole}}
\def\arcalg@comp|#1#2|#3{\twosubs{#1}{#2}{\!\left(\arcalg@whole{#3}\right)\!}}
\def\arcalg@whole#1{H^{#1}}
\def\arcmod{\@ifnextchar|{\arcmod@comp}{\arcmod@whole}}
\def\arcmod@comp|#1#2|#3{\arcmod@whole{\ifx.#1\else\revmatching{#1}\fi#3#2}}
\def\arcmod@whole#1{\FKh(#1)}
\def\arcmap{\@ifnextchar|{\arcmap@comp}{\arcmap@whole}}
\def\arcmap@comp|#1#2|#3{\arcmap@whole{\ifx.#1\else\revmatching{#1}\fi#3#2}}
\def\arcmap@whole#1{\FKh(#1)}
\def\CKalg{\@ifnextchar|{\CKalg@comp}{\CKalg@whole}}
\def\CKalg@comp|#1#2|#3{\twosubs{#1}{#2}{\!\left(\CKalg@whole{#3}\right)\!}}
\def\CKalg@whole#1{A^{#1}}
\def\CKmod{\@ifnextchar|{\CKmod@comp}{\CKmod@whole}}
\def\CKmod@comp|#1#2|#3{\CKmod@whole{\ifx.#1\else\revmatching{#1}\fi#3#2}}
\def\CKmod@whole#1{C_{CK}(#1)}
\def\CKmap{\@ifnextchar|{\CKmap@comp}{\CKmap@whole}}
\def\CKmap@comp|#1#2|#3{\twosubs{#1}{#2}{(\CKmap@whole{#3})}}
\def\CKmap@whole#1{\@ifnextchar'{\varphi_{#1}}{\varphi_{#1}^{\mathstrut}}}
\def\arctimes#1{\mytimes@clap\otimes{\arcalg{#1}}}
\def\CKtimes#1{\mytimes@clap\otimes{\CKalg{#1}}}
\def\arcdtimes#1{\mytimes@clap{\hat\otimes}{\arcalg{#1}}}
\def\CKdtimes#1{\mytimes@clap{\hat\otimes}{\CKalg{#1}}}
\def\webtimes#1{\mytimes@clap\otimes{\webalg{#1}}}
\def\qtwebtimes#1{\mytimes@clap\otimes{\qtwebalg{#1}}}
\def\webalg{\@ifnextchar|{\webalg@comp}{\webalg@whole}}
\def\webalg@comp|#1#2|#3{\twosubs{#1}{#2}{\!\left(\webalg@whole{#3}\right)\!}}
\def\webalg@whole#1{\mathfrak{W}^{#1}}
\def\webmod{\@ifnextchar|{\webmod@comp}{\webmod@whole}}
\def\webmod@comp|#1#2|#3{\twosubs{#1}{#2}{\webmod@whole{#3}}}
\def\webmod@whole#1{\Fweb^\circ(#1)}
\def\webmap{\@ifnextchar|{\webmap@comp}{\webmap@whole}}
\def\webmap@comp|#1#2|#3{\twosubs{#1}{#2}{\webmap@whole{#3}}}
\def\webmap@whole#1{\Fweb^\circ(#1)}
\def\qtwebalg{\@ifnextchar|{\qtwebalg@comp}{\qtwebalg@whole}}
\def\qtwebalg@comp|#1#2|#3{\twosubs{#1}{#2}{\!\left(\qtwebalg@whole{#3}\right)\!}}
\def\qtwebalg@whole#1{\mathfrak{A}^{#1}}
\def\qtwebmod{\@ifnextchar|{\qtwebmod@comp}{\qtwebmod@whole}}
\def\qtwebmod@comp|#1#2|#3{\twosubs{#1}{#2}{\qtwebmod@whole{#3}}}
\def\qtwebmod@whole#1{C_{\mathfrak A}(#1)}
\def\qtwebmap{\@ifnextchar|{\qtwebmap@comp}{\qtwebmap@whole}}
\def\qtwebmap@comp|#1#2|#3{\twosubs{#1}{#2}{\qtwebmap@whole{#3}}}
\def\qtwebmap@whole#1{\@ifnextchar'{\varphi_{#1}}{\varphi_{#1}^{\mathstrut}}}
\def\floor#1{\lfloor #1\rfloor}
\def\bdry{\Sigma}
\def\web{\omega}
\def\dbltan{\alpha}
\def\dblcob{W}
\def\sgn{\mathrm{sgn}\mskip\thinmuskip}
\def\Web{\@ifnextchar[\WebS\Web@}
\def\Web@{\cat{Web}}
\def\WebS[#1]{\Web@^{(#1)}}
\def\clWeb{\Web@^{\mathit{cl}}}
\def\Foam{\@ifnextchar[\FoamS\Foam@}
\def\Foam@{\cat{Foam}}
\def\FoamS[#1]{\Foam@^{(#1)}}
\def\clFoam{\Foam@^{\mathit{cl}}}
\def\web{\omega}
\def\foam{S}
\def\Blanchet{Z}
\def\cupfoam{\mathrm{cup}}
\def\capfoam{\mathrm{cap}}
\let\foamequiv\doteq
\def\cupfoam@#1{\@ifnextchar*{\cupfoam@get{#1}}{\@ifnextchar+{\cupfoam@getx{#1}}{\cupfoam@get{#1}}}}
\def\cupfoam@getx#1+#2{\cupfoam@get{#1#2}}
\def\cupfoam@get#1{\@ifnextchar[{\cupfoam@dots{#1}}{\cupfoam@nodots{#1}}}
\def\cupfoam@dots#1[#2]#3{#1(#3;#2)}
\def\cupfoam@nodots#1#2{#1(#2)}
\def\extpower{\@ifnextchar^\extpowersup\extpowernosup}%
\def\extpowersup^#1{\mbox{\Large $\wedge$}^{\mkern-6mu#1}}%
\def\extpowernosup{\mbox{\Large $\wedge$}}%
\def\Diff{\mathit{Diff}\@ifnextchar_{\!}{}}
\def\EqSymb{E}
\def\EqFunc{\mathcal E}
\def\flip#1{\@ifnextchar_{\mathrlap{\phantom{#1}^!}#1}{#1^!}}
\def\dotted#1{\@ifnextchar_{\mathrlap{\phantom{#1}^\bullet}#1}{#1^\bullet}}
\protected\def\bdrypoints#1{\begin{tikzpicture}[x=1em, baseline=-0.5ex]
	\def\bdrypointx{0}%
	\drawbdrypoint#1,,
\end{tikzpicture}}
\def\drawbdrypoint#1,{
	\ifx\relax#1\relax\else
		\webendpoint#1(\bdrypointx,0);
		\edef\bdrypointx{\numexpr\bdrypointx+1}%
		\expandafter\drawbdrypoint
	\fi}
\def\webdrawdots#1#2(#3)#4{%
	\ifcase#1\or
		\fill[V1] (#3) circle[radius=3pt];
	\or
		\draw[V1,fill=white,line width=0.5pt] (#3) circle[radius=3pt];
	\fi
	\ifx#4;\expandafter\@gobble\else\expandafter\@firstofone\fi
	{\webdrawdots#2#4}%
}
\def\webendpoint{\@ifnextchar[\webendpoint@{\webendpoint@[above]}}
\def\webendpoint@[#1]#2#3(#4){%
	\filldraw[V#3dot] (#4) circle[radius=\the\dimexpr#3pt/2+1.5pt\relax]
		node[#1,text=V#3lineColor] {$\scriptstyle#2\vphantom+$}}
\def\foamdot(#1){\fill (#1) circle[radius=0.5ex]}
\def\foamdualdot(#1){\draw[line width=0.15ex] (#1) circle[radius=0.45ex]}
\def\foamorient(#1) Rx=#2 Ry=#3 S=#4 E=#5;{\begin{scope}[shift={(#1)}]
	\coordinate (arrowst) at (#5:#2 and #3);
	\ifnum #4 < #5
		\path (arrowst) ++(#5+90:#2 and #3) coordinate(arrowdir);
	\else
		\path (arrowst) ++(#5-90:#2 and #3) coordinate(arrowdir);
	\fi
	\draw[stealth-] ($(arrowst)!3pt!(arrowdir)$) -- (arrowst)
		arc[x radius=#2, y radius=#3, start angle=#5, end angle=#4];
\end{scope}}
\def\getlength#1(#2){%
  \path let \p{x}=(#2), \n{xlen}={veclen(\x{x},\y{x})}
  in \pgfextra{\xdef#1{\n{xlen}}}%
}
\def\foamarrow#1[#2]#3(#4){%
	\ifx\relax#3\relax
		\foamarrowx#1[#2]0({#4})%
	\else
		\foamarrowx#1[#2]#3({#4})%
	\fi\foamarrownext
}
\def\foamarrowx#1[#2]#3(#4){%
	\ifx<#1\relax
		\edef\foamarrownext{\unexpanded{\draw[#2, line width=0.75pt]}
			(#4) ++(#3:-2.5pt) arc[radius=5pt,
				start angle=\the\numexpr -90 + #3,
				end angle=\the\numexpr -25 + #3]
			(#4) ++(#3:-2.5pt) arc[radius=5pt,
				start angle=\the\numexpr 90 + #3,
				end angle=\the\numexpr 25 + #3]}%
	\else\ifx>#1\relax
		\edef\foamarrownext{\unexpanded{\draw[#2, line width=0.75pt]}
			(#4) ++(#3:2.5pt) arc[radius=5pt,
				start angle=\the\numexpr -90 + #3,
				end angle=\the\numexpr -155 + #3]
			(#4) ++(#3:2.5pt) arc[radius=5pt,
				start angle=\the\numexpr 90 + #3,
				end angle=\the\numexpr 155 + #3]}%
	\else\let\foamarrownext\relax\fi\fi
}
\def\foamdotsonsphere#1{%
	\ifcase 0#1\or
		\foamdot(0.2, 0.25);\or
		\foamdot(0.22, 0.25);\foamdot(0, 0.25);\or
		\foamdot(0.22, 0.25);\foamdot(0, 0.25);\foamdot(-0.22, 0.25);\or
		\foamdualdot(0.12,0.25);\or
		\foamdot(0.2,0.25);\foamdualdot(-0.05,0.25);
	\fi
}
\def\foamdotsonplane#1{%
	\ifcase 0#1\or
		\foamdot(0,0);\or
		\foamdot(-0.15,0);\foamdot(0.15,0);\or
		\foamdot(0,0);\foamdot(-0.25,0);\foamdot(0.25,0);\or
		\foamdualdot(0,0);\or
		\foamdualdot(-0.15,0);\foamdot(0.15,0);\or
		\foamdualdot(-0.15,0);\foamdualdot(0.15,0);
	\fi
}
\def\authordata#1#2#3{#1\\\parbox{0.4\textwidth}{\centering\normalsize#2\\#3}}
\begin{document}

\title{%
	On the~functoriality of $\LieSL$ tangle homology
}

\author{%
	\authordata{Anna Beliakova}
		{Universit\"at Z\"urich}{Z\"urich, Switzerland}
\and
	\authordata{Matthew Hogancamp}
		{University of Southern California}{Los Angeles, CA}
\and
	\authordata{Krzysztof K.\ Putyra}
		{Universit\"at Z\"urich}{Z\"urich, Switzerland}
\and
	\authordata{Stephan M.\ Wehrli}
		{Syracuse University}{Syracuse, NY}
}

\maketitle

\begin{abstract}
	We construct an~explicit equivalence between the~(bi)category
	of $\LieGL$ webs and foams and the~Bar-Natan (bi)category of
	Temperley--Lieb diagrams and cobordisms. With this equivalence
	we can fix functoriality of every link homology theory that
	factors through the~Bar-Natan category. To achieve this, we
	define web versions of arc algebras and their quasi-hereditary
	covers, which provide strictly functorial tangle homologies.
	Furthermore, we construct explicit isomorphisms between these
	algebras and the~original ones based on Temperley--Lieb cup
	diagrams. The~immediate application
	is a~strictly functorial version of the~Beliakova--Putyra--Wehrli
	quantization of the annular link homology. 
\end{abstract}

\setcounter{tocdepth}{2}
\tableofcontents

\section{Introduction}

In 1999 Khovanov \cite{KhHom} defined for any link in the 3-sphere
a~chain complex, whose homotopy type---hence, homology---is a~link
invariant and whose Euler characteristic is the Jones polynomial.
It was later extended to tangles between even collections of points
\cite{KhArcAlgebras} and then to all tangles \cite{ChenKhov, BrunStroII}.
The~main advantage of the~Khovanov homology with respect to the~Jones
polynomial is that link cobordisms induce chain maps between Khovanov's
complexes \cite{KhFunct,Jacobsson,DrorCob}. Even though the~original
construction is not strictly functorial---the~sign of the~chain map 
associated with a~link cobordism depends on the~decomposition of
the~cobordism into elementary pieces \cite{Jacobsson}---it was used
by Rasmussen to provide a~lower bound for the~slice genus of a~knot
and a~combinatorial proof of the~Milnor conjecture \cite{SliceGenusBound}.

In the~last 15 years there were many attempts to fix the~functoriality
of Khovanov homology. In \cite{ClarkMorrisonWalker, CaprauFoams, Vogel}
this was done by modifying the Bar-Natan category \cite{DrorCob} and
enlarging the~ground ring. In 2014 Blanchet \cite{Blanchet} proposed
a~more elegant solution, which does not change the~ring of scalars, but
replaces circles and surfaces in the~Bar-Natan category with \emph{webs}
and \emph{foams}: certain planar trivalent graphs and singular cobordisms
between them respectively.
This construction, commonly referred to as \emph{$\LieGL$ homology},
has been widely accepted as the~most natural way to fix functoriality
of Khovanov homology.
A~priori potentially different, $\LieGL$ homology coincides
with Khovanov homology in case of links \cite{Blanchet}, but the~case
of tangles has been analyzed only partially by Ehrig, Stroppel and
Tubbenhauer in \cite{WebAlgebras}.

The~Hochschild homology of the~Chen--Khovanov invariant of an~$(n,n)$-tangle
$T$ has been identified in \cite{QntHom} with the~annular Khovanov homology
of the~annular closure $\widehat T$ of the~tangle. In the~same paper
the~annular invariant has been quantized by deforming the~Hochschild
homology. Our original goal was to make this quantized annular homology
functorial, in order to construct its colored version following
\cite{KhColored} and \cite{CooperKrushkal}. These quantized colored
homologies are treated in the~follow up paper \cite{QntColored,ViennaTalk},
where we also show that both complexes coincide when the~deformation
parameter is generic.
In order to obtain a~strictly functorial quantized annular homology,
we wanted first to understand the~Ehrig--Stroppel--Tubbenhauer isomorphism
between Khovanov's arc algebras and their web algebras, and then reconstruct
the~Chen--Khovanov functor in the~framework of webs and foams. However,
after a~chain of simplifications of their arguments, especially replacing
the~foam basis used in \cite{WebAlgebras} with another one, more natural
from the~topological perspective, we understood the~real reason why all
the~isomorphisms popped out: \emph{foams and cobordisms constitute equivalent
bicategories}. By using a~particularly nice basis of foams, we construct
such a~equivalence explicitly and use it to obtain a~web versions of
the~TQFT functors from \cite{KhArcAlgebras, ChenKhov, BrunStroII}.

In the~following sections we discuss the~above in more details.


\subsection{The~equivalence of foams and Bar-Natan cobordisms}
In order to compute Khovanov homology of a~link $L$, one first picks its
diagram $D$ and constructs the~\emph{cube of resolutions} of $D$:
a~commutative diagram in the~shape of the~$c$-dimension cube, where
$c$ counts crossings in $D$, with vertices decorated by Kauffman resolutions
of $D$ and edges by saddle cobordisms between them \cite{KhHom}.
Applying a~2-dimensional TQFT to this cube, changing signs of some
maps, decorating edges, and collapsing the~cube along diagonals results
in an~actual chain complex, which---depending on the~choice of the~TQFT
functor---computes the~Khovanov homology of $L$ or its deformation.

It was observed by Bar-Natan that most of the~construction can be performed
formally \emph{before} applying a~TQFT functor to get an~invariant of a~tangle
$T$ in the~form of a~formal complex $\KhBracket{T}$ called the~\emph{Khovanov
bracket} of $T$ \cite{DrorCob}. This complex is constructed in the~\emph
{Bar-Natan} bicategory $\cat{BN}$, the~locally additive graded bicategory
with objects collections of points on a~line, 1-morphisms generated by flat
tangles, and 2-morphisms generated by surfaces with dots modulo the~following
local relations:
\begin{itemize}
	\item \textit{sphere evaluations:}
	\begin{equation}\label{rel:sphere-eval0}
	\tikzset{x=8mm,y=8mm}%
		\foampict{1sphere}    = 0\hskip 0.1\textwidth
		\foampict{1sphere}[1] = 1
	\end{equation}
	
	\item \textit{neck cutting relation:}
	\begin{equation}\label{rel:neck-cutting0}
	\tikzset{x=7mm,y=7mm}%
		\mathclap{\foampict{1neck}\, = \,\foampict{1cap 1cup}[+1]
		                          \, + \,\foampict{1cap 1cup}[-1]
			                       \, - h \foampict{1cap 1cup}}
	\end{equation}
	
	\item \textit{dot reduction:}
	\begin{equation}\label{rel:dots0}
	\tikzset{x=1cm,y=1cm}%
		\foampict{1plane}[2] = h\!\foampict{1plane}[1] + t\!\foampict{1plane}
	\end{equation}
\end{itemize}
Here $h$ and $t$ are fixed elements of the~ring of scalars $\scalars$.
When $h=0$, then the~neck cutting relation evaluates a~handle attached to
a~plane as a~dot scaled by 2. Because of that it is common to think of a~dot
as ,,half'' of a~handle, even when 2 is not an~invertible scalar. However,
this interpretation is not correct if $h\neq 0$, in particular in the~universal
case $\scalars = \Z[h,t]$.

The~formal bracket is projectively functorial \cite{DrorCob}. Indeed,
there is a~way to associate a~formal chain map with each Reidemeister move
as well as any cobordism with a~unique critical point. One constructs
a~formal chain map for any tangle cobordism by decomposing the~cobordism
into a~sequence of the~above elementary pieces and composing the~associated maps;
choosing a~different decomposition may at most change the~global sign of the~map.

In Blanchet's construction \cite{Blanchet} the~role of flat tangles is played
by \emph{webs}, trivalent graphs with each edge colored blue or red,%
\footnote{
	When compared to \cite{Blanchet}, blue edges are those with label 1
	and red edges are those with label 2.
}
and dotted surfaces are replaced with \emph{foams}, which are singular
cobordisms with each facet also colored blue or red. They
constitute a~bicategory $\cat{Foam}$, where certain local relations
between foams, including \eqref{rel:sphere-eval0}--\eqref{rel:dots0},
are imposed (see Definition~\ref{def:foams}). Following \cite{DrorCob}
we can construct a~formal complex $\wKhBracket{T}$ in $\cat{Foam}$,
which we refer to as the~\emph{Blanchet--Khovanov bracket}.

The~collection of blue edges of a~web $\web$ is a~flat tangle $\web_b$,
which we call the~\emph{underlying tangle} of $\web$. Likewise, there is
an~\emph{underlying surface} $\foam_b$ associated with any foam $\foam$.
It is tempting to consider a~2-functor $\cat{Foam} \to
\cat{BN}$ that forgets red edges in webs and red facets in foams.
However, this operation is not compatible with relations between foams,
and it is not clear at first how to solve this problem. For instance,
it was observed in \cite{KhViaHowe} that if such a~functor exists, then
it cannot be identity on all foams with no red facets.

We resolved the~above problem by taking into account the~orientation of
blue edges and facets. Shortly speaking, we fix an~orientation for each
flat tangle and surface in a~canonical way, reinterpreting them as webs
and foams respectively (recall that tangles and surfaces from $\cat{BN}$,
though orientable, come with no particular orientation). This results in
a~2-functor, which however does not reach every object of $\cat{Foam}$.
In order to fix this we replace $\cat{BN}$ with the~product
$\cat{wBN} := \cat{BN} \times \Z$, where $\Z$ is seen as a~discrete
bicategory. We use the~extra integer to determine how many
red points, edges, or facets has to be added to the~right of the~oriented
blue points, tangle, or surface respectively.%
\footnote{
	Compare this with the~relation between the~weight lattices
	of $\LieSL$ and $\LieGL$---the~latter is isomorphic to the~product
	of the~former with $\Z$.
}
This way we end up with a~2-functor
$\EqFunc\colon \cat{wBN} \to \cat{Foam}$, such that every object
of $\cat{Foam}$ is equivalent to one from the~image of $\EqFunc$.

\pretheorem{Theorem}{thm:equiv-of-bicats}{%
	The 2-functor $\EqFunc\colon \cat{wBN} \to \cat{Foam}$ is
	an~equivalence of bicategories.
}

From the~point of view of representation theory, 
$\EqFunc$ and its inverse can be understood as the~categorification
of the~induction--restriction pair between representations of
$\LieSL$ and $\LieGL$.

There is also a~local version of Theorem~\ref{thm:equiv-of-bicats}.
Having fixed a~collection $\bdry$ of oriented blue and red points on
$\partial\Disk$, write $\cat{Foam}(\bdry)$ for the~category of webs in $\Disk$
bounded by $\bdry$ and foams in $\DxI$ between such webs. Likewise we
consider the~category $\cat{BN}(\bdry_b)$ of flat tangles bounded by
$\bdry_b$ and dotted surfaces between them, where $\bdry_b$ is
the~collection of blue points from $\bdry$.
We construct a~functor $\EqFunc_\bdry\colon \cat{BN}(\bdry_b) \to
\cat{Foam}(\bdry)$ in Section~\ref{sec:local BN-->Foam} 
by extending coherently all flat tangles to webs bounded by $\bdry$
and surfaces to foams.

\pretheorem{Theorem}{thm:equiv-of-cats}{%
	The~functor $\EqFunc_\bdry \colon \cat{BN}(\bdry_b)
		\to \cat{Foam}(\bdry)$
	is an~equivalence of categories.
}


We construct the~functor $\EqFunc_\bdry$ explicitly as well as
its inverse $\EqFunc_\bdry^\vee$. The~latter not only forgets red
facets of foams, but also scales them by a~sign when necessary;
we provide an~explicit way to compute these signs in terms of
the~Blanchet evaluation of foams.
When combined with a~homological argument presented in \cite{OddKh, ChCob},
Theorem~\ref{thm:equiv-of-cats} implies that for every tangle $T$
the~image of the~Khovanov bracket $\KhBracket{T}$ under $\EqFunc_\bdry$
is isomorphic to the~Blanchet--Khovanov bracket $\wKhBracket{T}$.
Hence, any TQFT functor on $\cat{BN}(\bdry_b)$ that leads to
an~invariant tangle or link homology can be precomposed with
$\EqFunc_\bdry^\vee$ to obtain a~functor on $\cat{Foam}(\bdry)$
that computes the~same homology groups, but which is strictly
functorial with respect to tangle cobordisms.

\subsubsection{Main tools: shadings and bicolored isotopies}

The~key step in the~proofs of Theorems~\ref{thm:equiv-of-bicats} and
\ref{thm:equiv-of-cats} is to understand how foams with the~same underlying
surface are related. We achieve this by constructing foams from
\emph{shadings}. A~shading is a~union of two possibly intersecting surfaces:
a~non-oriented blue and an~oriented red one, that are in general position
in $\R^3$, together with a~checkerboard black and white coloring of
the~connected components of their complement, called regions.
Forgetting those red facets of a~shading, the~orientations of which
disagree with the~one induced from the~white regions, results in a~foam,
and all foams can be constructed this way. The~same applies to webs.

A~particularly nice feature of representing foams by shadings is
the~flexibility of this construction, which we call the~\emph{bicolored
isotopy argument}: deforming any of the~two surfaces by an~isotopy results
in a~foam that differs from the~original one only up to a~sign or replacing
some dots with their duals (see Proposition~\ref{prop:foams-red} in
Section~\ref{sec:foams} for a~precise statement). This has a~number
of important consequences:
\begin{itemize}
	\item closed foams can be evaluated (Theorem~\ref{thm:foam-evaluation})
	using the~bicolored isotopy argument by moving the~blue and red facets
	away from each other,
	\item more generally, foams with the~same boundary and underlying surfaces
	coincide up to a~sign and types of dots (Proposition~\ref{prop:foams-red}),
	\item a~foam, the~underlying surface of which is a~product $\web\times[0,1]$,
	is invertible.
\end{itemize}
We then use the~above to construct a~basis of the~space of foams bounded by
a~closed web $\web$. It is given in terms of shadings of a~plane that extends
$\web$, the~blue loops of which may carry dots. The~foam associating with
such a~picture $\web^+$ is given by attaching blue and red cups to the~loops
of $\web^+$---red cups above all blue ones---and placing a~dot at the~minimum
of every blue cup attached to a~loop that is marked by a~dot.
This leads to an~explicit description of the~tautological TQFT
functor on $\cat{Foam}(\emptyset)$ that associates the~space
$\Hom_{\cat{Foam}(\emptyset)}(\emptyset, \web)$ with a~closed web $\web$,
presented in Section~\ref{sec:TQFT}. When compared with \cite{WebAlgebras},
our basis is not only easier to visualize, but also the~formula for
the~action of foams involves less signs.

\subsection{Functorial tangle homology}
Khovanov extended his construction first to tangles with an~even number of
boundary points at each side \cite{KhArcAlgebras}. For this he constructed
a~2-functor $\FKh^\circ\colon \cat{BN}^\circ \to \cat{Bimod}$,
where $\cat{BN}^\circ$ is the~subbicategory of $\cat{BN}$ with only even
collections of points as objects.
The~2-functor $\FKh^\circ$ associates with a~collection of $2n$ points
the~\emph{arc algebra}
\begin{equation}\label{eq:arc-alg ala Lev}
	\arcalg n := \bigoplus_{a,b}
		\Hom_{\cat{BN}}(a,b),
\end{equation}
where $a$ and $b$ run through the~set of Temperley--Lieb cup diagrams in
$\R\times(-\infty,0]$ with $2n$ boundary points at the~top boundary line.%
\footnote{
	This presentation of $\arcalg n$ comes from \cite{StableKh}.
}
This algebra $\arcalg n$ is known to categorify the~invariant subspace
$\mathrm{Inv}(V^{\otimes n})$ of $V^{\otimes n}$, where $V$ is the~fundamental
representation of $\Uqsl$.
Cup diagrams parametrize indecomposable projective $\arcalg n$-modules,
which in turn correspond to elements of the~canonical basis of $V^{\otimes n}$.
Let $\KhCom(T)$ be the~chain complex associated with an~$(2n,2n')$-tangle
$T$, i.e.\ the~result of applying $\FKh^\circ$ to $\KhBracket{T}$.
The~functors $\KhCom(T)\otimes(\blank)$ lift the~action of tangles
on $\mathrm{Inv}(V^{\otimes n})$ to the~derived categories of
the~arc algebras \cite{KhArcAlgebras}.

In order to categorify the~whole tensor power $V^{\otimes n}$, Chen and
Khovanov considered a~family of algebras $\CKalg{k,n-k}\rics$, where
$0\leq k\leq n$, each constructed as a~subquotient of $\arcalg n$.
These algebras were discovered independently by Stroppel \cite{ParabolicO},
who proved with Brundan than they are quasi-hereditary covers of arc algebras
and Koszul \cite{BrunStroI, BrunStroII}. Furthermore, projective modules
over $\CKalg{k,n-k}$ categorify the~weight space $V^{\otimes n}(\lambda)$
with $\lambda = n-2k$ \cite{ChenKhov, BrunStroI}. As in the~case
of arc algebras, there is a~family of 2-functors
$\FKh^\lambda\colon \cat{BN} \to \cat{Bimod}$,
such that $\FKh^\lambda$ assigns to a~collection of $n$ points the~algebra
$\CKalg{k,n-k}$ with $\lambda = n-2k$ \cite{ChenKhov, BrunStroII}.
Write $\KhCom(T;\lambda)$ for the~result of applying $\FKh^\lambda$ to
$\KhBracket{T}$. Then the~functor $\KhCom(T;\lambda)\otimes(\blank)$
lifts the~action of $T$ on the~weight space $V^{\otimes n}(\lambda)$.

Using Theorem~\ref{thm:equiv-of-bicats} we can contruct a~strictly functorial
version of both Khovanov and Chen--Khovanov homologies by precomposing
$\FKh^\circ$ and $\FKh^\lambda$ with $\EqFunc^\vee$. We provide
a~direct construction of both invariants.

Following \cite{WebAlgebras} we call the~web version of $\arcalg n$
the~\emph{Blanchet--Khovanov algebra}. It is defined for any collection
of oriented red and blue points $\bdry$ that is \emph{balanced}, i.e.
bounds a~web, as the~direct sum
\[
	\webalg{\mathcal B} :=
		\bigoplus_{a,b\in\mathcal B}
			\Hom_{\cat{Foam}(\bdry)}(a,b),
\]
where $\mathcal B$ is a~\emph{cup basis} of webs bounded by $\bdry$;
its elements play the~role of cup diagrams for $\arcalg n$. Although
$\webalg{\mathcal B}$ depends a~priori on $\mathcal B$, we show that
different choices of basis lead to isomorphic algebras. Moreover, there
is a~special basis of webs---the~\emph{red-over-blue} basis---such that
forgetting red facets in cup foams is compatible with multiplication.
In particular, $\webalg{\mathcal B}$ admits a~\emph{positive basis}.
This results immediately in an~algebra isomorphism $\webalg{\mathcal B}
\cong \arcalg n\rics$, where $n$ is half of the~blue points in $\bdry$.
We further extend this construction to a~2-functor
$\Fweb^\circ \colon \cat{Foam}^\circ \to \cat{Bimod}$
following the~construction of $\FKh^\circ$.

Suppose that $T$ is an~oriented tangle, the~input and output of which
are balanced. Then all resolutions of $T$ are in $\cat{Foam}^\circ$
and $\Fweb^\circ$ can  be applied to $\wKhBracket{T}$ to produce a~chain
complex of bimodules $\WebCom(T)$. We call it
the~\emph{Blanchet--Khovanov complex}.

\pretheorem{Theorem}{thm:FKh-vs-Fweb}{%
	The~2-functor $\Fweb^\circ$ is equivalent to
	$\FKh^\circ\circ\EqFunc^\vee\rics$. In particular, the~complexes
	$\WebCom(T)$ and $\KhCom(T)$ are isomorphic for any
	tangle\/ $T\ric$ with balanced input and output.
}

The~construction of a~web version of Chen--Khovanov algebras
is more challenging. We first describe two extensions of a~sequence
$\bdry$ to a~balanced one $\bdry^\circ$ by inserting extra blue points
to the~left and to the~right of $\bdry$.
Then we pick a~basis $\mathcal B$ of webs bounded by $\bdry^\circ$
and the~corresponding Blanchet--Khovanov algebra $\webalg{\mathcal B}$.
The~\emph{extended Blanchet--Khovanov algebra} $\qtwebalg{\bdry,\lambda}$,
where $\lambda\in \Z$ has the~same parity as the~number of blue points
in $\bdry$, is a~certain subquotient of $\webalg{\mathcal B}$.
Following the~same procedure we associate a~bimodule with a~web
and a~bimodule map with a~foam for every $\lambda\in \Z$, obtaining
a~family of 2-functors $\Fweb^\lambda\colon \cat{Foam}
\to \cat{Bimod}$, each defined on the~entire foam bicategory.
As in the~previous construction, $\Fweb^\lambda$ is compatible with
relations between foams, so that applying it to $\wKhBracket{T}$
results in an~invariant chain complex of bimodules $\WebCom(T; \lambda)$.
We call it the~\emph{extended Blanchet--Khovanov complex} of $T$.

We construct an~explicit isomorphism
$\qtwebalg{\mathcal B,\lambda} \cong \CKalg{k,n-k}\rics$, where $n$ counts
blue points in $\bdry$ and $\lambda = n-2k$. Contrary to the~previous case,
it is not enough to forget red facets in cup foams to get the~isomorphism,
because the~basic webs from $\mathcal B$ may have too many blue arcs.
This issue is resolved by \emph{stabilization}---adding beneath webs
and foams extra blue arcs and disks respectively. We then extend
this isomorphism to bimodules and prove the~following fact.

\pretheorem{Theorem}{thm:FCKh-vs-Fweb}{%
	The~2-functor $\Fweb^\lambda$ is equivalent to
	$\FKh^\lambda\circ\EqFunc^\vee\rics$. In particular, the~complexes
	$\WebCom(T; \lambda)$ and $\KhCom(T; \lambda)$ are isomorphic
	for any tangle\/ $T\ric$.
}

All the~isomorphisms are constructed explicitly and---in case nice
bases are used---given by very simple formulas. Furthermore,
by the~discussion following Theorem~\ref{thm:equiv-of-cats},
the~tangle homology computed with $\Fweb^\circ$ and $\Fweb^\lambda$
are isomorphic to the~Khovanov and Chen--Khovanov invariants
respectively.

\subsection{Functoriality of quantized annular Khovanov homology}

The~above results allow us to construct a~strictly functorial version
of the~quantized annular Khovanov homology, which was the~motivation
for this paper.
Combining Theorem~\ref{thm:FCKh-vs-Fweb} with \cite[Proposition 6.6]
{QntHom} we get

\begin{icor}\label{cor:qHH-of-extBlKh}
	Suppose $\scalars$ is flat over\/ $\Z[q^{\pm 1}]$. Then the~quantum
	Hochschild homology groups $\qHoHom_{\!i}(\qtwebalg{\mathcal B,\lambda})$
	with coefficients in $\scalars$ vanish for $i>0$, whereas the~Chern
	character map
	\[
		h\colon K_0\left( \qtwebalg{\mathcal B,\lambda}\right)
			\otimes_{\Z[q^{\pm 1}]}{\scalars}
		\to
			\qHoHom_{\!0}\left( \qtwebalg{\mathcal B,\lambda}\right)
	\]
	is an~isomorphism.
\end{icor}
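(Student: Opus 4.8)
The plan is to deduce the statement from the identification of $\qtwebalg{\mathcal B,\lambda}$ with a Chen--Khovanov algebra, exploiting that quantum Hochschild homology and $K_0$ are invariants of a graded algebra up to isomorphism and that the Chern character is natural with respect to such isomorphisms.

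First I would invoke the explicit algebra isomorphism underlying Theorem~\ref{thm:FCKh-vs-Fweb}, namely $\qtwebalg{\mathcal B,\lambda} \cong \CKalg{k,n-k}$ with $n$ the number of blue points in $\bdry$ and $\lambda = n-2k$. It is built by forgetting red facets of cup foams (with sign corrections) and stabilising by extra blue arcs and disks, so it preserves both the $q$-grading and the $\Z[q^{\pm1}]$-module structure. Being a graded $\Z[q^{\pm1}]$-algebra isomorphism, it induces, after extending scalars to $\scalars$, isomorphisms $\qHoHom_i(\qtwebalg{\mathcal B,\lambda}) \cong \qHoHom_i(\CKalg{k,n-k})$ for every $i$ and $K_0(\qtwebalg{\mathcal B,\lambda}) \cong K_0(\CKalg{k,n-k})$, fitting into a commutative square with the two Chern character maps. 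Thus the corollary for $\qtwebalg{\mathcal B,\lambda}$ is equivalent to the same assertion for $\CKalg{k,n-k}$.

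The latter is exactly \cite[Proposition~6.6]{QntHom}: under the flatness hypothesis on $\scalars$ over $\Z[q^{\pm1}]$ one has $\qHoHom_i(\CKalg{k,n-k}) = 0$ for $i>0$ and the Chern character $K_0(\CKalg{k,n-k}) \otimes_{\Z[q^{\pm1}]} \scalars \to \qHoHom_0(\CKalg{k,n-k})$ is an isomorphism; flatness is used there to commute the quantum Hochschild complex, which is naturally defined over $\Z[q^{\pm1}]$, with the base change to $\scalars$, and this carries over verbatim to $\qtwebalg{\mathcal B,\lambda}$. The argument is essentially formal once Theorem~\ref{thm:FCKh-vs-Fweb} is in hand; the only point deserving a line of verification is that the isomorphism it provides is genuinely graded and $\Z[q^{\pm1}]$-linear, so that $\qHoHom_\bullet$, $K_0$, and the Chern character all transfer along it --- which is immediate from the explicit, degree-preserving formulas defining it. I do not anticipate a substantive obstacle.
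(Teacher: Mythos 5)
Your argument is correct and coincides with the paper's own (one-line) derivation: the corollary is stated as an immediate consequence of combining Theorem~\ref{thm:FCKh-vs-Fweb} (equivalently, the underlying graded $\Z[q^{\pm1}]$-algebra isomorphism $\qtwebalg{\mathcal B,\lambda}\cong\CKalg{k,n-k}$ of Theorem~\ref{thm:qtwebalg=CKalg}) with Proposition~6.6 of the cited reference, which is exactly the transfer-along-a-graded-isomorphism step you spell out. Your extra remark checking that the isomorphism is degree-preserving and $\Z[q^{\pm1}]$-linear, so that $\qHoHom_\bullet$, $K_0$, and the Chern character are all carried across, is the right point to verify and is precisely what makes the citation legitimate.
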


Choose now an~oriented tangle $T$ that is bounded at both top and bottom
by the~same collection of oriented points $\bdry$. We define for its annular
closure $\widehat T$ the~\emph{quantum annular $\LieGL$ complex} as
\[
	\qAKh(\widehat T) := \bigoplus_\lambda
		\qHoHom_{\!\bullet}(
			\qtwebalg{\mathcal B,\lambda},
			\qtwebmod{T; \lambda}
		)
\]
where $\mathcal B$ is a~cup basis of webs bounded by $\bdry$ and 
$\qtwebmod{T; \lambda}$---the~chain complex of bimodules obtained
by applying $\Fweb^\lambda$ to $\wKhBracket{T}$.
Corollary~\ref{cor:qHH-of-extBlKh} together with
\cite[Theorem B]{QntHom} imply the following.

\begin{icor}
	The~quantum annular $\LieGL$ homology $\qAKh(L)$ is a~triply graded
	invariant of annular links that is strictly functorial with respect
	to annular link cobordisms. Moreover, it admits an~action of\/
	$\mathcal U_q(\LieGL)$ that commutes with the differential and
	the~maps induced by annular link cobordisms.
\end{icor}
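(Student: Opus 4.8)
The plan is to reduce the statement to the corresponding facts for Chen--Khovanov homology, which are available in \cite{QntHom}, via the equivalence of $2$-functors of Theorem~\ref{thm:FCKh-vs-Fweb}. Fix an oriented tangle $T$ with top and bottom boundary the collection $\bdry$ and with $\widehat T = L$, fix a cup basis $\mathcal B$ of webs bounded by $\bdry$, and let $n$ be half the number of blue points of $\bdry$ and $k$ the index with $\lambda = n-2k$. Theorem~\ref{thm:FCKh-vs-Fweb} provides an isomorphism of algebras $\qtwebalg{\mathcal B,\lambda}\cong\CKalg{k,n-k}$ together with a compatible isomorphism of complexes of bimodules $\qtwebmod{T;\lambda}\cong\KhCom(T;\lambda)$; since this is part of an equivalence of $2$-functors, it is implemented by a pseudonatural transformation and therefore intertwines the chain maps induced by foams and by tangle cobordisms. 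Applying quantum Hochschild homology --- which carries isomorphic pairs (algebra, bimodule complex) to isomorphic groups and homotopy equivalences to isomorphisms --- we obtain, compatibly in $T$,
\[
	\qHoHom_{\bullet}\bigl(\qtwebalg{\mathcal B,\lambda},\,\qtwebmod{T;\lambda}\bigr)
	\;\cong\;
	\qHoHom_{\bullet}\bigl(\CKalg{k,n-k},\,\KhCom(T;\lambda)\bigr).
\]
The isomorphisms of Theorem~\ref{thm:FCKh-vs-Fweb} respect the quantum and homological gradings, and the third, annular grading is recorded by $\lambda$, so the identification respects all three gradings. Summing over $\lambda$ identifies $\qAKh(\widehat T)$ with the quantum annular Khovanov homology of $\widehat T$ from \cite{QntHom}.

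I would then deduce well-definedness and invariance. Independence of the cup basis $\mathcal B$ follows because different bases give isomorphic Blanchet--Khovanov, hence extended Blanchet--Khovanov, algebras, and the bimodule complex $\qtwebmod{T;\lambda}$ transforms compatibly, so that $\qHoHom_\bullet$ is unchanged. Independence of the tangle presentation $T$ of $L$, and invariance of the triple grading, are transported from the Chen--Khovanov side: two annular diagrams of $L$ are related by annular Reidemeister moves and (de)stabilizations, each inducing a homotopy equivalence of $\KhCom(T;\lambda)$ compatible with the bimodule structure, hence an isomorphism on quantum Hochschild homology; this is part of \cite[Theorem B]{QntHom}, and it transfers via the identification above. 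Corollary~\ref{cor:qHH-of-extBlKh} ensures that the algebras $\qtwebalg{\mathcal B,\lambda}$ share the homological properties of the Chen--Khovanov algebras (no higher quantum Hochschild homology, Chern character isomorphism in degree $0$), so that \cite[Theorem B]{QntHom} applies on the web side as well and $\qAKh(L)$ is a well-defined triply graded invariant of annular links.

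For functoriality, an annular link cobordism from $L$ to $L'$ is represented by a tangle cobordism $\Sigma\colon T\to T'$ (after the annular Markov moves dealt with above), and the foam bracket assigns to $\Sigma$ a chain map $\wKhBracket{\Sigma}\colon\wKhBracket{T}\to\wKhBracket{T'}$; applying $\Fweb^\lambda$ and then quantum Hochschild homology yields the induced map on $\qAKh$. This is well defined \emph{on the nose}: the foam bracket is a strict $2$-functor on the tangle cobordism bicategory --- this is precisely the functoriality gained by replacing the Bar--Natan category with $\cat{Foam}$, cf.\ \cite{Blanchet} --- while $\Fweb^\lambda$ is a genuine $2$-functor and quantum Hochschild homology respects composition and identifies homotopic maps, so no sign ambiguity can arise. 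The $\mathcal U_q(\LieGL)$-action is transported along the identification of the first paragraph from the action constructed in \cite{QntHom}: its generators act by the maps on quantum Hochschild homology induced by the exact bimodules that categorify the raising, lowering and weight functors between the algebras $\qtwebalg{\mathcal B,\lambda}$, and by \cite[Theorem B]{QntHom}, applied through Theorem~\ref{thm:FCKh-vs-Fweb} to the strand-adding webs and foams defining those functors, they commute with the differential and with the maps induced by annular cobordisms.

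The main obstacle is not a single hard computation but the \emph{bookkeeping of naturality}: one must check that the equivalence $\Fweb^\lambda\simeq\FKh^\lambda\circ\EqFunc^\vee$ is simultaneously compatible with (i) the algebra/bimodule structure, (ii) the maps induced by \emph{arbitrary} tangle cobordisms, and (iii) the functors categorifying the $\mathcal U_q(\LieGL)$-action, and that it preserves all three gradings. Point (i) is built into the statement of Theorem~\ref{thm:FCKh-vs-Fweb}; point (ii) is automatic, since an equivalence of $2$-functors is realised by a pseudonatural transformation, which by definition intertwines the images of $2$-morphisms; point (iii) is the special case of (ii) for the foams that add or remove a strand, so it too reduces to the $2$-naturality already available. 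Everything else is assembling these compatibilities together with the invariance and quantum-group statements imported from \cite{QntHom}.
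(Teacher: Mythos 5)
Your proposal is correct and follows essentially the same route as the paper, which derives the corollary in a single line from Corollary~\ref{cor:qHH-of-extBlKh} and \cite[Theorem~B]{QntHom} (both of which already encode the transport from the Chen--Khovanov side via Theorem~\ref{thm:FCKh-vs-Fweb}). Your more detailed write-up usefully makes explicit a point the paper leaves implicit: the \emph{strictness} of the functoriality cannot be imported from \cite{QntHom} (the Chen--Khovanov complex is only projectively functorial there) but comes from the strict functoriality of the Blanchet--Khovanov bracket $\wKhBracket{\cdot}$ in $\cat{Foam}$ (Theorem~\ref{thm:strictification}), combined with the fact that $\Fweb^\lambda$ and quantum Hochschild homology are genuine functors --- so no sign ambiguity survives.
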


It follows now from Theorem~\ref{thm:FCKh-vs-Fweb} and the~following
discussion that $\qAKh(L)$ is isomorphic with the~quantized annular
complex as constructed in \cite{QntHom}.

\subsection{Further generalizations}

The~Khovanov homology has been extended by Asaeda, Przytycki, and Sikora
to links in thickened surfaces \cite{APS},
but the~functoriality has not been addressed until the~resent paper of
Quefellec and Wedrich \cite{KhSurfaces}.
There they have defined $\LieGL$ foams in thickened \emph{oriented} surfaces,
and the~natural question is whether the~results of this paper can be
extended to show equivalence of the~two constructions.
This is addressed in a~follow up paper, where we also discuss foams
in arbitrary 3-manifolds, including non-orientable ones.

Another natural question is about $\LieGL[N]$ foams for $N>2$. Again there
are two (bi)categories involved: of \emph{enhanced} and \emph{not enhanced}
foams, the~latter allowing only facets of labels up to $N-1$. We expect
that a~proper generalization of this paper would prove equivalence of
both (bi)categories, hence, also of the~associated link homologies.
Notice that functoriality of $\LieGL[N]$ homology has been shown in
\cite{KhRozFunctorial} using enhanced foams.


\subsection{Organization of the~paper}
Section~\ref{sec:preliminaries} provides a~brief exposition of webs
and foams. All the~results presented there are well-known, except
perhaps the~choice of defining relations. Section~\ref{sec:shadings}
discusses shadings, their connection to webs and foams, and bicolored
isotopies. It ends with a~construction of a~basis of the~space of foams
bounded by a~given web. The~equivalence
of bicategories $\cat{BN}$ and $\cat{Foam}$ together with the~local
versions are constructed in Section~\ref{sec:equivalences}, in which
we also compare the~two versions of the~Khovanov bracket.
Finally, Sections~\ref{sec:TQFT}--\ref{sec:subquotients}
provide detailed constructions of TQFT functors: a~description of
the~tautological functor on $\cat{Foam}(\emptyset)$ in terms of planar
pictures, the~constructions of the~Blanchet--Khovanov algebras, their
subquotients, and the~2-functors $\Fweb^\circ$ and
$\Fweb^\lambda\rics$.


\subsection{Conventions and notation}
Throughout the~paper we fix a~commutative unital ring $\scalars$
and linearity means $\scalars$--linearity.
We denote by $\{d\}$ the~upward degree shift by $d$,
i.e. $M\{d\}_i = M_{i-d}$ for a~graded module $M$.
Hence, a~homogeneous $m\in M$ has degree $\deg(m) + d$
when seen as an~element of $M\{d\}$.
We write $\HCom(\cat{C})$ for the~homotopy category of a~linear
category $\cat{C}$, the~objects of which are formal complexes
in $\cat{C}$ and morphisms---homotopy classes of chain maps.

Manifolds are assumed to be smooth (or at least piecewise smooth
when necessary) and submanifolds are neat---that is $N\subset M$
is transverse to $\partial M$ and $\partial N = M\cap \partial M$
\cite{DiffTop}. Orientation of a~surface
$S \subset \R^3$ is often identified with the~\emph{canonical normal
vector field} $\nu$, defined by the~property that for each $p\in S$
the~triple $(e_1,e_2,\nu_p)$, where $(e_1,e_2)$ is an~oriented basis
of $T_pS$, is an~oriented basis of $T_p\R^3$. Such a~vector field
is unique up to an~isotopy and can be found by the~right hand rule.

\subsection{Acknowledgments}
The authors are grateful to the~organizers of the~program
,,Homology Theories in Low Dimensional Topology'' in spring 2017
at the~Isaac Newton Institute for Mathematical Sciences in Cambridge,
where they have started to work on this project.
A.B. and K.P. are supported by the NCCR SwissMAP
founded by Swiss National Science Foundation.

\section{Main players}
\label{sec:preliminaries}

This section provides basic definitions and facts about webs and foams.
Most of the~material is well-known \cite{Blanchet, KhViaHowe},
except perhaps the~choice of defining relations, and the~main purpose
of this part is to fix notation and introduce terms used throughout
the~paper.

\subsection{Webs}
\label{sec:webs}

A~\emph{web} is an~oriented trivalent graph with edges colored blue
or red%
\footnote{
	Red edges are drawn as double thick lines to make the~difference
	visible when the~paper is printed black and white.
}
in such a~way, that at each vertex either two blue edges \emph{merge}
to a~red one, or a~red edge \emph{splits} into two blue edges:
\begin{equation}\label{eq:flow-cond}
	\tikzset{x=1.5em,y=1.5em}%
	\mathclap{\webpict{merge up}}
	\hskip 0.3\textwidth
	\mathclap{\webpict{split up}}
\end{equation}
In this paper webs will be always embedded in a~disk $\Disk$ or a~sphere
$\S^2$ with a~fixed basepoint $\ast$ that lies on $\partial\Disk$ in
the~case of a~disk.
Edges of a~web in a~disk can be attached
transversely to the~boundary circle away from $\ast$; each boundary point
inherits then both the~color and orientation from the~attached edge: outwards
(resp.\ inwards) oriented edges terminate with positive (resp.\ negative) points.
A~web is \emph{closed} if its boundary is empty.

\begin{rmk}
	By moving the~basepoint $\ast$ to the~infinity, we can consider
	webs in $\Disk$ or $\S^2$ as embedded in a~half plane $\R\times(-\infty,0]$
	or a~full plane $\R^2$ respectively.
\end{rmk}

\begin{defn}
	We write $\set{Web}$ for the~module generated by isotopy%
	\footnote{
		Isotopies are assumed to fix points on the~boundary circle.
	}
	classes of webs in a~disk, modulo the~local%
	\footnote{
		The~word \emph{local} means that two webs are identified
		if there is a~disk outside of which the~webs coincide and
		inside they look like in the~pictures.
	}
	relations
	\tikzset{x=2em,y=2em}%
	\begin{align}
	\label{rel:circle-evaluation}
		\webpict{circle}[V1] &= q+q^{-1}
	&
		\webpict{circle}[V2] &= 1
	\\[2ex]
	\label{rel:web-detach-and-saddle}
		\webpict{T-shape} &= \webpict{line and cup}
	&
		\webpict{hor arcs} &= \webpict{vert arcs}
	\end{align}
	where the~webs above can carry any coherent orientation unless indicated.
	For each collection of oriented red and blue points $\bdry \subset \partial\Disk$
	there is a~submodule $\set{Web}(\bdry)$ generated by webs bounded by $\bdry$
	and $\set{Web}$ is the~direct sum of all of them.
\end{defn}


\begin{exer}
	Show that webs satisfy the~following local relations:
	\tikzset{x=2em,y=2em}%
	\begin{align}
	\label{rel:web-bigons}
		\webpict{bigon>} &= \webpict{vline}[V1]
	&
		\webpict{bubble} &= (q+q^{-1}) \webpict{vline}[V2]
	\end{align}
	\begin{hint}
		Start with the~left relation in \eqref{rel:web-detach-and-saddle}.%
	\end{hint}
\end{exer}

Blue edges of a~web $\web$ form a~crossingless tangle $\web_b$, which we call
the~\emph{underlying tangle of $\web$}. In particular, it is a~collection of
disjoint circles when $\web$ is closed. Write $\ell(\web)$ for the~number of
blue loops in $\web_b$. Let $\mathrm r(\web)$ be a~web, the~underlying tangle
of which is $\web_b$ with closed loops removed. We call it a~\emph{reduction
of $\web$}. We construct it later using the~bicolored isotopy argument and
show the~following fact, which implies in particular that $\mathrm r(\web)$
does not depend on the~placement of red edges.

\begin{prop}\label{prop:webs-red}
	Webs with same boundary and isotopic underlying tangles coincide
	in $\set{Web}$. In particular, $\web = (q+q^{-1})^{\ell(\web)}
	\mathrm r(\web)$ for any web $\web$.
\end{prop}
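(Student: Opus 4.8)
The plan is to prove the two assertions in turn, deriving the numerical formula $\web = (q+q^{-1})^{\ell(\web)}\,\mathrm r(\web)$ from the first statement together with the circle evaluations in \eqref{rel:circle-evaluation}. For the first statement—that two webs with the same boundary and isotopic underlying tangles are equal in $\set{Web}$—I would argue by induction on the complexity of the webs, using the local relations to simplify red edges step by step. The key point is that the underlying blue tangle is what carries the information, and the red edges are entirely determined (up to the relations) once we know how the blue edges merge and split.

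First I would set up the reduction $\mathrm r(\web)$ concretely: given a closed web $\web$, its underlying tangle $\web_b$ is a disjoint union of embedded circles in $\Disk$ (or in a half-plane after pushing $\ast$ to infinity). Near each blue vertex a red edge emanates, and the red edges themselves form arcs and circles living ``between'' the blue strands. I would first observe that any red circle disjoint from blue edges can be removed at the cost of a scalar $1$ by the right relation in \eqref{rel:circle-evaluation}, and that a red bigon bounded by two blue edges can be absorbed using the bigon relation \eqref{rel:web-bigons} (the left one), turning two parallel blue strands joined by merge/split pairs into a single blue strand. More generally, any red edge that forms an ``innermost'' configuration—bounding a disk meeting the rest of the web only in the expected way—can be removed using a combination of \eqref{rel:web-detach-and-saddle} and \eqref{rel:web-bigons}. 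The heart of the induction is: if a web has any red edges, there is always such an innermost red configuration, so we can strictly decrease the number of red edges while preserving the underlying blue tangle and the class in $\set{Web}$. Iterating, every web equals (in $\set{Web}$) a canonical web determined by $\web_b$ alone—but we must be careful, because a blue circle by itself is not a web: a blue circle carries no vertices, so the ``canonical'' object for a single blue loop is genuinely the bare blue circle, which by \eqref{rel:circle-evaluation} equals $q+q^{-1}$ times the empty web. For a web with boundary, the canonical object is the crossingless blue tangle $\mathrm r(\web)$ with no closed components, times $(q+q^{-1})^{\ell(\web)}$ accounting for the removed blue loops. This simultaneously establishes both the well-definedness of $\mathrm r(\web)$ (independence of red-edge placement) and the formula.

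The main obstacle I anticipate is the combinatorial claim that a web with at least one red edge always admits an innermost simplifiable configuration—i.e., making precise and verifying the inductive step. One has to rule out ``tangled'' red subgraphs where no single relation applies directly, and handle the interaction of red edges with the boundary points $\bdry$ and with each other. The clean way around this, which the paper itself flags as ``the main tool,'' is to invoke the bicolored isotopy argument (Proposition~\ref{prop:foams-red} and the shading machinery of Section~\ref{sec:shadings}, which the excerpt promises): one realizes the web as the blue/red trace of a shading and slides the red surface entirely off the blue one by an ambient isotopy, collecting only signs—but at the level of webs (as opposed to foams) there are no signs, so the two webs become literally equal. In a self-contained argument without shadings, I would instead reduce to the case of two parallel blue strands connected by a sequence of merge–split pairs (which \eqref{rel:web-bigons} collapses) plus free red circles (which \eqref{rel:circle-evaluation} kills), and argue that any red-edge pattern can be brought to this form by isotopy and the saddle move \eqref{rel:web-detach-and-saddle}; the bookkeeping that every such move preserves the class in $\set{Web}$ is routine once the reduction scheme is fixed.

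Finally, once the first sentence is proved, the second follows immediately: $\web$ and the disjoint union of $\mathrm r(\web)$ with $\ell(\web)$ isotopically trivial blue circles have the same boundary and isotopic underlying tangles, hence are equal in $\set{Web}$; then applying the left relation in \eqref{rel:circle-evaluation} once per circle contributes a factor $q+q^{-1}$ each, giving $\web = (q+q^{-1})^{\ell(\web)}\,\mathrm r(\web)$.
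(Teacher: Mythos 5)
Your derivation of $\web = (q+q^{-1})^{\ell(\web)}\,\mathrm r(\web)$ from the first sentence is correct. The problem is with your argument for the first sentence. You offer two routes. The elementary induction on the number of red edges is not carried out, and you rightly flag the inductive step---the existence of an ``innermost'' simplifiable red configuration---as the hard part; without it this route is a sketch, not a proof. The fallback via shadings is where the genuine gap sits: you propose to ``slide the red surface entirely off the blue one,'' but for webs this is impossible whenever the boundary contains red points (a red arc must terminate at a trivalent vertex on the blue tangle, so it cannot be disjoined from it), and even for closed webs it at best normalizes a single web; the Bicolored Isotopy Lemma compares \emph{isotopic} shadings, and the completions of two webs $\web$, $\web'$ with $\web_b\simeq\web'_b$ but differently placed red edges are in general not isotopic as shadings, so the lemma alone does not identify them.

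The paper's actual proof sidesteps both issues by going one dimension up. Take the trace of the planar isotopy from $\web_b$ to $\web'_b$ as a surface in $\DxI$ (so the blue part has no critical points), extend it to a foam $\foam\colon\web\to\web'$ using Proposition~\ref{prop:blue->binded}, and complete $\foam$ to a shading of $\DxI$. In generic position this shading gives a one-parameter family of completed webs: between consecutive levels with no red critical point, Lemma~\ref{lem:rb-isotopy}(1) gives equality, and across a red cap, cup, or saddle the two adjacent webs either coincide (if $\Gamma$ erases the affected red edge) or are identified by the right relations in \eqref{rel:circle-evaluation} and \eqref{rel:web-detach-and-saddle}. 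So the level-set analysis---not an ambient disjoining---is the engine. Note also that the proof uses Proposition~\ref{prop:blue->binded} and Lemma~\ref{lem:rb-isotopy}, not Proposition~\ref{prop:foams-red}; the latter is a sibling result proved from the same lemma immediately afterward, so invoking it here would be out of order.
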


Let $-\web$ be the~result of reversing orientation of all edges in a~web
$\web$. This operation preserves the~relations \eqref{rel:circle-evaluation}
and \eqref{rel:web-detach-and-saddle}, hence it induces an~involution
on $\set{Web}$. It does not preserve the~submodules $\set{Web}(\bdry)$,
but there is a~pairing
\begin{equation}\label{eq:web-pairing}
	(\web,\web') := (q+q^{-1})^{\ell(-\web \cup \web')},
\end{equation}
which can be visualized by placing $-\web$ and $\web'$ on the~lower and
upper hemisphere of a~sphere and applying Proposition~\ref{prop:webs-red}
to the~resulting web (entirely red webs evaluate to $1$).

\begin{lem}\label{lem:web-pairing-is-nondeg}
	The~pairing \eqref{eq:web-pairing} is non-degenerate.
\end{lem}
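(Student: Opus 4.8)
The plan is to exhibit an explicit dual basis for the pairing \eqref{eq:web-pairing} with respect to a natural spanning set of each $\set{Web}(\bdry)$. By Proposition~\ref{prop:webs-red}, a web in $\set{Web}(\bdry)$ is determined up to a scalar $(q+q^{-1})^{\ell}$ by its underlying tangle, so $\set{Web}(\bdry)$ is a free $\scalars[q^{\pm1}]$-module with basis indexed by the crossingless matchings of $\bdry_b$ (the blue points of $\bdry$)---equivalently, by webs $\web$ bounded by $\bdry$ whose underlying tangle has no closed loops, i.e.\ the reductions $\mathrm r(\web)$. First I would fix such a basis $\{\web_i\}$ of $\set{Web}(\bdry)$ and the analogous basis $\{\web'_j\}$ of $\set{Web}(\bdry')$, where $\bdry'$ is the collection of points paired with $\bdry$ in the sphere picture (so that $-\web_i \cup \web'_j$ is a closed web).

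Next I would compute the pairing matrix $(\web_i, \web'_j) = (q+q^{-1})^{\ell(-\web_i \cup \web'_j)}$. The key combinatorial input is that when two crossingless matchings are glued along a circle, the number of resulting loops is at most the number of through-strands allows, with equality $\ell = n$ (the maximal possible, with $2n = |\bdry_b|$) occurring \emph{only} when the two matchings are mirror images of each other---this is the standard fact underlying the non-degeneracy of the Temperley--Lieb bilinear form. Thus, after ordering the bases so that $\web'_j$ is the mirror of $\web_j$, the matrix $M_{ij} = (\web_i,\web'_j)$ has diagonal entries $(q+q^{-1})^n$ and off-diagonal entries $(q+q^{-1})^{\ell}$ with $\ell < n$; a routine induction (or an explicit unitriangularity after dividing by $(q+q^{-1})^n$ and ordering matchings by, say, the number of nested cups) shows $\det M$ is a unit times a power of $q+q^{-1}$ in $\scalars[q^{\pm1}][[q+q^{-1}]]$, hence the pairing is non-degenerate over $\scalars[q^{\pm1}]$, and a fortiori over $\scalars$.

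The main obstacle is making the gluing count precise and the triangularity argument clean without invoking too much machinery: one must verify that $\ell(-\web_i \cup \web'_j) = n$ forces $\web'_j$ to be the mirror of $\web_i$, and then order the crossingless matchings so that the pairing matrix becomes (uni)triangular after normalization. The cleanest route is to recall that $\set{Web}(\bdry)$ is, via Proposition~\ref{prop:webs-red}, isomorphic as a module-with-pairing to the Temperley--Lieb module on $2n$ points with its standard non-degenerate form $\langle \cdot,\cdot\rangle_{TL}$ sending a pair of matchings to $(q+q^{-1})^{\#\text{loops}}$; non-degeneracy of that form is classical. So in practice the proof reduces to: (1) identify the spanning set of webs modulo the circle relation \eqref{rel:circle-evaluation} with crossingless matchings via Proposition~\ref{prop:webs-red}; (2) identify the pairing \eqref{eq:web-pairing} with the classical Temperley--Lieb form under the sphere-gluing picture; (3) cite or re-derive non-degeneracy of the latter by the triangularity of its Gram matrix. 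I would write it up in that order, keeping step (3) brief since it is standard.
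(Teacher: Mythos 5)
Your reduction steps (1) and (2) are fine: Proposition~\ref{prop:webs-red} does identify $\set{Web}(\bdry)$ with the free $\Zq$--module on crossingless matchings of $\bdry_b$, and the pairing \eqref{eq:web-pairing} then becomes the standard Temperley--Lieb gluing form. (Minor slip: the form pairs $\set{Web}(\bdry)$ with itself, not with $\set{Web}(\bdry')$ for a different $\bdry'$.) Step (3), however, contains a genuine error. The Gram matrix is \emph{not} triangular, even after dividing by $(q+q^{-1})^n$: gluing any two distinct matchings yields strictly fewer than $n$ loops, so every off-diagonal entry is a \emph{nonzero} power of $q+q^{-1}$, and the matrix is symmetric and dense. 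Consequently the claim that $\det M$ is ``a unit times a power of $q+q^{-1}$'' is false. Already for $n=2$ one gets
\[
  \det\begin{pmatrix}(q+q^{-1})^2 & q+q^{-1}\\ q+q^{-1} & (q+q^{-1})^2\end{pmatrix}
  = (q+q^{-1})^2\bigl((q+q^{-1})^2 - 1\bigr)
  = (q+q^{-1})^2\,[3],
\]
and $[3]=q^2+1+q^{-2}$ is not a unit in $\Zq$. The pairing is therefore not perfect; what is true (and what the lemma asserts) is only that the Gram determinant is a \emph{nonzero} Laurent polynomial, hence the form has trivial kernel over the domain $\Zq$.

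The paper proves this more directly by a leading-term argument, without touching the determinant: given a nonzero $w=\sum_i c_i\web_i$, choose $\web_1$ so that $c_1$ contains a monomial $q^d$ with $|d|$ maximal among all monomials of all $c_i$; since $\ell(-\web_1\cup\web_1)$ strictly dominates $\ell(-\web_1\cup\web_i)$ for $i\neq 1$, the extreme term $q^{\pm(|d|+\ell(-\web_1\cup\web_1))}$ of $c_1(\web_1,\web_1)$ cannot be cancelled in $(w,\web_1)$, so $(w,\web_1)\neq 0$. This is the essential replacement for your triangularity step: diagonal dominance on the loop count plus the domain structure of $\Zq$, rather than any matrix factorization. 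If you want to keep your route through the Temperley--Lieb form, you would need to cite the Di\,Francesco--type Gram determinant formula (a product of Chebyshev polynomials, hence nonzero but not a unit), or simply reproduce the leading-term argument.
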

\begin{proof}
	Choose a~nonzero $w\in\set{Web}(\bdry)$ and write it as a~linear
	combination $c_1\web_1 + \dots + c_r\web_r$ of pair-wise non-isotopic
	webs $\web_1,\dots,\web_r$, the~underlying tangles of which contain
	no loops. We may further assume that the~polynomial $c_1$ contains
	a~term $q^d$ with the~maximal value of $|d|$ among all $c_i$.
	Because $\ell(-\web_1\cup \web_i) < \ell(-\web_1\cup \web_1)$ for any $i\neq 1$,
	the~term $q^d(\web_1, \web_1)$ is not canceled in the~expansion of $(w,\web_1)$.
	Hence, $(w,\web_1) \neq 0$.
\end{proof}

\subsection{Foams}
\label{sec:foams}

A~\emph{foam} is a~collection of \emph{facets}, oriented blue and red%
\footnote{
	As in the~case of webs, red facets of a~foam are doubled in pictures.
}
surfaces, 
embedded in a~3-ball $\Ball$ with boundary components attached transversely to
$\partial\Ball$ or glued together along singular curves called \emph{bindings}
in a~way, such that locally two blue facets merge into a~red one in
an~orientation preserving way as shown in Figure~\ref{fig:local-model-for-foams}.
Furthermore, blue facets may carry \emph{dots}, but not the~red ones, and bindings
inherit orientation from blue facets. We say that a~foam is \emph{closed} if its
boundary is empty. Otherwise it is bounded by a~web in $\partial\Ball$.
Notice that blue facets alone form a~surface $\foam_b$ with dots,
the~\emph{underlying surface of $\foam$}. As in the~case of webs,
we fix a~basepoint $\ast \in \partial\Ball$ away from $\partial\foam$.
By moving it to infinity we can reinterpret foams as embedded in a~half
3-space $\R^2\times(-\infty,0]$.

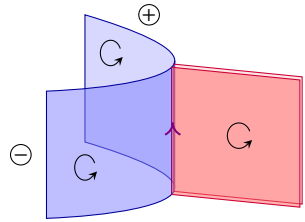
\begin{figure}[ht]%
	\centering
%
\begin{tikzpicture}[x=4em,y=4em]
	\draw[1facetBack]
		(0,0.44) .. controls (0, 0.55) and (-0.3, 0.68) .. (-0.7,0.8) --
		++(0,-1) .. controls (-0.3,-0.32) and (0,-0.45) .. (0,-0.56);
	\draw[1facetFront]
		(0,0.44) .. controls (0, 0.3) and (-0.5, 0.22) .. (-1, 0.2) --
		++(0,-1) .. controls (-0.5,-0.78) and (0,-0.7) .. (0,-0.56) -- ++(0,1);
	\path[name path=arc]
		(0,0.44) .. controls (0, 0.3) and (-0.5, 0.22) .. (-1, 0.2);
	\path[name path=bline] (-0.5,0.4) ++(0, 0.6pt) -- ++(1,0);
	\path[name path=fline] (-0.5,0.4) ++(0,-0.6pt) -- ++(1,0);
	\draw[2facetInner, name intersections={of=arc and bline}]
		(intersection-1) coordinate[name=back]
		-- ++(1,-0.1) -- ++(0,-1) -- ++(-1,0.1);
	\draw[seam] (back)  -- ++(0,-1);
	\draw[2facetFront, name intersections={of=arc and fline}]
		(intersection-1) coordinate[name=front]
		-- ++(1,-0.1) -- ++(0,-1) -- ++(-1,0.1);
	\draw[seam] (front) -- ++(0,-1);
	\draw (-0.2, 0.8) node[shape=circle,inner sep=0.2pt,draw] {$\scriptstyle+$}
	      (-1.2,-0.3) node[shape=circle,inner sep=0.2pt,draw] {$\scriptstyle-\vphantom+$};
	\coordinate (midSeam) at ($0.5*(front) + 0.5*(back) + (0,-0.5)$);
	\foamarrow>[seam]90(midSeam);
	\foamorient (0.5,-0.15) Rx=0.09 Ry=0.1 S=20 E=310;
	\foamorient (-0.5, 0.5) Rx=0.08 Ry=0.1 S=20 E=310;
	\foamorient (-0.7,-0.4) Rx=0.08 Ry=0.1 S=20 E=310;
\end{tikzpicture}
	\caption{%
		The~local model for a~foam. The~orientation of the~binding 
		is coherent with the~orientation of the~blue facets, but opposite
		to the~one induced from the~red facet. The~cyclic order is counter-%
		clockwise, when seen from above, so that the~front blue facet
		is the~negative one.}%
	\label{fig:local-model-for-foams}%
\end{figure}

There is a~canonical cyclic order of facets attached to a~binding
that follows the~right hand rule: point the~thumb of your right
hand along the~binding curve and slightly bend the~other fingers%
---they indicate the~orientation of a~small circle around the~binding,
hence, a~cyclic order of facets. We call a~blue facet \emph{positive}
or \emph{negative} depending on whether it succeeds or precedes the~red
facet respectively. For non-embedded foams this cyclic order is usually
provided explicitly by drawing small arrows around the~binding,
see \cite{Blanchet}.

\begin{defn}\label{def:foams}
	We write $\set{Foam}$ for the~module generated by isotopy classes of
	foams in $\Ball$ with the~following local relations imposed:
	\begin{itemize}
	\item \textit{sphere evaluations:}
	\begin{equation}\label{rel:sphere-eval}
	\tikzset{x=8mm,y=8mm}%
		\foampict{1sphere}    = 0\hskip 0.1\textwidth
		\foampict{1sphere}[1] = 1\hskip 0.1\textwidth
		\foampict{2sphere}    =-1
	\end{equation}
	
	\item \textit{neck cutting relations:}
	\begin{equation}\label{rel:neck-cutting}
	\tikzset{x=7mm,y=7mm}%
		\mathclap{\foampict{1neck}\, = \,\foampict{1cap 1cup}[+1]
		                          \, + \,\foampict{1cap 1cup}[-1]
			                       \, - h \foampict{1cap 1cup}}
	\hskip 0.4\textwidth
		\mathclap{\foampict{2neck}\, = -\foampict{2cap 2cup}}
	\end{equation}
	
	\item \textit{dot reduction and dot moving relations:}
	\begin{equation}\label{rel:dots}
	\tikzset{x=1cm,y=1cm}%
		\foampict{1plane}[2] = h\!\foampict{1plane}[1] + t\!\foampict{1plane}
	\hskip 2em
		\foampict{3facets}[+] = h\foampict{3facets}\, - \!\foampict{3facets}[-]
	\end{equation}
	
	\item \textit{red facet detachments:} 
	\tikzset{x=7mm,y=7mm}%
	\begin{gather}\label{rel:detach-cylinder}
		\foampict{2cyl at 1plane}[<]
			\mskip 8mu\raisebox{-1ex}{$\,=$}\mskip-6mu
		\foampict{2cup > 1plane}
	\hskip 4em
		\foampict{2cyl at 1plane}[>]
			\mskip 8mu\raisebox{-1ex}{$\,=-$}\mskip-6mu
		\foampict{2cup > 1plane}
	\\[2ex]
		\label{rel:detach-saddle}
		\foampict{2planes at 1plane}[<]
			\mskip 8mu = \mskip-6mu
		\foampict{2saddle at 1plane}[<]
	\hskip 4em
		\foampict{2planes at 1plane}[>]
			\mskip 8mu =- \mskip-6mu
		\foampict{2saddle at 1plane}[>]
	\end{gather}
	\end{itemize}
	Foams bounded by a~web $\web\subset\S^2$ (with $\ast \notin \web$)
	generate a~submodule $\set{Foam}(\web)$.  As in the~case of webs,
	$\set{Foam}$ is the~direct sum of all these submodules.
\end{defn}

\begin{rmk}\label{rmk:sign-from-normal-vector}
	The~sign in \eqref{rel:detach-cylinder} and \eqref{rel:detach-saddle} can
	be read easily from the~direction of the~canonical normal vector at
	the~critical point on the~red surface: it is positive exactly
	when the~normal vector is directed towards the~blue plane. For example,
	\eqref{rel:detach-cylinder} can be written as
	\[
	\tikzset{x=8mm,y=8mm}%
		\foampict{2cyl at 1plane}[<V]
			\mskip 8mu\raisebox{-1ex}{$\,=$}\mskip-6mu
		\foampict{2cup > 1plane}[V]
	\hskip 4em
		\foampict{2cyl at 1plane}[>V]
			\mskip 8mu\raisebox{-1ex}{$\,=-$}\mskip-6mu
		\foampict{2cup > 1plane}[A]
	\]
\end{rmk}

When $\Bbbk$ is graded with $h$ and $t$ homogeneous in degree 2 and 4
respectively, then $\set{Foam}$ is a~graded module with a~foam $\foam$
being a~homogeneous element in degree
\begin{equation}
	\deg(\foam) := -\chi(\foam_b) + 2\mathit{dots}(\foam).
\end{equation}
Here $\chi(\foam_b)$ stands for the~Euler characteristic of the~underlying
surface and $\mathit{dots}(\foam)$ counts dots carried by the~foam.

The~dot moving relation (the~right one in \eqref{rel:dots}) takes
a~particularly simple form for $h=0$: it allows to move a~dot on
the~underlying surface at a~cost of a~sign. To have a~similar
interpretation in the~general case, we introduce the~\emph{dual dot}
as the~difference
\begin{equation}
	\foampict{1plane}[4] := h\foampict{1plane} - \foampict{1plane}[1].
\end{equation}
The~following exercise lists several relations satisfied by dual dots.

\begin{exer}\label{rel:dual-dot}
	Show the~following equalities between foams:
	\begin{gather*}
		\mathclap{\foampict{1sphere}[4] = -1}
		\hskip 0.4\linewidth
		\mathclap{\foampict{1sphere}[5] = 0}
	\\[2ex]
		\mathclap{\foampict{1plane}[6] = h\foampict{1plane}[4] + t\foampict{1plane}}
		\hskip 0.5\linewidth
		\mathclap{\foampict{3facets}[+] \,=\, \foampict{3facets}[-2]}
	\\[2ex]
		\tikzset{x=7mm, y=7mm}
		\foampict{1cap 1cup}[+1]\, - \,\foampict{1cap 1cup}[-2]
			\quad=\quad
		\foampict{1neck}
			\quad=\quad
		\foampict{1cap 1cup}[-1]\, - \,\foampict{1cap 1cup}[+2]
	\end{gather*}
\end{exer}

The~detaching relations \eqref{rel:detach-cylinder} and
\eqref{rel:detach-saddle} can take many other forms.
For instance, redrawing them to make red facets horizontal
results in
\begin{tikzset}{x=2em,y=2em}%
\begin{align}
	\label{rel:cup-vs-plane}
		\foampict{1cup < 2plane}[>] &=  \foampict{1cup > 2plane} &
		\foampict{1cup < 2plane}[<] &= -\foampict{1cup > 2plane}
	\\[1ex]
	\label{rel:saddle-vs-plane}
		\foampict{Asaddle > 2plane}[>] &=  \foampict{Asaddle < 2plane}[>] &
		\foampict{Vsaddle < 2plane}[>] &= -\foampict{Vsaddle > 2plane}[>]
\intertext{%
	Likewise, \eqref{rel:detach-cylinder} together with
	\eqref{rel:sphere-eval} allow us to remove a~red membrane
	attached to a~blue cup
}
	\label{rel:cup-vs-disk}
		\foampict{1cup and disk}[>] &=  \foampict[baseline=-0.4em]{1cup} &
		\foampict{1cup and disk}[<] &= -\foampict[baseline=-0.4em]{1cup}
\intertext{%
	and other well-known relations arise by redrawing	
	\eqref{rel:detach-saddle} and \eqref{rel:cup-vs-disk}
	in a~way, such that blue facets form a~horizontal plane
	and the~boundary of red facets is vertical:
}
	\label{rel:red-cap-vs-plane}
	\foampict{2bubble at 1plane}[>] &=  \foampict{no bubble at 1plane} &
	\foampict{2bubble at 1plane}[<] &= -\foampict{no bubble at 1plane}
\\[2ex]
	\label{rel:stripe-vs-plane}
	\foampict{2tunel at 1plane}[>] &=  \foampict{2shells at 1plane}[>] &
	\foampict{2tunel at 1plane}[<] &= -\foampict{2shells at 1plane}[<]
\end{align}
\end{tikzset}%
Notice that in each case the~sign can be read from the~direction of the~normal
vector as explained in Remark~\ref{rmk:sign-from-normal-vector}.

We interpret the~above relations later as isotopies between two surfaces,
a~blue and a~red one. This will be a~key ingredient in the~proofs
of the~two facts listed below.
In what follows we write $\foam \foamequiv \foam'$ if foams $\foam$ and $\foam'$
differ only by a~sign and dualizing dots. For instance, $\foam\foamequiv\foam'$
when $\foam'$ is the~result of moving a~dot on the~underlying surface of $\foam$.

\begin{prop}\label{prop:foams-red}
	Let $\foam$ and $\foam'$ be foams with isotopic underlying surfaces
	and same boundary. Then $\foam \foamequiv \foam'$ in $\set{Foam}$.
\end{prop}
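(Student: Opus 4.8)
The plan is to turn the statement into a question about red facets sitting over one fixed blue surface. Since a foam is only defined up to isotopy and $\foam_b,\foam'_b$ are isotopic rel their common boundary $\web_b$, I would first use the isotopy extension theorem to promote that isotopy to an ambient isotopy of $\Ball$ fixing $\partial\Ball$; pushing $\foam$ along it produces an isotopic---hence equal in $\set{Foam}$---foam whose underlying surface is literally $\foam'_b$. So from now on $\foam_b=\foam'_b=:\Sigma$, both foams have the same boundary web $\web$ on $\partial\Ball$ (in particular the same red edges there, forcing their red facets to share a boundary on $\partial\Ball$), and the dots of $\foam$ and of $\foam'$ lie on $\Sigma$ with matching count on each component (this is part of ``isotopic underlying surfaces''). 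Because $\foamequiv$ is an equivalence relation that already absorbs an overall sign, it now suffices to fix once and for all a reference system of red facets $R_0$ over $\Sigma$ compatible with $\web$, with a standard placement of the dots, and to prove that \emph{every} foam with underlying surface $\Sigma$ and boundary $\web$ is $\foamequiv$ to the reference foam $\foam_0:=(\Sigma,R_0)$; applying this to $\foam$ and to $\foam'$ gives $\foam\foamequiv\foam_0\foamequiv\foam'$.

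To do this I would set up the \emph{shading / bicolored isotopy} formalism advertised in the introduction. Given a foam, complete its red facets to an oriented red surface $R$, possibly disconnected, transverse to $\Sigma$, by pushing a collar of each red facet across its bindings; the checkerboard coloring of $\R^3\setminus(\Sigma\cup R)$ then recovers the foam by discarding the red sheets whose orientation disagrees with the white-region rule. The crucial lemma to establish is the \textbf{bicolored isotopy argument}: if $R$ is changed by a generic path of red surfaces transverse to $\Sigma$---allowed to pass finitely many births and deaths of red spheres, creations and cancellations of red tubes, and tangencies with $\Sigma$---then the resulting foam changes only by a sign and by replacing some dots with dual dots. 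This is proved move by move. A tangency of $R$ with $\Sigma$ birthing or killing a binding circle, or a band of $R$ being slid over $\Sigma$, is precisely one of the detaching relations \eqref{rel:detach-cylinder}, \eqref{rel:detach-saddle} in one of its redrawn incarnations \eqref{rel:cup-vs-plane}, \eqref{rel:saddle-vs-plane}, \eqref{rel:cup-vs-disk}, \eqref{rel:red-cap-vs-plane}, \eqref{rel:stripe-vs-plane}, each contributing at most the sign prescribed by the normal-vector rule of Remark~\ref{rmk:sign-from-normal-vector}; a Morse modification of $R$ away from $\Sigma$ is a red sphere appearing or disappearing, handled by \eqref{rel:sphere-eval} at the cost of $-1$, or a red neck being cut, handled by the second relation in \eqref{rel:neck-cutting} at the cost of $-1$; and whenever a binding is swept across a dot one invokes the dual-dot identity of Exercise~\ref{rel:dual-dot}, which converts a dot on the positive facet at the binding into a dual dot on the negative facet with no correction terms. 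The normal-vector description of the signs is what lets all these local choices fit together coherently.

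With the lemma in hand, the remaining step is purely three-dimensional topology: connecting the completed red surface $R$ of an arbitrary foam over $\Sigma$ to the completed red surface $R_0$ of the reference foam by a path of the above type. Both are compact surfaces in $\Ball$ with the same boundary on $\partial\Ball$; after adding red tubes (stabilization) and discarding spherical components, any two such surfaces become ambiently isotopic rel boundary, and a generic such isotopy can be kept transverse to $\Sigma$ except at the controlled tangencies of the previous paragraph---while the stabilizations, destabilizations, and sphere/handle births are exactly the surgeries made available by \eqref{rel:sphere-eval} and \eqref{rel:neck-cutting}. Carrying the dots along and moving them into standard position---possibly across bindings, using the dual-dot relation---completes the reduction to $\foam_0$ and hence the proof.

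The main obstacle is the second paragraph: producing the complete list of codimension-one degenerations of a transverse pair consisting of a dotted blue surface and an oriented red surface---including the mixed strata where a tangency with $\Sigma$ coincides with a red Morse event, or occurs along a dotted region---and checking that each is accounted for, with exactly the right sign and dot bookkeeping, by the relations of Definition~\ref{def:foams} and their consequences \eqref{rel:cup-vs-plane}--\eqref{rel:stripe-vs-plane} together with Exercise~\ref{rel:dual-dot}. Once Remark~\ref{rmk:sign-from-normal-vector} is used systematically to pin down the signs, so that the only ambiguity left is genuinely the one built into $\foamequiv$, the global topological argument of the third paragraph is routine.
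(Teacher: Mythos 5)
Your proof is correct and shares its essential ingredients with the paper's---shadings, the bicolored isotopy catalogue, neck cutting, and red sphere evaluation---but it organizes them differently. The paper first extends both foams to shadings, isolates the set where the red completions differ inside a ball $\mathcal O$ in the interior of $\Ball$, and invokes Lemma~\ref{lem:rb-isotopy} to isotope the blue surface \emph{out of} $\mathcal O$; this reduces the comparison to two purely red foams inside $\mathcal O$, which are then massaged into disks and spheres by \eqref{rel:neck-cutting} and \eqref{rel:sphere-eval}. Isotopy events and surgery events are thus treated in two separate passes, and the surgery pass takes place in the simpler all-red setting where no bindings or dots are in play. You instead fix the blue surface $\Sigma$ and drag the red completion through it along a single generic path, which forces you to prove an ``extended bicolored isotopy argument'' that simultaneously controls tangencies of $R$ with $\Sigma$, bindings sweeping across dots, and Morse modifications of $R$ away from $\Sigma$; this is a strictly stronger statement than Lemma~\ref{lem:rb-isotopy}, which covers only the isotopy events, and you correctly identify its verification as the main burden. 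Both routes are valid. The paper's is more economical precisely because, once the blue surface has been pushed out of the way, the remaining degenerations come from only one surface; your route is more direct---no auxiliary ball, no repositioning of the blue surface---at the cost of a fuller classification of codimension-one strata of the transverse pair. One small caution for your third paragraph: the statement that any two compact red surfaces with the same boundary in $\Ball$ become \emph{ambiently isotopic} after stabilization is slightly stronger than what you need or can easily quote; the cleaner formulation, matching what the relations supply, is that they are connected by a generic one-parameter family with finitely many births, deaths, and saddle events, which follows from irreducibility of $\Ball$ and a standard compression argument and is exactly what \eqref{rel:sphere-eval} and \eqref{rel:neck-cutting} let you absorb.
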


We prove the~above proposition in the~following section. An~important
consequence of it is the~uniqueness (up to a~sign) of a~foam $\cupfoam(\web)$,
the~underlying surface of which is a~collection of disjoint disks
bounded by $\web_b$. We call it the~\emph{cup foam} associated to
$\web$. Then for any family $X$ of blue loops in $\web_b$ we denote
by $\cupfoam(\web, X)$ the~cup foam with a~dot placed on every blue
disk that bounds a~curve from $X$. These foams constitute a~linear basis
of $\set{Foam}(\web)$ as shown in Section~\ref{sec:cup basis}.

\begin{thm}\label{thm:basis-for-Foam(w)}
	Choose a~closed web $\web$.
	The~set $\{ \cupfoam(\web,X)\ |\ X\subset BL(\omega) \}$
	is a~linear basis of $\set{Foam}(\web)$. In particular,
	$\set{Foam}(\web)$ is a~free graded module of rank
	$(q+q^{-1})^{\ell(\web)}\rics$.
\end{thm}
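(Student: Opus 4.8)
The plan is to prove that the cup foams span $\set{Foam}(\web)$, then that they are linearly independent, and finally to read off the graded rank; throughout, the essential input is the bicolored isotopy machinery behind Proposition~\ref{prop:foams-red}.

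\emph{Reduction and spanning.} Since $\web$ is closed, its underlying tangle $\web_b$ is a disjoint union of $\ell(\web)$ circles, so there is an invertible foam $W\colon\web\to\web_0$ with underlying surface the product $\web_b\times[0,1]$, where $\web_0$ is the purely blue web consisting of these $\ell(\web)$ circles (invertibility of product foams being one of the consequences of Proposition~\ref{prop:foams-red} recorded above). Gluing with $W$ is an isomorphism $\set{Foam}(\web)\cong\set{Foam}(\web_0)$ carrying $\cupfoam(\web,X)$ to $\pm\,\cupfoam(\web_0,X)$, so it suffices to treat $\web=\web_0$. Now let $S$ be a foam bounded by $\web_0$. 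By Proposition~\ref{prop:foams-red} its class in $\set{Foam}(\web_0)$ is determined, up to a sign and up to replacing some dots by dual dots, by its underlying surface with its dots; equivalently, every red facet of $S$ is bounded entirely by blue bindings and can be stripped off at the cost of a scalar by the red detaching relations \eqref{rel:detach-cylinder}--\eqref{rel:detach-saddle}. We are thus reduced to a dotted surface $\Sigma$ in $\Ball$ bounded by the $\ell(\web_0)$ circles $\web_{0,b}$, and this is the standard Bar--Natan computation: choosing a system of simple closed curves that cuts $\Sigma$ into one disk per boundary circle together with some closed spheres, iterated application of the blue neck cutting relation \eqref{rel:neck-cutting} rewrites $S$ as a $\scalars$-linear combination of such configurations, with dots on the pieces. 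Evaluating the spheres with \eqref{rel:sphere-eval}, reducing the dots on each disk to $0$ or $1$ with \eqref{rel:dots}, and rewriting any dual dot as $h$ times the corresponding plain disk minus the singly-dotted disk (by the definition of the dual dot), we obtain a linear combination of the $\cupfoam(\web_0,X)$, $X\subseteq BL(\web_0)$. Hence the cup foams span.

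\emph{Linear independence and rank.} Gluing $\Ball$ to a mirror copy of itself along $\S^2=\partial\Ball$ gives $\S^3$, and a foam $F\in\set{Foam}(\web_0)$ together with a foam $G\in\set{Foam}(-\web_0)$ in the mirror ball glue along the equator to a closed foam $F\cup G\subset\S^3$; its evaluation (Theorem~\ref{thm:foam-evaluation}) defines a bilinear pairing $\langle-,-\rangle\colon\set{Foam}(\web_0)\otimes\set{Foam}(-\web_0)\to\scalars$. On cup foams this is immediate: because $\web_0$ carries no red edges, $\cupfoam(\web_0,X)\cup\cupfoam(-\web_0,Y)$ is a disjoint union of $\ell(\web_0)$ blue spheres, the one over a loop $c$ carrying a dot for each of $X$ and $Y$ that contains $c$, so by \eqref{rel:sphere-eval} and \eqref{rel:dots}
\[
	\bigl\langle\cupfoam(\web_0,X),\ \cupfoam(-\web_0,Y)\bigr\rangle
	=\pm\!\!\prod_{c\in BL(\web_0)}\mu_c,\qquad
	\mu_c=\begin{cases}
		0 & c\notin X\cup Y,\\
		1 & c\in(X\setminus Y)\cup(Y\setminus X),\\
		h & c\in X\cap Y.
	\end{cases}
\]
Thus the Gram matrix of the cup foams is, up to an overall sign, the $\ell(\web_0)$-fold Kronecker power of $\left(\begin{smallmatrix}0&1\\1&h\end{smallmatrix}\right)$, which has determinant $-1$ and is therefore invertible over $\scalars$. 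Consequently a relation $\sum_X c_X\,\cupfoam(\web_0,X)=0$, paired against $\cupfoam(-\web_0,Y)$ for every $Y$, forces all $c_X=0$. Together with the spanning statement this shows the cup foams form a basis, so $\set{Foam}(\web)$ is free; and since $(\cupfoam(\web,X))_b$ is a union of $\ell(\web)$ disks carrying $|X|$ dots, its degree is $-\ell(\web)+2|X|$, whence the graded rank is $\sum_{X\subseteq BL(\web)}q^{2|X|-\ell(\web)}=(q+q^{-1})^{\ell(\web)}$.

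\emph{Main difficulty.} The delicate part is the reduction step: one must verify that the invertible product foam $W\colon\web\to\web_0$ exists (and that $\cupfoam(\web)$ is well defined in the first place), that interior red facets really can be removed cleanly by the detaching relations, and one must keep honest track of the signs, dots, and dual dots produced by \eqref{rel:neck-cutting}--\eqref{rel:dots}. All of this rests on the shading construction and the bicolored isotopy argument developed in Sections~\ref{sec:shadings}--\ref{sec:TQFT}; once that technology is in place, the pairing computation for linear independence is routine.
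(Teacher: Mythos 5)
Your proof is correct, and it takes a genuinely different route from the paper's, though both hinge on the same engine (Proposition~\ref{prop:foams-red}). The paper does \emph{not} reduce explicitly to the purely blue case; instead it introduces cap foams $\capfoam(\web,X)$ (vertical flips of cup foams, with each dot replaced by $-1$ times the dual dot) and proves Lemma~\ref{lem:cup-cap-foam-relations}: (i) $\capfoam(\web,Y)\,\cupfoam(\web,X)=\sgn(\web)\,\delta_{Y,X^c}$, and (ii) a cylinder-cutting identity expressing $\web\times[0,1]$ as $\sgn(\web)\sum_X \cupfoam(\web,X)\,\capfoam(\web,X^c)$. Linear independence is then immediate from the \emph{diagonal} pairing (i), and spanning is immediate from (ii) by precomposing any $S$ with the identity cylinder and noting the closed pieces evaluate to scalars. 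Your route reduces to $\web_0=\web_b$ via an explicit invertible product foam, then runs the classical Bar--Natan neck-cutting argument for spanning, and uses the mirror-gluing pairing into $Z$ for independence, computing the Gram matrix $\bigotimes_{c}\left(\begin{smallmatrix}0&1\\1&h\end{smallmatrix}\right)$ directly. The paper's route buys a cleaner (diagonal) orthogonality and produces the cylinder-cutting relation as a reusable tool; your route avoids defining cap foams and the attendant sign conventions, at the cost of a Gram-determinant computation (which is fine since that determinant is $\pm1$ over any $\scalars$). Two minor notes: since $\web_0$ is purely blue and standardly oriented, the $\pm$ in your Gram entries is actually always $+$, so the hedge is unnecessary; and you should make sure the appeal to Theorem~\ref{thm:foam-evaluation} is non-circular — it is, since that theorem's proof depends only on Proposition~\ref{prop:foams-red}, not on the basis statement.
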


\subsection{Decategorification}

Fix a~collection of red and blue oriented points $\bdry \subset \Disk$.
A~\emph{foam with corners in $\bdry$} is a~foam $\foam$ in $\DxI$ with
$\foam \cap \partial\Disk = \bdry\times [0,1]$. We gather them into a~category
$\cat{Foam}(\bdry)$, in which
\begin{itemize}
	\item objects are webs bounded by $\bdry$ with no relation imposed,
	\item morphisms from $\web_0$ to $\web_1$ are generated by foams with
	      corners in $\bdry$, with $\web_1$ at the~top and $-\web_0$ at
			the~bottom disk of $\DxI$, modulo the~relations
			\eqref{rel:sphere-eval}--\eqref{rel:detach-saddle}, and
	\item the~composition is given by stacking foams, one on top of the~other.
\end{itemize}
We further enhance it to a~graded additive category by introducing formal
direct sums and formal degree shifts, so that objects are of the~form
$\web_1\{d_1\} \oplus \dots \oplus \web_r\{d_r\}$, and redefining the~degree
of a~foam $\foam\colon \web_0\{a\} \to \web_1\{b\}$ as
\begin{equation}
	\deg(\foam) := (b-a)
			- \chi(\foam_b) + 2\mathit{dots}(\foam)
			+ \frac{\#\bdry_b}{2},
\end{equation}
where, as before, $\chi(\foam_b)$ is the~Euler characteristic of the~underlying
surface of $\foam$ and $\mathit{dots}(\foam)$ counts dots on $S$, whereas
$\#\bdry_b$ is the~number of blue points in $\bdry$.
The~reason for the~last term is to make the~identity foam a~morphism of degree
zero; it also makes the~degree additive under the~composition of foams.
Furthermore, reinterpreting foams with corners as foams in $\Ball^3$ leads
to an~isomorphism of graded $\scalars$--modules
\begin{equation}\label{eq:Foam(a,b) as Foam(ab)}
	\Hom_{\cat{Foam}(\bdry)}(\web,\web')
		\cong
	\set{Foam}(-\web \cup \web')\big\{\tfrac{\#\bdry_b}{2}\big\}
\end{equation}
for any webs $\web$ and $\web'$ bounded by $\bdry$.

The~orientation reversing diffeomorphism of the~thickened
disk $(p,t) \mapsto (p,1-t)$ induces a~contravariant involutive
functor
\begin{equation}\label{eq:foam-reversion}
	\Hom_{\cat{Foam}(\bdry)}(\web,\web') \ni \foam
		\longmapsto
	\foam^! \in \Hom_{\cat{Foam}(\bdry)}(\web',\web)
\end{equation}
that flips a~foam vertically and reverses orientation of its facets.
We check directly that all the~defining relations
\eqref{rel:sphere-eval}--\eqref{rel:detach-saddle} are preserved.

Foams with corners categorify webs. Indeed, web relations are lifted to
isomorphisms:
\begin{align}
	\label{isom:circle-removal}
	&\begin{tikzpicture}[x=6em,baseline=(X.base)]
		\node (X) at (0,0) {$\vcenter{\hbox{\phantom X}}$};
		\node[anchor=east] (1circle) at (0,0) {$\webpict[x=2em,y=2em]{circle}$};
		\node[anchor=west] (qq) at (1,0) {$\emptyset\{-1\}\oplus\emptyset\{+1\}$};
		\draw[transform canvas={yshift=-2pt},<-] (1circle) -- (qq);
		\draw[transform canvas={yshift= 2pt},->] (1circle) -- (qq);
		\node[anchor=south] at (0.5, 2pt) {$\scriptstyle\left[\substack{
			\fnt{foam-1cap} \\[1ex]
			\fnt{foam-1cap*}\raisebox{-0.5ex}{$\,-h$}\fnt{foam-1cap}
		}\right]$};
		\node[anchor=north] at (0.5,-2pt) {$\scriptstyle\left[
			\fnt{foam-1cup*} \quad \fnt{foam-1cup}
		\right]$};
	\end{tikzpicture}
&&\hskip 0.4em\begin{tikzpicture}[x=5em,baseline=(X.base)]
		\node (X) at (0,0) {$\vcenter{\hbox{\phantom X}}$};
		\node[anchor=east] (2circle) at (0,0) {$\webpict[x=2em,y=2em]{circle}[V2]$};
		\node[anchor=west] (q) at (1,0) {$\emptyset$};
		\draw[transform canvas={yshift=-2pt},<-] (2circle) -- (q);
		\draw[transform canvas={yshift= 2pt},->] (2circle) -- (q);
		\node[anchor=south] at (0.5, 2pt)
			{$\mathllap-\scriptstyle\fnt{foam-2cap}$};
		\node[anchor=north] at (0.5,-2pt)
			{$\scriptstyle\fnt{foam-2cup}$};
	\end{tikzpicture}
\\
	\label{isom:detach-and-saddle}
	&\begin{tikzpicture}[x=6em,baseline=(X.base)]
		\node (X) at (0,0) {$\vcenter{\hbox{\phantom X}}$};
		\node[anchor=east] (Tshape) at (0,0) {$\webpict[x=2em,y=2em]{T-shape}$};
		\node[anchor=west] (lines)  at (1,0) {$\webpict[x=2em,y=2em]{line and cup}$};
		\draw[transform canvas={yshift=-2pt},<-] (Tshape) -- (lines);
		\draw[transform canvas={yshift= 2pt},->] (Tshape) -- (lines);
		\node[anchor=south] at (0.5, 2pt)
			{$\mathllap{\raisebox{-1ex}{$\pm$}}\scriptstyle\fnt{foam-detach-cup}$};
		\node[anchor=north] at (0.5,-2pt)
			{$\scriptstyle\fnt{foam-attach-cup}$};
	\end{tikzpicture}
&&\begin{tikzpicture}[x=5em,baseline=(X.base)]
		\node (X) at (0,0) {$\vcenter{\hbox{\phantom X}}$};
		\node[anchor=east] (H) at (0,0) {$\webpict[x=2em,y=2em]{hor arcs}$};
		\node[anchor=west] (V) at (1,0) {$\webpict[x=2em,y=2em]{vert arcs}$};
		\draw[transform canvas={yshift=-2pt},<-] (H) -- (V);
		\draw[transform canvas={yshift= 2pt},->] (H) -- (V);
		\node[anchor=south] at (0.5, 2pt)
			{$\mathllap{\raisebox{-0.75ex}{$-$}}\scriptstyle\fnt{foam-saddle-front}$};
		\node[anchor=north] at (0.5,-2pt)
			{$\scriptstyle\fnt{foam-saddle-side}$};
	\end{tikzpicture}
\end{align}
where the~sign in the~bottom left corner depends on the~orientation of
the~edges. Therefore, there is a~well-defined epimorphism
$\gamma\colon \set{Web}(\bdry) \to K_0(\cat{Foam}(\bdry)) \otimes_{\Zq} \scalars$
that takes a~web $\web$ to its class $[\web]$ in the~Grothendieck group.

\begin{thm}\label{thm:K0(foam)=Web}
	The~linear map
	$\gamma\colon \set{Web}(\bdry) \to K_0(\cat{Foam}(\bdry)) \otimes_{\Zq} \scalars$
	is an~isomorphism.
\end{thm}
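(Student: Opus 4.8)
The map $\gamma$ is already known to be a well-defined epimorphism, so the whole content of the theorem is injectivity, and the plan is to transport the non-degenerate web pairing \eqref{eq:web-pairing} through $\gamma$ to a graded-rank pairing on $K_0$. First I would record the structure of both sides. On the web side, combining the circle relations \eqref{rel:circle-evaluation} with Proposition~\ref{prop:webs-red} shows that $\set{Web}(\bdry)$ is free of finite rank with basis the reduced webs $\mathrm r(\web_M)$, one for each crossingless matching $M$ of the blue points $\bdry_b$; linear independence is the leading-$q$-coefficient argument from the proof of Lemma~\ref{lem:web-pairing-is-nondeg}, applied to a putative relation. On the foam side, the isomorphisms \eqref{isom:circle-removal}--\eqref{isom:detach-and-saddle} together with the invertibility of foams whose underlying surface is a product (a consequence of Proposition~\ref{prop:foams-red}) show that in $\cat{Foam}(\bdry)$ every object is isomorphic to a direct sum of degree shifts of the webs $\mathrm r(\web_M)$; since $[\web\{d\}]=q^d[\web]$ and a closed blue loop splits off a summand $\{-1\}\oplus\{+1\}$ by the local version of \eqref{isom:circle-removal}, one gets $[\web]=(q+q^{-1})^{\ell(\web)}[\mathrm r(\web_M)]$ for $M$ the matching underlying $\web_b$. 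Hence $K_0(\cat{Foam}(\bdry))\otimes_{\Zq}\scalars$ is generated over $\scalars$ by the classes $[\mathrm r(\web_M)]=\gamma(\mathrm r(\web_M))$, so $\gamma$ takes a spanning set to a spanning set and injectivity reduces to the $\scalars$-linear independence of these classes.

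Next I would introduce the pairing. Using the identification \eqref{eq:Foam(a,b) as Foam(ab)} of foams-with-corners with closed foams and the rank formula of Theorem~\ref{thm:basis-for-Foam(w)}, the graded $\scalars$-module $\Hom_{\cat{Foam}(\bdry)}(\web,\web')$ is free of finite graded rank $q^{\#\bdry_b/2}(q+q^{-1})^{\ell(-\web\cup\web')}$. Setting $\langle A,B\rangle:=q^{-\#\bdry_b/2}\operatorname{grk}\Hom_{\cat{Foam}(\bdry)}(A,B)$ then gives, by additivity of graded rank under direct sums and $\operatorname{grk}(M\{d\})=q^d\operatorname{grk}(M)$, a well-defined $\scalars$-bilinear form on $K_0(\cat{Foam}(\bdry))\otimes_{\Zq}\scalars$ (well-definedness on Grothendieck classes uses that $\Zq$, and hence $\scalars[q^{\pm1}]$ through unit leading coefficients, is cancellative for the differences that occur). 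By construction $\langle\gamma(\web),\gamma(\web')\rangle=(q+q^{-1})^{\ell(-\web\cup\web')}=(\web,\web')$, so $\gamma$ intertwines the web pairing with $\langle-,-\rangle$. Consequently, if $\sum_M c_M[\mathrm r(\web_M)]=0$ in $K_0\otimes\scalars$, pairing against every $[\mathrm r(\web_{M_0})]$ yields $(w,\mathrm r(\web_{M_0}))=0$ for all $M_0$, where $w:=\sum_M c_M\mathrm r(\web_M)$; non-degeneracy of the web pairing (Lemma~\ref{lem:web-pairing-is-nondeg}) forces $w=0$, and freeness of $\set{Web}(\bdry)\otimes\scalars$ then forces every $c_M=0$. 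Hence $\gamma$ is injective, and together with the known surjectivity it is an isomorphism.

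The one genuinely substantial input, everything else being bookkeeping, is the well-definedness and value of the graded-rank pairing on $K_0$: one needs the $\Hom$-spaces of $\cat{Foam}(\bdry)$ to be free graded modules of the stated finite rank, which is exactly where Theorem~\ref{thm:basis-for-Foam(w)} (the cup-foam basis, giving freeness of $\set{Foam}(\web)$) and the reinterpretation \eqref{eq:Foam(a,b) as Foam(ab)} enter. A secondary point to double-check is that the proof of Lemma~\ref{lem:web-pairing-is-nondeg} survives over an arbitrary coefficient ring $\scalars$; it does, since in the Gram matrix of the web pairing with respect to the reduced-web basis the diagonal entries $(q+q^{-1})^{\ell(-\web_M\cup\web_M)}$ strictly dominate the off-diagonal ones in $q$-degree and have leading coefficient $1$, so the relevant top-degree coefficient cannot be cancelled regardless of zero divisors in $\scalars$.
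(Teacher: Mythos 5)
Your proposal is correct and follows essentially the same route as the paper: reduce to injectivity, define a bilinear form on $K_0$ by graded rank of $\Hom$-spaces, identify it with the web pairing via Theorem~\ref{thm:basis-for-Foam(w)} and \eqref{eq:Foam(a,b) as Foam(ab)}, and invoke Lemma~\ref{lem:web-pairing-is-nondeg}. Your inclusion of the $q^{-\#\bdry_b/2}$ normalization is in fact slightly more careful than the paper's statement (which omits this unit factor when equating the two pairings), and the remarks about spanning by reduced webs and about the coefficient ring are harmless extra bookkeeping that doesn't change the argument.
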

\begin{proof}
	We have to show that $\gamma$ is injective. Consider
	a~bilinear form $\langle\blank,\blank\rangle$ on
	$K_0(\cat{Foam}(\bdry))$ defined for webs $\web$ and $\web'$ as
	\(
		\langle [\web], [\web'] \rangle
		:= \rk_q\Hom_{\cat{Foam}(\bdry)}(\web, \web')
	\).
	It is well-defined, because the~rank of the~morphism space
	depends only on the~images of webs in the~Grothendieck group.
	Theorem~\ref{thm:basis-for-Foam(w)} and the~isomorphism
	\eqref{eq:Foam(a,b) as Foam(ab)} imply together that
	$\langle [\web], [\web']\rangle = (\web, \web')$,
	where the~latter is the~nondegenerate pairing from
	\eqref{eq:web-pairing}. Hence, $\gamma(w) = 0$ forces
	$(w,\blank)=0$, so that $w$ must be zero.
\end{proof}

\subsection{Higher structures}

It is common to consider webs embedded in a~horizontal stripe $\RxI$
instead of a~disk. This is equivalent to picking two basepoints on $\partial
\Disk$, $\ast$ and $\ast'$, and placing them at the~left and right infinities
respectively. Such webs are morphisms of a~linear category $\cat{Web}$,
the~objects of which are finite collections of oriented red and blue points
on a~line, whereas the~composition is defined by stacking stripes vertically:

\begin{equation*}
	\def\webrect(#1,#2) -- (#3,#4);{%
		\fill[web/bg] (#1,#2) rectangle (#3,#4);
		\draw[web/bdry] (#1,#2) -- (#3,#2) (#3,#4) -- (#1,#4);
	}%
	\begin{tikzpicture}[baseline=(X.base)]
		\useasboundingbox (-0.5,-0.2) rectangle (2.5,1.7);
		\node (X) at (0, 0.75) {\phantom0};
		\webrect (-0.5,0) -- (2.2,1.5);
		\draw[V1,->-] (1.4,0.6) .. controls (1.7,1) .. (1.8,1.5);
		\draw[V1,->-] (1.4,0.6) .. controls (1,1.2) and (0,1.2) .. (0,0);
		\draw[V2,->-=0.7] (1.4,0) -- (1.4,0.6);
	\end{tikzpicture}
	\circ
	\begin{tikzpicture}[baseline=(X.base)]
		\useasboundingbox (-0.8,-0.2) rectangle (2.8,1.7);
		\node (X) at (0, 0.75) {\phantom0};
		\webrect (-0.5,0) -- (2.5,1.5);
		\draw[V1,->-] (0,0) .. controls (0.1,0.2) and (0.2,0.4) .. (0.5,0.6);
		\draw[V1,->-] (0,1.5) .. controls (0,1) and (0.2,0.8) .. (0.5,0.6);
		\draw[V1,->-=0.7] (1,0.6) .. controls (1.3,0.4) and (1.4,0.2) .. (1.4,0);
		\draw[V1,->-=0.7] (1,0.6) -- (1.4,0.9);
		\draw[V1,-<-=0.6] (1.4,0.9) .. controls (1.6,0.7) and (1.9,0.6) .. (2,0);
		\draw[V2,->-=0.8] (0.5,0.6) -- (1,0.6);
		\draw[V2,->-=0.7] (1.4,0.9) -- (1.4,1.5);
	\end{tikzpicture}
	:=
	\begin{tikzpicture}[baseline=(X.base)]
		\useasboundingbox (-0.8,0) rectangle (2.5,3);
		\node (X) at (0, 1.5) {\phantom0};
		\webrect (-0.5,0) -- (2.5,3);
		\begin{scope}
			\draw[V1,->-] (0,0) .. controls (0.1,0.2) and (0.2,0.4) .. (0.5,0.6);
			\draw[V1] (0,1.5) .. controls (0,1) and (0.2,0.8) .. (0.5,0.6);
			\draw[V1,->-=0.7] (1,0.6) .. controls (1.3,0.4) and (1.4,0.2) .. (1.4,0);
			\draw[V1,->-=0.7] (1,0.6) -- (1.4,0.9);
			\draw[V1,-<-=0.6] (1.4,0.9) .. controls (1.6,0.7) and (1.9,0.6) .. (2,0);
			\draw[V2,->-=0.8] (0.5,0.6) -- (1,0.6);
			\draw[V2,->] (1.4,0.9) -- (1.4,1.6);
		\end{scope}
		\begin{scope}[shift={(0,1.5)}]
			\draw[V1,->-] (1.4,0.6) .. controls (1.7,1) .. (1.8,1.5);
			\draw[V1,->-=0.9] (1.4,0.6) .. controls (1,1.2) and (0,1.2) .. (0,0);
			\draw[V2] (1.4,0) -- (1.4,0.6);
		\end{scope}
	\end{tikzpicture}
\end{equation*}
Formally, $\Hom_{\cat{Web}}(\bdry,\bdry') = \set{Web}(-\bdry \cup \bdry')$.
This category is closely related to representations of $\mathcal U_q(\LieGL)$
\cite{WebsHowe}: there is a~monoidal functor
$\mathcal V\colon \cat{Web} \to \cat{Rep}(\mathcal U_q(\LieGL))$ such that
\begin{itemize}
	\item a~blue positive (resp.\ negative) point is assigned the~fundamental
	representation $V$ (resp.\ its dual $V^\ast$) and a~red positive
	(resp.\ negative) point---the~determinant representation
	$\extpower^2V$ (resp.\ $\extpower^2V^\ast)$, whereas a~sequence
	of such points is assigned the~tensor product of the~corresponding
	representations,
	
	\item the~merge and split webs \eqref{eq:flow-cond} are assigned
	the~canonical inclusion and quotient maps between representations, and
	
	\item cups and caps represent coevaluation and evaluation maps.
\end{itemize}
The~relations between webs make the~above functor faithful.

Define the~\emph{weight} of a~point from $\bdry$ according to the~table below.
\begin{center}%
\def\point#1#2{\,\begin{centertikz}[V#2dot]
	\filldraw (0,0) circle[radius=\the\dimexpr#2pt/2+1.5pt\relax]
		node[right,text=black]{$\scriptstyle#1$}
		node[left]{$\scriptstyle\phantom#1$};
\end{centertikz}}%
\def\arraystretch{1.2}%
	\begin{tabular}{l|cccc}
	\hline
	point
		& $\point+1$
		& $\point-1$
		& $\point+2$
		& $\point-2$
	\\
	\hline
	weight
		& $+1$
		& $-1$
		& $+2$
		& $-2$
	\\
	\hline
\end{tabular}\end{center}
The~\emph{total weight} $w(\bdry)$ of $\bdry$ is the~sum of weights of its points.
A~quick analysis of the~local model for webs \eqref{eq:flow-cond}
reveals that webs exist only between objects of the~same weight. Hence,
the~category of webs decomposes into \emph{weight blocks} $\cat{Web}^k\ric$,
each spanned by objects of weight $k\in \Z$.
In particular, $\Hom_{\cat{Web}}(\emptyset, \bdry) \neq 0$ only when
$w(\bdry) = 0$; such collections are called \emph{balanced}.

In a~similar matter one collects the~foam categories $\cat{Foam}(\bdry)$
into a~bicategory $\cat{Foam}$, which also decomposes into blocks
$\cat{Foam}^k$ parametrized with $k\in\Z$.
Theorem~\ref{thm:K0(foam)=Web} can be then rephrased to say that
$\cat{Foam}^k$ categorifies $\cat{Web}^k$, i.e.\ 
the~category of webs is obtained by replacing morphism categories of
$\cat{Foam}$ with their Grothendieck groups.

\subsection{Blanchet evaluation formula}

We end this section recalling the~evaluation formula for closed
foams in a~3-ball $\Ball$ following \cite{Blanchet}.
It requires two 2-dimensional TQFTs, one for blue and
one for red facets.
Each is uniquely determined by the~(associative)
commutative Frobenius algebra assigned to a~circle. 
We choose the~algebras
\[
	A_b := \Bbbk[X]/(X^2 - hX - t)
	\quad\textrm{and}\quad
	A_r:=\Bbbk
\]
for blue and red circles respectively,
where $h,t \in \Bbbk$ are fixed parameters (the~standard choice is $h=t=0$).
The~comultiplications and counits are defined by the~formulas
\begin{align*}
	\Delta_b(1) &= 1\otimes X+X \otimes 1 - h 1\otimes 1, &
	\Delta_b(X) &= X\otimes X + t 1\otimes 1, &
	\Delta_r(1) &= -1\otimes 1,
\\
	\epsilon_b(1) &=  0, &
	\epsilon_b(X) &=  1, &
	\epsilon_r(1) &= -1.
\end{align*}
A~dot on a~blue surface is interpreted as the~multiplication with $X$.
Notice that $h-X$, which represents a~dual dot, satisfies
the~polynomial relation defining $A_b$, so that $\overline X := h-X$ extends
to a~conjugation compatible with multiplication. One checks directly that
$\overline{\Delta_b(a)} = -\Delta_b(\overline a)$ and $\epsilon_b(\overline a)
= -\epsilon_b(a)$ for any $a\in A_b$.

When $\Bbbk$ is graded with $h$ and $t$ homogeneous in degree 2 and 4
respectively, then we make $A_b$ a~graded algebra by setting $\deg(X)=2$;
comultiplication and counit increase and decrease the~degree by $2$
respectively.
Assigning now $A_b\{-1\}$ to a~blue circle produces a~graded TQFT:
$\deg(1) = -1$ and $\deg(X) = +1$, in which case both multiplication and
comultiplication are homogeneous in degree 1, matching the~degree of
a~saddle. Likewise for the~unit and counit.
The~other TQFT is upgraded by inheriting the~grading on $A_r$ from $\Bbbk$.

Assume that a~closed foam $\foam$ is obtained from a~blue surface $\foam_b$
and a~red one $\foam_r$ by identifying boundary circles $C^+_i, C^-_i
\subset \partial\foam_b$ with $C^0_i\subset \partial \foam_r$ for $1\leq i\leq m$,
such that $C^+_i$ and $C^-_i$ come from the~positive and negative facet
respectively. Let
\[
	\Blanchet_b(\foam_b)\in (A_b \otimes A_b)^{\otimes m}
\quad\textrm{and}\quad
	\Blanchet_r(\foam_r)\in (A_r )^{\otimes m}
\]
be the~elements assigned by the~two TQFTs to the~blue and red surface,
where the~first factor in $A_b\otimes A_b$ corresponds to $C_i^+$ and
the~second to $C_i^-$. The~evaluation assigns to $\foam$ the~value
\begin{equation}\label{eq:Blclfoam}
	\Blanchet(\foam) = \tr^{\otimes m}\left(  
		\pi^{\otimes m}(\Blanchet_b(\foam_b))
	\otimes
		\eta^{\otimes m}(\Blanchet_r(\foam_r))
	\right) \in \Bbbk,
\end{equation}
where $\pi\colon A_b\otimes A_b\to A_b$ sends $x \otimes y$ to
$x\overline y$ and $\eta\colon A_r\to A_b$
is the~inclusion of algebras;
the~trace map $\tr\colon A_b\otimes A_b\to \Bbbk$
is the~composition of the~multiplication with the~counit of $A_b$.

\begin{exam}\label{ex:dotted-sphere}
	Let $S$ be a~blue sphere with a~red disk inside and one dot, as shown below.
	It decomposes into three cups, two blue and a~red one, where one of the~blue
	cups carries a~dot:
	\begin{equation*}
		\foampict{sphere and membrane}[<1]
			\quad=\quad
		\foampict{1cup} \cup \foampict{2cup} \cup \foampict{1cup}[1]
	\end{equation*}
	The~orientation of the~binding determines that the~dotted cup is attached
	to the~negative boundary. Hence,
	\begin{equation*}
		\Blanchet_b(S_b) = 1 \otimes X,
			\qquad
		\Blanchet_r(S_r) = 1,
	\end{equation*}
	resulting in $\Blanchet(S) = \tr(1\otimes\overline X) = \epsilon_b(h-X) = -1$.
\end{exam}

The~relations \eqref{rel:cup-vs-disk} and \eqref{rel:sphere-eval}
evaluate the~foam $S$ from the~example above to $-1$ as well.
This is not a~coincidence: the~defining relations were looked up
in the~kernel of $\Blanchet$. In fact, Proposition~\ref{prop:foams-red}
implies a~stronger statement. It was first proven in \cite{Blanchet}.

\begin{thm}[cp.\ \cite{Blanchet}]\label{thm:foam-evaluation}
	The~evaluation \eqref{eq:Blclfoam} descends to an~isomorphism
	$Z\colon \set{Foam}(\emptyset) \to \scalars$.
\end{thm}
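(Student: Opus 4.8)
The plan is to combine two facts: (i) the Blanchet evaluation $\Blanchet$ of \eqref{eq:Blclfoam} descends to a well-defined $\scalars$-linear map $\Blanchet\colon\set{Foam}(\emptyset)\to\scalars$, and (ii) $\set{Foam}(\emptyset)$ is spanned over $\scalars$ by the empty foam $[\emptyset]$. Granting both, since $\Blanchet$ sends the empty foam to the empty trace, namely $1\in\scalars$, the map $c\mapsto c\cdot[\emptyset]$ is a two-sided inverse of $\Blanchet$, which is therefore the isomorphism $Z$ of the statement.

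First I would establish well-definedness. The formula \eqref{eq:Blclfoam} is an isotopy invariant of closed foams: an ambient isotopy of $\foam$ preserves the diffeomorphism types of the underlying blue and red surfaces $\foam_b,\foam_r$ together with the binding pattern, and it can at most permute the binding circles — but then it permutes the corresponding tensor factors of $\Blanchet_b(\foam_b)$ and $\Blanchet_r(\foam_r)$ simultaneously, and $\tr^{\otimes m}$ is insensitive to a simultaneous permutation. It then remains to check that $\Blanchet$ annihilates each of the defining relations \eqref{rel:sphere-eval}--\eqref{rel:detach-saddle}. Since $\Blanchet$ is assembled from the two Frobenius TQFTs by gluing along bindings, it suffices to evaluate each relation inside its local ball, which is a finite computation in $A_b$ and $A_r$: the sphere evaluations are the counits $\epsilon_b(1)=0$, $\epsilon_b(X)=1$, $\epsilon_r(1)=-1$; the blue (resp.\ red) neck cutting encodes $\Delta_b(1)=1\otimes X+X\otimes1-h\,1\otimes1$ (resp.\ $\Delta_r(1)=-1\otimes1$); the dot reduction is $X^2=hX+t$ in $A_b$; and the dot moving relation is read off from $\pi(x\otimes y)=x\overline y$ together with $\overline X=h-X$, which records a dot on a positive facet as $X$ and on a negative facet as $h-X$. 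The red facet detachments \eqref{rel:detach-cylinder} and \eqref{rel:detach-saddle} follow from $\Delta_r(1)=-1\otimes1$, $\epsilon_r(1)=-1$ and the unital inclusion $\eta\colon A_r\to A_b$, with the sign in each case matching the normal-vector rule of Remark~\ref{rmk:sign-from-normal-vector}; Example~\ref{ex:dotted-sphere} is the prototypical bookkeeping. (As the surrounding discussion notes, this is by design: the relations were extracted from $\ker\Blanchet$.)

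Second I would identify $\set{Foam}(\emptyset)$. The route suggested by the text is to use Proposition~\ref{prop:foams-red}: a closed foam $\foam$ and the closed blue dotted surface $\foam_b$, regarded as a foam with no red facets, have the same (hence isotopic) underlying surface and the same (empty) boundary, so $\foam\foamequiv\foam_b$; thus, in $\set{Foam}(\emptyset)$, $\foam$ is a $\scalars$-combination of closed blue dotted surfaces. By the blue relations \eqref{rel:sphere-eval}--\eqref{rel:dots} — the $2$-dimensional TQFT relations for $A_b$ — and multiplicativity under disjoint unions, every such surface reduces to a scalar multiple of $[\emptyset]$. Hence $\set{Foam}(\emptyset)=\scalars\cdot[\emptyset]$, and since the already-constructed $\Blanchet$ satisfies $\Blanchet(c\cdot[\emptyset])=c$, the element $[\emptyset]$ is not torsion and $\Blanchet$ is an isomorphism. (If one prefers, Theorem~\ref{thm:basis-for-Foam(w)} applied to $\web=\emptyset$ gives the same conclusion at once, the only cup foam over the empty web being $\cupfoam(\emptyset)=[\emptyset]$ and the rank being $(q+q^{-1})^{\ell(\emptyset)}=1$.) The part I expect to require the most care is the sign bookkeeping for the red facet detachments \eqref{rel:detach-cylinder}--\eqref{rel:detach-saddle} under $\Blanchet$ — the blue relations being standard — together with making precise that verifying a relation inside its local ball forces it into $\ker\Blanchet$ globally.
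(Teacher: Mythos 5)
Your proposal follows the paper's own argument: check well-definedness of $\Blanchet$ on the relations (the paper handles the detaching relations by neck-cutting around the binding and reducing to the dotted-sphere computations of Example~\ref{ex:dotted-sphere}, and leaves \eqref{rel:detach-saddle} as an exercise), then use Proposition~\ref{prop:foams-red} to replace any closed foam by a dotted blue surface, which the blue relations collapse to a scalar multiple of the empty foam, yielding surjectivity of $c\mapsto c\cdot[\emptyset]$ with $\Blanchet$ as two-sided inverse. One caveat: the parenthetical alternative via Theorem~\ref{thm:basis-for-Foam(w)} would be circular in the paper's logical order, since the linear-independence half of that theorem already relies on $\set{Foam}(\emptyset)\cong\scalars$ (to conclude that the scalars produced by Lemma~\ref{lem:cup-cap-foam-relations} are nonzero and torsion-free), which is precisely the statement being proved; stick to the route through Proposition~\ref{prop:foams-red} and the blue TQFT relations.
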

\begin{proof}
	We first check that $Z$ is well-defined, i.e.\ it preserves the~relations
	\eqref{rel:sphere-eval}--\eqref{rel:detach-saddle}.
	Those involving facets of one color can be checked directly, whereas
	moving a~dot through an $i$-th binding corresponds to taking it from
	a~facet attached to $C_i^+$ (multiplication by $X$) and placing it on
	the~facet attached to $C_i^-$ (multiplication by $\overline X = h-X$).
	Hence, \eqref{rel:dots} is satisfied. We follow now
	Example~\ref{ex:dotted-sphere} to compute
	\begin{equation}\label{eq:sphere with a membrane}
		Z\left(\foampict{sphere and membrane}[>]\right) = 0,\qquad
		Z\left(\foampict{sphere and membrane}[>1]\right) = 1,\qquad
		Z\left(\foampict{sphere and membrane}[<1]\right) = -1,
	\end{equation}
	which immediately implies \eqref{rel:detach-cylinder}:
	using \eqref{rel:neck-cutting} cut both the~red cylinder and the~plane
	around the~binding to obtain a~sum of three foams,
	each consisting of a~red cup, a~blue plane, and a~blue sphere with a~red
	membrane inside. Two of these foams have an~additional dot, one on
	the~plane and the~other the~sphere; only the~latter term survives
	and the~sign comes from \eqref{eq:sphere with a membrane}.
	We leave \eqref{rel:detach-saddle} as an~exercise.

	Assume now that $Z(S) = 0$. By Proposition~\ref{prop:foams-red},
	$\foam$ coincides up to a~sign with an~entirely blue foam $\foam'$,
	which is the~blue surface $\foam_b$, perhaps with some dots replaced
	with dual dots.
	However, applying the~blue neck cutting relation \eqref{rel:neck-cutting}
	to any component of positive genus reduces $\foam'$ further
	to a~sum of collections of dotted spheres. These in turn can be
	completely evaluated with \eqref{rel:dots} and \eqref{rel:sphere-eval}.
	Hence, $\foam' = Z(\foam') = 0$, which shows that $Z$ is invertible.
\end{proof}

\section{Shadings and a~basis of foams}
\label{sec:shadings}

This part is the~backbone of the~paper. We introduce here shadings
of manifolds, use them to construct webs and foams, and prove
the~bicolored isotopy lemma: isotopic shadings encode equal
webs and foams (the~latter up to a~sign and type of dots). Using
this language we introduce then a~basis of foams that is especially
easy to visualize.

\subsection{Shadings and trivalent manifolds}

A~\emph{shading} of a~manifold $\mfld$ consists of two codimension
1 submanifolds, an~oriented $U_r$ and a~non-oriented $U_b$, that are
transverse to each other and to $\partial\mfld$, together with a~\emph{%
checkerboard coloring} of $\mfld$: a~choice of color, white or black,
for each connected component of the~complement of $U_r\cup U_b$.
We refer to $U_r$ and $U_b$ as \emph{red} and \emph{blue} respectively.
The~components of the~intersectoin $U_r \cap U_b$ are called \emph{bindings};
they decompose both $U_r$ and $U_b$ into \emph{facets}. Finally, we refer
to the~components of the~complement of $U_r\cup U_b$ in $\mfld$ as
\emph{regions}.

\begin{lem}\label{lem:shadibility}
	Assume $\mfld$ is simply connected and fix a~point $\ast\in\mfld\rics$.
	Then a~pair of codimension 1 submanifolds of $\mfld\rics$, that are
	transverse to each other and away from $\ast$, determines a~unique
	shading of\/ $\mfld\rics$ with the~region containing $\ast$ painted white.
\end{lem}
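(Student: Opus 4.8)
The plan is to build the checkerboard coloring out of two auxiliary $\Z/2$-valued functions, one attached to each submanifold, and to use simple connectivity of $M$ only to see that these functions are well defined. Note first that the pair of submanifolds (with $U_r$ carrying its orientation, $U_b$ carrying none) is precisely the data of a shading other than the coloring, so the statement reduces to: the checkerboard coloring with the region of $\ast$ painted white exists and is unique. To define it, for each region $R$ (a connected component of $M\setminus(U_r\cup U_b)$) I would choose a smooth path $\gamma$ from $\ast$ to a point of $R$; since $M$ is path connected and, after a generic perturbation rel endpoints, $\gamma$ can be made transverse to $U_r$ and to $U_b$ while missing the bindings $U_r\cap U_b$ entirely (these have codimension $2$, so a path avoids them generically), the path $\gamma$ meets $U_r\cup U_b$ in finitely many points, each lying on exactly one of the two submanifolds. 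Then I set $c_b(R):=\#(\gamma\cap U_b)\bmod 2$ and $c_r(R):=\#(\gamma\cap U_r)\bmod 2$.

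The crucial point is that $c_b$ and $c_r$ depend only on $R$; I argue this for $c_b$, the case of $c_r$ being identical. Two admissible paths ending at different points of the same $R$ differ by a path lying inside $R$, which meets neither submanifold, so the count does not depend on the endpoint chosen within $R$. Now let $\gamma_0,\gamma_1$ be admissible paths from $\ast$ to the same point; their concatenation is a loop based at $\ast$, so it bounds a smooth map $f\colon\Disk\to M$ because $M$ is simply connected, with $f|_{\partial\Disk}$ equal to this loop and hence already transverse to $U_b$ (both $\gamma_i$ are, and $\ast\notin U_b$). By the relative transversality theorem \cite{DiffTop} I may homotope $f$ keeping $f|_{\partial\Disk}$ fixed so that $f$ is transverse to $U_b$; then $f^{-1}(U_b)$ is a compact $1$-manifold neatly embedded in $\Disk$, so it has an even number of boundary points, and these boundary points are exactly the intersections of $\gamma_0$ and of $\gamma_1$ with $U_b$. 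Hence $\#(\gamma_0\cap U_b)\equiv\#(\gamma_1\cap U_b)\pmod 2$, so $c_b$ (likewise $c_r$) is a well-defined function on regions, and $c_b(R_\ast)=c_r(R_\ast)=0$ for the region $R_\ast$ containing $\ast$.

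Finally I would paint $R$ white when $c_b(R)+c_r(R)\equiv 0\pmod 2$ and black otherwise. A codimension-one wall separating two adjacent regions lies, at a generic point, on exactly one of $U_r$, $U_b$ (it is not a binding), so crossing it flips precisely one of $c_b$, $c_r$ and therefore flips the colour — this is the checkerboard condition — while $R_\ast$ is white by the computation above. Uniqueness is then immediate: for any checkerboard coloring with $R_\ast$ white, the colour of an arbitrary region $R$ is forced by walking along an admissible path from $\ast$ to $R$ and toggling the colour at each wall crossing, so any two such colorings agree. If one wishes to also pin down the orientation of $U_r$ intrinsically when $M$ is oriented, one may require its canonical normal field to point into the black regions; this is globally consistent, since each facet of $U_r$ has a white region on one side and a black region on the other, which forces the facet's normal bundle to be orientable (otherwise a tubular neighbourhood of the facet minus the facet would be connected, hence a single region carrying both colours). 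The one step I expect to require genuine care is the well-definedness argument — invoking relative transversality for $f$ along its already-transverse boundary and identifying $\partial f^{-1}(U_b)$ with the path crossings; everything else is bookkeeping with the $\Z/2$ counts.
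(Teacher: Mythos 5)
Your proof is correct and takes essentially the same approach as the paper: color a region according to the parity of the number of times a path from $\ast$ crosses $U_r\cup U_b$, using simple connectivity to show this parity is independent of the path. The paper combines the two counts into a single $d(\gamma)=\#(\gamma\cap U_r)+\#(\gamma\cap U_b)$ and leaves the well-definedness (bounding disk, relative transversality, even boundary of a compact $1$-manifold) and the uniqueness/checkerboard checks implicit, whereas you spell them out and keep the two parities $c_b$, $c_r$ separate; these are elaborations, not a different route.
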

\begin{proof}
	Given a~pair of transverse codimension 1 submanifolds $(U_r, U_b)$
	we construct a~desired shading as follows.
	Given $p \in \mfld \setminus (U_r\cup U_b)$ choose a~path $\gamma$
	from $\ast$ to $p$, transverse to both $U_r$ and $U_b$, and let 
	$d(\gamma) := \#(\gamma \cap U_r) + \#(\gamma \cap U_b)$
	count the~intersection points of $\gamma$ with both submanifolds.
	Color $p$ white or black depending on whether $d(\gamma)$ is even
	or odd. Because $\mfld$ is simply connected, the~parity of $d(\gamma)$ does
	not depend on the~choice of $\gamma$ and the~color of $p$ is well-defined.
\end{proof}

\begin{rmk}\label{rmk:standard-orientation}
	It follows from Lemma~\ref{lem:shadibility} that every codimension 1
	submanifold $U$ of a~simple connected manifold $\mfld$ admits a~\emph
	{standard orientation}: the~one induced from white regions, when $U$
	is considered as a~shading with $U_r = \emptyset$. When $\mfld$ is
	a~line and $U$ a~collection of blue points, then the~standard orientation
	on $U$ is the~alternating one. Likewise for the~case $\mfld = \mathbb S^1\rics$,
	assuming the~cardinality of $U$ is even (otherwise it does not extend
	to a~shading).
\end{rmk}

A~\emph{trivalent manifold} embedded in $\mfld$ is a~generalization of
webs and foams. It is a~collection of \emph{facets}, oriented codimension
1 submanifolds colored blue or red, with boundary components
attached transversely to $\partial\mfld$ or glued together along \emph{bindings}
in a~way, such that locally two blue facets merge into a~red one. In other
words, each point of a~trivalent manifold has a~neighborhood diffeomorphic to
either $\R^{n-1}$ or $Y\times\R^{n-2}$, where $Y$ is an~oriented \emph{merge}
or a~\emph{split} from \eqref{eq:flow-cond}.

Given a~shading $(U_r,U_b)$ of $\mfld$ we construct a~trivalent manifold
$\Gamma(U_r, U_b)$ by examining the~orientation on facets induced
from white regions:
\begin{itemize}
	\item blue facets inherit the~orientation,
	\item red facets are preserved (,,amplified'') if the~induced orientation
	      agrees with the~given one or annihilated otherwise.
\end{itemize}
An~example is presented in Figure~\ref{fig:shading->foam}.
\begin{figure}[ht]%
	\centering
\begin{tikzpicture}[x=2em,y=2em]
%
\begin{scope}[shift={(-4,0)}]
	\begin{scope}
		\clip (0,0) circle[radius=2];
		\clip (2.5, 0)
			arc[radius=5, start angle=270, end angle=180]
			-- (2.6, 1.0) -- (2.6,-2.3) -- (-0.8,-2.3)
			arc[radius=1.5, start angle=180, end angle=0];
		\fill[BCshade]
			(-3, 1.8) circle[radius=2.8]
			( 4,-1.2) circle[radius=3.5];
	\end{scope}
	\begin{scope}
		\clip (-0.2,1.8)
			arc[radius=2.8, start angle=0, end angle=-90]
			-- (-2,-2) -- (0.4,-2) -- (0.5,-1.2)
			arc[radius=3.5, start angle=180, end angle=135];
		\clip (-0.8,-2.3)
			arc[radius=1.5, start angle=180, end angle=60]
			-- (2.5, 0)
			arc[radius=5.0, start angle=270, end angle=180]
			-- (-2,-2);
		\fill[BCshade] (0,0) circle[radius=2];
	\end{scope}
	\draw[thin] ( 0,0) circle[radius=2];
	\clip (0,0) circle[radius=2];
	\draw[BCedge]
		(-3,1.8) circle[radius=2.8]
		(4,-1.2) circle[radius=3.5];
	\draw[BCedge+,->-]
		(2.5,5) ++(222.3:5) arc[radius=5, start angle=222.3, end angle=264.5];
	\draw[BCedge+,-<-=0.6]
		(0.7,-2.3) ++(50:1.5) arc[radius=1.5, start angle=50, end angle=164];	
	\foreach \x/\y in {-1.4/0.4, 0/-1.6, 1.3/-0.4, 0.5/1.3}
		\draw[line width=0.5pt, {-Stealth[length=4pt, width=5pt]}] (\x,\y) ++(-150:0.85ex)
			arc[radius=0.85ex, start angle=-150, end angle=150]
			-- ++(220:1pt);
\end{scope}

\draw[->] (-1,0) -- node[midway,above]{$\Gamma$} (1,0);
		
\begin{scope}[shift={(4,0)}]
	\draw[thin] (0,0) circle[radius=2];
	\clip (0,0) circle[radius=2];
	\draw[thin,gray,dashed]
		(2.5, 5.0) circle[radius=5]
		(0.7,-2.3) circle[radius=1.5];
	\draw[V2,->-=0.6]
		(2.5, 5.0) ++(235.25:5) arc[radius=5,start angle=235.25,end angle=248.5];
	\draw[V2,->-=0.6]
	   (0.7,-2.3) ++( 97:1.5)  arc[radius=1.5, start angle=97, end angle=50];
	\draw[V1,->-=0.5, -<-=0.75]
	   (-3,-1) arc[radius=2.8,start angle=270,end angle=380];
	\draw[V1,-<-=0.55,->-=0.75,-<-=0.92]
		(4, 2.7) arc[radius=3.5, start angle=90, end angle=200];
\end{scope}
\end{tikzpicture}%
	\caption{%
		The~construction of a~web from a~shading of a~disk. The~annihilated
		red edges are drawn as dashed lines on the~right diagram.
	}%
	\label{fig:shading->foam}%
\end{figure}
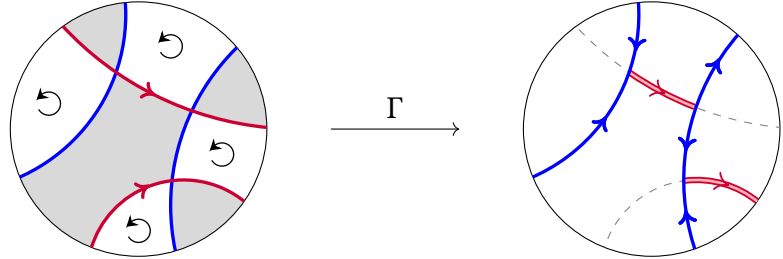%
In particular, $\Gamma(\emptyset, U_b)$ is $U_b$ with its standard
orientation as defined in Remark~\ref{rmk:standard-orientation}.
It appears that every trivalent manifold arises this way,
the~proof of which is presented below and visualized in
Figure~\ref{fig:foam->shading}.
Hence, shadings can be considered as \emph{completions} of trivalent
manifolds, because of which we shall refer to shadings of $\Disk$ and
$\Ball$ as \emph{completed webs} and \emph{completed foams} respectively.

\begin{lem}\label{lem:trivalent->shading}
	Choose a~trivalent manifold $V\subset \mfld\rics$, such that
	$\partial V = \Gamma(\tilde U_r, \tilde U_b)$ for some
	shading $(\tilde U_r, \tilde U_b)$ of\/ $\partial\mfld\rics$.
	Then there exists a~shading $(U_r,U_b)$ of\/ $\mfld\ric$ that restricts
	to $(\tilde U_r,\tilde U_b)$ on $\partial\mfld\rics$ and
	satisfies\/ $\Gamma(U_r,U_b) = V\ric$.
\end{lem}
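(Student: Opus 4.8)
The plan is to build the shading $(U_r,U_b)$ of $\mfld$ by hand, guided by Figure~\ref{fig:foam->shading}. The blue part is forced: set $U_b$ equal to the union of all blue facets of $V$, a codimension~$1$ submanifold of $\mfld$ transverse to $\partial\mfld$ with $U_b\cap\partial\mfld=\tilde U_b$ (the blue part of $\partial V=\Gamma(\tilde U_r,\tilde U_b)$). As in Lemma~\ref{lem:shadibility} we may assume $\mfld$ simply connected, so $U_b$ is two-sided; fix a bicollar $U_b\times[-1,1]\hookrightarrow\mfld$ meeting $\partial\mfld$ in a bicollar of $\tilde U_b$, and---after a small isotopy of $V$, which changes neither $\Gamma$ nor the conclusion---arrange that near $U_b$ the manifold $V$ consists of $U_b$ itself together with, over each binding $b$, a single ``fin'' $b\times[0,1)$ (or $b\times(-1,0]$) sitting in the bicollar on the side from which the red facet of $V$ at $b$ approaches.

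Next I would construct the red surface $U_r$. Its germ along the set $B$ of bindings is prescribed: inside the bicollar take $U_r$ to be $B\times\{0\}\times[-1,1]$, i.e.\ extend every fin of $V$ smoothly through $U_b$ to the opposite side, adding a ``ghost'' sheet there; this is transverse to $U_b$ and meets it exactly along $B$. It remains to extend this germ to a global codimension~$1$ submanifold by (i) keeping the red facets of $V$ outside the bicollar, and (ii) supplying the missing ghost facets so that their boundary on $\partial\mfld$ equals the \emph{annihilated} part of $\tilde U_r$, namely $\tilde U_r$ with the red facets of $\partial V$ removed (possibly also throwing in auxiliary red sheets, disjoint from $U_b$ and destined to be entirely annihilated, whose only purpose is to correct the colouring below). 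The closed $1$-manifold ``ghost boundary on the bicollar wall'' $\cup$ ``annihilated part of $\tilde U_r$'' is null-homologous in the relevant component of $\mfld$ cut along $U_b$---this is precisely where the hypothesis $\partial V=\Gamma(\tilde U_r,\tilde U_b)$ enters---hence it bounds, and a choice of such bounding surfaces, pushed to the correct side of $U_b$ and made transverse, completes $U_r$. Orient $U_r$ so that its restriction to each red facet of $V$ is the orientation that facet already carries.

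Finally I would apply Lemma~\ref{lem:shadibility} to $(U_r,U_b)$ to get a checkerboard colouring, choosing the one of the two for which the induced orientation of Remark~\ref{rmk:standard-orientation} on each blue facet agrees with the orientation $V$ assigns to it; this is possible because the blue orientations of a trivalent manifold reverse exactly across bindings---the defining pattern of standard orientations---and the auxiliary red sheets of step (ii) kill any residual global obstruction on disconnected pieces. Then one checks $\Gamma(U_r,U_b)=V$ facet by facet: the blue facets match by the choice of colouring, while for the red facets one uses the local observation that the standard orientation of $U_r$ \emph{reverses} across each binding, so that a globally oriented red facet of $U_r$ is amplified on exactly one side of every binding and annihilated on the other; consequently the amplified facets are exactly the red facets of $V$ and the ghost and auxiliary facets all disappear. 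The restriction to $\partial\mfld$ is $(\tilde U_r,\tilde U_b)$ by construction. The hard part is step (ii): promoting the locally prescribed germ of $U_r$ to a globally embedded, coherently oriented submanifold with the correct boundary, a relative bounding problem whose solvability rests on the compatibility encoded in the hypothesis on $\partial V$ together with the two-sidedness of $U_b$.
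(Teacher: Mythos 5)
Your proposal takes a genuinely different route from the paper's, and the route has a gap that you flag as ``the hard part'' but never close. The paper does not try to extend the red facets of $V$ through $U_b$ at all. Instead, for each region $R$ of the complement of $V$ it orients the pieces of $\partial R$ (facets of $V$ plus regions of $\partial \mfld$, the latter with orientation reversed over black parts of the given boundary shading), singles out those pieces whose orientation disagrees with the one induced from $R$, and pushes a red \emph{parallel copy} $U'_i$ of each such piece slightly into $R$, painting the resulting thin slab black. The copies $U'_i$ are already at hand --- they are just push-offs of facets you already have --- so no bounding problem and no global colouring problem ever arise: the construction is purely local, one region at a time.

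Your construction instead makes $U_r$ out of the red facets of $V$, their ``ghost'' continuations through $U_b$, and then, crucially, a bounding surface capping off the ghost boundary together with the annihilated part of $\tilde U_r$ inside $\mfld$ cut along $U_b$. The assertion that this $1$--manifold is null-homologous there, ``precisely because of the hypothesis $\partial V = \Gamma(\tilde U_r,\tilde U_b)$,'' is not justified and is in fact essentially equivalent to the statement you are trying to prove: if a shading with $\Gamma(U_r,U_b)=V$ exists, then of course its annihilated red part supplies the bounding surface, but you cannot assume this. The components of $\mfld$ cut along $U_b$ can have nontrivial $H_1$ (the interior of a blue torus in $\Ball$, say), so the bounding is not automatic and the hypothesis on $\partial V$, which only concerns $\partial\mfld$, does not transparently control it. Likewise, your final step --- choosing the colouring so that the standard orientation of each blue facet agrees with the one $V$ carries, with ``auxiliary red sheets'' absorbing any residual global obstruction --- is asserted, not argued; once $(U_r,U_b)$ and a basepoint are fixed, Lemma~\ref{lem:shadibility} determines the colouring and hence the orientations, and you would have to show that the auxiliary sheets can always be chosen to make them match. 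Both of these difficulties disappear in the paper's push-in construction, which I would encourage you to study: it manufactures the shading region by region without ever posing an existence problem for a new surface.
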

\begin{proof}
	Consider the~orientation of $\partial\mfld$ induced from $\mfld$
	and reverse it at all points painted black in the~given shading.
	Then the~boundary of any region $R\subset\mfld$ is a~union of facets
	of $V$ and regions in $\partial\mfld\rics$, such that oppositely oriented
	components meet only in two situations: when they are both contained in
	the~boundary (so that they meet at a~facet of $\partial V$)
	or both are blue facets of $V$ adjacent to a~red facet outside of $R$.
	Consider the~union of those components of $\partial R$, the~orientation
	of which does not match the~one induced from $R$.
	They constitute certain oriented $(n-1)$-dimensional submanifolds $U_1,\dots,U_k$.
	Taking a~red colored copy $U'_i$ of each $U_i$ push its interior inside $R$
	and paint the~newly created region black.
	Repeating this for each region produces a~desired shading.
\end{proof}

\begin{figure}[ht]%
	\centering
\begin{tikzpicture}[x=2em,y=2em]
\begin{scope}[shift={(-7,0)}]
	\draw[thin] (0,0) circle[radius=2];
	\fill (150:2) circle[radius=2pt]
	      (243:2) circle[radius=2pt]
	      (  5:2) circle[radius=2pt];
	\foreach \x/\y in {22/b,68/w,124/b,176/w,222/b,265/w,306/b,345/w}
		\node at(\x:2.2) {\scriptsize\y};
	\clip (0,0) circle[radius=2];
	\draw[V2,->-=0.6]
		(2.5, 5.0) ++(235.25:5) arc[radius=5,start angle=235.25,end angle=248.5];
	\draw[V2,->-=0.6]
	   (0.7,-2.3) ++( 97:1.5)  arc[radius=1.5, start angle=97, end angle=50];
	\draw[V1,->-=0.5, -<-=0.75]
	   (-3,-1) arc[radius=2.8,start angle=270,end angle=380];
	\draw[V1,-<-=0.55,->-=0.75,-<-=0.92]
		(4, 2.7) arc[radius=3.5, start angle=90, end angle=200];
\end{scope}
\begin{scope}
	\draw[thin] (0,0) circle[radius=2];
	\fill (150:2) circle[radius=2pt]
	      (243:2) circle[radius=2pt]
	      (  5:2) circle[radius=2pt];
	\foreach \angleA/\angleB in {
		 23/ 22,  72/ 73, 124/123, 176/177,
		223/222, 264/265, 308/307, 345/346
	} \draw[->] (\angleA:2)
		arc[radius=2, start angle=\angleA, end angle=\angleB];
	\clip (0,0) circle[radius=2];
	\draw[V2,->-=0.6]
		(2.5, 5.0) ++(235.25:5) arc[radius=5,start angle=235.25,end angle=248.5];
	\draw[V2,->-=0.6]
		(0.7,-2.3) ++( 97:1.5)  arc[radius=1.5, start angle=97, end angle=50];
	\draw[V1,->-=0.5, -<-=0.75]
	   (-3,-1) arc[radius=2.8,start angle=270,end angle=380];
	\draw[V1,-<-=0.55,->-=0.75,-<-=0.92]
		(4, 2.7) arc[radius=3.5, start angle=90, end angle=200];
\end{scope}
\begin{scope}[shift={(7,0)}]
	\begin{scope}
		\clip ( 0,0  ) circle[radius=2];
		\clip (-3,1.8) circle[radius=2.8];
		\fill[BCshade]
			(150:2) .. controls (150:1.5) and (-0.75, 1.7)
			.. (-0.60,1.6) .. controls (-0.45, 1.5) and (-0.71,1.17)
			.. (-0.35, 0.89) -- (0,0.89) -- (0,2) -- (-2,2) -- cycle;
	\end{scope}
	\begin{scope}
		\clip (0,0) circle[radius=2];
		\clip (4,-1.2) circle[radius=3.5];
		\fill[BCshade]
			(0.83,0.29) .. controls (1.38, 0.13) and (1.3,0.8)
			.. (1.5, 0.75) .. controls (1.7,0.7) and (5:1.6)
			.. (5:2) -- (2,2) -- (0.83,2) -- cycle;
		\fill[BCshade] (0.7,-2.3) circle[radius=1.5];
	\end{scope}
	\begin{scope}
		\clip (-0.2,1.8)
			arc[radius=2.8, start angle=0, end angle=-90]
			-- (-2,-2) -- (0.4,-2) -- (0.5,-1.2)
			arc[radius=3.5, start angle=180, end angle=135];
		\clip (-0.8,-2.3)
			arc[radius=1.5, start angle=180, end angle=60]
			-- (2.5,0)
			arc[radius=5.0, start angle=270, end angle=180]
			-- (-2,-2);
		\fill[BCshade]
			(243:2) .. controls (243:1.2) and (-1.8,-1.2)
			.. (-1,-0.8) .. controls (-0.2,-0.4) and (-0.3, 0.4)
			.. (0.1,0.2) .. controls (0.5,0) and (0,-0.87)
			.. (0.52,-0.81) -- (2,0) arc[radius=2,start angle=0,end angle=243];
	\end{scope}
	\draw[thin] (0,0) circle[radius=2];
	\clip (0,0) circle[radius=2];
	\draw[color=V2lineColor,->-=0.2,->-=0.88]
		(150:2) .. controls (150:1.5) and (-0.75, 1.7) ..
		(-0.60,1.6) .. controls (-0.45, 1.5) and (-0.71,1.17) .. (-0.35, 0.89)
		(0.83,0.29) .. controls (1.38, 0.13) and (1.3,0.8) ..
		(1.5, 0.75) .. controls (1.7,0.7) and (5:1.6) .. (5:2);
	\draw[color=V2lineColor,rounded corners=6pt,fill=white,->-=0.15]
		(318:1.85) -- (293:1.85) -- (305:1.15) -- cycle;
	\draw[color=V2lineColor,->-]
		(243:2) .. controls (243:1.2) and (-1.8,-1.2) ..
		(-1,-0.8) .. controls (-0.2,-0.4) and (-0.3, 0.4) ..
		(0.1,0.2) .. controls (0.5,0) and (0,-0.87) .. (0.52,-0.81);
	\draw[BCedge+,->-=0.6]
		(2.5, 5.0) ++(235.25:5) arc[radius=5,start angle=235.25,end angle=250.5];
	\draw[BCedge+,->-=0.6]
		(0.7,-2.3) ++( 97:1.5)  arc[radius=1.5, start angle=97, end angle=50];
	\draw[BCedge]
		(-3,1.8) circle[radius=2.8]
		(4,-1.2) circle[radius=3.5];
\end{scope}
\draw[->] (-4.5,0) -- (-2.5,0)
	node[midway, anchor=south] {\small orient}
	node[midway, anchor=north] {\small boundary};
\draw[->] ( 2.5,0) -- ( 4.5,0)
	node[midway, anchor=south] {\small push}
	node[midway, anchor=north] {\small \&\ shade};
\end{tikzpicture}
	\caption{%
		The~construction of a~shading from a~planar web that extends a~given shading
		of its boundary.
		The~boundary of the~disk is oriented in the~middle picture, whereas
		the~curves $U'_i$ are identified and pushed inwards the~corresponding regions
		in the~third picture.}%
	\label{fig:foam->shading}%
\end{figure}
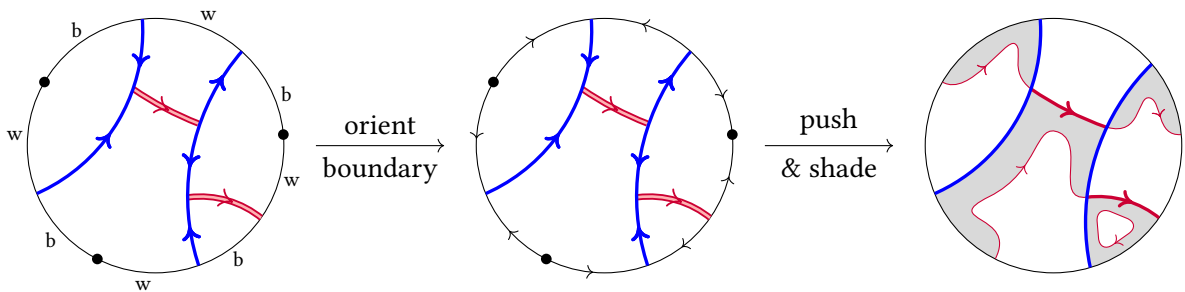

A~useful consequence of Lemma~\ref{lem:trivalent->shading} is that tangles and
surfaces can be extended to webs and foams with given boundary. Recall that
a~collection $\bdry \subset \partial\Disk$ of oriented red and blue points
is \emph{balanced} if it bounds a~web, which is equivalent to being of weight
zero.

\begin{prop}\label{prop:blue->binded}\ 
	\begin{enumerate}
		\item	
		Let\/ $\bdry \subset \partial\Disk$ be a~balanced collection of oriented
		red and blue points and $\tau$ a~tangle bounded by $\bdry_b$.
		Then there exists a~web $\web$ bounded by\/ $\bdry$ with $\web_b = \tau\ric$.
		
		\item
		Let $\web \subset \partial\Ball$ be a~web and\/ $W\ric$ a~surface
		bounded by $\web_b$. Then there is a~foam $\foam$ bounded by $\web$
		with $\foam_b = W\ric$.
	\end{enumerate}
\end{prop}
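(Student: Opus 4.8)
The plan is to deduce both claims from the dictionary between shadings and trivalent manifolds, via Lemmas~\ref{lem:shadibility} and~\ref{lem:trivalent->shading}. In each case I would \emph{complete} the prescribed blue part (the tangle $\tau$, resp.\ the surface $W$) to a shading by adjoining a suitable red part, arrange that the shading restricts on the boundary to a completed web for $\bdry$ (resp.\ for $\web$), and then output $\Gamma$ of that shading; by construction its blue part is the one we started with and its boundary is the prescribed web. The only real input is that we are free to prescribe the blue part of a shading and merely have to produce an appropriate red part.

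For part~(1): because $\bdry$ is balanced it bounds a web, and completing that web (Lemma~\ref{lem:trivalent->shading}) and restricting to $\partial\Disk$ exhibits $\bdry$ as $\Gamma(\tilde R,\bdry_b)$ for a shading $(\tilde R,\bdry_b)$ of $\partial\Disk$ whose blue part is exactly $\bdry_b$. Next I would choose an oriented red $1$-submanifold $R\subset\Disk$ with $\partial R=\tilde R$, transverse to $\tau$ and disjoint from $\ast$ — concretely a family of disjoint arcs and circles pairing up the red boundary points, after first enlarging $\tilde R$ by a few adjacent annihilated pairs if the oriented point set $\tilde R$ does not already have equally many positive and negative points (such an enlargement changes neither $\Gamma(\tilde R,\bdry_b)$ nor the boundary coloring). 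By Lemma~\ref{lem:shadibility} the transverse pair $(R,\tau)$ determines a unique shading of $\Disk$ with the region of $\ast$ white. The key point is that its checkerboard coloring restricts on $\partial\Disk$ to the coloring of $(\tilde R,\bdry_b)$: that coloring is governed by parities of intersection numbers along boundary arcs emanating from $\ast$, hence depends only on $R\cap\partial\Disk=\tilde R$ and $\tau\cap\partial\Disk=\bdry_b$, and replacing $\tau$ by any other tangle with the same boundary $\bdry_b$ leaves it unchanged. Thus $(R,\tau)$ restricts to $(\tilde R,\bdry_b)$, and $\web:=\Gamma(R,\tau)$ is a web with $\web_b=\tau$ and $\partial\web=\Gamma(\tilde R,\bdry_b)=\bdry$.

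Part~(2) is the same argument one dimension up, and is slightly simpler because $\partial\Ball$ carries no boundary data: applying Lemma~\ref{lem:trivalent->shading} to the closed web $\web\subset\partial\Ball$ gives a shading $(R_\web,\web_b)$ of $\partial\Ball$ with $\Gamma(R_\web,\web_b)=\web$. Here $R_\web$ is a disjoint union of oriented circles in $\partial\Ball$, so it bounds an oriented subsurface of $\partial\Ball$; I would push that subsurface slightly into $\Ball$ and perturb it to be transverse to $W$ and disjoint from $\ast$, obtaining an oriented red surface $R\subset\Ball$ with $\partial R=R_\web$. By Lemma~\ref{lem:shadibility} the pair $(R,W)$ determines a unique shading of $\Ball$, and exactly as above its coloring restricts on $\partial\Ball$ to that of $(R_\web,\web_b)$, since $R\cap\partial\Ball=R_\web$ and $W\cap\partial\Ball=\partial W=\web_b$; then $\foam:=\Gamma(R,W)$ is a foam with $\foam_b=W$ and $\partial\foam=\web$. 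The step that needs genuine care — and the main obstacle I anticipate — is the bookkeeping for the red boundary data: in part~(1) one must know that a balanced collection is realized by a completed web with the correct orientations and parity of red points, and in both parts one must check that the chosen red submanifold can be coherently oriented so that $\partial R$ matches the prescribed oriented red curves; granting the standing fact that $\Gamma$ of any shading is a trivalent manifold, everything else is routine transversality.
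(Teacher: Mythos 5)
Your overall strategy — complete the prescribed blue part to a shading, then apply $\Gamma$ — is the same as the paper's. In part~(1) the argument is sound, and it resolves the orientation–parity issue differently but validly: you observe that $\tilde R$ is the boundary of the red part of a completed web in $\Disk$, so it automatically has as many positive as negative points, whereas the paper derives the same fact from the two weight identities $b + 2r = 0$ and $b + r + r' = 0$. Your fallback of ``enlarging $\tilde R$ by adjacent annihilated pairs'' is therefore never needed; note that the parenthetical claim that such an enlargement leaves the boundary coloring unchanged is in fact false (inserting two adjacent red points creates a new arc of the opposite color between them), but since the fallback is unnecessary this does no harm.

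In part~(2) there is a genuine gap. You assert that $R_\web$, a disjoint union of oriented circles in $\partial\Ball \cong \S^2$, ``bounds an oriented subsurface of $\partial\Ball$''. This is false in general: take three nested circles in $\S^2$, all oriented counterclockwise. The dual tree of complementary regions has a forced proper $2$-coloring, and whichever color class you pick contains an annular region $A$ between two of the circles; neither orientation of $A$ induces the counterclockwise boundary orientation on both of its boundary circles simultaneously, so no embedded oriented subsurface with the prescribed oriented boundary exists. You do not argue that the particular $R_\web$ produced by Lemma~\ref{lem:trivalent->shading} is of the bounding type, so the step does not go through as written. The paper sidesteps the issue by bounding $R_\web$ with a family of disjoint oriented \emph{disks} in the interior of $\Ball$: any family of disjoint circles in $\S^2$ bounds a family of disjoint embedded disks in $\Ball$ (push them in to distinct depths according to nesting), and each disk, being a disk, can be given either orientation, so any prescribed boundary orientations are realizable. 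Replacing your pushed-in subsurface of $\partial\Ball$ with such a family of disks, taken transverse to $W$ and away from $\ast$, repairs the argument; the rest of your part~(2), the check that the boundary coloring restricts correctly, is fine.
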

\begin{proof}
	Extend $\bdry$ to a~shading $\widetilde\bdry = (\bdry_r \cup \bdry'_r, \bdry_b)$.
	Then $\widetilde\bdry$ has an~even number of points and the~orientation of
	points from $\bdry$ matches
	the~one induced from white regions. Let $b$, $r$, and $r'$ be the~sums of
	orientations of blue points in $\bdry$, red points in $\bdry$, and red
	points added to $\widetilde\bdry$ respectively. Then $b + 2r = 0$, because
	$\bdry$ is balanced, and $b + r + r' = 0$, because the~orientation of
	points in $\widetilde\bdry$ alternate. Subtracting the~two equalities
	reveals that $r - r' = 0$. It follows that there is an~oriented collection
	of disjoint intervals $\tau_r \subset \Disk$ bounded by $\widetilde\bdry_r$,
	the~orientation of which agree with the~points from $\bdry_r$ and disagree
	with those from $\bdry'_r$. Hence, $\web := \Gamma(\tau_r, \tau)$ is
	the~desired web.
	
	The~second statement is even easier to show. Extend the~web $\web$ to
	a~shading $\dbltan$. Then $\dbltan_r$ is a~collection of disjoint loops
	and each such collection bounds a~family $W_r$ of disjoint disks in $\Ball$.
	Therefore, $\foam := \Gamma(W_r, W)$ is the~desired foam.
\end{proof}

\subsection{Bicolored isotopies}

Choose an~isotopy $\Phi$ of $\mfld$ and a~subset $A$. The~set
\(
	\Tr_\Phi(A) = \{ (\Phi_t(a), t)\ |\ a\in A, t\in [0,1]\}
\)
is called the~\emph{trace of $A\subset\mfld$ under} $\Phi$ \cite{DiffTop}.
We say that a~pair of isotopies $(\Phi, \Psi)$ of $\mfld$ is an~\emph{isotopy
of a~shading} $(U_r, U_b)$ if $(\Tr_\Phi(U_r), \Tr_\Phi(U_b))$ is a~shading of
$\mfld\times[0,1]$ that coincides with $(U_r, U_b)$ at the~level $t=0$.
When $\mfld$ is a~disk, then a~generic pair of isotopies can be encoded by
a~sequence of \emph{bigon moves}
\begin{equation}\label{rel:isot-webs}
	\begin{centertikz}[x=5em,y=5em]
		\draw[BCedge ] (0.3,0.2) .. controls (0.7,0.4) and (0.7,0.6) .. (0.3,0.8);
		\draw[BCedge+] (0.7,0.2) .. controls (0.3,0.4) and (0.3,0.6) .. (0.7,0.8);
	\end{centertikz}
	\quad\leftrightarrow\quad
	\begin{centertikz}[x=5em,y=5em]
		\draw[BCedge ] (0.3,0.2) .. controls (0.5,0.5) .. (0.3,0.8);
		\draw[BCedge+] (0.7,0.2) .. controls (0.5,0.5) .. (0.7,0.8);
	\end{centertikz}
\end{equation}
whereas in case of a~3-ball two moves are necessary:
\begin{equation}\label{rel:isot-foams}
	\tikzset{x=1.5em,y=1.5em}%
	\foampict{red cup < blue plane}
		\,\leftrightarrow\,
	\foampict{red cup > blue plane}
\qquad\text{and}\qquad
	\foampict{red saddle < blue plane}
		\,\leftrightarrow\,
	\foampict{red saddle > blue plane}
\end{equation}
In each move a~shading of one side determines a~shading of the~other.
Hence, we obtain the~following characterization of isotopies of shadings
in these cases.%
\footnote{
	This can be extended to all manifolds by a~detailed analysis of
	singular levels of a~pair of isotopies.
}
\begin{lem}
	Two shadings of\/ $\Disk$ or $\Ball$ are isotopic if and only if their
	codimension 1 components are isotopic, possibly by different isotopies.
\end{lem}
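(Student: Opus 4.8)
The plan is to prove the two implications separately. The \emph{only if} direction is a direct unpacking of the definition: if $(\Phi,\Psi)$ is an isotopy of the shading $(U_r,U_b)$ onto $(U_r',U_b')$, then by definition the trace $\Tr_\Phi(U_r)$ is a neat submanifold of $\mfld\times[0,1]$ restricting to $U_r$ at $t=0$ and to $\Phi_1(U_r)=U_r'$ at $t=1$; identifying $\Tr_\Phi(U_r)$ with $U_r\times[0,1]$ shows that $\Phi$ is an isotopy of oriented submanifolds from $U_r$ to $U_r'$, and likewise $\Psi$ is one from $U_b$ to $U_b'$. Hence the codimension $1$ components are isotopic, by the (possibly distinct) isotopies $\Phi$ and $\Psi$. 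All the work is in the converse.

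For the \emph{if} direction, suppose $\Phi$ is an ambient isotopy of $\mfld$ with $\Phi_1(U_r)=U_r'$ as oriented submanifolds and $\Psi$ one with $\Psi_1(U_b)=U_b'$; after reparametrizing I may assume both are stationary for $t$ near $0$ and near $1$. The traces $\Tr_\Phi(U_r)$ and $\Tr_\Psi(U_b)$ are submanifolds of $\mfld\times[0,1]$ with the correct restrictions at $t=0,1$, and the one condition that need not hold is that they are transverse to each other there (transversality to $\partial(\mfld\times[0,1])$ is automatic: near $t\in\{0,1\}$ the traces are products, and over $\partial\mfld\times[0,1]$ they are the traces of the neat boundaries $\partial U_r$, $\partial U_b$). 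Conjugating by the level-preserving diffeomorphism $(x,t)\mapsto(\Phi_t(x),t)$ turns achieving this into the problem of making the image of $U_b\times[0,1]$ transverse to the fixed submanifold $U_r\times[0,1]$ under a level-preserving diffeomorphism of $\mfld\times[0,1]$ that is the identity near $t=0$ and equals $\Phi_1^{-1}\Psi_1$ near $t=1$. Over neighbourhoods of the two ends the relevant pair of slices is $(U_r,U_b)$ respectively $(U_r',U_b')$ (the latter carried by the diffeomorphism $\Phi_1$), and both are transverse because they are shadings; so I only need to perturb in the interior $t\in(0,1)$. A parametric (fibered) transversality argument then supplies a $C^\infty$-small perturbation of $\Phi_t$ supported in $t\in(0,1)$, still an ambient isotopy with unchanged germ at the ends, after which the two traces are everywhere transverse. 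For $\mfld=\Disk$ a generic such perturbation sweeps the family $t\mapsto(\Phi_t(U_r),\Psi_t(U_b))$ through only finitely many singular levels, each modeled on the bigon move \eqref{rel:isot-webs}; for $\mfld=\Ball$ the singular levels are modeled on the two moves of \eqref{rel:isot-foams}; and at every such move a shading of one side extends to a shading of the other, as recorded just before the statement.

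Having made the traces transverse, I would conclude with Lemma~\ref{lem:shadibility}: since $\mfld$ is $\Disk$ or $\Ball$, the cylinder $\mfld\times[0,1]$ is simply connected, so that lemma endows the transverse pair $(\Tr_\Phi(U_r),\Tr_\Psi(U_b))$---which stays away from the basepoint $(\ast,0)$---with a unique shading of $\mfld\times[0,1]$ normalized so that the region containing $(\ast,0)$ is white. By construction it restricts to the given shading $(U_r,U_b)$ at $t=0$ (same normalization, same induced orientation on $U_r$) and to $(U_r',U_b')$ at $t=1$, so the perturbed pair $(\Phi,\Psi)$ is an isotopy of shadings, as required. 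The step I expect to be the real obstacle is the parametric transversality: the perturbation must be performed among \emph{traces of ambient isotopies of $\mfld$ rel endpoints}, not as an unconstrained perturbation of $\Tr_\Phi(U_r)$, and this is exactly where the explicit normal forms \eqref{rel:isot-webs} and \eqref{rel:isot-foams} for the singular levels---hence also the hypothesis $\mfld\in\{\Disk,\Ball\}$, the extension to general $\mfld$ being the "detailed analysis of singular levels" alluded to in the footnote---are used.
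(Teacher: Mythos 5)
Your argument is correct, and it is in fact cleaner than what the paper has in mind: the paper's (only sketched) reasoning is to put the pair of isotopies in general position, decompose it into a finite sequence of bigon moves \eqref{rel:isot-webs} (respectively the two moves \eqref{rel:isot-foams}), and transfer the shading through each move; you instead observe that once the traces in $\mfld\times[0,1]$ are made transverse, Lemma~\ref{lem:shadibility} applied to the simply connected cylinder produces the required shading directly, with no need to classify singular levels at all. The ``only if'' direction, the bookkeeping about neatness, the basepoint normalization at the two ends, and the reduction (after conjugating by $\Phi$) to perturbing a single isotopy rel its endpoints are all handled correctly. Your middle paragraph about finitely many singular levels modelled on \eqref{rel:isot-webs} or \eqref{rel:isot-foams} is therefore not needed for the conclusion you actually draw; it is the paper's alternative route, and you could delete it without loss.

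One point to straighten out: you flag parametric transversality as ``the real obstacle'' and say that it is ``exactly where the explicit normal forms \eqref{rel:isot-webs} and \eqref{rel:isot-foams} are used'' and hence where $\mfld\in\{\Disk,\Ball\}$ enters. That misattributes the roles. Achieving transversality of $\Tr_\Psi(U_b)$ with the fixed $U_r\times[0,1]$ by a perturbation of the isotopy supported in $(0,1)$ (first over $\partial\mfld$, then extended inwards) is an instance of the ordinary Thom transversality theorem for families of embeddings; it works for any manifold $\mfld$ and never sees the local models. The local models \eqref{rel:isot-webs} and \eqref{rel:isot-foams} are a \emph{classification of slice-wise singular levels of an already transverse pair}, which is what the proof of the Bicolored Isotopy Lemma~\ref{lem:rb-isotopy} needs, not what the present lemma needs. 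In your route, the hypothesis $\mfld\in\{\Disk,\Ball\}$ is used only to guarantee that $\mfld\times[0,1]$ is simply connected so that Lemma~\ref{lem:shadibility} applies (and indeed your argument proves the statement for any simply connected $\mfld$). Also, a small slip: with your conjugation you are perturbing $\Psi_t$, not $\Phi_t$.
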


When a~basepoint $\ast\in\mfld$ is present, then one must be careful how it
behaves under the~isotopy. There is no problem when $\Psi$ and $\Phi$ coincide
at $\ast$ (and in this paper we always assume that both $\Psi$ and $\Phi$ fix
$\ast$). Otherwise, the~basepoint should stay at the~same region if possible.
However, when the~region disappears, then the~basepoint has to reappear in
a~white region. For instance, when $\ast$ lies in the~small bigon on the~left
hand side of \eqref{rel:isot-webs}, then it reappears between the~two strands
on the~right hand side. The~same can be done for both moves in
\eqref{rel:isot-foams}.

Recall from Section~\ref{sec:foams} that we write $\foam \foamequiv \foam'$
for foams $\foam$ and $\foam'$ if they agree up to a~sign and replacing some
dots with their duals.

\begin{lem}[Bicolored Isotopy]\label{lem:rb-isotopy}\ 
	\begin{enumerate}
		\item $\Gamma(\dbltan_r,\dbltan_b) = \Gamma(\dbltan'_r,\dbltan'_b)$
		in $\set{Web}$ if $(\dbltan_r,\dbltan_b)$ and $(\dbltan'_r,\dbltan'_b)$
		are isotopic shadings of\/ $\Disk\rics$.
		
		\item $\Gamma(\dblcob_r,\dblcob_b) \foamequiv \Gamma(\dblcob'_r,\dblcob'_b)$
		in $\set{Foam}$ if $(\dblcob_r,\dblcob_b)$ and $(\dblcob'_r,\dblcob_b')$
		are isotopic shadings of\/ $\Ball\rics$.
	\end{enumerate}
\end{lem}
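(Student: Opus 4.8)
The plan is to reduce both parts to the elementary moves that generate isotopies of shadings and then to check the assertion on each such move. As recalled above, a generic pair of isotopies realizing an isotopy of a shading of $\Disk$ is encoded by a finite sequence of bigon moves \eqref{rel:isot-webs}, while one realizing an isotopy of a shading of $\Ball$ is encoded by a finite sequence of the two moves \eqref{rel:isot-foams}; these are separated by isotopies that preserve the cell structure of the shading. On a cell-structure-preserving isotopy the construction $\Gamma$ yields literally isotopic webs, resp.\ foams, at the two ends, hence equal elements of $\set{Web}$, resp.\ $\set{Foam}$ --- except that a dot on the blue part may slide across a binding, which is dealt with below. Thus everything reduces to a single bigon move, and since the isotopies fix the basepoint no care about the basepoint is needed here.

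For part (1), fix the bigon move \eqref{rel:isot-webs} and run over the finitely many checkerboard colorings of the two local pictures --- the color of one region forces that of all the others. Under $\Gamma$ the blue strand always survives, while the red strand survives only along the arcs that are adjacent, on the prescribed side, to a white region. In the colorings in which the red strand is entirely annihilated, $\Gamma$ of the two sides are isotopic webs; in the others the red strand survives and the identity $\Gamma(\text{left}) = \Gamma(\text{right})$ is, after an isotopy, an instance of one of the local relations \eqref{rel:web-detach-and-saddle} or of the bigon relation \eqref{rel:web-bigons}. The short case list then gives part (1).

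For part (2) the mechanism is the same, with the two moves of \eqref{rel:isot-foams} in place of the web bigon. Applying $\Gamma$ and running over the checkerboard colorings, the blue facets always survive while the red cup, resp.\ red saddle, survives in part according to which side carries a white region; for each coloring the resulting identity is, up to the overall sign prescribed by the direction of the canonical normal vector on the red surface (Remark~\ref{rmk:sign-from-normal-vector}), one of the red-facet-detaching relations in horizontal form --- \eqref{rel:cup-vs-plane}, \eqref{rel:saddle-vs-plane}, \eqref{rel:cup-vs-disk} or \eqref{rel:stripe-vs-plane}, equivalently \eqref{rel:detach-cylinder}--\eqref{rel:detach-saddle}. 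Finally, a dot on a blue facet that is carried across a binding by the isotopy becomes, by the dot-moving relation \eqref{rel:dots}, a dual dot on the adjacent facet. Since both an overall sign and the replacement of a dot by its dual are exactly what $\foamequiv$ permits, we obtain $\Gamma(\dblcob_r,\dblcob_b) \foamequiv \Gamma(\dblcob'_r,\dblcob'_b)$, which is part (2).

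The per-move verifications are routine; the genuine content --- and the step I expect to be the main obstacle, though it is largely supplied by the material preceding the lemma --- is the reduction of an arbitrary isotopy of a shading to a sequence of the elementary moves above. This rests on a general-position argument ensuring that the trace of the shading in $\mfld\times[0,1]$ has only the standard codimension $1$ singular levels, together with the observation that the singular levels not listed in \eqref{rel:isot-webs}--\eqref{rel:isot-foams} --- those involving only one color, or the basepoint --- affect $\Gamma$ only through an isotopy. For $\Disk$ and $\Ball$ this list is short (one move for the disk, two for the ball), which is why the statement is confined to these two cases.
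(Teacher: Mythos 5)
Your proof follows the same strategy as the paper's: reduce to the elementary bigon move for shadings of the disk and the two moves of \eqref{rel:isot-foams} for the ball, then observe that applying $\Gamma$ to each side yields an instance of \eqref{rel:web-detach-and-saddle}/\eqref{rel:web-bigons} in the web case and of the detaching relations \eqref{rel:detach-cylinder}--\eqref{rel:detach-saddle} in the foam case, with the dot-moving relation absorbing the effect of a dot crossing a binding. You are somewhat more explicit than the paper about the case analysis over checkerboard colorings and about the dot-crossing singular levels, but the argument is the same.
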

\begin{proof}
	It is enough to consider the~case of elementary isotopies.
	When applied to each side of the~bigon move \eqref{rel:isot-webs},
	$\Gamma$ removes red edges in both pictures from the~same side of
	the~blue line. Hence, $\Gamma(\dbltan_r,\dbltan_b)$ and $\Gamma
	(\dbltan'_r,\dbltan'_b)$ are related by the~left relation in either
	\eqref{rel:web-detach-and-saddle} or \eqref{rel:web-bigons}.
	Likewise, the~moves \eqref{rel:isot-foams} correspond to
	the~detaching relations \eqref{rel:detach-cylinder} and
	\eqref{rel:detach-saddle}.
\end{proof}

The~above result has far reaching consequences when paired with
Lemma~\ref{lem:trivalent->shading}. The~statements about comparing webs
and foams with isotopic blue pieces follows, which in turn were used
in the~proof of Theorem~\ref{thm:foam-evaluation} to show bijectivity
of the~Blanchet evaluation map $Z$.

\begin{proof}[Proof of Proposition~\ref{prop:webs-red}]
	Let $\web$ and $\web'$ have isotopic underlying tangles
	and take the~trace of $\web_b$ under this isotopy as $\foam_b$;
	it is the~underlying surface of a~foam $\foam\colon\web \to \web'$
	due to Proposition~\ref{prop:blue->binded}.
	Extend the~foam to a~shading $(\widetilde\foam_r, \foam_b)$ of $\DxI$.
	When in generic position, it can be represented by a~finite
	sequence of level sets, such that in between any two consecutive levels
	$\widetilde\foam_r$ has either no critical points
	(so that the~level sets are related by the~Bicolored Isotopy Lemma) or
	a~unique Morse type critical point---a~cap, a~cup, or a~saddle---in which
	case the~corresponding webs coincide (if the~affected red edges are erased)
	or are identified by the~right relations in \eqref{rel:circle-evaluation}
	and \eqref{rel:web-detach-and-saddle} (if the~red edges survive).
	Notice that $\foam_b$ has no critical points.
	
	For the~second part, extend $\web$ to a~shading $(\widetilde\web_r, \web_b)$
	of $\Disk$ and isotope closed blue loops, so that they do not intersect
	$\web_r$. Applying $\Gamma$ results in a~new web $\web'$ that coincides
	with $\web$ as shown above. Removing blue circles from $\web'$ results in
	$\mathrm r(\web)$ and the~desired equality follows from
	\eqref{rel:circle-evaluation}.
\end{proof}

\begin{proof}[Proof of Proposition~\ref{prop:foams-red}]
	Let foams $\foam_1$ and $\foam_2$ have isotopic blue parts.
	Extend them to shadings $\dblcob_1$ and $\dblcob_2$ respectively
	and pick a~ball $\mathcal O$ in the~interior of $\Ball$, outside of which
	the~red facets of the~shadings coincide. Using Lemma~\ref{lem:rb-isotopy}
	isotope blue facets away from $\mathcal O$ (this may dualize dots), reducing
	the~problem to showing equality for foams with only red facets.
	In such case, use the~neck cutting relation \eqref{rel:neck-cutting}
	to reduce each foam to a~collection of disjoint disks and spheres;
	this may change the~sign of the~foam. The~thesis follows, because
	each red sphere evaluates to $-1$ and the~disks are uniquely
	determined up to an~isotopy by the boundary circles.
\end{proof}

It follows immediately from Proposition~\ref{prop:foams-red} that the~foam
used in the~proof of Proposition~\ref{prop:webs-red} is invertible. That
would be enough to prove the~latter if we knew that $\cat{Foam}$ categorifies
$\set{Web}$. However, the~proof of the~categorification result is based on
Theorem~\ref{thm:basis-for-Foam(w)}, which is proven only in the~next section.

\subsection{Cup foams}
\label{sec:cup basis}

We will now apply the~above results to show that cup foams, as defined in
Section~\ref{sec:foams}, constitute a~free basis of spaces of foams.
In particular, the~category of foams is non-degenerate.

Let $\web$ be a~closed web, so that $\web_b$ is a~collection of blue loops.
Orient them in a~standard way (see Remark~\ref{rmk:standard-orientation})
and pick a~foam $I_\web \in \Hom_{\cat{Foam}(\emptyset)}(\web_b, \web)$ with
$\web_b \times [0,1]$ as its underlying surface; the~existence of such a~foam
follows from Proposition~\ref{prop:blue->binded}.
According to Proposition~\ref{prop:foams-red}, there is a~sign
$\sgn(\web) = \pm1$ satisfying
\[
	\flip I_\web \, I_\web = \sgn(\web) (\web_b\times[0,1]),
\]
where $\flip I_\web \in \Hom_{\cat{Foam}}(\web, \web_b)$ is the~vertical flip
of $I_\web$ as defined in \eqref{eq:foam-reversion}. The~sign
can be also computed directly as
\[
	\sgn(\web)
		= \sgn(\web)(\dotted C_\web \, C_\web)
		= Z(\dotted C_\web \, \flip I_\web \, I_\web \, C_\web),
\]
where $C_\web \in \Hom_{\cat{Foam}}(\emptyset, \web_b)$ is a~collection of disks
bounded by $\web_b$ and $\dotted C_\web \in \Hom_{\cat{Foam}}(\web_b, \emptyset)$
the~same collection, except that each disk is decorated by a~dot. Hence,
$\sgn(\web)$ is a~well-defined integer, which we call the~\emph{sign of
the~web $\web$}.

\begin{lem}\label{lem:sign-of-web}
	The~sign $\sgn(\web)$ does not depend on the~choice of $I_\web$.
\end{lem}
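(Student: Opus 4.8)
The plan is to show that any two choices of identity foam $I_\web, I'_\web \in \Hom_{\cat{Foam}(\emptyset)}(\web_b,\web)$ with underlying surface $\web_b\times[0,1]$ differ only by a sign, and that this sign cancels when one forms the product $\flip I_\web\, I_\web$. Concretely, both $I_\web$ and $I'_\web$ have the same underlying surface $\web_b\times[0,1]$ and the same boundary $-\web_b \cup \web$, so Proposition~\ref{prop:foams-red} gives $I'_\web \foamequiv I_\web$; since the underlying surface is a disjoint union of annuli $\S^1\times[0,1]$ (genus zero, no closed components), there are no dots to dualize, and hence in fact $I'_\web = \varepsilon\, I_\web$ for a genuine sign $\varepsilon = \pm1$.

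With that reduction in hand, the key computation is short. The vertical flip defined in \eqref{eq:foam-reversion} is contravariant and $\scalars$-linear, so $\flip{(I'_\web)} = \varepsilon\,\flip I_\web$, and therefore
\[
	\flip{(I'_\web)}\, I'_\web = \varepsilon^2\, \flip I_\web\, I_\web = \flip I_\web\, I_\web = \sgn(\web)\,(\web_b\times[0,1]).
\]
Thus the defining relation $\flip{(I'_\web)}\,I'_\web = \sgn(\web)\,(\web_b\times[0,1])$ holds with the same scalar $\sgn(\web)$, which is what was to be shown. Equivalently, one can invoke the alternative formula $\sgn(\web) = Z(\dotted C_\web\,\flip I_\web\, I_\web\, C_\web)$: replacing $I_\web$ by $\varepsilon I_\web$ multiplies the argument of $Z$ by $\varepsilon^2 = 1$, leaving the Blanchet evaluation unchanged.

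I do not expect a genuine obstacle here; the only point needing care is the justification that $I'_\web \foamequiv I_\web$ upgrades to an honest equality up to sign (no dual-dot ambiguity). This is exactly because the underlying surface $\web_b\times[0,1]$ is a union of annuli, on which there are no dots in either foam, so the "$\foamequiv$" of Proposition~\ref{prop:foams-red} reduces to "$=$ up to a global sign" on the nose. One should also note that $\varepsilon$ could in principle depend on which pair $(I_\web, I'_\web)$ is chosen, but that is irrelevant: the squaring argument kills $\varepsilon$ regardless of its value, so $\sgn(\web)$ is the same for every choice.
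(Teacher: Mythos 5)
Your argument is correct and matches the paper's proof: any two choices $I_\web, I'_\web$ satisfy $I'_\web=\pm I_\web$ by Proposition~\ref{prop:foams-red}, and the vertical flip preserves this sign, so the sign cancels in the product $\flip I_\web\, I_\web$. You go slightly further than the paper in explaining why the relation $\foamequiv$ of Proposition~\ref{prop:foams-red} sharpens here to honest equality up to a global sign --- namely, the underlying surfaces are undotted collections of annuli, so there are no dots to dualize --- which is a worthwhile clarification of a point the paper leaves implicit.
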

\begin{proof}
	Let $\foam \in \Hom_{\cat{Foam}}(\web_b, \web)$ be another foam with
	$\foam_b = \web_b \times [0,1]$. Then $\foam = \pm I_\web$
	by Proposition~\ref{prop:foams-red} and $\foam^!\,\foam =
	\flip I_\web\, I_\web$, because the~same sign relates $\foam^!$
	with $\flip I_\web$.
\end{proof}

Let $BL(\web)$ be the~collection of blue loops in $\web$.
For each subset $X \subset BL(\web)$ we construct the~cup foam
$\cupfoam(\web, X)$ by attaching blue disks to the~input of $I_\web$
and placing a~dot on each disk bounded by a~loop from $X$. Notice that red
facets of $\cupfoam(\web, X)$ are above all dots and minima of blue facets.
Therefore, we say that $\cupfoam(\web, X)$ is a~\emph{red-over-blue cup foam
decorated by}  $X\ric$.
We construct likewise a~\emph{cap foam} $\capfoam(\web, X)
\in \Hom_{\cat{Foam}(\emptyset)}(\web, \emptyset)$ by reflecting
$\cupfoam(\web, X)$ vertically and replacing each dot with the~dual
one scaled by $-1$. For instance, we have the~following
correspondence between cup and cap foams bounded by two blue loops:
\[
	\tikzset{x=2em,y=2em}%
	\setlength\arraycolsep{3pt}%
	\begin{array}{rclcrcl}
		\foampict{1cup}\foampict{1cup} & \leftrightarrow &
		\foampict{1cap}\foampict{1cap}
		& \mbox{\hskip 2em} &
		\foampict{1cup}\foampict{1cup}[1] & \leftrightarrow &
		-\foampict{1cap}\foampict{1cap}[2]
		\\
		\foampict{1cup}[1]\foampict{1cup}[1] & \leftrightarrow &
		\foampict{1cap}[2]\foampict{1cap}[2]
		&&
		\foampict{1cup}[1]\foampict{1cup} & \leftrightarrow &
		-\foampict{1cap}[2]\foampict{1cap}
	\end{array}
\]
Let us now represent a~foam $\foam \in \Hom_{\cat{Foam}(\emptyset)}
(\web, \web')$ by a~vertical cylinder labeled $\foam\ric$, with $\web$ and $\web'$
at the~bottom and top disk respectively.
When no label is present, it is understood that $\web = \web'$ and the~cylinder
represents the~identity foam $\web\times[0,1]$.
We emphasize the~cases $\web=\emptyset$ and $\web'=\emptyset$ by drawing a~cup
or a~cap instead and, to simplify notation, we decorate it directly with
$X \subset BL(\web)$ when $S$ is a~cup or a~cap foam:
\[
	\cupfoam(\web, X) =
	\begin{tikzpicture}[x=1.5em,y=1.5em,baseline=-1.5em]
		\draw (0,0) ellipse[x radius=1, y radius=0.35]
	    	  (1,0) arc[x radius=1, y radius=1.5, start angle=360, end angle=180];
		\node (X) at (0,-1) {$X$};
		\node at (0, 0) {$\web$};
	\end{tikzpicture}
\qquad\text{and}\qquad
	\capfoam(\web, X) =
	\begin{tikzpicture}[x=1.5em,y=1.5em,baseline=0.5em]
		\draw (1,0)
	    	  	arc[x radius=1, y radius=1.5, start angle=  0, end angle=180]
			arc[x radius=1, y radius=0.35,start angle=180, end angle=360];
		\draw[dashed] (1,0)
	    	  	arc[x radius=1, y radius=0.35, start angle=0, end angle=180];
		\node (X) at (0, 1) {$X$};
		\node at (0, 0) {$\web$};
	\end{tikzpicture}
\]
Moreover, $X^c := BL(\omega) \setminus X$ stands for the~complement of a~subset
$X\subset BL(\omega)$.

\begin{lem}\label{lem:cup-cap-foam-relations}
	Foams satisfy the~following relations:
	\begin{align}
	\label{rel:bubble-evaluation}
		\begin{centertikz}[x=1.5em, y=1.5em]
			\draw (0,0) node {$\web$} ellipse[x radius=1, y radius=1.5]
				(1,0) arc[x radius=1, y radius=0.35, start angle=360, end angle=180];
			\draw[dashed] (1,0) arc[x radius=1, y radius=0.35, start angle=0, end angle=180];
			\node at (0,-1) {$X$};
			\node at (0, 1) {$Y$};
		\end{centertikz}
		\quad&=\ 
		\begin{cases}
			\sgn(\web),& \text{if\/ } Y = X^c,\\[1ex]
			0,&\text{otherwise},
		\end{cases}
	\\[2ex]
	\label{rel:cylinder-cutting}
		\begin{centertikz}[x=1.5em, y=1.5em]
			\draw (0,0) node {$\web$} (0,4) node {$\web$} ellipse[x radius=1, y radius=0.35]
				  (-1,4) -- (-1,0) (1,4) -- (1,0)
				  arc[x radius=1, y radius=0.35, start angle=360, end angle=180]
				  (0,2);
			\draw[dashed] (1,0) arc[x radius=1, y radius=0.35, start angle=0, end angle=180];
		\end{centertikz}
		\quad&= \sgn(\web) \sum_{X\subset BL(\web)}
		\begin{centertikz}[x=1.5em, y=1.5em]
			\draw (0,0) node {$\web$} (1,0)
				arc[x radius=1, y radius=0.35, start angle=360, end angle=180]
				arc[x radius=1, y radius=1.5, start angle=180, end angle=0];
			\draw[dashed] (1,0) arc[x radius=1, y radius=0.35, start angle=0, end angle=180];
			\node at (0,1) {$X^c$};
			\draw (0,4) node {$\web$} ellipse[x radius=1, y radius=0.35] (1,4)
				arc[x radius=1, y radius=1.5, start angle=360, end angle=180];
			\node at (0,3) {$X$};
		\end{centertikz}
	\end{align}
\end{lem}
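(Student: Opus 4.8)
The plan is to deduce both relations from three ingredients already in hand: Proposition~\ref{prop:foams-red} (foams with isotopic underlying surface and equal boundary agree up to sign and dualizing dots), the relation $\flip I_\web\,I_\web=\sgn(\web)\,(\web_b\times[0,1])$ that defines $\sgn(\web)$, and the evaluation of blue spheres provided by \eqref{rel:sphere-eval} and Exercise~\ref{rel:dual-dot}.

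For \eqref{rel:bubble-evaluation}, the foam on the left is the closed foam $\capfoam(\web,Y)\circ\cupfoam(\web,X)$. Write $\cupfoam(\web,X)=I_\web\circ C^X$, where $C^X$ is the family of blue disks bounded by $\web_b$ carrying a dot on each loop of $X$, and recall that $\capfoam(\web,Y)$ is the vertical reflection of $\cupfoam(\web,Y)$ with every dot turned into $-1$ times a dual dot. Then this composite equals $(-1)^{|Y|}$ times the foam obtained by gluing, along $\flip I_\web\circ I_\web$, a family of blue cap disks dual‑dotted over $Y$ to a family of blue cup disks dotted over $X$. Replacing $\flip I_\web\circ I_\web$ by $\sgn(\web)(\web_b\times[0,1])$ removes every red facet and leaves $(-1)^{|Y|}\sgn(\web)$ times a disjoint union of blue spheres, one over each loop $\ell\in BL(\web)$, carrying a dot if $\ell\in X$ and a dual dot if $\ell\in Y$. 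By \eqref{rel:sphere-eval} and Exercise~\ref{rel:dual-dot} such a sphere evaluates to $1$ if it bears exactly one dot, to $-1$ if it bears exactly one dual dot, and to $0$ if it is undecorated or bears a dot together with a dual dot. Hence the product over all loops vanishes unless $X$ and $Y$ partition $BL(\web)$, i.e.\ $Y=X^c$, in which case it equals $(-1)^{|X^c|}$; the total then collapses to $(-1)^{|X^c|}\sgn(\web)(-1)^{|X^c|}=\sgn(\web)$, as claimed.

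For \eqref{rel:cylinder-cutting}, observe first that $I_\web\circ\flip I_\web$ and $\web\times[0,1]$ have the same underlying surface $\web_b\times[0,1]$ and boundary, so by Proposition~\ref{prop:foams-red} they differ by a sign, and composing on the bottom with $\flip I_\web$ and using the defining relation for $\sgn(\web)$ pins that sign down: $\web\times[0,1]=\sgn(\web)\,I_\web\circ(\web_b\times[0,1])\circ\flip I_\web$. Now $\web_b\times[0,1]$ is a purely blue foam, a disjoint union of $\ell(\web)$ annuli, so unlike $\web\times[0,1]$ it has no red facets to obstruct neck cutting. Apply the dual‑dot form of the neck cutting relation from Exercise~\ref{rel:dual-dot} to each annulus: it rewrites the annulus over a loop $\ell$ as $(\text{blue cup with a dot})\circ(\text{blue cap})-(\text{blue cup})\circ(\text{blue cap with a dual dot})$, which is exactly $\sum_{X_\ell\subseteq\{\ell\}}\cupfoam(\ell,X_\ell)\circ\capfoam(\ell,\{\ell\}\setminus X_\ell)$. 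Taking the product of these decompositions over all loops gives $\web_b\times[0,1]=\sum_{X\subseteq BL(\web)}\cupfoam(\web_b,X)\circ\capfoam(\web_b,X^c)$, and reattaching $I_\web$ on top and $\flip I_\web$ on the bottom turns $I_\web\circ\cupfoam(\web_b,X)$ into $\cupfoam(\web,X)$ and $\capfoam(\web_b,X^c)\circ\flip I_\web$ into $\capfoam(\web,X^c)$, directly from the definitions; summing yields \eqref{rel:cylinder-cutting}.

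The routine part is short; the delicate point is the sign and dot bookkeeping. One must verify that the dual‑dot form of neck cutting produces precisely the $-1$‑scaled dual dots that are built into the definition of $\capfoam$ — this is exactly what makes the $h$‑terms of ordinary neck cutting cancel — and that reattaching $I_\web$ and $\flip I_\web$ recovers $\cupfoam(\web,\cdot)$ and $\capfoam(\web,\cdot)$ on the nose rather than sign‑ or dual‑dot‑twisted versions. It is also worth keeping the proof of \eqref{rel:cylinder-cutting} independent of Theorem~\ref{thm:basis-for-Foam(w)}, since that theorem will in turn be deduced from \eqref{rel:bubble-evaluation}; the route above uses only Proposition~\ref{prop:foams-red}, Exercise~\ref{rel:dual-dot}, and the definition of $\sgn(\web)$.
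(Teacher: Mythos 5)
Your proof is correct and follows essentially the same route as the paper: split off $\flip I_\web\, I_\web = \sgn(\web)\,(\web_b\times[0,1])$, evaluate the resulting blue spheres via \eqref{rel:sphere-eval} and Exercise~\ref{rel:dual-dot} for the first relation, and use the dual-dot form of neck cutting on the blue annuli for the second. One small slip: to pin down the sign in $I_\web\,\flip I_\web = \epsilon\,(\web\times[0,1])$ you should compose on the bottom with $I_\web$ (or on the top with $\flip I_\web$), since $\flip I_\web\colon \web\to\web_b$ cannot be precomposed with a foam whose domain is $\web$; the conclusion $\epsilon=\sgn(\web)$ is of course unaffected.
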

\begin{proof}
	From the~construction of cup and cap foams
	\[
		\tikzset{x=1.5em, y=1.5em}
		\begin{centertikz}
			\draw (0,0) node {$\web$} ellipse[x radius=1, y radius=1.5]
				(1,0) arc[x radius=1, y radius=0.35, start angle=360, end angle=180];
			\draw[dashed] (1,0) arc[x radius=1, y radius=0.35, start angle=0, end angle=180];
			\node at (0,-1) {$X$};
			\node at (0, 1) {$Y$};
		\end{centertikz}
		\quad=\quad
		\begin{centertikz}
			\draw (1,0) arc[x radius=1, y radius=0.35, start angle=360, end angle=180]
			      (1,2) arc[x radius=1, y radius=0.35, start angle=360, end angle=180]
			      (1,4) arc[x radius=1, y radius=0.35, start angle=360, end angle=180];
			\draw[dashed]
			      (1,0) arc[x radius=1, y radius=0.35, start angle=0, end angle=180]
			      (1,2) arc[x radius=1, y radius=0.35, start angle=0, end angle=180]
			      (1,4) arc[x radius=1, y radius=0.35, start angle=0, end angle=180];

			\node at (0,0) {$\web_b$};
			\node at (0,2) {$\web$};
			\node at (0,4) {$\web_b$};

			\draw (1,4) -- ( 1, 0) arc[x radius=1, y radius=1.5, start angle=360, end angle=180]
			            -- (-1, 4) arc[x radius=1, y radius=1.5, start angle=180, end angle=0]
			            -- cycle;
			            
			\node at (0,-1) {$X$};
			\node at (0, 5) {$Y$};
			\node at (0, 1) {$I_\web$};
			\node at (0, 3) {$\flip I_\web$};
		\end{centertikz}
		\quad=\sgn(\web)\,
		\begin{centertikz}
			\draw (0,0) node {$\web_b$} ellipse[x radius=1, y radius=1.5]
				(1,0) arc[x radius=1, y radius=0.35, start angle=360, end angle=180];
			\draw[dashed] (1,0) arc[x radius=1, y radius=0.35, start angle=0, end angle=180];
			\node at (0,-1) {$X$};
			\node at (0, 1) {$Y$};
		\end{centertikz}
	\]
	and the~right hand side is a~collection of spheres, each carrying at most one
	regular and one dual dot, scaled by $(-1)^{|Y|}$. Such a~sphere evaluates to
	1 or $-1$ when it carries either one regular or one dual dot respectively and
	vanishes otherwise (see Exercise~\ref{rel:dual-dot}). Hence,
	\eqref{rel:bubble-evaluation} follows.
	
	The~second relation follows from the~equality
	$\web\times[0,1] = \sgn(\web)\, I_\web\, \flip I_\web$
	and the~neck cutting relation from Exercise~\ref{rel:dual-dot}.
\end{proof}

We are ready to prove that cup foams form a~linear basis of foams.

\begin{proof}[Proof of Theorem~\ref{thm:basis-for-Foam(w)}]
	The~first relation of Lemma~\ref{lem:cup-cap-foam-relations}
	implies that cup foams are linearly independent. To show that
	they generate $\set{Foam}(\web) \cong \Hom_{\cat{Foam}(\emptyset)}
	(\emptyset, \web)$, use the~second relation to
	write a~foam $S$ bounded by $\web$ as a~sum
	\[
		\tikzset{x=1.5em, y=1.5em}
		\begin{centertikz}
			\draw (0,0) node{$\web$} ellipse[x radius=1, y radius=0.35]
				  (1,0) -- (1,-2)
			      arc[x radius=1, y radius=2.5, start angle=360, end angle=180] -- (-1,0)
			      (0,-2) node {$S$};
		\end{centertikz}
		\quad=
		\sgn(\web) \sum_{X\subset BL(\web)}
		\begin{centertikz}[x=1.5em, y=1.5em]
			\draw (0,0) node {$\web$}
				ellipse[x radius=1, y radius=1.5]
				 (1, 0) arc[x radius=1, y radius=0.35, start angle=360, end angle=180]
				 (0,-1) node {$S$};
			\draw[dashed] (1,0) arc[x radius=1, y radius=0.35, start angle=0, end angle=180];
			\node at (0,1) {$X^c$};
			\draw (0,4) node {$\web$} ellipse[x radius=1, y radius=0.35] (1,4)
				arc[x radius=1, y radius=1.5, start angle=360, end angle=180];
			\node at (0,3) {$X$};
		\end{centertikz}
	\]
	which is a~linear combination of cup foams, because closed foams evaluate
	to scalars. Finally,
	\[
		\deg(\cupfoam(\web,X)) = 2 |X| - \ell,
	\]
	as the~underlying surface of the~cup foam consists of $\ell$ disks decorated
	by $|X|$ dots, so that
	\[
		\rk_q \set{Foam}(\web) = \sum_X q^{2|X| - \ell}
			= \sum_{s=0}^\ell {\ell \choose s} q^{2s - \ell}
			= (q + q^{-1})^\ell
	\]
	as desired.
\end{proof}

\section{Equivalences of foam and cobordism categories}
\label{sec:equivalences}

In this section we prove Theorems~\ref{thm:equiv-of-bicats} and
\ref{thm:equiv-of-cats}, which state that foams and Bar-Natan cobordisms
constitute equivalent (bi)categories. We then relate the~formal complexes
$\KhBracket{T}$ and $\wKhBracket{T}$ associated with a~tangle $T\ric$.

\subsection{Embedding cobordisms into foams}
\label{sec:local BN-->Foam}


Fix a~balanced collection $\bdry \subset \partial\Disk$ away from a~fixed
basepoint $\ast\in\partial\Disk$ and write $\bdry_b$ for the~subset consisting
of all blue points from $\bdry$. Consider first the~case when $\bdry = \bdry_b$
and the~points are oriented in a~standard way as explained in
Remark~\ref{rmk:standard-orientation}. This means that, when following
the~orientation of the~boundary circle, the~first point after the~basepoint
is negative and then the~orientation alternates.
Theorem~\ref{thm:equiv-of-cats} is in this case a~direct consequence of
Proposition~\ref{prop:webs-red} and Theorem~\ref{thm:basis-for-Foam(w)}:
each web is isomorphic to an~entirely blue one (and each such web is
a~flat tangle equipped with the~standard orientation) and for such webs
$\web$ and $\web'$ the~cup basis of $\Hom_{\cat{Foam}(\bdry)}(\web, \web')$
consists of foams with no red facets. Hence, the~naive map
$\Hom_{\cat{BN}(\bdry)}(\web, \web') \to \Hom_{\cat{Foam}(\bdry)}(\web, \web')$
that orients a~cobordism in a~standard way does the~job.
A~little more work has to be done to cover the~general case.

\begin{lem}
	There is a~web $\EqSymb_\bdry \subset \S\times[0,1]$ bounded by $\bdry$
	at $\S\times\{1\}$ and standardly oriented $\bdry_b$ at $\S\times\{0\}$,
	which is disjoint from $\{\ast\}\times[0,1]$ and with $\bdry_b \times [0,1]$
	as the~underlying tangle.
\end{lem}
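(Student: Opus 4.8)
The plan is to reduce the statement to the already-established disk case, Proposition~\ref{prop:blue->binded}. First I would cut the annulus $\S\times[0,1]$ along the basepoint segment $\{\ast\}\times[0,1]$. This produces a disk $D$ whose boundary circle is a union of four arcs: the interval $A_0$ coming from $\S\times\{0\}$, the interval $A_1$ coming from $\S\times\{1\}$, and two arcs coming from the cut. I would place the basepoint of $D$ on one of the latter two arcs, away from all marked points, and decorate $\partial D$ with the collection $\widehat\bdry$ that carries $\bdry$ on $A_1$, carries $\bdry_b$ with its standard orientation (Remark~\ref{rmk:standard-orientation}, normalized so that the first blue point after the basepoint is negative, as in the paragraph before the lemma) on $A_0$, and is empty on the two arcs coming from the cut. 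Any web in $D$ bounded by $\widehat\bdry$ and disjoint from those two arcs becomes, upon regluing $D$ into $\S\times[0,1]$, a web in the annulus with the boundary data required of $\EqSymb_\bdry$.

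The construction then proceeds in a few short steps. First, $\widehat\bdry$ is balanced: the standard orientation of $\bdry_b$ is alternating on an even number of points, so its total weight is $0$, and $w(\bdry)=0$ because $\bdry$ is balanced, whence $w(\widehat\bdry)=0$; here one only needs to observe that the outward directions along $A_0$ and $A_1$ in $D$ coincide with those along $\S\times\{0\}$ and $\S\times\{1\}$, so no point changes its sign. Second, the blue points of $\widehat\bdry$ are exactly the blue points of $\bdry$, occurring in matching positions on $A_0$ and $A_1$, so the trivial vertical tangle $\tau := \bdry_b\times[0,1]$ in $D$ is a crossingless tangle bounded by $\widehat\bdry_b$. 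Third, Proposition~\ref{prop:blue->binded}(1), applied to $D$, $\widehat\bdry$, and $\tau$, yields a web $\web\subset D$ bounded by $\widehat\bdry$ with $\web_b=\tau$. Finally, since all boundary points of $\web$ lie on $A_0\cup A_1$, the web $\web$ avoids the two arcs of the cut; regluing $D$ identifies those arcs back into $\{\ast\}\times[0,1]$ and turns $\web$ into a web $\EqSymb_\bdry\subset\S\times[0,1]$ disjoint from $\{\ast\}\times[0,1]$, bounded by $\bdry$ on $\S\times\{1\}$ and by standardly oriented $\bdry_b$ on $\S\times\{0\}$, and with $(\EqSymb_\bdry)_b=\bdry_b\times[0,1]$.

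The only conceptual point worth isolating — less an obstacle than a potential source of confusion — is how the triviality of the underlying tangle can coexist with arbitrary boundary orientations. A vertical blue strand, taken on its own, would force its two endpoints to carry opposite signs, which for some strands would conflict with the standard orientation imposed at the bottom; but in a web such a strand may pass through trivalent vertices at which red edges are attached, and the orientation of the blue may reverse there. The shading/$\Gamma$-construction underlying Proposition~\ref{prop:blue->binded} manufactures precisely these red edges, so the potential clash never materializes. Consequently the proof is essentially convention bookkeeping plus one appeal to the extension result; and it is worth remarking that $\EqSymb_\bdry$ is by no means unique — any such web serves equally well for the subsequent definition of $\EqFunc_\bdry$.
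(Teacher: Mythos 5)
Your proof is correct and takes the same route as the paper: cut the annulus to a disk along $\{\ast\}\times[0,1]$, take the radial vertical strands $\bdry_b\times[0,1]$ as the underlying tangle, and invoke Proposition~\ref{prop:blue->binded}(1). You supply a bit more bookkeeping than the paper (explicitly verifying that the cut-disk boundary data is balanced, and noting why the rigid underlying tangle is compatible with arbitrary boundary orientations via orientation reversal at trivalent vertices), but the argument is the same.
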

\begin{proof}
	Let $\tau$ be a~collection of radial blue intervals connecting
	blue points at $\S\times\{0\}$ with those at $\S\times\{1\}$.	
	Cut the~annulus to a~disk along $\{\ast\}\times[0,1]$ and apply
	Proposition~\ref{prop:blue->binded} to get a~desired web.
\end{proof}

\begin{rmk}\label{rmk:canonical-annular-web}
	The~extension of a~tangle to a~web is constructed in
	Lemma~\ref{lem:trivalent->shading}
	from a~shading of the~disk, which is by no means unique.
	In case of an~annulus, however, the~situation is different:
	there is a~unique up to an~isotopy family of counter-clockwise
	oriented arcs that bounds a~given collection of oriented points
	at the~outer boundary circle. Some of the~arc may intersect
	the~interval $\{\ast\}\times[0,1]$; moving them through the~hole
	results in a~preferred shading and a~preferred web $\EqSymb_\bdry$.
\end{rmk}

\begin{figure}[ht]%
	\centering
	\begin{tikzpicture}[x=1.4em,y=1.4em]
	
	\begin{scope}[shift={(-10,0)}]
		\draw (0,0) circle[radius=2];
		\foreach \angle/\sign in {0/-,40/+,120/+,180/+,240/+,300/+}
			\filldraw[V1] (\angle:2) circle[radius=2pt]
				++(\angle:1.75ex) node {$\scriptstyle\sign$};
		\foreach \angle/\sign in {90/-,210/-,270/-,330/+}
			\draw[V2dot] (\angle:2) circle[radius=3pt]
				++(\angle:1.75ex) node {$\scriptstyle\sign$};
		\fill[white] (65:2) circle[radius=4pt] node[text=black]{$\ast$};
	\end{scope}

	\draw[->] (-6.5,0) -- (-4.5,0);

	\begin{scope}
		\clip (0,0) circle[radius=3];
	
		\draw[V2,-<-=0.4] (120:2)
			arc[x radius=2, y radius=1.5, start angle=-60, end angle=30];
		\draw[V0] (120:2)
			arc[x radius=2, y radius=1.5, start angle=-60, end angle=-120];
		
		\draw[V2,-<-=0.35] (300:2.25)
			arc[x radius=1.5, y radius=1, start angle=120, end angle=240];
		\draw[V0] (300:2.25)
			arc[x radius=1.5, y radius=1, start angle=120, end angle=0];
			
		\draw[V2,->-=0.23,-<-=0.75]
			(210:3) .. controls (210:1.5) and ++(150:0.5) .. (240:1.75)
			(340:3) .. controls (340:1.5) and ++(30:0.75) .. (300:1.75);
		\draw[V0] (240:1.75) arc[radius=1.75, start angle=240, end angle=300];
	\end{scope}
	
	\draw[fill=white] (0,0) circle[radius=1.25];
	\draw (0,0) circle[radius=3];

	\draw[V2dot]
		(0,  3) circle[radius=3pt] ++(0,1.75ex) node{$\scriptstyle-$}
		(210:3) circle[radius=3pt] ++(210:1.75ex) node{$\scriptstyle-$}
		(277:3) circle[radius=3pt] ++(277:1.75ex) node{$\scriptstyle-$}
		(340:3) circle[radius=3pt] ++(340:1.75ex) node{$\scriptstyle+$};
	
	\foreach \angle/\in/\out in {0/-/-,40/+/+,120/-/+,180/+/+,240/-/+,300/+/+}
		\filldraw[V1]
			(\angle:1.25) ++(\angle:-1.5ex) node{$\scriptstyle\in$}
			++(\angle:1.5ex) circle[radius=2pt] -- (\angle:3) circle[radius=2pt]
			++(\angle:1.75ex) node{$\scriptstyle\out$};
	
	\draw[dotted] (70:1.25) -- (70:3);
	\fill[white]
		(70:1.25) circle[radius=4pt] node[text=black]{$\ast$}
		(70:3) circle[radius=4pt] node[text=black]{$\ast$};
	
\end{tikzpicture}
	\caption{%
		A~collection of points $\bdry$ with four red and six blue points and
		an~annular web $\EqSymb_\bdry$ as in Remark~\ref{rmk:canonical-annular-web}.
		Dashed red lines are not part of the~web, but they represent the~additional
		red edges in the~associated shading.
	}%
	\label{fig:annular-diagram}%
\end{figure}
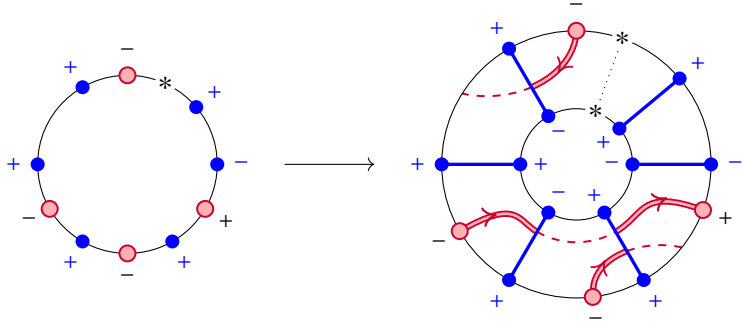

Inserting a~tangle inside the~web $\EqSymb_\bdry$ and a~surface inside the~foam
$\EqSymb_\bdry \times [0,1]$ results in a~functor
$\EqFunc_\bdry \colon \cat{BN}(\bdry_b) \to \cat{Foam}(\bdry)$,
as it preserves units and composition.

\insertpretheorem{thm:equiv-of-cats}
\begin{proof}
	It follows from Propositions \ref{prop:webs-red} and \ref{prop:foams-red}
	that $\EqFunc_\bdry$ is essentially surjective and full. Faithfulness
	follows from Theorem~\ref{thm:basis-for-Foam(w)}: both
	$\Hom_{\cat{BN}(\bdry_b)}(\web_b,\web_b')$ and
	$\Hom_{\cat{Foam}(\bdry)}(\web,\web')$ are free graded modules of graded
	rank $(q+q^{-1})^\ell\rics$, where $\ell$ counts blue loops in
	$-\web \cup \web'\rics$.
\end{proof}

\subsection{A~coherent way to forget red facets}
\label{sec:forget-red}

The~inverse functor to $\EqFunc_\bdry$ forgets red facets of foams,
but it may also change the~sign. To construct it explicitly,
fix for each web $\web$ an~invertible foam
$I_\web \in \Hom_{\cat{Foam}(\bdry)}(\EqFunc_\bdry(\web_b), \web)$.
Given a~non-vanishing foam $S\colon \web \to \web'$ consider the~square
\begin{equation}\label{diag:sgn of foam}
\begin{centertikz}[x=10em, y=10ex]
	\node (web0) at (0,1) {$\web$};
	\node (web1) at (1,1) {$\web'$};
	\node (tan0) at (0,0) {$\EqFunc_\bdry(\web_b)$};
	\node (tan1) at (1,0) {$\EqFunc_\bdry(\web'_b)$};
	\draw[->] (web0) -- (web1) node[midway, above] {$\scriptstyle S$};
	\draw[->] (tan0) -- (tan1) node[midway, above]
		{$\scriptstyle \sgn(S) \EqFunc_\bdry(S_b)$};
	\draw[->] (tan0) -- (web0) node[midway, left] {$\scriptstyle I_{\web}$};
	\draw[->] (tan1) -- (web1) node[midway, right] {$\scriptstyle I_{\web'}$};
\end{centertikz}
\end{equation}
where $\sgn(S) = \pm1$ is the~unique sign for which the~square commutes, i.e.
\[
	\foam\,I_\web = \sgn(S) \left( I_{\web'}\EqFunc_\bdry(S_b) \right).
\]
It exists by Proposition~\ref{prop:foams-red}.
Both sides are foams bounded by the~web $-\EqFunc_\bdry(\web_b) \cup \web'$
and they evaluate to a~nonzero scalar after glued with some cup foam
$C$.%
\footnote{
	Explicitly, $C = \cupfoam(-\EqFunc_\bdry(\web_b) \cup \web', X)$
	where $X$ contains exactly one boundary circle of each genus 0 component
	of $S_b$ that does not carry a~dot.
}
Therefore,
\[
	\sgn(S) = \frac
		{Z(C \cup \foam\,I_\web)}
		{Z(C \cup I_{\web'}\EqFunc_\bdry(S_b))},
\]
where $Z$ is the~Blanchet evaluation map.

\begin{prop}
	The~assignment
	\[
		\web \mapsto \web_b,
		\qquad
		\foam \mapsto \sgn(\foam)\foam_b
	\]
	defines a~functor
	$\EqFunc^\vee_\bdry\colon\cat{Foam}(\bdry) \to \cat{BN}(\bdry_b)$
	inverse to $\EqFunc_\bdry$.
\end{prop}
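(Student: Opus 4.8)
The plan is to deduce everything from two facts already established: that $\EqFunc_\bdry$ is fully faithful (Theorem~\ref{thm:equiv-of-cats}), so that it restricts to an isomorphism on every morphism module, and that each chosen foam $I_\web\in\Hom_{\cat{Foam}(\bdry)}(\EqFunc_\bdry(\web_b),\web)$ is invertible (Proposition~\ref{prop:foams-red}). First I would repackage the assignment of the statement, choice-freely, as the composite functor
\[
	G\colon\cat{Foam}(\bdry)\to\cat{BN}(\bdry_b),\qquad
	\web\longmapsto\web_b,\qquad
	\bigl(\foam\colon\web\to\web'\bigr)\longmapsto
	\EqFunc_\bdry^{-1}\bigl(I_{\web'}^{-1}\circ\foam\circ I_\web\bigr),
\]
where $\EqFunc_\bdry^{-1}$ denotes the inverse of the bijection
$\EqFunc_\bdry\colon\Hom_{\cat{BN}(\bdry_b)}(\web_b,\web'_b)\xrightarrow{\ \sim\ }\Hom_{\cat{Foam}(\bdry)}(\EqFunc_\bdry(\web_b),\EqFunc_\bdry(\web'_b))$. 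Being a composite of functorial operations (pre- and post-composition with the $I_\web$'s and the inverse of $\EqFunc_\bdry$ on Hom-modules), $G$ is manifestly $\scalars$-linear, additive, and preserves units and composition; moreover it takes as input the morphism modules of $\cat{Foam}(\bdry)$ themselves, so there is nothing to verify against the defining relations \eqref{rel:sphere-eval}--\eqref{rel:detach-saddle}.

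Next I would identify $G$ with the explicit formula. On objects this is the definition. For a generating foam $\foam\colon\web\to\web'$, unwinding $G$ gives $\foam\circ I_\web=I_{\web'}\circ\EqFunc_\bdry(G(\foam))$; comparing with the commuting square \eqref{diag:sgn of foam}, which says $\foam\circ I_\web=\sgn(\foam)\bigl(I_{\web'}\circ\EqFunc_\bdry(\foam_b)\bigr)$, and cancelling $I_{\web'}$ and the faithful $\EqFunc_\bdry$, one reads off $G(\foam)=\sgn(\foam)\,\foam_b$. The only substantive input here is the existence of the single sign $\sgn(\foam)\in\{\pm1\}$: as $I_\web,I_{\web'}$ have product underlying surfaces and carry no dots, the foams $\foam$ and $I_{\web'}\circ\EqFunc_\bdry(\foam_b)\circ I_\web^{-1}$ have the same boundary and isotopic underlying surfaces with dots in matching positions, so Proposition~\ref{prop:foams-red} pins them down up to an overall $\pm1$ with no dual-dot ambiguity---this is the content of the discussion preceding the statement. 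As by-products one recovers $\sgn(\id_\web)=1$, multiplicativity $\sgn(\foam'\circ\foam)=\sgn(\foam')\sgn(\foam)$ (stack two copies of \eqref{diag:sgn of foam}), and independence of the choices of $I_\web$.

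Finally I would conclude that $\EqFunc^\vee_\bdry=G$ is a quasi-inverse of $\EqFunc_\bdry$. By construction the family $I=(I_\web)_\web$ is a natural transformation $\EqFunc_\bdry\circ G\Rightarrow\Id_{\cat{Foam}(\bdry)}$ with invertible components, hence a natural isomorphism. Applying $\EqFunc_\bdry$ on the left gives $\EqFunc_\bdry\circ G\circ\EqFunc_\bdry\cong\EqFunc_\bdry$, and then the standard argument---pull each component $\eta_{\EqFunc_\bdry(\tau)}$ back through the Hom-module bijection $\EqFunc_\bdry$ and check naturality there, using faithfulness to produce inverses---yields a natural isomorphism $G\circ\EqFunc_\bdry\Rightarrow\Id_{\cat{BN}(\bdry_b)}$. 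Thus $\EqFunc^\vee_\bdry$ and $\EqFunc_\bdry$ are mutually inverse equivalences of categories. I do not expect any real obstacle: after the repackaging in the first step the argument is formal category theory, and the one step carrying genuine topological content---that the red facets of a foam can be stripped off at the cost of a single, well-defined sign---is precisely where Proposition~\ref{prop:foams-red}, via the bicolored isotopy argument, does the work.
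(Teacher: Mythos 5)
Your proof is correct and amounts to a slicker repackaging of the paper's argument. The paper defines $\EqFunc^\vee_\bdry$ directly by the formula $\foam\mapsto\sgn(\foam)\foam_b$ and then verifies that $\sgn$ is multiplicative under composition by a hands-on computation with the defining square \eqref{diag:sgn of foam}; you instead define $G(\foam)=\EqFunc_\bdry^{-1}\bigl(I_{\web'}^{-1}\foam I_\web\bigr)$, which is automatically a functor because it is built from pre- and post-composition and from the Hom-module bijections provided by the already-proved Theorem~\ref{thm:equiv-of-cats}, and then you read off the formula $G(\foam)=\sgn(\foam)\foam_b$ (and the multiplicativity of $\sgn$, as a free consequence) by unwinding the defining equation $\foam I_\web=\sgn(\foam)\,I_{\web'}\EqFunc_\bdry(\foam_b)$. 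The two arguments are equivalent at the level of computation, but your packaging trades the multiplicativity check for an appeal to full faithfulness, and it also keeps the quasi-inverse verification honest for arbitrary choices of $I_\web$, whereas the paper's bald claim that $\EqFunc^\vee_\bdry\circ\EqFunc_\bdry$ is the identity (rather than merely naturally isomorphic to it) implicitly requires $I_{\EqFunc_\bdry(\tau)}$ to be chosen as the identity foam. One small slip: you list ``independence of the choices of $I_\web$'' among the by-products, but $\sgn(\foam)$, and hence $\EqFunc^\vee_\bdry$ itself, does depend on those choices---if $I'_\web$ is another choice with $I'_\web=s(\web)I_\web$ in $\set{Foam}$, then $\sgn(\foam)$ changes to $s(\web)s(\web')\sgn(\foam)$. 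What is choice-independent is the functor up to a canonical natural isomorphism, which is exactly the content of the remark the paper places immediately after the proposition; since your argument nowhere uses the stronger claim, this does not create a gap, but the sentence as stated overreaches.
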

\begin{proof}
	We have to check that the~sign $\sgn(\foam)$ is multiplicative
	with respect to composition of foams. For that pick foams
	$\foam'\in\Hom_{\cat{Foam}(\bdry)}(\web, \web')$ and
	$\foam''\in\Hom_{\cat{Foam}(\bdry)}(\web',\web'')$,
	such that the~composition $\foam''\foam'$ does not vanish. Then
	\begin{align*}
		\foam''\foam'I_\web
		&=
			\sgn(\foam'')\sgn(\foam')\left(
				I_{\web''} \EqFunc_\bdry(\foam''_b) \EqFunc_\bdry(\foam'_b)
			\right) \\
		&= \sgn(\foam'')\sgn(\foam')\left(
				I_{\web''} \EqFunc_\bdry(\foam''_b\foam'_b)
			\right)
	\end{align*}
	which forces $\sgn(\foam''\foam') = \sgn(\foam'')\sgn(\foam')$.
	To end the~proof, we check directly that
	$\EqFunc^\vee_\bdry \circ \EqFunc_\bdry$ is the~identity functor on
	$\cat{BN}(\bdry_b)$, whereas the~collection of the~invertible foams
	$I_\web$ constitute a~natural isomorphism between
	$\EqFunc_\bdry \circ \EqFunc^\vee_\bdry$ and the~identity
	functor on $\cat{Foam}(\bdry)$.
\end{proof}

\begin{exam}\label{ex:forgetting red facets}
	\tikzset{x=2em,y=2em}%
	Let $\web$ be a~blue circle oriented clockwise. This is the~orientation
	induced from the~unbounded region, hence standard, so that $\web = \web_b$
	and $\EqFunc^\vee_\emptyset$ simply forgets orientation:
	\[
		\EqFunc^\vee_\emptyset\left(\foampict{1cup}[0<]\right) = \foampict{1cup}
	\qquad\text{and}\qquad
		\EqFunc^\vee_\emptyset\left(\foampict{1cup}[1<]\right) = \foampict{1cup}[1].
	\]
	However, when $\web$
	is oriented counter-clockwise, then the~invertible foam $I_\web$ is a~cylinder
	with a~red membrane and removing the~membrane may cost a~sign:
	\[
		\EqFunc^\vee_\emptyset\left(\foampict{1cup}[0>]\right) = \foampict{1cup}
	\qquad\text{and}\qquad
		\EqFunc^\vee_\emptyset\left(\foampict{1cup}[1>]\right) = -\foampict{1cup}[1].
	\]
\end{exam}

\begin{rmk}
	Although the~construction of $\EqFunc^\vee_\bdry$ depends on the~choice of
	foams $I_\web$, the~functor is unique up to a~unique natural isomorphism.
	To see this directly,
	suppose that $\tilde{\EqFunc}^\vee_\bdry$ is constructed using a~different
	family of foams $\tilde I_\web$. Then $\tilde I_\web = s(\web) I_\web$
	for a~well-defined sign $s(\web) = \pm1$ and it follows from a~direct
	computation that the~collection of morphisms
	$\iota_\web := s(\web)\cdot \web_b\times[0,1]$ is a~natural isomorphism
	from $\EqFunc^\vee_\bdry$ to $\tilde{\EqFunc}^\vee_\bdry$.
\end{rmk}

\subsection{An~equivalence of bicategories}

Recall that a~1-morphism $f\colon x\to y$ in a~bicategory $\cat{C}$ is
an~\emph{equivalence} if there exists $g\colon y\to x$ such that the~compositions
$f\circ g$ and $g\circ f$ are isomorphic to identity 1-morphisms.
A~2-functor $\mathcal F\colon \cat{C} \to \cat{D}$ is
an~\emph{equivalence of bicategories} when
\begin{itemize}
	\item it is a~\emph{local equivalence}, that is the~functor
	$\mathcal F_{x,y}\colon \cat{C}(x,y) \to \cat{D}(\mathcal F(x), \mathcal F(y))$
	is an~equivalence of categories for all objects $x,y$ of $\cat{C}$, and
	\item it is \emph{essentially surjective}: each object of $\cat{D}$ is
	equivalent to an~object of the~form $\mathcal F(x)$.
\end{itemize}
Indeed, the~above conditions imply the~existence of an~inverse of $\mathcal F$
\cite{BasicBicats}.

There is a~2-functor
\begin{equation}\label{func:BN-->Foam}
	\EqFunc^0 \colon \cat{BN} \to \cat{Foam}
\end{equation}
that equips points, tangles, and cobordisms with the~standard orientation.%
\footnote{
	Recall the~convention that the~basepoint $\ast$ is placed at the~left
	infinity, so that the~left unbounded region is painted white. This
	implies in particular that the~left most point of an~object of $\cat{BN}$
	receives the~positive orientation and the~left most vertical strand of
	a~1-morphism is oriented upwards.
}
It is a~local equivalence due to Theorem~\ref{thm:equiv-of-cats}, but
not essentially surjective: objects from the~image of 
$\EqFunc^0$ have weight 0 or 1, so that the~whole image
is contained in $\cat{Foam}^0 \sqcup \cat{Foam}^1$. We fix this by
enlarging the~source bicategory to $\cat{wBN} := \cat{BN}\times\Z$,
the~product of $\cat{BN}$ with $\Z$ seen as a~discrete bicategory.
In other words, objects of $\cat{wBN}$ are pairs $(\bdry, k)$
consisting of an~object $\bdry$ from $\cat{BN}$ and a~number $k\in\Z$,
whereas morphism categories are copied directly from $\cat{BN}$:
\begin{equation}
	\cat{wBN} ( (\bdry, k), (\bdry', k')) :=
		\cat{BN}(\bdry, \bdry').
\end{equation}
We then extend \eqref{func:BN-->Foam} to a~2-functor
\begin{equation}\label{func:BNxZ-->Foam}
	\EqFunc \colon \cat{wBN} \to \cat{Foam}
\end{equation}
in a~way, such that $(\bdry, k)$ is taken to the~collection $\EqFunc^0(\bdry)$
with $|k|$ red points added to the~right, all positive when $k>0$ and negative
otherwise. Likewise for 1- and 2-morphisms: $\EqFunc$ takes a~tangle $\tau$
(resp.\ a~cobordism $W$), orients it in a~standard way, and adds to the~right
$|k|$ vertical red lines (resp.\ vertical red squares) with the~appropriate
orientation.

\insertpretheorem{thm:equiv-of-bicats}
\begin{proof}
	By Theorem~\ref{thm:equiv-of-cats}, $\EqFunc$ is a~local equivalence. Hence,
	it is enough to show that it is essentially surjective. For that choose
	an~object $\bdry$ from $\cat{Foam}$ and let $k = \floor{w(\bdry)/2}$, where
	$w(\bdry)$ is the~weight of $\bdry$. Then $\bdry^0 := \EqFunc(\bdry_b, k)$
	has the~same weight. Considering $\RxI$ as a~disk with two boundary points
	removed, we can apply Lemma~\ref{lem:trivalent->shading} to the~collection of
	points $-\bdry^0 \cup \bdry$ to obtain a~web $\EqSymb_\bdry \colon \bdry^0
	\to \bdry$ with vertical lines as the~underlying tangle. Another application
	of Lemma~\ref{lem:trivalent->shading} combined with Proposition~\ref{prop:foams-red}
	shows that it is an~equivalence, with its mirror image the~inverse 1-morphism.
\end{proof}

We write $\EqFunc^\vee$ for the~2-functor inverse to $\EqFunc$. It can be
constructed explicitly like the~functors $\EqFunc^\vee_\bdry$, except that
the~computation of signs requires not only a~choice of isomorphisms between
webs, but also a~choice of equivalences between collections of points.
For the~latter one can use the~following webs
\begin{center}
	\tikzset{x=4em,y=3em}%
	\rule{0pt}{0pt}\hfill\hfill
		\begin{tikzpicture}
			\useasboundingbox (-0.2,-1) -- (0.7,0);
			\draw[V2,->]
				(0,-0.5) arc[radius=0.5, start angle=270, end angle=360]
				(-60:0.5) -- ++(31:-0.1pt);
			\draw[V1,-<-=0.25, ->-=0.9] (0,-1) -- (0,0);
		\end{tikzpicture}
	\hfill
		\begin{tikzpicture}
			\useasboundingbox (-0.2,0) -- (0.7,1);
			\begin{scope}
				\clip (-0.5,0) -- (0,1) -- (0.5,0) -- cycle;
				\draw[V2,->-=0.65] (-0.25,-0.25) -- (0.5,0.5);
			\end{scope}
			\begin{scope}
				\clip (1,1) -- (0,1) -- (0.5,0) -- cycle;
				\draw[V2,->-=0.55] (0,0.5) -- (0.75,1.25);
			\end{scope}
			\draw[V1,-<-=0.2,->-=0.55,-<-=0.85] (0.5,0) -- (0,1);
		\end{tikzpicture}
	\hfill
		\begin{tikzpicture}
			\useasboundingbox (-0.2,0) -- (0.7,1);
			\begin{scope}
				\clip (-0.5,0) -- (0,1) -- (0.5,0) -- cycle;
				\draw[V2,->] (0,0) .. controls (0,0.6) .. (0.5,1)
					(0.0135,0.405) -- ++(0.02pt,0.2pt);
			\end{scope}
			\begin{scope}
				\clip (1,1) -- (0,1) -- (0.5,0) -- cycle;
				\draw[V2,->] (0,0) .. controls (0.5,0.4) .. (0.5,1)
					(0.4995,0.837) -- ++(0.01pt,0.2pt);
			\end{scope}
			\draw[V1,->-=0.15,-<-=0.55,->-=0.9] (0.5,0) -- (0,1);
		\end{tikzpicture}
	\hfill
		\begin{tikzpicture}
			\useasboundingbox (-0.45,0) -- (0.45,1);
			\draw[V2,->]
				(-0.25,0) arc[x radius=0.25, y radius=0.5, start angle=180, end angle=0]
				(150:0.25 and 0.5) -- ++(218:0.01pt and 0.02pt);
		\end{tikzpicture}
	\hfill\hfill\rule{0pt}{0pt}\\[2ex]
	\rule{0pt}{0pt}\hfill\hfill
		\begin{tikzpicture}
			\useasboundingbox (-0.2,-1) -- (0.7,0);
			\draw[V2,->]
				(0,-0.5) arc[radius=0.5, start angle=270, end angle=360]
				(-30:0.5) -- ++(41:0.1pt);
			\draw[V1,->-=0.25, -<-=0.85] (0,-1) -- (0,0);
		\end{tikzpicture}
	\hfill
		\begin{tikzpicture}
			\useasboundingbox (-0.2,0) -- (0.7,1);
			\begin{scope}
				\clip (-0.5,0) -- (0,1) -- (0.5,0) -- cycle;
				\draw[V2,-<-=0.55] (-0.25,-0.25) -- (0.5,0.5);
			\end{scope}
			\begin{scope}
				\clip (1,1) -- (0,1) -- (0.5,0) -- cycle;
				\draw[V2,-<-=0.55] (0,0.5) -- (0.75,1.25);
			\end{scope}
			\draw[V1,->-=0.2,-<-=0.55,->-=0.9] (0.5,0) -- (0,1);
		\end{tikzpicture}
	\hfill
		\begin{tikzpicture}
			\useasboundingbox (-0.2,0) -- (0.7,1);
			\begin{scope}
				\clip (-0.5,0) -- (0,1) -- (0.5,0) -- cycle;
				\draw[V2,->] (0,0) .. controls (0,0.6) .. (0.5,1)
					(0.0005,0.163) -- ++(-0.01pt,-0.2pt);
			\end{scope}
			\begin{scope}
				\clip (1,1) -- (0,1) -- (0.5,0) -- cycle;
				\draw[V2,->] (0,0) .. controls (0.5,0.4) .. (0.5,1)
					(0.4915,0.595) -- ++(-0.01pt,-0.2pt);
			\end{scope}
			\draw[V1,-<-=0.15,->-=0.55,-<-=0.9] (0.5,0) -- (0,1);
		\end{tikzpicture}
	\hfill
		\begin{tikzpicture}
			\useasboundingbox (-0.45,0) -- (0.45,1);
			\draw[V2,->]
				(-0.25,0) arc[x radius=0.25, y radius=0.5, start angle=180, end angle=0]
				(30:0.25 and 0.5) -- ++(-38:0.01pt and 0.02pt);
		\end{tikzpicture}
	\hfill\hfill\rule{0pt}{0pt}
\end{center}
which are equivalences by Proposition~\ref{prop:foams-red}. They can be
used to construct an~explicit equivalence from a~collection $\bdry$ to
$\bdry^0 = \EqFunc(\bdry_b, \floor{w(\bdry)/2})$ by examining the~points
of $\bdry$ from left to right. The~details are left to the~reader.

\subsection{Comparison of Khovanov brackets}
\label{sec:Kh-brackets}

We finish this section by comparing two invariant complexes for a~tangle $T$:
the~Khovanov bracket $\KhBracket{T}$ from \cite{DrorCob}, which is a~formal
complex of objects from $\cat{BN}(\partial T)$, and the~Blanchet--Khovanov
bracket $\wKhBracket{T}$ constructed using wbes and foams instead. In what
follows we recall the~construction of the~latter---forgetting red edges in
webs and red facets in foams recovers the~former.

Let $c$ be the~number of crossings in $T$, out of which $c_+$
are positive and $c_-$ are negative. The~first step to construct
$\wKhBracket{T}$ is to compute the~\emph{cube of resolutions} of
$\wKhCube{T}$: a~commutative diagram with resolutions of $T$ at
vertices of the~$c$-dimensional cube $[0,1]^c$.%
\begin{figure}[ht]%
	\centering
	\begin{tikzpicture}
			\node (Xing) at ( 0,2) {$\FNT{PosCr}$};
			\node (Hres) at ( 2,0) {\webpict{zip res}};
			\node (Vres) at (-2,0) {\webpict{arc res}};
			\draw[->] (Xing) -- (Vres) node[midway,above] {$0$};
			\draw[->] (Xing) -- (Hres) node[midway,above] {$1$};
			\draw[->] (Vres) -- (Hres);
			\node[below] at (0,-1ex) {$\foampict{edge zip}$};
	\end{tikzpicture}
	\hskip 4em
	\begin{tikzpicture}
			\node (Xing) at ( 0,2) {$\FNT{NegCr}$};
			\node (Hres) at (-2,0) {$\webpict{zip res}$};
			\node (Vres) at ( 2,0) {$\webpict{arc res}$};
			\draw[->] (Xing) -- (Hres) node[midway,above] {$0$};
			\draw[->] (Xing) -- (Vres) node[midway,above] {$1$};
			\draw[->] (Hres) -- (Vres);
			\node[below] at (0,-1ex) {$\foampict{edge unzip}$};
	\end{tikzpicture}
	\caption{%
		Web resolutions of a~positive (to the~left) and negative (to the~right)
		crossing, together with the~minimal foams between them.
	}%
	\label{fig:resolutions}%
\end{figure}
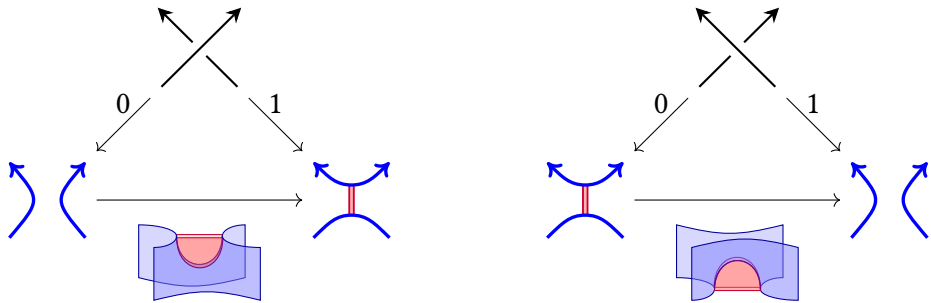
Namely, a~vertex $\xi = (\xi_1, \dots, \xi_c) \in \{0,1\}^c$ is decorated with
the~web $T_\xi$ obtained from $T$ by replacing each $i$-th crossing of the~tangle
with its resolution of type $\xi_i$, as shown in Figure~\ref{fig:resolutions}.
Let $\xi'$ be another vertex, obtained from $\xi$ by changing one coordinate
from $0$ to $1$. The~directed edge $\zeta\colon \xi \to \xi'$ is decorated
with the~minimal foam $T_\zeta\colon T_\xi \to T_{\xi'}$, which is a~collection
of vertical facets except over the~region where the~two resolutions do not match;
here $T_\zeta$ is a~zip or an~unzip as shown in Figure~\ref{fig:resolutions}.
It is evident that $\wKhCube{T}$ commutes: directed paths between same
vertices represent isotopic foams.

Pick a~\emph{sign assignment} $\epsilon$, that is a~collection of signs
$\epsilon(\zeta) = \pm1$, one sign per edge in the~cube, such that the~product
of signs around any square in the~cube is equal to $-1$. The~standard choice
is $\epsilon(\zeta) = (-1)^{s(\xi,\xi')}$, where $s(\xi,\xi')$ counts $1$'s
left to the~place at which $\xi$ and $\xi'$ disagree. Scaling each edge $\zeta$
by $\epsilon(\zeta)$ makes the~cube anticommute and it can be shown that
the~isomorphism type of the~cube is independent of the~sign assignment
(compare with \cite[Lemma 2.2]{OddKh} or \cite[Lemma 5.7]{ChCob}).
The~formal complex $\wKhBracket{T}$ is obtained by flattening the~cube
along diagonals and shifting degrees accordingly. Explicitly,
\[
	\wKhBracket{T}^i := \bigoplus_{|\xi| = i + c_-} T_\xi
		\{ c_- - c_+ - i \}
\]
where $|\xi| := \xi_1 + \dots + \xi_c$, with the~differential
\[
	d|_{T_\xi} = \sum_{\zeta\colon \xi \to \xi'}
		\epsilon(\zeta) T_\zeta.
\]
The~Khovanov bracket $\KhBracket{T}$ is constructed following the~same
steps, except that webs and foams are replaced with flat tangles and
cobordisms. In particular, one has to erase in Figure~\ref{fig:resolutions}
the~red edges in resolutions and red facets in foams.


\begin{thm}\label{thm:strictification}
	The~homotopy type of\/ $\wKhBracket{T}$ is an~invariant of the~tangle $T$,
	strictly functorial with respect to tangle cobordism. Its image under
	$\EqFunc^\vee_{\partial T}$ is isomorphic to $\KhBracket{T}$.
\end{thm}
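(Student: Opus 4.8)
The plan is to separate the statement into two independent halves. For the first half---invariance of the homotopy type of $\wKhBracket{T}$ and strict functoriality under tangle cobordism---I would argue abstractly rather than by checking Reidemeister moves directly. The key observation is that $\wKhBracket{T}$ is built by exactly the same formal recipe as $\KhBracket{T}$ (cube of resolutions, sign assignment, flattening along diagonals), but carried out inside $\cat{Foam}$ instead of $\cat{BN}$. By Theorem~\ref{thm:equiv-of-cats} the functor $\EqFunc_{\partial T}\colon \cat{BN}(\partial T) \to \cat{Foam}(\partial T)$ is an equivalence of categories, and an equivalence of additive categories induces an equivalence of the corresponding homotopy categories $\HCom$. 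So the homotopy type of a formal complex in $\cat{Foam}(\partial T)$ carries exactly the same information as the homotopy type of its preimage in $\cat{BN}(\partial T)$. Concretely, I would show that $\EqFunc_{\partial T}$ carries the Khovanov bracket $\KhBracket{T}$ to a complex isomorphic to $\wKhBracket{T}$: this is the content of the second sentence of the theorem, so once it is proven, all invariance and functoriality properties of $\wKhBracket{T}$ follow from the known invariance and (strict, after sign fixing) functoriality of $\KhBracket{T}$ established in \cite{DrorCob}, transported along the equivalence. Strict functoriality is automatic here because the ambiguity in the Bar-Natan setting was only a global sign, and a global sign is unaffected by applying an equivalence of categories.

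For the second half---that $\EqFunc^\vee_{\partial T}(\wKhBracket{T}) \cong \KhBracket{T}$, equivalently $\EqFunc_{\partial T}(\KhBracket{T}) \cong \wKhBracket{T}$---the natural approach is to compare the two cubes of resolutions termwise and edgewise. First I would check that $\EqFunc_{\partial T}$ (which orients a flat tangle in the standard way and inserts it into the annular web $\EqSymb_{\partial T}$, and does the analogous thing for cobordisms) sends the flat-tangle resolution $(T^{\mathrm{Kh}})_\xi$ of $T$ to a web isotopic to the web resolution $T_\xi$ of Figure~\ref{fig:resolutions}, and sends the elementary saddle cobordism on each edge to the zip/unzip foam $T_\zeta$, at least up to an invertible scalar. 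The subtlety is that $\EqFunc_{\partial T}$ is not literally the ``forget red'' inverse; it is defined via a choice of extension $\EqSymb_{\partial T}$ and of invertible foams $I_\web$. So the two cubes will not be equal on the nose: on each vertex $\xi$ we only get an isomorphism $\psi_\xi\colon \EqFunc_{\partial T}((T^{\mathrm{Kh}})_\xi) \xrightarrow{\ \sim\ } T_\xi$, and on each edge the square formed by $\psi_\xi$, $\psi_{\xi'}$, $\EqFunc_{\partial T}$ of the Bar-Natan edge, and $T_\zeta$ commutes only up to a sign $\delta(\zeta) = \pm 1$ (this is precisely the sign appearing in diagram~\eqref{diag:sgn of foam}, and it is well-defined by Proposition~\ref{prop:foams-red}).

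The main obstacle, and the heart of the proof, is therefore to show that the collection of signs $\delta(\zeta)$ can be absorbed into a change of sign assignment, i.e.\ that $\epsilon(\zeta) \mapsto \delta(\zeta)\epsilon(\zeta)$ is again a valid sign assignment. For this it suffices that the product of $\delta(\zeta)$ around every $2$-face of the cube is $+1$; equivalently, that the function $\xi \mapsto \psi_\xi$ makes the two cubes commute up to a ``gauge transformation'' of the edge signs. This is exactly the kind of statement handled by the homological lemma cited in the introduction (\cite[Lemma 2.2]{OddKh}, \cite[Lemma 5.7]{ChCob}): two anticommutative cubes built on the same diagram, which agree up to isomorphism at each vertex and up to sign on each edge, differ only by a change of sign assignment, and hence give isomorphic total complexes; moreover the isomorphism type of such a flattened complex is independent of the sign assignment. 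I would invoke that lemma to conclude that the total complexes $\EqFunc_{\partial T}(\KhBracket{T})$ and $\wKhBracket{T}$ are isomorphic as complexes in $\cat{Foam}(\partial T)$. Finally, applying $\EqFunc^\vee_{\partial T}$ (the inverse functor constructed in Section~\ref{sec:forget-red}) to $\wKhBracket{T}$ and using that $\EqFunc^\vee_{\partial T}\circ\EqFunc_{\partial T} \cong \Id$ gives $\EqFunc^\vee_{\partial T}(\wKhBracket{T}) \cong \KhBracket{T}$, completing the proof. The one genuinely laborious check, which I would relegate to a short computation, is the termwise/edgewise comparison of $\EqFunc_{\partial T}$ applied to Figure~\ref{fig:resolutions} against the web/foam resolutions---that the ``standard orientation plus annular extension'' really does reproduce the pictures of zips and unzips.
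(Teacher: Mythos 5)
Your treatment of the second half of the theorem (the comparison $\EqFunc^\vee_{\partial T}(\wKhBracket{T}) \cong \KhBracket{T}$) is essentially the paper's argument, spelled out in more detail: push $\wKhCube{T}$ through the equivalence, observe that the resulting cube differs from $\KhCube{T}$ only in edge signs, and invoke the cited lemma to identify the flattened complexes. That part is fine.

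The argument for the first half has a genuine gap. You claim that strict functoriality of $\wKhBracket{\blank}$ ``is automatic here because the ambiguity in the Bar-Natan setting was only a global sign, and a global sign is unaffected by applying an equivalence of categories.'' This is exactly backwards. An equivalence of categories preserves, and therefore inherits, the sign ambiguity; it does not kill it. If the cobordism map in $\cat{BN}(\partial T)$ is well-defined only up to a global sign, then its image under $\EqFunc_{\partial T}$ is also well-defined only up to a global sign. Taken at face value, your argument would equally prove that the original Khovanov bracket in $\cat{BN}$ is strictly functorial, which is famously false by Jacobsson's example. The entire point of passing to $\cat{Foam}$ is that the foam relations rigidify the assignment so that the sign ambiguity disappears---this is something that has to be \emph{proven}, not transported for free across an equivalence. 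The paper's actual argument is: Blanchet showed strict functoriality for closed links; Bar-Natan's argument gives functoriality up to sign for arbitrary tangles; and every tangle cobordism can be closed to a link cobordism, so comparing the two pins down the sign. You would need to supply something equivalent to that last step (or reprove Blanchet's closed-link result in this framework); the categorical-equivalence shortcut you propose does not do the job.
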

\begin{proof}
	Following \cite{DrorCob} one can show that $\KhBracket{T}$
	is functorial up to a~sign and strict functoriality is shown in
	\cite{Blanchet} in the~case of links, i.e.\ when $T$ has no
	endpoints. From these two facts strict functoriality follows,
	because every tangle can be closed to a~link.
	
	To compare $\wKhBracket{T}$ with $\KhBracket{T}$ consider the~cube
	of resolutions $\wKhCube{T}$ constructed in $\cat{Foam}(\partial T)$
	and let $\KhCube{T}'$ be its image in $\cat{BN}(\partial T)$ under
	the~equivalence of categories $\EqFunc^\vee_{\partial T}$. It differs
	from $\KhCube{T}$, the~cube of resolutions in $\cat{BN}(\partial T)$
	that computes $\KhBracket{T}$, only in signs at edges. Hence, the~two
	cubes are isomorphic and the~thesis follows.
\end{proof}

\begin{rmk}
	The~construction of $\wKhBracket{T}$ can be easily extended to
	an~invariant of knotted webs \cite{Hoel} and it is conjectured to
	be strictly functorial with respect to foams embedded in a~four
	dimensional space.
\end{rmk}

\section{A~diagrammatic TQFT on \texorpdfstring{$\cat{Foam}(\emptyset)$}{Foam(0)}.}
\label{sec:TQFT}

The~assignment of the~module $\set{Foam}(\web)$ to a~closed web $\web$
extends to a~functor
\[
	\Hom_{\cat{Foam}(\emptyset)}(\emptyset, \blank)
		\colon \cat{Foam}(\emptyset) \longrightarrow \cat{kMod}.
\]
In what follows we provide a~diagrammatic description of this functor
by representing red-over-blue cup foams from $\set{Foam}(\web)$ using
certain planar diagrams and examine how the~diagrams changes under
action of the~linear maps associated with foams.

\subsection{A~planar representation of cup foams}

Let $\web$ be a~bounded planar web and $\web^+$ its completion, which is
a~shading $(\web^+_r,\web^+_b)$ satisfying $\Gamma(\web^+_r, \web^+_b) = \web$.
It is assumed that the~basepoint $\ast$ marks the~unbounded region, so that
the~region is painted white. To simplify the~picture and make the~web $\web$
better visible, we do not color regions and we draw red edges as double
or dashed lines depending on whether they survive or disappear after $\Gamma$
is applied, see Figure~\ref{fig:completed webs}.
\begin{figure}[ht]%
	\centering
	\begin{tikzpicture}[x=1em,y=1em,baseline=0pt]
	\draw[V2]
		(-1, 1) -- (-2, 2)	( 1, 1) -- ( 2, 2)
		(-1,-1) -- (-2,-2)	( 1,-1) -- ( 2,-2);
	\draw[V2,->] ( 1.5, 1.5) -- ++( 3pt, 3pt);
	\draw[V2,->] (-1.5,-1.5) -- ++(-3pt,-3pt);
	\draw[V2,->] (-1.5, 1.5) -- ++( 3pt,-3pt);
	\draw[V2,->] ( 1.5,-1.5) -- ++(-3pt, 3pt);
	\draw[V1] (-1,1)
		.. controls (-0.6, 1.4) and ( 0.6, 1.4) .. ( 1, 1)
		.. controls ( 1.4, 0.6) and ( 1.4,-0.6) .. ( 1,-1)
		.. controls ( 0.6,-1.4) and (-0.6,-1.4) .. (-1,-1)
		.. controls (-1.4,-0.6) and (-1.4, 0.6) .. cycle;
	\draw[V1] (-2,2)
		.. controls (-1.3, 2.7) and ( 1.3, 2.7) .. ( 2, 2)
		.. controls ( 2.7, 1.3) and ( 2.7,-1.3) .. ( 2,-2)
		.. controls ( 1.3,-2.7) and (-1.3,-2.7) .. (-2,-2)
		.. controls (-2.7,-1.3) and (-2.7, 1.3) .. cycle;
	\draw[V0]
		(-2, 2) .. controls (-4, 4) and ( 4, 4) .. ( 2, 2)
		(-2,-2) .. controls (-4,-4) and ( 4,-4) .. ( 2,-2)
		(-1, 1) .. controls (-0.2, 0.2) and ( 0.2, 0.2) .. ( 1, 1)
		(-1,-1) .. controls (-0.2,-0.2) and ( 0.2,-0.2) .. ( 1,-1);
\end{tikzpicture}
\hskip 4em
\begin{tikzpicture}[x=1em,y=1em,baseline=0pt]
	\draw[V2]
		(-1, 1) -- (-2, 2)	( 1, 1) -- ( 2, 2)
		(-1,-1) -- (-2,-2)	( 1,-1) -- ( 2,-2);
	\draw[V2,->] ( 1.5, 1.5) -- ++( 3pt, 3pt);
	\draw[V2,->] (-1.5,-1.5) -- ++(-3pt,-3pt);
	\draw[V2,->] (-1.5, 1.5) -- ++( 3pt,-3pt);
	\draw[V2,->] ( 1.5,-1.5) -- ++(-3pt, 3pt);
	\draw[V1] (-1,1)
		.. controls (-0.6, 1.4) and ( 0.6, 1.4) .. ( 1, 1)
		.. controls ( 1.4, 0.6) and ( 1.4,-0.6) .. ( 1,-1)
		.. controls ( 0.6,-1.4) and (-0.6,-1.4) .. (-1,-1)
		.. controls (-1.4,-0.6) and (-1.4, 0.6) .. cycle;
	\draw[V1] (-2,2)
		.. controls (-1.3, 2.7) and ( 1.3, 2.7) .. ( 2, 2)
		.. controls ( 2.7, 1.3) and ( 2.7,-1.3) .. ( 2,-2)
		.. controls ( 1.3,-2.7) and (-1.3,-2.7) .. (-2,-2)
		.. controls (-2.7,-1.3) and (-2.7, 1.3) .. cycle;
	\draw[V0,->]
		( 0, 3.5) .. controls (-4, 3.5) and (-4.5, 3) .. (-4.5, 0)
		          .. controls (-4.5,-3) and (-4,-3.5) .. ( 0,-3.5)
					 .. controls ( 4,-3.5) and ( 4.5,-3) .. ( 4.5, 0)
					 .. controls ( 4.5, 3) and ( 4, 3.5) .. ( 0, 3.5);
	\draw[V0]
		( 2,-2) .. controls ( 4,-4) and ( 4, 4) .. ( 2, 2)
		(-2,-2) .. controls (-4,-4) and (-4, 4) .. (-2, 2)
		(-1, 1) .. controls (-0.2, 0.2) and ( 0.2, 0.2) .. ( 1, 1)
		(-1,-1) .. controls (-0.2,-0.2) and ( 0.2,-0.2) .. ( 1,-1);
\end{tikzpicture}%
	\caption{%
		Two completions of the~same web. The~surrounding dashed circle
		in the~right picture is required by the~condition that the~unbounded
		region is painted white.}%
	\label{fig:completed webs}%
\end{figure}
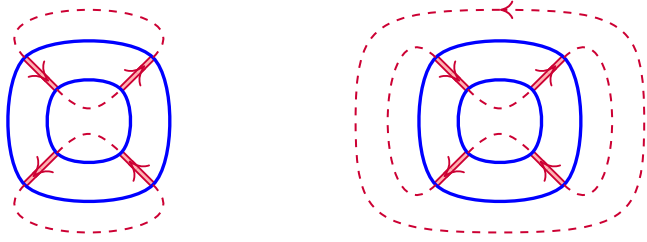
Furthermore, we allow to mark blue loops of $\web^+$ with (any number of) dots.
We assign to such a~planar diagram a~completed foam $\cupfoam(\web^+) =
\cupfoam_r(\web^+) \cup \cupfoam_b(\web^+)$ bounded by $\web^+\rics$
that satisfies the~following conditions:
\begin{enumerate}[label={(CF\arabic{*})},ref={CF\arabic{*}},leftmargin=4em]
	\item\label{mcf:red disks}
		$\cupfoam_r(\web^+) \subset \R^2\times[-1,0]$ and consists of disks
		that project injectively onto $\R^2\times\{0\}$,
	\item\label{mcf:blue disks}
		$\cupfoam_b(\web^+)$ is a~collection of disks such that
	   $\cupfoam_b(\web^+) \cap (\R^2 \times [-1,0]) = \web^+_b \times [-1,0]$,
	\item\label{mcf:dots}
		each blue disk is decorated with as many dots as its boundary loop
	   in $\web^+\rics$, all placed at heights smaller than $-1$ (hence,
		below all red facets).
\end{enumerate}
Painting the~unbounded region white extends to a~unique shading supported
by $\cupfoam(\web^+)$. The~resulting foam $\Gamma(\cupfoam(\web^+)) \in
\set{Foam}(\web)$ is a~red-over-blue cup foam. We call it the~\emph{cup
foam associated to $\web^+$}. The~following observation is an~immediate
consequence of Theorem~\ref{thm:basis-for-Foam(w)}.

\begin{lem}
	Choose a~completion $\web^+$ of $\web$ and consider the~family of all
	dotted completed webs obtained from $\web^+$ by placing at most one dot
	on each blue loop. Then the~corresponding cup foams form a~linear basis
	of $\set{Foam}(\web)$.
\end{lem}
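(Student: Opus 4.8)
The plan is to identify the cup foams appearing in the stated family with the basis of $\set{Foam}(\web)$ already produced in Theorem~\ref{thm:basis-for-Foam(w)}, after which the assertion is one line.

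First I would fix notation. A dotted completed web obtained from $\web^+$ by placing at most one dot on each blue loop is recorded by the subset $X \subset BL(\web)$ of loops that carry a dot; write $\web^+_X$ for the resulting diagram, so that $\Gamma(\cupfoam(\web^+_X)) \in \set{Foam}(\web)$ is the cup foam associated to it. The key point is that $\Gamma(\cupfoam(\web^+_X)) = \pm\,\cupfoam(\web, X)$, where $\cupfoam(\web, X)$ is the red-over-blue cup foam decorated by $X$ introduced in Section~\ref{sec:cup basis}. Indeed, by conditions~(\ref{mcf:red disks})--(\ref{mcf:dots}) the foam $\cupfoam(\web^+_X)$ consists of one blue disk for each loop of $\web^+$, carrying a single regular dot precisely when that loop lies in $X$, capped off by red disks sitting entirely above the blue facets; since a dot may be isotoped freely within its facet without changing the foam, we may take every dot below all red facets. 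Applying $\Gamma$ yields exactly a red-over-blue cup foam decorated by $X$ in the sense of Section~\ref{sec:cup basis}, and Proposition~\ref{prop:foams-red} says such a foam is determined up to sign. Hence $\Gamma(\cupfoam(\web^+_X)) = \pm\,\cupfoam(\web, X)$.

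With this in hand the lemma follows: Theorem~\ref{thm:basis-for-Foam(w)} gives that $\{\cupfoam(\web, X) \mid X \subset BL(\web)\}$ is a linear basis of $\set{Foam}(\web)$, and multiplying each basis vector by the unit $\pm 1$ leaves a basis, namely the family $\{\Gamma(\cupfoam(\web^+_X)) \mid X \subset BL(\web)\}$ of cup foams associated to the dotted completions of $\web^+$.

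The step I expect to require the most care is the identification $\Gamma(\cupfoam(\web^+_X)) = \pm\,\cupfoam(\web, X)$, that is, checking that the planar-diagram recipe of this section reproduces, up to sign, the $I_\web$-based construction of cup foams in Section~\ref{sec:cup basis}; this is precisely where Proposition~\ref{prop:foams-red} is used. Should one prefer not to argue that the sign-and-dual-dot ambiguity of Proposition~\ref{prop:foams-red} collapses to a mere sign here, one can instead observe that turning regular dots into dual dots rewrites a cup foam as a combination of cup foams $\cupfoam(\web, X')$ with $X' \subseteq X$ whose coefficient on $\cupfoam(\web, X)$ is $\pm 1$; the change of basis between $\{\cupfoam(\web, X)\}$ and $\{\Gamma(\cupfoam(\web^+_X))\}$ is then triangular with respect to inclusion of subsets with $\pm 1$ on the diagonal, hence invertible over $\scalars$, and the lemma follows regardless.
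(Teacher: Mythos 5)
Your proof is correct and takes the route the paper intends: the paper calls this an immediate consequence of Theorem~\ref{thm:basis-for-Foam(w)}, and your argument supplies the missing bookkeeping identifying the two families of cup foams up to an invertible change of basis. Your triangular-matrix fallback safely handles the dual-dot ambiguity in Proposition~\ref{prop:foams-red}, but in fact the ambiguity already collapses to a sign here: cutting $\cupfoam(\web^+_X)$ horizontally between the red facets (heights in $[-1,0]$) and the dots (heights below $-1$) splits it into an upper piece with underlying surface $\web^+_b\times[-1,0]$ carrying no dots --- hence determined up to sign alone by Proposition~\ref{prop:foams-red} --- and a fixed collection of dotted blue disks below, whence $\Gamma(\cupfoam(\web^+_X)) = \pm\,\cupfoam(\web,X)$.
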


Notice that dots in this pictures only mark loops. In particular,
moving a~dot along a~loop---even passing through a~crossing with a~red
strands---does not affect the~cup foam represented by the~diagram.

\begin{exam}
	Let $\web$ be a~blue circle. Then $\set{Foam}(\web)$ is generated by
	two blue cups: one with and the~other without a~dot. These are the~cup
	foams associated to $\web^+$ when $\web$ is oriented clockwise, because
	this orientation is oriented from the~unbounded region, hence $\web^+ = \web$.
	Otherwise, $\web^+$ is $\web$ surrounded by a~dashed red circle, which results
	in the~change of the~sign of the~cup with a~dot, 
	see Table~\ref{tab:basis for circles}. This is consistent with
	the~computation from Example~\ref{ex:forgetting red facets}.
\end{exam}

\begin{table}[ht]%
	\centering
	\def\arrayrowstretch{1.2}%
	\tikzset{x=2em,y=2em}%
	\begin{tabular}{ccl}
		\hline
		Web & Completion & Basis \\
		\hline
		$\begin{centertikz}
			\draw[V1,->]
				(0,0.4) arc[radius=0.4, start angle=90, end angle=-270] -- ++(3pt,0);
		\end{centertikz}$
		&
		$\begin{centertikz}
			\draw[V1] (0,0) circle[radius=0.4];
		\end{centertikz}$
		&
		$\begin{centertikz}
			\useasboundingbox (-0.8,-0.5) rectangle (0.8,0.5);
			\draw[V1] (0,0) circle[radius=0.4];
		\end{centertikz}
			\longleftrightarrow
		\foampict{1cup}[0]$,
		\\
		&&$\begin{centertikz}
			\useasboundingbox (-0.8,-0.5) rectangle (0.8,0.5);
			\draw[V1] (0,0) circle[radius=0.4];
			\webdrawdots 1(60:0.4);
		\end{centertikz}
			\longleftrightarrow
		\foampict{1cup}[1]$
		\\
		\hline
		$\begin{centertikz}
			\draw[V1,->]
				(0,0.4) arc[radius=0.4, start angle=90, end angle=450] -- ++(-3pt,0);
		\end{centertikz}$
		&
		$\begin{centertikz}
			\useasboundingbox (-0.8,-0.8) rectangle (0.8,0.8);
			\draw[V1] (0,0) circle[radius=0.4];
			\draw[V0,->] (0.7,0) arc[radius=0.7, start angle=0, end angle=360]
				-- ++(0,3pt);
		\end{centertikz}$
		&
		$\begin{centertikz}
			\useasboundingbox (-0.8,-0.8) rectangle (0.8,0.8);
			\draw[V1] (0,0) circle[radius=0.4];
			\draw[V0,->] (0.7,0) arc[radius=0.7, start angle=0, end angle=360]
				-- ++(0,3pt);
		\end{centertikz}
			\longleftrightarrow
		\foampict{1cup and disk}[>] = \phantom-\foampict[baseline=-0.4em]{1cup}$,
		\\
		&&$\begin{centertikz}
			\useasboundingbox (-0.8,-0.8) rectangle (0.8,0.8);
			\draw[V1] (0,0) circle[radius=0.4];
			\webdrawdots 1(60:0.4);
			\draw[V0,->] (0.7,0) arc[radius=0.7, start angle=0, end angle=360]
				-- ++(0,3pt);
		\end{centertikz}
			\longleftrightarrow
		\foampict{1cup and disk}[>1] = -\foampict[baseline=-0.4em]{1cup}[1]$
		\\
		\hline
	\end{tabular}
	\caption{%
		A~basis for foams bounded by a~circle, represented as planar diagrams
		and as foams.}
	\label{tab:basis for circles}
\end{table}

\subsection{Action of foams}

We now provide a~description of the~linear maps associated to foams
in terms of the~dotted completed webs. In fact, it is enough to analyze
the~following \emph{elementary completed foams}:
\begin{center}
	\tikzset{x=2em,y=2em}%
	\begin{tabular}{ccccc}
		\foampict{pocket above} &
		\foampict{cup} & \foampict{cap} & \foampict{saddle} & \foampict{dot} \\
		a~pocket & a~blue cup & a~blue cap & a~blue saddle & a~dot \\[2ex]
		\foampict{pocket below} &
		\foampict{cup}[2] & \foampict{cap}[2] & \foampict{saddle}[2] & \\
		a~reversed pocket & a~red cup & a~red cap & a~red saddle & 
	\end{tabular}
\end{center}
because every foam can be decomposed into these.

\subsubsection*{Pockets and bicolored isotopies}
A~bicolored isotopy is a~sequence of several bigon moves \eqref{rel:isot-webs},
which are realized by the~pocket foams. When applied to a~(completed)
cup foam, it results in a~collection of disks which may or may not be minimal,
see Figure~\ref{fig:bigon-moves-on-foams}.
\begin{figure}[ht]%
	\centering
%
\begin{tikzpicture}[x=0.08\linewidth,y=0.08\linewidth]

\begin{scope}[shift={(-3.5,0)}]
	\draw[1facetFront] (-0.4,-0.3) -- (0.4,0.3) -- (0.4,-0.5) -- (-0.4,-1.1) -- cycle;
	\fill[2facetBack] (0.2, 0)
		arc[x radius=0.7, y radius=0.15, start angle=180, end angle= 90]
		arc[x radius=0.2, y radius=0.75, start angle=360, end angle=270]
		arc[x radius=0.5, y radius=0.60, start angle=270, end angle=180];
	\fill[2facetFront] (0.2, 0)
		arc[x radius=0.3, y radius=0.15, start angle=180, end angle=270]
		arc[x radius=0.2, y radius=0.45, start angle=180, end angle=270]
		arc[x radius=0.5, y radius=0.60, start angle=270, end angle=180];
	\draw[2facetLine] (0.2, 0)
		arc[x radius=0.3, y radius=0.15, start angle=180, end angle=270]
		arc[x radius=0.2, y radius=0.45, start angle=180, end angle=270]
		arc[x radius=0.2, y radius=0.75, start angle=270, end angle=360]
		arc[x radius=0.7, y radius=0.15, start angle= 90, end angle=180]
		arc[x radius=0.5, y radius=0.60, start angle=180, end angle=270];
	
	\begin{scope}[shift={(0,-2)}]
		\clip (-0.5, 0.3) |- (0.3,-0.3) -- (1.1,0.3) -- cycle;
		\draw[BCedge+,fill=V2innerColor] (1,0.15) -- (0.9, 0.15)
			arc[x radius=0.65, y radius=0.15, start angle= 90, end angle=180]
			arc[x radius=0.25, y radius=0.15, start angle=180, end angle=270]
			-- ++(0.1,0);
		\draw[BCedge] (-0.6,-0.45) -- (0.6, 0.45);
	\end{scope}
\end{scope}

\begin{scope}
	\begin{scope}
		\clip (0,0.005) arc[x radius=0.17, y radius=0.62, start angle=-90, end angle=0]
			|- (-0.6, 0.8) |- (0,-0.2);
		\fill[2facetBack]
			(0, 0) .. controls ++(-0.3,0.1) and ++(0,-0.4) .. (-0.5,0.5)
			arc[x radius=1.4, y radius=0.15, start angle=180, end angle= 90];
	\end{scope}
	\begin{scope}
		\clip (0,0.005)
			arc[x radius=0.152, y radius=0.38, start angle=270, end angle=180]
			-- (0.4,0.8) -| (-0.6, -0.2);
		\fill[2facetFront](0, 0) .. controls ++(-0.3,0.1) and ++(0,-0.4) .. (-0.5,0.5)
			arc[x radius=1.0, y radius=0.15, start angle=180, end angle=270];
		\draw[2facetLine] (0, 0) .. controls ++(-0.3,0.1) and ++(0,-0.4) .. (-0.5,0.5);
	\end{scope}
	\draw[1facetFront] (-0.4, 0.2) -- (0.4,0.8) -- (0.4,-0.5) -- (-0.4,-1.1) -- cycle;
	\begin{scope}
		\clip (0,0.005) arc[x radius=0.17, y radius=0.62, start angle=-90, end angle=0]
			-- (1.0, 0.8) |- (0,-1);
		\fill[2facetBack](-0.5,0.5)
			arc[x radius=1.4, y radius=0.15, start angle=180, end angle= 90]
			arc[x radius=0.2, y radius=1.25, start angle=360, end angle=270]
			.. controls ++(-0.5,0) and ++(0.3,-0.1) .. (0, 0);
	\end{scope}
	\draw[seam]
		(0,0.005) arc[x radius=0.17, y radius=0.62, start angle=-90, end angle=0];
	\begin{scope}
		\clip (0,0.005)
			arc[x radius=0.152, y radius=0.38, start angle=270, end angle=180]
			-- (1.0, 0.8) |- (0,-1);
		\fill[2facetFront] (-0.5,0.5)
			arc[x radius=1.0, y radius=0.15, start angle=180, end angle=270]
			arc[x radius=0.2, y radius=0.95, start angle=180, end angle=270]
			.. controls ++(-0.5,0) and ++(0.3,-0.1) .. (0, 0) -- (-0.5,0);
	\end{scope}
	\draw[2facetLine] (0.7, -0.6)
		arc[x radius=0.2, y radius=0.95, start angle=270, end angle=180]
		arc[x radius=1.0, y radius=0.15, start angle=270, end angle=180]
		arc[x radius=1.4, y radius=0.15, start angle=180, end angle= 90]
		arc[x radius=0.2, y radius=1.25, start angle=360, end angle=270]
		.. controls ++(-0.5,0) and ++(0.3,-0.1) .. (0, 0) -- ++(-0.6pt, 0.2pt);
	\draw[seam]
		(0,0.005) arc[x radius=0.152, y radius=0.38, start angle=270, end angle=180];
	\draw[1facetLine] (-0.2,0.35) -- (0.2,0.65);
	
	\begin{scope}[shift={(0,-2)}]
		\clip (-0.6, 0.3) |- (0.3,-0.3) -- (1.1,0.3) -- cycle;
		\draw[BCedge+,fill=V2innerColor] (1,0.15) -- (0.9, 0.15)
			arc[x radius=1.4, y radius=0.15, start angle= 90, end angle=180]
			arc[x radius=1.0, y radius=0.15, start angle=180, end angle=270]
			-- ++(0.1,0);
		\draw[BCedge] (-0.6,-0.45) -- (0.6, 0.45);
	\end{scope}
\end{scope}

\begin{scope}[shift={(3.5,0)}]
	\fill[2facetBack] (0,-0.2)
		.. controls ++(-0.3,0.1) and ++(0,-0.3) .. (-0.5,0.2)
		.. controls ++(0,0.3) and ++(-0.3,-0.1) .. ( 0,0.6)
		(0,0.595) arc[x radius=0.15, y radius=0.395, start angle=90, end angle=-90];
	\fill[2facetFront] (0,-0.2)
		.. controls ++(-0.3,0.1) and ++(0,-0.3) .. (-0.5,0.2)
		.. controls ++(0,0.3) and ++(-0.3,-0.1) .. ( 0,0.6) --
		(0,0.595) arc[x radius=0.15, y radius=0.395, start angle=90, end angle=270];
	\draw[2facetLine] (0,-0.2)
		.. controls ++(-0.3,0.1) and ++(0,-0.3) .. (-0.5,0.2)
		.. controls ++(0,0.3) and ++(-0.3,-0.1) .. ( 0,0.6);
	\draw[1facetFront] (-0.4, 0.7) -- (0.4,1.3) -- (0.4,-0.5) -- (-0.4,-1.1) -- cycle;
	\fill[2facetBack] (0,0.6)
		.. controls ++(0.1,0.033) and ++(0,-0.4) .. (0.2, 1)
		arc[x radius=0.7, y radius=0.15, start angle=180, end angle= 90]
		arc[x radius=0.2, y radius=1.75, start angle=360, end angle=270]
		.. controls ++(-0.5,0) and ++(0.3,-0.1) .. (0,-0.2) -- (0,-0.195)
		arc[x radius=0.15, y radius=0.395, start angle=-90, end angle=90];
	\draw[seam] (0,-0.195)
		arc[x radius=0.15, y radius=0.395, start angle=-90, end angle=90];
	\fill[2facetFront] (0,0.6)
		.. controls ++(0.1,0.033) and ++(0,-0.4) .. (0.2, 1)
		arc[x radius=0.3, y radius=0.15, start angle=180, end angle=270]
		arc[x radius=0.2, y radius=1.45, start angle=180, end angle=270]
		.. controls ++(-0.5,0) and ++(0.3,-0.1) .. (0,-0.2) -- (0,-0.195)
		arc[x radius=0.15, y radius=0.395, start angle=270, end angle=90];
	\draw[seam]
		(0,-0.195) arc[x radius=0.15, y radius=0.395, start angle=270, end angle=90];
	\draw[2facetLine] (0,0.6) ++(-0.6pt,-0.2pt) --
		(0,0.6) .. controls ++(0.1,0.033) and ++(0,-0.4) .. (0.2, 1)
		arc[x radius=0.7, y radius=0.15, start angle=180, end angle= 90]
		arc[x radius=0.2, y radius=1.75, start angle=360, end angle=270]
		.. controls ++(-0.5,0) and ++(0.3,-0.1) .. (0,-0.2) -- ++(-0.6pt,0.2pt)
		(0.2, 1)
		arc[x radius=0.3, y radius=0.15, start angle=180, end angle=270]
		arc[x radius=0.2, y radius=1.45, start angle=180, end angle=270];

	\begin{scope}[shift={(0,-2)}]
		\clip (-0.6, 0.3) |- (0.3,-0.3) -- (1.1,0.3) -- cycle;
		\fill[black!20] (0.9, 0.16)
			arc[x radius=1.4, y radius=0.16, start angle= 90, end angle=180]
			arc[x radius=1.0, y radius=0.16, start angle=180, end angle=270];
		\draw[BCedge+,fill=V2innerColor] (1,0.15) -- (0.9, 0.15)
			arc[x radius=0.65, y radius=0.15, start angle= 90, end angle=180]
			arc[x radius=0.25, y radius=0.15, start angle=180, end angle=270]
			-- ++(0.1,0);
		\draw[BCedge] (-0.6,-0.45) -- (0.6, 0.45);
	\end{scope}
\end{scope}

\draw[->] (-2.2,-0.1) -- (-0.8,-0.1)
	node[midway,above] {\small push}
	node[midway,below] {\small leftwards};

\draw[->] (1.3,-0.1) -- (2.7,-0.1)
	node[midway,above] {\small pull}
	node[midway,below] {\small rightwards};

\end{tikzpicture}%
	\caption{%
		A~cup foam with its projection on the~horizontal plane (the~left column)
		and the~results of applying the~bigon move twice (the~middle and right
		columns). The~middle foam is again a~cup foam, but not the~right one:
		the~projection has double points---the~shaded region outside of the cup---%
		coming from the~disk to the~side of the~vertical plane.
	}%
	\label{fig:bigon-moves-on-foams}%
\end{figure}
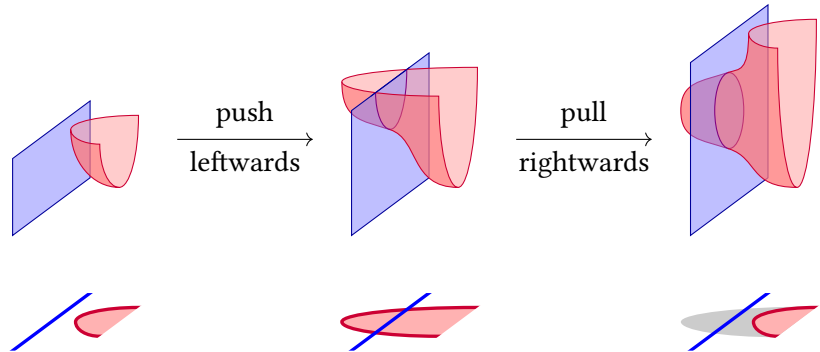
The~resulting cup foam is minimal whenever the~projection of a~red disk is pushed
through a~blue arc (this creates no double points in the~projection), so that
the~associated map takes a~dotted web to the~result of applying the~bigon move:
\begin{align}
\label{move:ext bigon->no bigon}
	\webpict{ext bigon}[V0,V2,] &\mapsto\webpict{ext cup}[V2,]   &
	\webpict{ext bigon}[V2,V0,] &\mapsto\webpict{ext cup}[V0,]   \\
\label{move:no bigon->inn bigon}
	\webpict{inn cup}[V2,]      &\mapsto\webpict{inn bigon}[V0,V2,] &
	\webpict{inn cup}[V0,]      &\mapsto\webpict{inn bigon}[V2,V0,]
\end{align}
The~shaded regions are the~projections of the~red disks in the~corresponding
completed cup foams.
However, pulling the~projection of a~red disk off a~blue arc creates double points,
like in the~right column of Figure~\ref{fig:bigon-moves-on-foams}. Indeed, the~new
red disk intersects the~blue surface in a~circle, so that either of
\eqref{rel:isot-foams} has to be applied. This may cost a~sign, depending on
the~orientation of the~edges:
\begin{align}
\label{move:inn bigon->no bigon,+}
	\webpict{inn bigon}[V0,V2,+] &\mapsto\phantom-\webpict{inn cup}[V2,+] &
	\webpict{inn bigon}[V2,V0,-] &\mapsto\phantom-\webpict{inn cup}[V0,-] \\
\label{move:inn bigon->no bigon,-}
	\webpict{inn bigon}[V0,V2,-] &\mapsto        -\webpict{inn cup}[V2,-] &
	\webpict{inn bigon}[V2,V0,+] &\mapsto        -\webpict{inn cup}[V0,+] \\
\label{move:no bigon->ext bigon,+}
	\webpict{ext cup}[V2,+]     &\mapsto\phantom-\webpict{ext bigon}[V0,V2,+] &
	\webpict{ext cup}[V0,-]     &\mapsto\phantom-\webpict{ext bigon}[V2,V0,-] \\
\label{move:no bigon->ext bigon,-}
	\webpict{ext cup}[V2,-]     &\mapsto        -\webpict{ext bigon}[V0,V2,-] &
	\webpict{ext cup}[V0,+]     &\mapsto        -\webpict{ext bigon}[V2,V0,+]
\end{align}
Indeed, the~left moves in \eqref{move:inn bigon->no bigon,+} and
\eqref{move:inn bigon->no bigon,-} are realized by detaching red
cylinders with \eqref{rel:detach-cylinder} whereas the~right ones---by
eliminating red caps with \eqref{rel:red-cap-vs-plane}. Likewise,
the~relations \eqref{rel:stripe-vs-plane} and \eqref{rel:detach-saddle}
give the~signs for the~left and right sides respectively of both
\eqref{move:no bigon->ext bigon,+} and \eqref{move:no bigon->ext bigon,+}.

\subsubsection*{Placing a~dot}
Placing a~dot on $\cupfoam(\web^+)$ near the~boundary violates \eqref{mcf:dots}.
To obtains a~minimal cup foam, the~dot has to be moved down.

Let $p$ be the~projection of the~dot onto the~horizontal plane and assume that
it does not lie on a~red loop. We define the~\emph{nestedness} $n(p)$ as the~number
of red loops encircling $p$. It counts red facets below $p$ in the~cup foam, hence,
the~number of times the~dot-moving relation \eqref{rel:dots} has to be applied to
move the~dot from top to the~bottom of a~blue disk. Therefore, placing a~dot on
a~blue loop results in the~following map:
\begin{equation}
\label{move:place a dot}
	\webpict{line}[V10] \mapsto \left\{\begin{array}{ll}
		\phantom h\webpict{line}[V11] & \text{ if }n(p)\text{ is even},\\
		h\webpict{line}[V10] - \webpict{line}[V11] & \text{ if }n(p)\text{ is odd}.
	\end{array}\right.
\end{equation}

\subsubsection*{Blue cups, caps, and saddles}
Suppose now that $\dblcob$ is a~completed foam with $\web^+$ at its bottom
and a~unique critical point that lies on the~blue surface.
In this case $\dblcob \cup \cupfoam(\web^+)$ is no longer a~cup foam associated
with the~output of $\dblcob$: to have one, the~critical point of $\dblcob$ has
to be slid downwards, below all red facets, and this may cost a~sign. Moreover,
a~cap creates a~sphere that has to be evaluated, whereas a~split creates a~neck
that has to be cut.

Let $p$ be the~projection of the~critical point onto the~horizontal plane and
assume that $p\notin \web^+$. We say that a~red loop $\gamma$ encircling $p$
is \emph{evenly distanced} if any generic path connecting $p$ to a~point $q$
from a~solid (resp.\ dashed) red arc of $\gamma$ intersects blue circles in
an~even (resp.\ odd) number of points. Otherwise, $\gamma$ is \emph{oddly distanced}.
Let $s(p)$ count \emph{oddly distanced} anti-clockwise and \emph{evenly distanced}
clockwise red loops surrounding $p$. This corresponds to two types of red facets
below $p$: downwards oriented ones that survive in the~cup foam $\Gamma(\web^+)$
and upwards oriented ones that are removed. These are exactly those situations,
in which there is a~sign in relations \eqref{rel:detach-cylinder} and
\eqref{rel:detach-saddle}.
Hence $s(p)$ determines the~result of isotoping the~blue critical point below
all red facets. Therefore, the~maps induced by critical blue points are the~usual
ones scaled by $(-1)^{s(p)}$:
\begin{align}
\label{move:blue cup}
	&\text{a~cup:} &
		\emptyset &\mapsto (-1)^{s(p)}\webpict{circle}[V1] \\
\label{move:blue cap}
	&\text{a~cap:} &
		\webpict{circle}[V1] &\mapsto (-1)^{s(p)} \\
\label{move:blue merge}
	&\text{a~merge:} &
		\webpict{circles}[V1] &\mapsto (-1)^{s(p)}\webpict{merged}[V1] \\
\label{move:blue split}
	&\text{a~split:} &
		\webpict{merged}[V1] &\mapsto (-1)^{s(p)}\left(
				\webpict{circles}[V110] + \webpict{circles}[V101]
		\right).
\end{align}

\subsubsection*{Red cups, caps, and saddles}
Placing a~red cup at the~top of $\cupfoam(\web^+)$ results in a~cup foam.
Hence, no sign appears. Conversely, capping off an~isolated red circle creates
a~red sphere, which can be removed by \eqref{rel:sphere-eval} at a cost of sign.
Hence, we obtain the~following maps:
\begin{align}
\label{move:red birth}
	&\text{a~cup:} & 
	\emptyset &\mapsto \webpict{circle}[V2] &
	\emptyset &\mapsto \webpict{circle}[V0] \\
\label{move:red death}
	&\text{a~cap:} &
	\webpict{circle}[V2] &\mapsto -1 &
	\webpict{circle}[V0] &\mapsto  1
\end{align}
The~behavior of merges and splits depends on whether the~two red circles (those
being merged or the~result of a~split) are nested or not. In the~latter case,
a~merge takes a~minimal cup foam to a~minimal cup foam, whereas splitting a~red
circle creates a~neck that has to be cut with \eqref{rel:neck-cutting}
if it survives in the~foam. Therefore, the~correspondings maps are
\begin{align}
\label{move:red merge}
	&\text{a~merge:} &
	\webpict{circles}[V2] &\mapsto \phantom-\webpict{merged} [V2] &
	\webpict{circles}[V0] &\mapsto  \webpict{merged} [V0] \\
\label{move:red split}
	&\text{a~split:} &
	\webpict{merged} [V2] &\mapsto -\webpict{circles}[V2] &
	\webpict{merged} [V0] &\mapsto \webpict{circles}[V0]
\end{align}
Merging nested red loops results in a~red disk, which does not project injectively
onto the~horizontal plane: to fix this, the~disk has to be isotoped into a~croissant:
\begin{center}
	\begin{tikzpicture}[x=3em,y=3em]
\begin{scope}[shift={(1.7,0)}]
	\fill[2facetBack] (1.3,0.3)
		arc[x radius=0.35, y radius=0.4, start angle=360, end angle=270]
		.. controls (0.7, -0.1) and (0.8,-0.05) .. (0.5,-0.05)
		.. controls (-0.1,-0.05) and (0,-0.5) .. (-0.4, -0.5)
		arc[x radius=0.4, y radius=0.5, start angle=270, end angle=180]
		arc[x radius=1.3, y radius=0.6, start angle=180, end angle= 90]
		arc[x radius=0.8, y radius=0.3, start angle= 90, end angle=  0];
	\fill[2facetFront] (1.3, 0.3)
		arc[x radius=0.35, y radius=0.4, start angle=360, end angle=270]
		.. controls (0.7, -0.1) and (0.8,-0.05) .. (0.5,-0.05)
		.. controls (-0.1,-0.05) and (0,-0.5) .. (-0.4, -0.5)
		arc[x radius=0.4, y radius=0.5, start angle=270, end angle=360]
		arc[x radius=0.5, y radius=0.2, start angle=180, end angle= 90] 
		.. controls (0.8,0.2) and (0.8,0.1) .. (1.0,0.1)                
		arc[x radius=0.3, y radius=0.2, start angle=270, end angle=360]; 
	\draw[2facetLine] (1.3, 0.3)
		arc[x radius=0.35, y radius=0.4, start angle=360, end angle=270]
		.. controls (0.7, -0.1) and (0.8,-0.05) .. (0.5,-0.05)
		.. controls (-0.1,-0.05) and (0,-0.5) .. (-0.4, -0.5);
	\fill[2facetBack] (-0.8,0)
		.. controls (-0.8,-0.45) and (-0.5,-0.85) .. (0.5,-0.85)
		arc[x radius=0.7, y radius=0.55, start angle=270, end angle=360]
		arc[x radius=0.3, y radius=0.18, start angle=  0, end angle= 90] 
		.. controls (0.7,-0.12) and (0.7,-0.2) .. (0.5,-0.2)
		arc[x radius=0.5, y radius=0.2, start angle=270, end angle=180]
		arc[x radius=0.4, y radius=0.5, start angle=360, end angle=180];
	\draw[2facetLine] (0,0)
		arc[x radius=0.4, y radius=0.5, start angle=360, end angle=270];
	\fill[2facetFront] (1.2,-0.3)
		arc[x radius=0.7, y radius=0.55, start angle=360, end angle=270]
		.. controls (-0.5,-0.85) and (-0.8, -0.45) .. (-0.8, 0)
		arc[x radius=1.3, y radius=0.6, start angle=180, end angle=270] 
		arc[x radius=0.7, y radius=0.3, start angle=270, end angle=360]; 
	\draw[2facetLine] (0,0)
		arc[x radius=0.5, y radius=0.2, start angle=180, end angle= 90] 
		.. controls (0.8,0.2) and (0.8,0.1) .. (1.0,0.1)                
		arc[x radius=0.3, y radius=0.2, start angle=270, end angle=360] 
		arc[x radius=0.8, y radius=0.3, start angle=  0, end angle= 90] 
		arc[x radius=1.3, y radius=0.6, start angle= 90, end angle=180] 
		arc[x radius=1.3, y radius=0.6, start angle=180, end angle=270] 
		arc[x radius=0.7, y radius=0.3, start angle=270, end angle=360] 
		arc[x radius=0.3, y radius=0.18, start angle=  0, end angle= 90] 
		.. controls (0.7,-0.12) and (0.7,-0.2) .. (0.5,-0.2)
		arc[x radius=0.5, y radius=0.2, start angle=270, end angle=180] 
		(1.2,-0.3)
		arc[x radius=0.7, y radius=0.55, start angle=360, end angle=270]
		.. controls (-0.5,-0.85) and (-0.8, -0.45) .. (-0.8, 0);
\end{scope}

\begin{scope}[shift={(-2.2,0)}]
	\fill[2facetBack] (1.3, 0.3)
		arc[x radius=0.8, y radius=0.3, start angle=  0, end angle= 90]
		arc[x radius=1.3, y radius=0.6, start angle= 90, end angle=180]
		arc[x radius=1.2, y radius=1.0, start angle=180, end angle=270]
		arc[x radius=0.8, y radius=0.7, start angle=270, end angle=360]
		-- (1.13,-0.58) .. controls (1.2,-0.42) and (1.3,-0.1) .. (1.3,0.3);
	\fill[2facetFront] (0, 0)
		arc[x radius=0.4, y radius=0.5, start angle=180, end angle=270]
		.. controls (0.7,-0.5) and (0.65,-0.3) .. (0.95, -0.3)
		.. controls (1.1,-0.3) and (1.1,-0.42) .. (1.13,-0.58)
		.. controls (1.2,-0.42) and (1.3,-0.1) .. (1.3,0.3)
		arc[x radius=0.3, y radius=0.2, start angle=360, end angle=270]
		.. controls (0.8,0.1) and (0.8,0.2) .. (0.5,0.2)
		arc[x radius=0.5, y radius=0.2, start angle=90, end angle=180];
	\draw[2facetLine]
		(1.3, 0.3) .. controls (1.3,-0.1) and (1.2,-0.42) .. (1.13,-0.58);
	\fill[2facetBack] (0,0)
		arc[x radius=0.4, y radius=0.5, start angle=180, end angle=270]
		.. controls (0.7,-0.5) and (0.65,-0.3) .. (0.95, -0.3)
		.. controls (1.1,-0.3) and (1.1,-0.42) .. (1.13,-0.58) -- (1.2,-0.3)
		arc[x radius=0.3, y radius=0.18, start angle=  0, end angle= 90]
		.. controls (0.7,-0.12) and (0.7,-0.2) .. (0.5,-0.2)
		arc[x radius=0.5, y radius=0.2, start angle=270, end angle=180];
	\draw[2facetLine] (0, 0)
		arc[x radius=0.4, y radius=0.5, start angle=180, end angle=270]
		.. controls (0.7,-0.5) and (0.65,-0.3) .. (0.95, -0.3)
		.. controls (1.1,-0.3) and (1.1,-0.42) .. (1.13,-0.58);
	\fill[2facetFront] (-0.8, 0)
		arc[x radius=1.2, y radius=1.0, start angle=180, end angle=270]
		arc[x radius=0.8, y radius=0.7, start angle=270, end angle=360]
		arc[x radius=0.7, y radius=0.3, start angle=360, end angle=270] 
		arc[x radius=1.3, y radius=0.6, start angle=270, end angle=180]; 
	\draw[2facetLine] (0,0)
		arc[x radius=0.5, y radius=0.2, start angle=180, end angle= 90] 
		.. controls (0.8,0.2) and (0.8,0.1) .. (1.0,0.1)                
		arc[x radius=0.3, y radius=0.2, start angle=270, end angle=360] 
		arc[x radius=0.8, y radius=0.3, start angle=  0, end angle= 90] 
		arc[x radius=1.3, y radius=0.6, start angle= 90, end angle=180] 
		arc[x radius=1.3, y radius=0.6, start angle=180, end angle=270] 
		arc[x radius=0.7, y radius=0.3, start angle=270, end angle=360] 
		arc[x radius=0.3, y radius=0.18, start angle=  0, end angle= 90] 
		.. controls (0.7,-0.12) and (0.7,-0.2) .. (0.5,-0.2)
		arc[x radius=0.5, y radius=0.2, start angle=270, end angle=180] 
		(-0.8, 0)
		arc[x radius=1.2, y radius=1.0, start angle=180, end angle=270]
		arc[x radius=0.8, y radius=0.7, start angle=270, end angle=360];
\end{scope}

\draw[->] (-0.6,-0.1) -- (0.6,-0.1);

\end{tikzpicture}
\end{center}
This isotopy can be described as a~\emph{finger move}: place your finger vertically
near the~saddle and move it inwards, pushing the~red disk. The~disk is then isotoped
through any blue facet attached to a~blue arc that cuts the~inner red circle or
a~cylinder attached to a~blue circle surrounded by the~inner red circle:
\[
	\tikzset{x=2em,y=2em}
	\foampict{cutting plane cut} \longrightarrow \foampict{cutting plane}
\hskip 4em
	\foampict{inner cylinder cut} \longrightarrow \foampict{inner cylinder}
\]
Depending on which red facets survive, each move represents two relations
between foams. We leave it to the~reader to check that the~foams involved
in the~right move are always equal, whereas the~left move costs a~sign only
in the~following two configurations:
\begin{center}
	\tikzset{x=2em,y=2em}%
	\begin{tikzpicture}
		\draw[V0] (0, 0.6) -- (-0.7, 0.6)
		          (0,-0.6) -- (-0.7,-0.6);
		\draw[V2] (0, 0.6) -- ( 0.7, 0.6)
		          (0,-0.6) -- ( 0.7,-0.6);
		\draw[V2,->] (0.4,-0.6) -- ++( 2pt,0);
		\draw[V2,->] (0.4, 0.6) -- ++(-6pt,0);
		\draw[V0lineColor,dotted]
			(-0.7, 0.6) arc[radius=0.6, start angle=90, end angle=270]
			( 0.7, 0.6) arc[radius=0.8, start angle=270, end angle=330]
			( 0.7,-0.6) arc[radius=0.8, start angle= 90, end angle= 30];
		\draw[V1] (0,1) -- (0,-1);
		\draw[shift={(1.5,0)}] (-2pt,-2pt) -- (2pt,2pt) (-2pt,2pt) -- (2pt,-2pt);
	\end{tikzpicture}
	\hskip 2cm
	\begin{tikzpicture}
		\draw[V2] (0, 0.6) -- (-0.7, 0.6)
		          (0,-0.6) -- (-0.7,-0.6);
		\draw[V2,->] (-0.4,-0.6) -- ++(-2pt,0);
		\draw[V2,->] (-0.4, 0.6) -- ++( 6pt,0);
		\draw[V0] (0, 0.6) -- ( 0.7, 0.6)
		          (0,-0.6) -- ( 0.7,-0.6);
		\draw[V2lineColor,dotted]
			(-0.7, 0.6) arc[radius=0.6, start angle=90, end angle=270]
			( 0.7, 0.6) arc[radius=0.8, start angle=270, end angle=330]
			( 0.7,-0.6) arc[radius=0.8, start angle= 90, end angle= 30];
		\draw[V1] (0,1) -- (0,-1);
		\draw[shift={(1.5,0)}] (-2pt,-2pt) -- (2pt,2pt) (-2pt,2pt) -- (2pt,-2pt);
	\end{tikzpicture}
\end{center}
where the~position of a~saddle is marked with a~cross.
Let $c$ be the~number of such configurations.
Keeping in mind that a~neck has to be cut in
case of a~split, we end up with the~following formulas:
\begin{align}
\label{move:inner merge}
	\webpict{nested}[V2] &\mapsto \phantom-(-1)^c\webpict{croissant}[V2] &
	\webpict{nested}[V0] &\mapsto (-1)^c\webpict{croissant}[V0] \\
\label{move:inner split}
	\webpict{croissant}[V2] &\mapsto -(-1)^{c}\webpict{nested}[V2] &
	\webpict{croissant}[V0] &\mapsto (-1)^{c}\webpict{nested}[V0]
\end{align}

\subsubsection*{Other common foams}
There are other moves of interest, such as blue saddles with vertical
red facets (in particular, zips and unzips) or red cups and caps that
intersect vertical blue facets. All can be represented as compositions
of those described above. For instance, a~zip is isotopic to a~pocket
move followed by a~saddle:
\[
	\webpict{blue-red-blue} \longrightarrow
	\webpict{blue-bigon}    \longrightarrow
	\webpict{zipped}
\]
As before, the~shaded regions represent a~projection of a~red disk,
and it is clear that the~first move takes a~basic cup foam to a~basic
cup foam, so that signs are governed by the~second move. Therefore,
the~map induced by a~zip is one of the~following two
\begin{align}
\label{move:merge zip}
	&\text{a~merging zip:} &
		\webpict{unjoined circles} &\mapsto (-1)^{s^+(p)}\webpict{joined merged} \\
\label{move:split zip}
	&\text{a~splitting zip:} &
		\webpict{unjoined merged} &\mapsto (-1)^{s^+(p)}\left(
				\webpict{joined circles}[10] + \webpict{joined circles}[01]
		\right),
\end{align}
where $s^+(p)$ is computed like $s(p)$, except that we take into account
the~loop passing through the~created red edge if it is oriented anticlockwise.

In the~unzip the~saddle precedes the~pocket and to ensure that the~latter
does not affect the~sign, we perform the~saddle to the~side of the~red
disk attached to the~red edge:
\[
	\webpict{zipped}     \longrightarrow
	\webpict{bigon-blue} \longrightarrow
	\webpict{blue-red-blue}
\]
Therefore, the~induced map is one of the~following two:
\begin{align}
\label{move:merge unzip}
	&\text{a~merging unzip:} &
		\webpict{joined circles} &\mapsto (-1)^{s^-(p)}\webpict{unjoined merged} \\
\label{move:split unzip}
	&\text{a~splitting unzip:} &
		\webpict{joined merged} &\mapsto (-1)^{s^-(p)}\left(
				\webpict{unjoined circles}[10] + \webpict{unjoined circles}[01]
		\right),
\end{align}
where again $s^-(p)$ is computed like $s(p)$ without counting
the~loop passing through the~removed red edge.

\section{The~Blanchet--Khovanov invariant of tangles with balanced boundaries}

Let $\cat{Foam}^\circ$ be the~subbicategory of $\cat{Foam}$ generated
by balanced sequences. In what follows we construct a~TQFT functor
$\Fweb^\circ\colon \cat{Foam}^\circ \to \cat{Bimod}$. If $T$ is
an~oriented tangle with balanced input and output collections of
points, then its resolutions are in $\cat{Foam}^\circ\rics$, so that
applying $\Fweb^\circ$ to $\wKhBracket{T}$ results in a~chain complex
of bimodules. We then show that this chain complex is isomorphic to
the~Khovanov's tangle invariant \cite{KhArcAlgebras}, but it admits
a~strictly functorial action of tangle cobordisms.

\subsection{A~linear basis of webs}

A~web $\web \subset \R\times(-\infty,0]$ is called a~\emph{cup web}
if its underlying tangle is a~cup diagram, i.e.\ a~collection of disjoint
arcs. All cup webs with the~same boundary any extending the~same cup
diagram coincide in $\set{Web}$ due to Proposition~\ref{prop:webs-red}
(and are isomorphic as objects of $\cat{Foam}$). Moreover, choosing a~cup
web for each cup diagram results in a~basis of the~space of webs with
given boundary, which we call a~\emph{cup basis}.

We describe now a~particular nice cup basis of webs bounded by a~balanced
$\bdry$ (see also Figure~\ref{fig:cup-web}).
Let $n$ be half of the~number of blue points in $\bdry$ (being balanced, $\bdry$
has an~even number of blue points). Proposition~\ref{prop:blue->binded} provides
an~invertible web $\EqSymb_\bdry\colon \bdry_b \to \bdry$ with $2n$ vertical
lines as blue edges. To obtain a~cup basis, attach cup diagrams to the~bottom
of $\EqSymb_\bdry$. In other words, the~basis is the~image of cup diagrams
under the~equivalence $\EqFunc_\bdry$ from Section~\ref{sec:equivalences}.
We call it the~\emph{red-over-blue basis of type} $\EqSymb_\bdry$,
because all red edges in the~webs appear above minima of blue cups.

\begin{figure}[ht]%
	\centering
%
\begin{tikzpicture}[x=0.0625\columnwidth,y=0.0625\columnwidth,
	shadow/.style={color=black!10}]
	\makeatletter
	\def\points{%
		\draw[thin] (-1,0) -- (5.5,0);
		\foreach \x/\sign in {0/-1, 1/+2, 2/+1, 3/-1, 4/+1, 5/-2} do
			\expandafter\webendpoint\sign(\x,0);
	}%
	\def\fullpoints{%
		\draw[thin] (-1,0) -- (5.5,0);
		\foreach \x/\sign in {-0.5/-2, 0/-1, 1/+2, 1.5/+2, 2/+1, 3/-1, 4/+1, 5/-2} do
			\expandafter\webendpoint\sign(\x,0);
	}%
	\def\caption{\@ifnextchar[\captionAt{\captionAt[-2.25]}}
	\def\captionAt[#1]{%
		\node[anchor=north, font=\small, align=center] at (2.5, #1)
	}
	\makeatother
	
	\begin{scope}[shift={(-4,0)}]
		\points
		\caption[-0.5] {Step 0: the~initial collection of points.};
	\end{scope}		
	
	\begin{scope}[shift={(4,0)}]
		\fullpoints
		\caption[-0.5] {Step 1: additional red points inserted.};
	\end{scope}		

	\begin{scope}[shift={(-4,-2)}]
		\fill[BCshade]
			    (0,0) rectangle (2,-2)
				 (3,0) rectangle (4,-2)
		    (-0.5,0) arc[start angle=180, end angle=360, radius=0.75]
		    ( 1.5,0) arc[start angle=180, end angle=360, y radius=1.25, x radius=1.75];
		\begin{scope}
			\clip (0,0) rectangle (2,-2);
			\fill[white] (-0.5,0) arc[start angle=180, end angle=360, radius=0.75];
		\end{scope}
		\begin{scope}
			\clip (1,1pt) -- (1,-2) -- (2,-2) -- (2,0) -- (3,0) -- (3,-2) -- (4,-2) -- (4,1pt) -- cycle;
			\fill[white] ( 1.5,0) arc[start angle=180, end angle=360, y radius=1.25, x radius=1.75];
		\end{scope}
		%
		%
		\foreach \x in {0,2,3,4} do
			\draw[BCedge] (\x,0) -- (\x,-2);
		\draw[BCedge+,->-=0.6] (-0.5,0) arc[start angle=180, end angle=360, radius=0.75];
		\draw[BCedge+,-<-=0.6] ( 1.5,0) arc[start angle=180, end angle=360, y radius=1.25, x radius=1.75];
		\fullpoints
		\caption {Step 2: a~bicolored cut with a~shading.};
	\end{scope}		

	\begin{scope}[shift={(4,-2)}]
		\draw[V0]
			(-0.5,0) arc[start angle=180, end angle=360, radius=0.75]
			( 1.5,0) arc[start angle=180, end angle=360, y radius=1.25, x radius=1.75];
		\begin{scope}
			\clip (0,1pt) -- (0,-2) -- (1.2,-2) -- (1.2,0) --
			      (2,0  ) -- (2,-2) -- (3  ,-2) -- (3  ,0) --
			      (4,0  ) -- (4,-2) -- (5.5,-2) -- (5.5,1pt) -- cycle;
			\draw[V2,->-=0.7] (-0.5,0)
				arc[start angle=180, end angle=360, radius=0.75];
			\draw[V2,-<-=0.35,-<-=0.85] ( 1.5,0)
				arc[start angle=180, end angle=360, y radius=1.25, x radius=1.75];
		\end{scope}
		\foreach \x in {0,2,3,4} do
			\draw[V1] (\x,0) -- (\x,-2);
		\points
		\caption {Step 3: applying $\Gamma$ produces the~web $\EqSymb_\bdry$.};
	\end{scope}		

	\begin{scope}[shift={(-4,-6)}]
		\draw[V0]
			(-0.5,0) arc[start angle=180, end angle=360, y radius=1, x radius=0.75]
			( 1.5,0) arc[start angle=180, end angle=360, y radius=0.8, x radius=1.75];
		\begin{scope}
			\clip (0,0) arc[x radius=2, y radius=2, start angle=180, end angle=270] |- (0,0);
			\draw[V2,-<-=0.5] (1,0) arc[start angle=0, end angle=-90, y radius=1, x radius=0.75];
		\end{scope}
		\begin{scope}
			\clip (2,0) arc[x radius=0.5, y radius=1.25, start angle=180, end angle=360] -- cycle;
			\draw[V2,-<-=0.7]
				( 1.5,0) arc[start angle=180, end angle=270, y radius=0.8, x radius=1.75];
		\end{scope}
		\begin{scope}
			\clip (4,0) arc[x radius=2, y radius=2, start angle=0, end angle=-90] -| (5.5,0);
			\draw[V2,->-=0.5] ( 5,0) arc[start angle=0, end angle=-90, y radius=0.8, x radius=1.75];
		\end{scope}			
		\draw[V1]
			(0,0) arc[x radius=2, y radius=2, start angle=180, end angle=360]
			(2,0) arc[x radius=0.5, y radius=1.25, start angle=180, end angle=360];
		\points
	\end{scope}
	\begin{scope}[shift={(4,-6)}]
		\draw[V0]
			(-0.5,0) arc[start angle=180, end angle=360, y radius=1, x radius=0.75]
			( 1.5,0) arc[start angle=180, end angle=360, y radius=0.8, x radius=1.75];
		\begin{scope}
			\clip (0,0) arc[x radius=1, y radius=1.5, start angle=180, end angle=360] -- cycle;
			\draw[V2,-<-=0.5] (1,0) arc[start angle=0, end angle=-90, y radius=1, x radius=0.75];
		\end{scope}
		\begin{scope}
			\clip
				(2,0) arc[x radius=1, y radius=1.5, start angle=360, end angle=270] --
				(3.5,-1.5) arc[x radius=0.5, y radius=1.5, start angle=270, end angle=180]
				-- cycle;
			\draw[V2,-<-=0.7]
				( 1.5,0) arc[start angle=180, end angle=270, y radius=0.8, x radius=1.75];
		\end{scope}
		\begin{scope}
			\clip (4,0) arc[x radius=0.5, y radius=1.5, start angle=0, end angle=-90] -| (5.5,0);
			\draw[V2,->-=0.5] ( 5,0) arc[start angle=0, end angle=-90, y radius=0.8, x radius=1.75];
		\end{scope}			
		\draw[V1]
			(0,0) arc[x radius=1, y radius=1.5, start angle=180, end angle=360]
			(3,0) arc[x radius=0.5, y radius=1.5, start angle=180, end angle=360];
		\points
	\end{scope}
	\caption[-8.25] {Step 4: connect bottom endpoints in all possible ways
	to obtain a~web basis.};
\end{tikzpicture}
	\protected\gdef\mypoints{\begin{tikzpicture}[x=1em, baseline=-0.5ex]
		\foreach \sign/\x in {-1/0,+2/1,+1/2,-1/3,+1/4,-2/5}
			\expandafter\webendpoint\sign(\x,0);
	\end{tikzpicture}}%
	\caption{%
		The~construction of a~cup basis for
		$\bdry = \bdrypoints{-1,+2,+1,-1,+1,-2}$,
		together with completions of the~cup webs (the~edges erased
		in the~third step are drawn as dashed arcs). The~first three
		steps follow the~proof of Proposition~\ref{prop:blue->binded}.
	}
	\label{fig:cup-web}
\end{figure}

\subsection{Blanchet--Khovanov algebras}

Fix a~balanced collection of points $\bdry$ and let $\mathcal B$
be a~cup basis of $\set{Web}(\bdry)$.

\begin{defn}
	The~\emph{Blanchet--Khovanov algebra} $\webalg{\mathcal B}$
	associated with $\mathcal B$ is the~direct sum of spaces of
	foams with corners
	\begin{equation}\label{def-eq:webalg}
		\webalg{\mathcal B} :=
		\bigoplus_{a,b\in\mathcal B}
			\Hom_{\cat{Foam}(\bdry)}(a,b)
	\end{equation}
	with multiplication given by the~composition (and zero if foams cannot
	be composed).
\end{defn}

\begin{rmk}
	The~above algebra appeared first in \cite{WebAlgebras} for $\bdry$
	a~collection of positively oriented blue points followed by negatively
	oriented red points, the~latter drawn in \cite{WebAlgebras} at the~bottom.
\end{rmk}

Choose a~completion $a^+$ for any cup web $a$ and write
$\revmatching a$ (resp.\ $\revmatching{(a^+)}$) for the~result
of reflecting $a$ (resp.\ $a^+$) along the~horizontal line and
reversing orientation of edges. Using the~natural isomorphisms
$\Hom_{\cat{Foam}(\bdry)}(a,b) \cong \set{Foam}(\revmatching ba)\{n\}$
we can represent elements of the~algebra by dotted completed webs,
in which case the~multiplication is induced from the~family of
\emph{generalized saddles}
\begin{equation}\label{eq:mult-foam}
	\revmatching{(c^+)} b^+ \sqcup
	\revmatching{(b^+)} a^+ \xrightarrow{\ S_{c,b,a}\ }
	\revmatching{(c^+)} a^+
\end{equation}
each consisting of the~identity foams $\revmatching{(c^+)}\times[0,1]$ and
$a^+\times[0,1]$ glued to the~half-rotation of $b^+$ around the~boundary line.
These foams take a~particularly nice form when $\mathcal B$
is a~red-over-blue basis, as they involve then only three types of moves:
\begin{itemize}
	\item merging \eqref{move:blue merge} and splitting \eqref{move:blue split}
	blue loops at points outside of all red circles,
	\item merging unnested red loops \eqref{move:red merge}, and
	\item removing bigons \emph{external} to the~projection of
	      red disks \eqref{move:ext bigon->no bigon}.
\end{itemize}
Hence, the~product of two dotted diagrams is a~positive linear combination of
other diagrams.

\begin{cor}
	The~algebra $\webalg{\mathcal B}$ admits a~positive basis.
\end{cor}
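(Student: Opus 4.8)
The plan is to take the cup-foam basis itself as the positive basis and to read non-negativity of the structure constants straight off the description of the multiplication foams $S_{c,b,a}$ obtained just above. First I would fix the candidate basis: by Theorem~\ref{thm:basis-for-Foam(w)} together with the natural isomorphisms $\Hom_{\cat{Foam}(\bdry)}(a,b)\cong\set{Foam}(\revmatching b a)\{n\}$, and having chosen a completion $a^+$ for each cup web $a\in\mathcal B$, one gets a linear basis $\mathbf B$ of $\webalg{\mathcal B}$ whose elements are the dotted completed webs $\revmatching{(b^+)}a^+$ carrying at most one dot on each blue loop. Linear independence and spanning are exactly Theorem~\ref{thm:basis-for-Foam(w)}, so the only thing to prove is that every structure constant of $\webalg{\mathcal B}$ relative to $\mathbf B$ is non-negative.

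Next I would compute the product of two basis elements $x\in\Hom_{\cat{Foam}(\bdry)}(b,c)$ and $y\in\Hom_{\cat{Foam}(\bdry)}(a,b)$ as the image of $x\sqcup y$ under the generalized saddle $S_{c,b,a}$ of \eqref{eq:mult-foam}, and invoke the structural fact recorded just before this corollary: for a red-over-blue basis $\mathcal B$, the foam $S_{c,b,a}$ is a composite of elementary completed foams of only three kinds — blue merges and splits \eqref{move:blue merge}--\eqref{move:blue split} at points lying outside all red circles, merges of unnested red loops \eqref{move:red merge}, and removals of red bigons external to the projections of the red disks \eqref{move:ext bigon->no bigon}. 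For each of these the sign occurring in the Section~\ref{sec:TQFT} formulas is $+1$: the blue critical points have $s(p)=0$ because they avoid all red circles, and the other two moves are sign-free. Hence each elementary move sends a basis diagram to a $\{0,1\}$-combination of basis diagrams. I would also spell out the dot bookkeeping: a dot only records the blue loop it lies on, so under a blue merge the dot counts of the two loops add and under a blue split the two terms of \eqref{move:blue split} just record which of the two new loops carries the surviving dot; whenever two dots accumulate on one loop they are removed by the dot reduction \eqref{rel:dots}, and inductively $X^{k}=p_k(h,t)X+q_k(h,t)$ with $p_k,q_k$ having non-negative integer coefficients, so this step preserves positivity as well. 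Composing the elementary maps then writes $xy$ as a non-negative combination of elements of $\mathbf B$, which is precisely the positive-basis statement.

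The genuinely substantive point — and the one I expect to be the main obstacle — is the reduction used in the second paragraph: verifying that, for a red-over-blue basis, $S_{c,b,a}$ can indeed be arranged so that only those three benign moves occur, i.e.\ that no blue cap/death (which would create a sphere and introduce the scalars of \eqref{rel:sphere-eval}), no red cup or cap, no red split, and no merge of nested red loops ever appears, and that the blue merges and splits always take place outside every red circle. This relies on the precise placement of red edges in the completions $a^+$ of a red-over-blue basis — all red edges lying above the minima of the blue cups — and on tracking what the half-rotation of $b^+$ does to the red loops of the contracted middle $b^+\revmatching{(b^+)}$; it is essentially the content of the analysis already carried out before the corollary, so in the final write-up I would either cite that discussion or expand the modest amount of bookkeeping it needs. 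Everything else — the basis claim, the sign values, the dot reductions — is routine and follows from the results and formulas recalled above.
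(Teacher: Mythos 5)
Your proposal is correct and takes essentially the same route as the paper, which presents this corollary as an immediate consequence of the preceding observation that, for a red-over-blue basis $\mathcal B$, the multiplication foams $S_{c,b,a}$ decompose into only the three sign-free moves \eqref{move:blue merge}--\eqref{move:blue split}, \eqref{move:red merge}, and \eqref{move:ext bigon->no bigon}. Your write-up simply spells out what the paper leaves implicit --- identifying the cup-foam basis via Theorem~\ref{thm:basis-for-Foam(w)}, tracking dot accumulation through \eqref{rel:dots}, and noting that the reduction to those three moves is the substantive point --- and the paper likewise defers that reduction to the discussion immediately above the statement rather than proving it anew.
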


When $\bdry$ is a~collection of blue points oriented in the~alternating
way and $\mathcal B$ consists of oriented cup diagrams (i.e.\ webs with
no red edges), then $\webalg{\mathcal B}$ coincides with the~arc algebra
$\arcalg n$ from \cite{KhArcAlgebras}. Indeed, $\webalg{\mathcal B}$
is the~image of $\arcalg n$ under the~embedding of bicategories
$\EqFunc \colon \cat{BN} \to \cat{Foam}$.
However, when $\mathcal B$ is not a~red-over-blue basis, then the~generalized
saddles \eqref{eq:mult-foam} may involve moves on red arcs that cost a~sign, 
such as splits \eqref{move:red split} or nested saddles \eqref{move:inner merge}
and \eqref{move:inner split}. Hence, cup foams do not constitute a~positive
basis of the~algebra in such case. Yet, it is still isomorphic to
the~arc algebra.

\begin{thm}\label{thm:webalg=arcalg}
	Let $\bdry$ be a~balanced collection of points with $2n$ blue points.
	Then there is an~algebra isomorphism $\webalg{\mathcal B} \cong \arcalg n$
	for any cup basis $\mathcal B$ of webs bounded by $\bdry$. When $\mathcal B$
	is a~red-over-blue basis, then the~isomorphism simply forgets
	red facets of basic cup foams.
\end{thm}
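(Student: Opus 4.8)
The plan is to reduce to a single conveniently chosen cup basis and then read the isomorphism off the equivalence $\EqFunc_\bdry$ of Theorem~\ref{thm:equiv-of-cats}.

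\textbf{Step 1: $\webalg{\mathcal B}$ is independent of $\mathcal B$ up to isomorphism.} Any two cup bases $\mathcal B$, $\mathcal B'$ are indexed by the same set of cup diagrams on $\bdry_b$, and for corresponding webs $a\in\mathcal B$, $a'\in\mathcal B'$ with the same underlying cup diagram, Propositions~\ref{prop:webs-red} and~\ref{prop:foams-red} provide an invertible degree-$0$ foam $\phi_a\colon a\to a'$ in $\cat{Foam}(\bdry)$ with product underlying surface. Conjugation $f\mapsto \phi_b\,f\,\phi_a^{-1}$ gives linear isomorphisms $\Hom_{\cat{Foam}(\bdry)}(a,b)\to\Hom_{\cat{Foam}(\bdry)}(a',b')$, and since $(\phi_c\,g\,\phi_b^{-1})(\phi_b\,f\,\phi_a^{-1})=\phi_c\,(gf)\,\phi_a^{-1}$, the induced map $\webalg{\mathcal B}\to\webalg{\mathcal B'}$ is a graded unital algebra isomorphism. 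So it suffices to treat one basis.

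\textbf{Step 2: the red-over-blue basis via $\EqFunc_\bdry$.} Take $\mathcal B=\{\EqFunc_\bdry(a_0)\}$, the image of cup diagrams $a_0$ under $\EqFunc_\bdry\colon\cat{BN}(\bdry_b)\to\cat{Foam}(\bdry)$. Fullness and faithfulness (Theorem~\ref{thm:equiv-of-cats}) give isomorphisms $\Hom_{\cat{BN}(\bdry_b)}(a_0,b_0)\to\Hom_{\cat{Foam}(\bdry)}(\EqFunc_\bdry a_0,\EqFunc_\bdry b_0)$; summing over cup diagrams yields a linear isomorphism $\arcalg n\to\webalg{\mathcal B}$ which is multiplicative and unital because $\EqFunc_\bdry$ is a functor, hence an algebra isomorphism. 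Its inverse is induced by $\EqFunc^\vee_\bdry$ from Section~\ref{sec:forget-red}, which forgets red facets up to a sign, so the remaining task is to show this sign is $+1$ on basic cup foams. To match the bases, note that by Theorem~\ref{thm:basis-for-Foam(w)} and the identification $\Hom_{\cat{Foam}(\bdry)}(a,b)\cong\set{Foam}(-a\cup b)\{n\}$ the cup foams $\cupfoam(-a\cup b,X)$ form a basis indexed by subsets $X$ of the blue loops of $-a\cup b$; these loops are exactly the circles of the closed cup diagram $-a_0\cup b_0$, so the index set is Khovanov's standard basis of $\Hom_{\cat{BN}(\bdry_b)}(a_0,b_0)$, and forgetting red facets of $\cupfoam(-a\cup b,X)$ produces the matching basic cup cobordism (a union of disks bounded by $-a_0\cup b_0$ carrying $|X|$ dots). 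Choosing the invertible foams $I_\web$ defining both the cup-foam basis and $\EqFunc^\vee_\bdry$ consistently with the $\EqSymb$-webs, the sign formula $\sgn(\foam)=Z(C\cup\foam\,I_\web)/Z(C\cup I_{\web'}\EqFunc_\bdry(\foam_b))$ evaluates to $1$: by the bicolored isotopy argument the red facets of $\cupfoam(-a\cup b,X)$ and of $\EqFunc_\bdry(\foam_b)$ (which come from $\EqSymb_\bdry\times[0,1]$, lying above the blue minima by Remark~\ref{rmk:canonical-annular-web}) can be moved apart from the blue ones in the same way, making numerator and denominator equal. Hence the inverse isomorphism literally forgets red facets.

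\textbf{Main obstacle.} The hard part is exactly this sign computation. Equivalently, avoiding $\EqFunc_\bdry$, one must check directly that for the red-over-blue basis the generalized saddles $S_{c,b,a}$ of \eqref{eq:mult-foam} decompose into only the sign-free elementary moves of Section~\ref{sec:TQFT}: blue merges and splits at points outside all red circles (\eqref{move:blue merge}, \eqref{move:blue split}), merges of unnested red loops (\eqref{move:red merge}), and removals of bigons external to the projected red disks (\eqref{move:ext bigon->no bigon}); forgetting red facets then turns $S_{c,b,a}$ into precisely the surgery cobordisms defining the product of $\arcalg n$, and unitality is clear since the identity foam of a basic web has underlying surface the identity cup cobordism. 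The substantive input is that, because every red edge of a basic web sits above the blue minima, no red splits, nested red saddles, or internal bigons can arise — which is exactly why ``red-over-blue'' is the right hypothesis. Combining this with Step~1 gives $\webalg{\mathcal B}\cong\arcalg n$ for an arbitrary cup basis.
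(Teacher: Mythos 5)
Your proof is correct and follows essentially the same route as the paper's: reduce to a single basis by conjugating with invertible foams (your Step 1 is the paper's second paragraph, with $\phi_b f\phi_a^{-1}$ playing the role of the paper's $s(b)\,\flip I_b\,S\,I_a$), then observe that for a red-over-blue basis $\webalg{\mathcal B}$ is precisely the image of $\arcalg n$ under the fully faithful functor $\EqFunc_\bdry$, whose inverse forgets red facets. Your ``main obstacle'' paragraph --- that for red-over-blue bases the generalized saddles $S_{c,b,a}$ only involve the sign-free moves --- is exactly what the paper uses to establish positivity in the discussion preceding the theorem, so you have correctly identified the substantive input; the one place you go beyond the paper, the sign computation via $Z(C\cup \foam\,I_\web)/Z(C\cup I_{\web'}\EqFunc_\bdry(\foam_b))$, is gestured at rather than carried out in detail, but the conclusion (choosing $I_\web$ compatibly with $\EqSymb_\bdry$ makes the sign $+1$) is right and the paper is no more explicit there.
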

\begin{proof}
	Assume first that $\mathcal B$ is a~red-over-blue basis of type
	$\EqSymb_\bdry$. Then $\webalg{\mathcal B}$ is the~image of $\arcalg n$
	under the~equivalence of categories $\EqFunc_\bdry$, which equips
	a~collection of dotted cups with its standard orientation. The~inverse
	of $\EqSymb_\bdry$ simply forgets red edges in webs and red facets in foams.
	Hence, the~thesis follows.

	Let now $\mathcal B'$ be any cup basis and pick for each cup web
	$a'\in\mathcal B'$ the~isomorphic cup web $a\in\mathcal B$,
	an~invertible foam $I_a \in \cat{Foam}(a, a')$,
	and $s(a) = \pm 1$ such that $\flip I_a\,I_a = s(a) \,\web_a\times[0,1]$.
	Then the~collection of linear isomorphisms
	\[
		\varphi_{ba} \colon \Foam(a',b') \xrightarrow{\ \cong\ } \Foam(a,b),
	\qquad
		S \mapsto s(b)\, \flip I_b\,S\,I_a
	\]
	constitute an~isomorphism of algebras $\webalg{\mathcal B'}
	\cong \webalg{\mathcal B}\rics$, where the~latter
	is isomorphic to $\arcalg n\rics$.
\end{proof}

\begin{exam}
	\def\extcups#1{%
	\draw[V2]
		(1.5,0) arc[x radius=1.5, y radius=0.75, start angle=#1180, end angle=0];
	\ifx#1+%
		\draw[V2,<-] (3,0) ++(100:1.5 and 0.75) -- ++(200:0.15pt and 0.075pt);
	\else
		\draw[V2,<-] (3,0) ++(240:1.5 and 0.75) -- ++(340:0.15pt and 0.075pt);
	\fi
	\fill[white]
		(1,0) arc[x radius=0.5, y radius=1.25, start angle=#1180, end angle=0] --
		(3,0) arc[x radius=0.5, y radius=1.25, start angle=#1180, end angle=0];
	\begin{scope}
		\clip
			(1,0) arc[x radius=0.5, y radius=1.25, start angle=#1180, end angle=0] --
			(3,0) arc[x radius=0.5, y radius=1.25, start angle=#1180, end angle=0] --
			++(0,#11pt) -- (1,#11pt) -- cycle;
		\draw[V0]
			(1.5,0) arc[x radius=1.5, y radius=0.75, start angle=#1180, end angle=0];
	\end{scope}
	\draw[V1]
		(1,0) arc[x radius=0.5, y radius=1.25, start angle=#1180, end angle=0]
		(3,0) arc[x radius=0.5, y radius=1.25, start angle=#1180, end angle=0];
}%
\def\inncups#1{%
	\draw[V2]
		(1.5,0) arc[x radius=1.5, y radius=0.75, start angle=#1180, end angle=0];
	\ifx#1+%
		\draw[V2,<-] (3,0) ++(100:1.5 and 0.75) -- ++(200:0.15pt and 0.075pt);
	\else
		\draw[V2,<-] (3,0) ++(240:1.5 and 0.75) -- ++(340:0.15pt and 0.075pt);
	\fi
	\fill[white]
		(1,0) arc[x radius=1.5, y radius=2, start angle=#1180, end angle=0] --
		(3,0) arc[x radius=0.5, y radius=1.25, start angle=0, end angle=#1180];
	\begin{scope}
		\clip (1,0)
			(1,0) arc[x radius=1.5, y radius=2, start angle=#1180, end angle=0] --
			(3,0) arc[x radius=0.5, y radius=1.25, start angle=0, end angle=#1180];
		\draw[V0]
			(1.5,0) arc[x radius=1.5, y radius=0.75, start angle=#1180, end angle=0];
	\end{scope}
	\draw[V1]
		(1,0) arc[x radius=1.5, y radius=2, start angle=#1180, end angle=0]
		(2,0) arc[x radius=0.5, y radius=1.25, start angle=#1180, end angle=0];
}%
\def\algdiagram #1(#2|#3){\begin{tikzpicture}[x=1.25em,y=1.25em,baseline=(X.base)]
	\node (X) at (0.4,0) {$\phantom+$};
	\node (Y) at (5.1,0) {$\phantom+$};
	\csname #2cups\endcsname+
	\csname #3cups\endcsname-
	\draw[arc hline] (0.5,0) -- (5,0);
	\foreach \x in {#1} do
		\fill[V1] (\x,0) circle[radius=3pt];
\end{tikzpicture}}%
\def\cupdiagram#1{\begin{tikzpicture}[x=1.25em,y=1.25em,baseline=-1.5ex]
	\csname #1cups\endcsname-
	\draw[arc hline] (0.5,0) -- (5,0);
	\node at (1,1ex) {$\scriptstyle+$};
	\node at (2,1ex) {$\scriptstyle+$};
	\node at (3,1ex) {$\scriptstyle-\vphantom+$};
	\node at (4,1ex) {$\scriptstyle+$};
	\node at (4.75,1ex) {$\scriptstyle-\vphantom+$};
\end{tikzpicture}}%
	Let $\bdry = \bdrypoints{+1, +1, -1, +1, -2}$. Then the~cup basis
	$\mathcal B$ consists of two elements
	\begin{center}
		\cupdiagram{ext}
		\qquad\text{and}\qquad
		\cupdiagram{inn}
	\end{center}
	that form four pairs: two of them have two blue loops and each of the~other
	two has one blue loop. Hence, $\dim\webalg{\mathcal B} = 12$.
	The~multiplication looks a~lot like in $\arcalg 2\rics$. For instance,
	\begin{align*}
		\algdiagram(ext|inn) \cdot \algdiagram(inn|ext)
			&=
		\algdiagram 2(ext|ext) + \algdiagram 3(ext|ext)
		\\ \\
		\algdiagram(inn|ext) \cdot \algdiagram 3(ext|inn)
			&=
		\algdiagram 3,4(inn|inn)
	\end{align*}
	Erasing red edges recovers the~usual diagrammatic calculus of
	$\arcalg 2\rics$.
\end{exam}

\begin{exam}\label{ex:EST-algebra}
Recall the~Blanchet--Khovanov algebra from \cite{WebAlgebras}: it is defined
using webs that have only vertical red edges, $2n$ positive blue endpoints
at the~top and positive $n$ red endpoints at the~bottom. We call them here
\emph{EST webs}.
To fit this construction into our framework, we move the~bottom endpoints
rightwards and to the~top by appending a~collection of nested red cups
(see Figure~\ref{fig:EST web}).
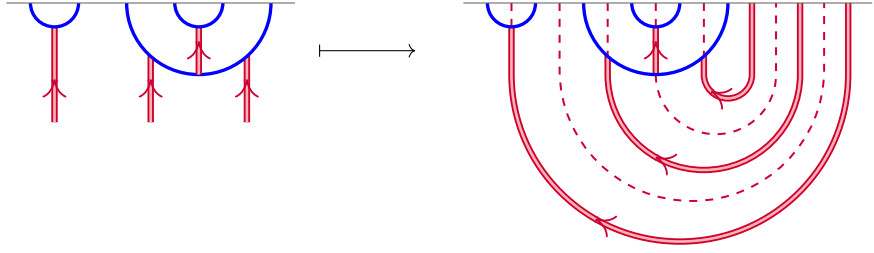
\begin{figure}[ht]%
	\centering
	\begin{tikzpicture}[x=1.5em,y=1.5em]
\begin{scope}[shift={(-8,0)}]
	\draw[V2]
		(1.5,-0.5) -- (1.5,-2.5)
		(3.5,-0.5) -- (3.5,-2.5)
		(5.5,-0.5) -- (5.5,-2.5);
	\draw[V1,fill=white]
		(3,0) arc[radius=1.5, start angle=180, end angle=360];
	\draw[V2]
		(4.5,-0.5) -- (4.5,-1.5);
	\draw[V1]
		(1,0) arc[radius=0.5, start angle=180, end angle=360]
		(4,0) arc[radius=0.5, start angle=180, end angle=360];
	\draw[arc hline] (0.5,0) -- (6.5,0);
	\draw[V2,<-] (1.5,-1.6) -- ++(0,-0.1pt);
	\draw[V2,<-] (3.5,-1.6) -- ++(0,-0.1pt);
	\draw[V2,<-] (5.5,-1.6) -- ++(0,-0.1pt);
	\draw[V2,<-] (4.5,-0.8) -- ++(0,-0.1pt);
\end{scope}
\draw[|->] (-1,-1) -- (1,-1);
\begin{scope}[shift={(1.5,0)}]
	\draw[V2]
		(1.5,-0.5) -- (1.5,-1.5)
			arc[radius=3.5, start angle=180, end angle=360] -- (8.5,0)
		(3.5,-0.5) -- (3.5,-1.5)
			arc[radius=2.0, start angle=180, end angle=360] -- (7.5,0)
		(5.5,-0.5) -- (5.5,-1.5)
			arc[radius=0.5, start angle=180, end angle=360] -- (6.5,0);
	\fill[white]
		(3,0) arc[radius=1.5, start angle=180, end angle=360];
	\draw[V0]
		(1.5,0) -- (1.5,-0.5)
		(2.5,0) -- (2.5,-1.4)
			arc[radius=2.75, start angle=180, end angle=360] -- (8,0)
		(4.5,0) -- (4.5,-0.5) (4.5,-1.5)
			arc[radius=1.25, start angle=180, end angle=360] -- (7,0);
	\begin{scope}
		\clip (3,0) arc[radius=1.5, start angle=180, end angle=360] -- cycle;
		\draw[V0]
			(3.5,0) -- (3.5,-1.5)
			(5.5,0) -- (5.5,-1.5);
	\end{scope}
	\draw[V2]
		(4.5,-0.5) -- (4.5,-1.5);
	\draw[V1]
		(1,0) arc[radius=0.5, start angle=180, end angle=360]
		(3,0) arc[radius=1.5, start angle=180, end angle=360]
		(4,0) arc[radius=0.5, start angle=180, end angle=360];
	\draw[arc hline] (0.5,0) -- (9,0);
	\draw[V2,<-] (6.0,-1.5) ++(225:0.5) -- ++(-25:0.1pt);
	\draw[V2,<-] (5.5,-1.5) ++(240:2.0) -- ++(-25:0.1pt);
	\draw[V2,<-] (5.0,-1.5) ++(240:3.5) -- ++(-25:0.1pt);
	\draw[V2,<-] (4.5,-0.8) -- ++(0,-0.1pt);
\end{scope}
\end{tikzpicture}
	\caption{%
		Turning red edges rightwards and to the~top produces
		a~cup web from an~EST web. There is a~natural completion,
		visualized by dashed arcs, with minima of red cups below
		all blue ones.}%
	\label{fig:EST web}%
\end{figure}
Contrary to the~case of red-over-blue bases, minima of red cups in EST webs appear
below all blue cups, because of which the~formula for multiplication involves
lots of signs.
Yet Theorem~\ref{thm:webalg=arcalg} provides a~direct isomorphism
between this algebra and the~arc algebra. Such an~isomorphism
was explicitly constructed in \cite{WebVsArcAlgebra} by providing a~sign
for each generator of the~algebra, then checking directly that these signs
result in a~homomorphism of algebras.
\end{exam}

\subsection{Blanchet--Khovanov bimodules}


Pick now two balanced collections $\bdry$ and $\bdry'$ with cup bases
$\mathcal B$ and $\mathcal B'$ respectively. We assign to a~web
$\web\colon \bdry \to \bdry'$ its \emph{Blanchet--Khovanov bimodule}
$\Fweb^\circ(\web)$, which is
the~$(\webalg{\mathcal B}, \webalg{\mathcal B'})$-bimodule
\begin{equation}
	\Fweb^\circ(\web) := \bigoplus_{\substack{
		a \in \mathcal B\phantom'\\
		b \in \mathcal B'
	}} \cat{Foam}(a,\web,b),
\end{equation}
where $\cat{Foam}(a,\web,b)$ is the~space of foams bounded by
$\revmatching b\cup \web \cup a$ and seen as foams in a~cube with
$\web$ at the~top facet, whereas $a$ and $b$ lie on opposite vertical
facets (see Figure~\ref{fig:Foam(a,w,b)}).
\begin{figure}[ht]%
	\centering
	\begin{tikzpicture}[x=2em,y=2em]
	
		\coordinate (inp1) at (0.4, 3.2);
		\coordinate (inp2) at (0.8, 2.9);
		\coordinate (inp3) at (1.2, 2.6);
		\coordinate (inp4) at (1.6, 2.3);
		
		\coordinate (inp-cup1) at (1.1, 1.9);	
		\coordinate (inp-cup2) at (1.1, 1.3);	
		
		\coordinate (out1) at (5.0, 3.125);
		\coordinate (out2) at (6.0, 2.375);

		\coordinate (out-cup) at (5.6, 1.225);	
		
		\coordinate (top-cup) at (1.8, 3.03);		
		\coordinate (cusp) at (3.2, 1.9);
		
		\coordinate (red1-l) at (1.61641, 2.94688);
		\coordinate (red1-r) at (2.5552,  2.6546 );
		\coordinate (red1-b) at (2.0787,  2.11804);
		
		\coordinate (red2-l) at (4.2, 2.35);
		\coordinate (red2-r) at (5.5, 2.75);
		\coordinate (red2-b) at (5.896, 1.5746);
		
		\draw[dotted]
			(0.0,1.5) -- ++(2,-1.5) 
			(4.5,1.5) -- ++(2,-1.5)
			(0,1.5) rectangle ++(4.5,2);

		\fill[1facetFront]
			(inp2) arc[x radius=1, y radius=0.13, start angle=270, end angle=360]
			.. controls ++(0,-0.8) and ++(-0.2,0) .. (cusp)
			.. controls ++(-0.7,0) and ++(0.5,0) .. (red1-b)
			.. controls ++(-0.5,0) and ++(0.7,0) .. (inp-cup1)
			arc[x radius=0.3, y radius=1, start angle=270, end angle=180];

		\draw[1facetLine]
			(top-cup) .. controls ++(0,-0.8) and ++(-0.2,0) .. (cusp);

		\fill[2facetInner]
			([shift={(1pt,0.2pt)}]red1-l) .. controls ++(0.2,-0.1) and ++(-0.1,0.1) ..
			([shift={(1pt,0.2pt)}]red1-r) .. controls ++(-0.2,-0.8) and ++(0,-1) .. cycle;
		\draw[2facetLine]
			([shift={(1pt,0.2pt)}]red1-l) .. controls ++(0.2,-0.1) and ++(-0.1,0.1) ..
			([shift={(1pt,0.2pt)}]red1-r);
		\draw[seam,shift={(1pt,0.2pt)}]
			([shift={(1pt,0.2pt)}]red1-r) .. controls ++(-0.2,-0.8) and ++(0,-1) ..
			([shift={(1pt,0.2pt)}]red1-l);

		\fill[2facetFront]
			([shift={(-1pt,-0.4pt)}]red1-l) .. controls ++(0.2,-0.1) and ++(-0.1,0.1) ..
			([shift={(-1pt,-0.4pt)}]red1-r) .. controls ++(-0.2,-0.8) and ++(0,-1) ..
			cycle;
		\draw[2facetLine]
			([shift={(-1pt,-0.4pt)}]red1-l) .. controls ++(0.2,-0.1) and ++(-0.1,0.1) ..
			([shift={(-1pt,-0.4pt)}]red1-r);
		\draw[seam,shift={(-1pt,-0.4pt)}]
			([shift={(-1pt,-0.4pt)}]red1-r) .. controls ++(-0.2,-0.8) and ++(0,-1) ..
			([shift={(-1pt,-0.4pt)}]red1-l);

		\fill[1facetBack]
			(inp1) arc[x radius=0.7, y radius=1.9, start angle=180, end angle=270]
			.. controls ++(1,0) and ++(-1,0) .. (3.5,0.8)
			.. controls ++(1,0) and ++(-1,0) .. (out-cup)
			arc[x radius=0.6, y radius=1.9, start angle=270, end angle=180]
			.. controls ++(-2,0) and ++(3,0) .. (inp3)
			arc[x radius=0.1, y radius=0.7, start angle=360, end angle=270]
			arc[x radius=0.3, y radius=1.0, start angle=270, end angle=180]
			arc[x radius=1.0, y radius=0.13, start angle=270, end angle=360]
			arc[x radius=1.4, y radius=0.17, start angle=0, end angle=90];

		\draw[1facetLine] (out-cup)
			arc[x radius=0.6, y radius=1.9, start angle=270, end angle=180]
			.. controls ++(-2,0) and ++(3,0) .. (inp3)
			arc[x radius=0.1, y radius=0.7, start angle=360, end angle=270]
			arc[x radius=0.3, y radius=1.0, start angle=270, end angle=180]
			arc[x radius=1.0, y radius=0.13, start angle=270, end angle=360]
			arc[x radius=1.4, y radius=0.17, start angle=0, end angle=90]
			arc[x radius=0.7, y radius=1.9, start angle=180, end angle=270]
			(cusp) .. controls ++(-0.7,0) and ++(0.5,0) .. (red1-b)
			.. controls ++(-0.5,0) and ++(0.7,0) .. (inp-cup1);

		\draw[2facetInner]
			($(red2-b) + (-0.3pt,-0.8pt)$) .. controls ++(-0.3,0) and ++(0.3,-0.5) ..
			($(red2-l) + (-1.2pt,0)$) .. controls ++(45:0.4) and ++(-0.3,0) ..
			($(red2-r) + (-1.0pt,0.75pt)$) .. controls ++(0,-0.5) and ++(-0.3,0.2) ..
			($(red2-b) + (-0.3pt,-0.8pt)$);
		\draw[2facetFront]
			($(red2-b) + (0.3pt,0.8pt)$) .. controls ++(-0.3,0) and ++(0.3,-0.5) ..
			($(red2-l) + (1.2pt,0)$) .. controls ++(45:0.4) and ++(-0.3,0) ..
			($(red2-r) + (1.0pt,-0.75pt)$) .. controls ++(0,-0.5) and ++(-0.3,0.2) ..
			($(red2-b) + (0.3pt,0.8pt)$);

		\draw[1facetFront]
			(inp-cup2) arc[x radius=0.5, y radius=1, start angle=270, end angle=360]
			.. controls ++(2,0) and ++(-2,0) .. (out2)
			arc[x radius=0.4, y radius=1.15, start angle=360, end angle=270]
			.. controls ++(-1,0) and ++(1,0) .. (3.5,0.8)
			.. controls ++(-1,0) and ++(1,0) .. (inp-cup2);

		\begin{scope}	
			\clip (4,1.5) arc[radius=0.8, start angle=300, end angle=240]
				-- ++(0,0.5) -| cycle;
			\draw[1facetLine, fill=white, opacity=1] (4,1.4)
				arc[radius=0.8, start angle=60, end angle=120];
		\end{scope}
		\draw[1facetLine] (4,1.5) arc[radius=0.8, start angle=300, end angle=240];
		
		\draw[seam]
			($(red2-b) + (-0.3pt,-0.8pt)$) .. controls ++(-0.3,0) and ++(0.3,-0.5)
				.. ($(red2-l) + (-1.2pt,0)$)
			($(red2-b) + (0.3pt,0.8pt)$) .. controls ++(-0.3,0) and ++(0.3,-0.5)
				.. ($(red2-l) + (1.2pt,0)$);
		
		\draw[dotted]
			(0.0,3.5) -- ++(2,-1.5)
			(4.5,3.5) -- ++(2,-1.5)
			(2,0.0) rectangle ++(4.5,2);
	
\end{tikzpicture}
	\caption{%
		An~element of $\cat{Foam}(a,\web,b)$ is a~foam in a~cube, bounded
		by the~webs $\web$, $a$, and $b$ at the~top and opposite vertical
		facets of the~cube respectively.}%
	\label{fig:Foam(a,w,b)}%
\end{figure}
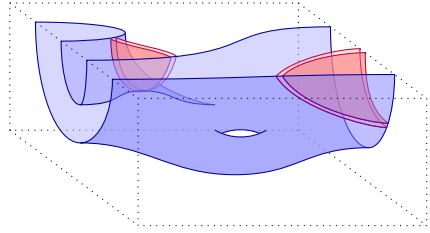
The~algebras $\webalg{\mathcal B}$ and $\webalg{\mathcal B'}$ act on
the~left and on the~right respectively, and there is a~diagrammatic
presentation of this bimodule as explained in Section~\ref{sec:TQFT}.
Moreover, placing a~foam $\foam \in \cat{Foam}(\web, \web')$ on top
results in a~bimodule map $\Fweb^\circ(\foam) \colon \Fweb^\circ(\web)
\to \Fweb^\circ(\web')$.
We check that $\Fweb^\circ(\foam) = \Fweb^\circ(\foam')$ if the~two foams
coincide in $\cat{Foam}$, so that the~map is well-defined. In particular,
$\Fweb^\circ(\web) \cong \Fweb^\circ(\web')$ if $\foam$ is invertible.
Finally, horizontal composition of foams induces a~canonical homomorphism
of graded bimodules
\begin{equation}\label{eq:tensor-isomorphism}
	\Fweb^\circ(\web') \webtimes{\bdry'} \Fweb^\circ(\web)
		\longrightarrow \Fweb^\circ(\web'\web)
\end{equation}
for any pair of composable webs $\web\colon\bdry\to\bdry'$ and
$\web'\colon\bdry'\to\bdry''$. The~proof of \cite[Theorem 1]{KhArcAlgebras}
can be adapted to our framework to show that
\eqref{eq:tensor-isomorphism} is an~isomorphism.

\begin{rmk}
	Contrary to the~case of Blanchet--Khovanov algebras, the~isomorphism
	\eqref{eq:tensor-isomorphism} may not take a~pair of cup foams into
	a~positive combination of cup foams. When using the~diagrammatics of
	completed webs, \eqref{eq:tensor-isomorphism} is induced by a~collection
	of generalized saddles, the~description of which---contrary to the~case
	of algebras---may involve moves on red loops that cost a~sign, such as
	\eqref{move:red split}--\eqref{move:inner split}.
	However, this is not the~case when both webs have only blue endpoints,
	oriented in an~alternating way---in this case all red loops lie inside
	the~webs and are not affected when webs are composed.
\end{rmk}

As in the~case of algebras, $\Fweb^\circ(\web)$ coincides with the~arc bimodule
$\FKh^\circ(\web)$ defined in \cite{KhArcAlgebras} when $\web$ is a~standardly
oriented flat tangle and both $\mathcal B$ and $\mathcal B'$ are collections
of cup diagrams.
Although in general $\Fweb^\circ(\web)$ is not a~priori a~bimodule over arc
algebras, it can be made such through the~algebra isomorphisms
$\arcalg n\cong \webalg{\mathcal B}$ and $\arcalg{n'}\cong \webalg{\mathcal B'}$
provided by Theorem~\ref{thm:webalg=arcalg}.
Hence, it makes sense to compare $\Fweb^\circ(\web)$ with $\FKh^\circ(\web_b)$.

\begin{thm}\label{thm:webmod=arcmod}
	Let $\web\colon\bdry \to \bdry'$ be a~web between balanced
	collections of points with $2n$ and $2n'$ blue points respectively.
	Then there is an~isomorphism of\/ $(\arcalg n,\arcalg{n'})$--bimodules
	$\Fweb^\circ(\web) \cong \FKh^\circ(\web_b)$. The~isomorphism simply forgets
	red facets of basic cup foams when $\mathcal B$ and $\mathcal B'$
	are red-over-blue web bases.
\end{thm}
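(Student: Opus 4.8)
The plan is to mirror closely the proof of Theorem~\ref{thm:webalg=arcalg}: first settle the case of red‑over‑blue bases, where the isomorphism is literally ``forget the red facets'', and then reduce the general case to it by conjugating with invertible foams. Two preliminary reductions make this clean. First, the bimodule $\Fweb^\circ(\web)$ does not depend, up to a canonical bimodule isomorphism, on the choice of $\web$ among webs with a fixed underlying tangle $\web_b$: any two such webs differ by an invertible foam with underlying surface $\web_b\times[0,1]$ (Proposition~\ref{prop:foams-red}), and placing it at the top facet induces an isomorphism $\cat{Foam}(a,\web,b)\to\cat{Foam}(a,\web',b)$ commuting with the two vertical compositions. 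So I may take $\web$ to be a red‑over‑blue web, with all red facets above the blue part. Second, by Theorem~\ref{thm:webalg=arcalg} it suffices to produce, for the red‑over‑blue bases $\mathcal B_0,\mathcal B'_0$ of types $\EqSymb_\bdry,\EqSymb_{\bdry'}$, an isomorphism of $(\webalg{\mathcal B_0},\webalg{\mathcal B'_0})$‑bimodules $\Fweb^\circ(\web)\cong\FKh^\circ(\web_b)$; for a general pair of cup bases $\mathcal B,\mathcal B'$ one then conjugates by the invertible foams $I_a\in\cat{Foam}(a,a_0)$ and the signs $s(a)=\pm1$ with $\flip I_a\,I_a=s(a)\,a\times[0,1]$ exactly as in the proof of Theorem~\ref{thm:webalg=arcalg}: the maps $T\mapsto s(b)\,\flip I_b\,T\,I_a$ on $\cat{Foam}(a,\web,b)$ assemble to a linear isomorphism $\Fweb^\circ(\web)_{\mathcal B,\mathcal B'}\to\Fweb^\circ(\web)_{\mathcal B_0,\mathcal B'_0}$, and the same associativity computation as there shows it intertwines the actions twisted by the algebra isomorphisms $\varphi_{ba}$, hence is a bimodule isomorphism over $\webalg{\mathcal B}\cong\webalg{\mathcal B_0}$.

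For the red‑over‑blue case I would argue as follows. Using the natural isomorphism $\cat{Foam}(a,\web,b)\cong\set{Foam}(\revmatching b\cup\web\cup a)$ and Theorem~\ref{thm:basis-for-Foam(w)}, a linear basis of $\cat{Foam}(a,\web,b)$ is given by the red‑over‑blue cup foams $\cupfoam(\revmatching b\cup\web\cup a,X)$. Since $a$, $b$ and $\web$ all carry their red facets above the blue surface, forgetting the red facets of such a cup foam yields precisely the cup cobordism $\cupfoam(\revmatching{b_b}\cup\web_b\cup a_b,X)$, and these cobordisms form the standard basis of $\FKh^\circ(\web_b)$ from \cite{KhArcAlgebras}. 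So the forgetful map $\Phi\colon\Fweb^\circ(\web)\to\FKh^\circ(\web_b)$ is a linear bijection; it is well defined on the quotient by the foam relations because it is the restriction of the 2‑functor $\EqFunc^\vee$ of Section~\ref{sec:forget-red}, which on 2‑morphisms sends $\foam\mapsto\sgn(\foam)\foam_b$ and for which $\sgn\equiv+1$ on the subcategory generated by red‑over‑blue cup foams (the invertible foams $I_\web$ here being the obvious ``correctly oriented'' red membranes). It remains to see that $\Phi$ is a morphism of $(\arcalg n,\arcalg{n'})$‑bimodules: the left action of $\webalg{\mathcal B_0}$ is composition along the left vertical facet, governed by the generalized saddles \eqref{eq:mult-foam}, which for a red‑over‑blue basis involve only the three sign‑free move types — blue merge/split at points outside all red loops, merge of unnested red loops, and removal of bigons external to red disks — so $\Phi$ intertwines it with the arc‑algebra action, and likewise on the right. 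Combined with the algebra identification $\webalg{\mathcal B_0}\cong\arcalg n$ of Theorem~\ref{thm:webalg=arcalg} (which is again ``forget red facets'' in this case) this gives the bimodule isomorphism; pulling back along the $\varphi$'s then yields the final statement, the bimodule structure on $\Fweb^\circ(\web)$ being by construction the one induced by the algebra isomorphisms of Theorem~\ref{thm:webalg=arcalg}.

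The step I expect to be the main obstacle is the bimodule‑compatibility verification. One has to be certain that, in the red‑over‑blue setting, the module‑action saddles produce no sign — equivalently that every relation needed to re‑expand a composite of red‑over‑blue cup foams in the cup basis uses only the sign‑free moves, so that $\Phi$ is a strict and not merely projective bimodule map — and, in the general case, that the conjugation by $I_a$, $I_b$ and the signs $s(a),s(b)$ matches the algebra twist $\varphi_{ba}$ on the nose rather than up to an extra scalar. Both are bookkeeping of signs in the spirit of, but slightly heavier than, the algebra computation in Theorem~\ref{thm:webalg=arcalg}, and they are where the ``red facets above blue'' normalization is really used.
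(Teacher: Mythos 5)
Your proof is correct and follows essentially the same route as the paper: reduce to red-over-blue bases via the Theorem~\ref{thm:webalg=arcalg}-style conjugation by invertible foams $I_a$, and in the red-over-blue case exhibit the bijection between cup bases as ``glue an invertible foam on one side / forget red facets on the other,'' checking bimodule compatibility via the sign-free generalized-saddle moves (the paper packages this as placing a fixed invertible foam $I_\web$ on top of $\cat{Foam}(a,\web_b,b)$ and noting that forgetting red facets is its inverse, which is a cleaner way to get well-definedness for free than your appeal to $\EqFunc^\vee$, but it is the same map). One small remark: your preliminary reduction ``take $\web$ to be a red-over-blue web, with all red facets above the blue part'' is both terminologically off (red-over-blue is a property of cup \emph{foams}; a planar web has edges, not facets) and unnecessary for the argument --- the cup foams $\cupfoam(\revmatching b\cup\web\cup a,X)$ are red-over-blue by construction regardless of the shape of $\web$, so forgetting their red facets always lands on the dotted-cup basis of $\FKh^\circ(\web_b)$.
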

\begin{proof}
	Assume first that $\mathcal B$ and $\mathcal B'$ are red-over-blue
	cup bases of types $\EqSymb_\bdry$ and $\EqSymb_{\bdry'}$ respectively.
	Fix a~foam $I_\web$ in a~cube with vertical rectangles as blue facets,
	bounded by $\web_b$ and $\web$ at the~bottom and top facets, and with
	$\EqSymb_\bdry$ and $\EqSymb_{\bdry'}$ at appropriate vertical facets.
	Placing it on top of an~element from $\cat{Foam}(a,\web_b,b)$ results
	in a~$\scalars$--linear isomorphism
	$\cat{Foam}(a,\web_b,b) \cong
		\cat{Foam}(\EqSymb_\bdry \cup a, \web, \EqSymb_{\bdry'}\cup b)$.
	It is straightforward to check that these isomorphisms are compatible
	with the~action of the~arc algebras, so that they constitute an~isomorphism
	of bimodules $\FKh^\circ(\web_b) \cong \Fweb^\circ(\web)$; it takes a~collection
	of dotted cups to a~basic cup foam. Forgetting red facets is the~inverse map.
	
	The~general case is reduced to the~above as in the~proof of Theorem~%
	\ref{thm:webalg=arcalg}: choose a~collection of invertible foams,
	one per $a\in\mathcal B$ and one per $b\in\mathcal B'$, and glue them
	to the~sides of foams generating $\FKh^\circ(\web)$.
\end{proof}

\begin{rmk}
	When $\mathcal B$ is a~red-over-blue basis of type $\EqSymb_\bdry$, then
	the~action of $\arcalg n$ can be understood pictorially as follows: a~dotted
	surface $\foam \in \arcalg n$ is standardly
	oriented and combined with $\EqSymb_\bdry \times [0,1]$ before acting on
	$\FKh^\circ(\web)$. The~same applies to $\arcalg{n'}$ if $\mathcal B'$
	is a~red-over-blue basis.
\end{rmk}

We say that a~linear basis $\{x_1,\dots,x_d\}$ of an~$(A,B)$-bimodule
is \emph{positive with respect to bases} $\{a_i\}$ of $A$ and $\{b_j\}$
of $B$, when each $a_ix_k$ and $x_kb_j$ has positive coefficients
in this basis. Because dotted cups constitute a~positive basis of
arc bimodules, Theorem~\ref{thm:webmod=arcmod} implies the~existence
of a~positive basis for Blanchet--Khovanov bimodules.

\begin{cor}
	Suppose that both $\mathcal B$ and $\mathcal B'\rics$ are red-over-blue
	cup bases of webs. Then basic cup foams constitute a~positive basis
	for $\Fweb^\circ(\web)$.
\end{cor}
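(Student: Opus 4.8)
The plan is to transport the statement along the forgetful isomorphisms of Theorems~\ref{thm:webalg=arcalg} and~\ref{thm:webmod=arcmod}, so that the whole content reduces to the classical positivity of the standard basis of Khovanov's arc bimodules; once the relevant bases are matched up, this is essentially bookkeeping.

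First I would fix the bases. Since $\mathcal B$ and $\mathcal B'$ are red-over-blue, the basic cup foams form a positive linear basis of $\webalg{\mathcal B}$ and of $\webalg{\mathcal B'}$ by the corollary following Theorem~\ref{thm:webalg=arcalg}, and the algebra isomorphisms $\webalg{\mathcal B}\cong\arcalg n$ and $\webalg{\mathcal B'}\cong\arcalg{n'}$ of that theorem are precisely the maps that forget red facets of cup foams; thus they carry the cup-foam bases onto the standard bases of the arc algebras (the standardly oriented dotted cup diagrams) and preserve all structure constants. On the bimodule side, the identification $\cat{Foam}(a,\web,b)\cong\set{Foam}(\revmatching b\cup\web\cup a)$, valid up to a degree shift exactly as in \eqref{eq:Foam(a,b) as Foam(ab)}, together with Theorem~\ref{thm:basis-for-Foam(w)}, exhibits the cup foams $\cupfoam(\revmatching b\cup\web\cup a,X)$---for $a\in\mathcal B$, $b\in\mathcal B'$ and $X$ a subset of the blue loops---as a linear basis of $\Fweb^\circ(\web)$; these are the basic cup foams of the statement.

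Next I would invoke Theorem~\ref{thm:webmod=arcmod}: for red-over-blue $\mathcal B$ and $\mathcal B'$ the bimodule isomorphism $\Fweb^\circ(\web)\cong\FKh^\circ(\web_b)$ also forgets red facets of basic cup foams, and it is compatible with the left and right actions through the algebra isomorphisms above. Hence this single isomorphism matches the triple formed by the cup-foam basis of $\Fweb^\circ(\web)$ and the positive bases of $\webalg{\mathcal B}$ and $\webalg{\mathcal B'}$ with the triple formed by the dotted-cup basis of $\FKh^\circ(\web_b)$ and the dotted-cup bases of $\arcalg n$ and $\arcalg{n'}$, so that corresponding structure constants agree (the degree shifts are immaterial here). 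Since dotted cup diagrams form a positive basis of $\FKh^\circ(\web_b)$ relative to the dotted-cup bases of the arc algebras \cite{KhArcAlgebras}, the basic cup foams satisfy the same positivity, which is the assertion.

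The one point that needs care---more than a genuine obstacle---is verifying the compatibility used in the previous step, namely that the bimodule isomorphism of Theorem~\ref{thm:webmod=arcmod} and the two algebra isomorphisms of Theorem~\ref{thm:webalg=arcalg} intertwine the actions; this is exactly how they are constructed, by gluing the same families of invertible foams to the top and to the sides of the generating foams, so it is immediate. If one prefers to avoid the arc algebras altogether, one can instead argue directly with the diagrammatic calculus of Section~\ref{sec:TQFT}: the left action of a red-over-blue cup foam on a basic cup foam $\cupfoam(\revmatching b\cup\web\cup a,X)$ is realised by a generalised saddle which---as in the multiplication of $\webalg{\mathcal B}$---can be shown to decompose into only the sign-free moves, i.e.\ blue merges and splits away from all red circles \eqref{move:blue merge}--\eqref{move:blue split}, merges of unnested red loops \eqref{move:red merge}, and removals of bigons external to the projections of red disks \eqref{move:ext bigon->no bigon}, and symmetrically for the right action; as none of these moves introduces a sign, all structure constants come out with the same positive coefficients as in Khovanov's arc calculus.
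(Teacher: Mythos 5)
Your proof is correct and takes the same route as the paper: the positivity of the dotted-cup basis of arc bimodules is transported along the forgetful isomorphisms of Theorems~\ref{thm:webalg=arcalg} and~\ref{thm:webmod=arcmod}, which for red-over-blue bases carry basic cup foams to dotted cup diagrams without introducing signs, so the structure constants match. The direct diagrammatic argument you append is a reasonable sanity check, but the transport argument is exactly what the paper intends and is complete once one notes (as you do) that the algebra and bimodule isomorphisms are built from the same invertible foams and hence intertwine the actions.
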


Although the~formulas for the~actions of the~algebras on a~Blanchet--Khovanov
bimodule involve no signs when red-over-blue bases as used, this is not the~case
for action of foams: the~following squares commutes only up to a~sign
\begin{equation}\label{eq:FKh-vs-Fweb}
	\vcenter{\hbox{\begin{tikzpicture}[x=8em,y=10ex]
		\node[anchor=base] (web1) at (0,1) {$\Fweb^\circ(\web)$};
		\node[anchor=base] (web2) at (1,1) {$\Fweb^\circ(\web')$};
		\node[anchor=base] (tan1) at (0,0) {$\FKh^\circ(\web_b)$};
		\node[anchor=base] (tan2) at (1,0) {$\FKh^\circ(\web'_b)$};
		\draw[->] (web1) -- (web2) node[midway,above] {$\scriptstyle \Fweb^\circ(S)$};
		\draw[->] (tan1) -- (tan2) node[midway,above] {$\scriptstyle \FKh^\circ(S_b)$};
		\draw[<-] (web1) -- (tan1) node[midway,left ] {$\scriptstyle I_\web$}
		                           node[midway,right] {$\scriptstyle \cong$};
		\draw[<-] (web2) -- (tan2) node[midway,left ] {$\scriptstyle I_{\web'}$}
		                           node[midway,right] {$\scriptstyle \cong$};
	\end{tikzpicture}}}
\end{equation}
where we abuse the~notation and denote the~isomorphism from the~proof of
Theorem~\ref{thm:webmod=arcmod} by the~same symbol $I_\web$ as the~foam
used to construct it. However, the~sign does not depend on the~direct summand
of the~bimodule: it is determined by the~configuration of red loops (see
Section~\ref{sec:TQFT}) and the~configuration is the~same for all closures
$\revmatching b\web a$.

\subsection{A~functorial homology for tangles with balanced boundaries}

The~previous sections describe a~morphism fo bicategories
$\Fweb^\circ\colon\cat{Foam}^\circ \to \cat{Bimod}$, which we
extend naturally to $\HCom(\cat{Foam}^\circ)$. As mentioned
in the~introduction, we can apply it to the~formal bracket
$\wKhBracket{T}$ of a~tangle $T$ with balanced input and output,
producing a~chain complex $\WebCom(T)$. Invariance and functoriality
of the~bracket implies that the~homotopy type of $\WebCom(T)$ is
an~invariant of the~tangle $T$ that is functorial with respect to
tangle cobordisms.

\insertpretheorem{thm:FKh-vs-Fweb}
\begin{proof}
	The~two functors coincide on objects by Theorem~\ref{thm:webalg=arcalg}
	and on 1-morphisms by Theorem~\ref{thm:webmod=arcmod}. Furthermore,
	the~collection of isomorphisms $I_\web$ is natural in $\web$, because
	the~square \eqref{eq:FKh-vs-Fweb} commutes when $\foam_b$ is replaced
	with $\EqFunc^\vee(\foam)$. Indeed, the~sign relating $\Fweb^\circ(S)
	\circ I_\web$ with $I_{\web'} \circ \FKh^\circ(\foam_b)$ is exactly the~one
	provided by $\EqFunc^\vee$. The~last statement is a~direct consequence
	of Theorem~\ref{thm:strictification}.
\end{proof}

\section{Subquotient algebras and an~invariant for all tangles}
\label{sec:subquotients}

Inspired by \cite{ChenKhov} we use the~TQFT from the~previous section
to define a~family of 2-functors $\Fweb^\lambda\colon \cat{Foam}
\to \cat{Bimod}$ parametrized by $\lambda\in\Z$, which are defined on
the~whole bicategory of foams. As before, these 2-functors lead to invariant
chain complexes for tangles that are strictly functorial versions of
the~Chen--Khovanov tangle invariants. Contrary to the~previous sections,
we assume here that $h=t=0$. In particular, a~foam vanishes when it has
a~blue facet with two dots.

\subsection{Balancing}
\label{sec:balancing}

Suppose that $\bdry$ has $m$ red and $n$ blue points and choose
$0\leqslant k \leqslant n$. We say that a~sequence
$\bdry^\circ$ on a~line with platforms is a~\emph{balancing of\/ $\bdry$ of
type $n-2k$} if it is balanced and obtained from $\bdry$ by placing $\ell$
and $r$ blue points to the~left and right of $\bdry$ respectively, where
$r-\ell = n-2k$. We say that the~extra points lie on \emph{platforms},
which are drawn as dashed lines.
In what follows we describe two methods to balance a~given sequence,
see Figure~\ref{fig:balancing-a-sequence}.

\begin{figure}[ht]%
	\centering
	\begin{tikzpicture}
		\useasboundingbox (-4, -3.5) rectangle (7, 2.25);
		\draw[arc hline] (0.5,0) -- (4.5,0);
		\webendpoint[below]+2(1,0);
		\webendpoint[below]-1(2,0);
		\webendpoint[below]+2(3,0);
		\webendpoint[below]+1(4,0);
		\draw[arc platform] (-4,0) -- (0.5,0);
		\webendpoint[below]-1(-3,0);
		\webendpoint[below]+1(-2,0);
		\webendpoint[below]-1(-1,0);
		\webendpoint[below]-1( 0,0);
		\draw[arc platform] ( 4.5,0) -- (7,0);
		\webendpoint[below]-1(5,0);
		\webendpoint[below]-1(6,0);
		\draw[help line,->] (0.5, 0.0 ) ++(30:0.5) arc[radius=0.5, start angle=30, end angle=150];
		\draw[help line,->] (0.0,-0.3 ) ++(30:1.1) arc[radius=1.1, start angle=30, end angle=150];
		\draw[help line,->] (0.0,-0.85) ++(30:2.2) arc[radius=2.2, start angle=30, end angle=150];
		\draw[help line,->] (0.0,-1.4 ) ++(30:3.3) arc[radius=3.3, start angle=30, end angle=150];
		\draw[help line,->] (4.5, 0.0 ) ++(150:0.5) arc[radius=0.5, start angle=150, end angle=30];
		\draw[help line,->] (4.5,-0.55) ++(150:1.6) arc[radius=1.6, start angle=150, end angle=30];
%
%
		\node[below] at (2.5, -1.25ex) {$\underbrace{\hskip 3.5cm}_{\bdry}$};
		\node[left] at (-4.25,0) {$\bdry_{mir}^{(k,n-k)}\colon$};

	\begin{scope}[shift={(0,-2.5)}]
		\useasboundingbox (-4, -1) rectangle (7, 1.75);
		\draw[arc hline] (0.5,0) -- (4.5,0);
		\webendpoint[below]+2(1,0);
		\webendpoint[below]-1(2,0);
		\webendpoint[below]+2(3,0);
		\webendpoint[below]+1(4,0);
		\draw[arc platform] (-4,0) -- (0.5,0);
		\webendpoint[below]+1(-3,0);
		\webendpoint[below]-1(-2,0);
		\webendpoint[below]-1(-1,0);
		\webendpoint[below]-1( 0,0);
		\draw[arc platform] ( 4.5,0) -- (7,0);
		\webendpoint[below]-1(5,0);
		\webendpoint[below]-1(6,0);
		\node[below] at (2.5, -1.25ex) {$\underbrace{\hskip 3.5cm}_{\bdry}$};
		\node[left] at (-4.25,0) {$\bdry_{can}^{(k,n-k)}\colon$};
	\end{scope}
\end{tikzpicture}
	\caption{A~visualization of two ways to balance a~sequence for $k=2$.
	For the~mirror balancing (above) replace each red point with two blue
	points first, then copy the~left $k+m=4$ points to the~left and
	the~remaining ones to the~right platform. In the~canonical balancing
	(below) the~points on each platform are ordered with respect to
	their orientation.}%
	\label{fig:balancing-a-sequence}%
\end{figure}
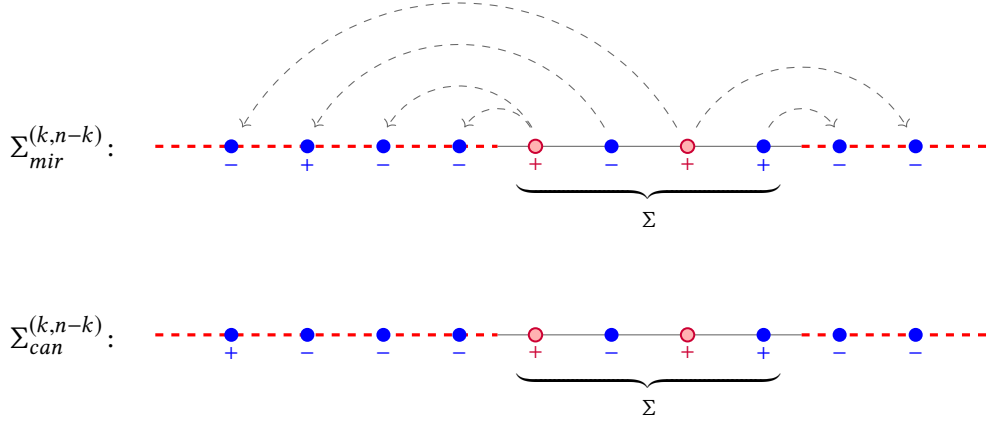

The~\emph{mirror balancing} $\bdry^{k,n-k}_{mir}$ of $\bdry$ of type
$\lambda = n-2k$ is constructed as follows.
First, replace each red point by two blue points oriented the~same way
and call the~new sequence $\bdry'$. Then $\bdry^{k,n-k}_{mir}$ is obtained
from $\bdry$ by placing the~first $k+m$ points of $\bdry'$ on
the~left and the~remaining ones on the~right platform, both in the~reversed
order; we also reflect the~orientation of points (compare this with
Figure~\ref{fig:balancing-a-sequence}). It is a~balanced sequence,
which is an~alternating sequence of blue points if $\bdry$ is such.
However, it depends heavily on the~orientations of points of $\bdry$.
The~next construction does not share his drawback.

The~\emph{canonical balancing} $\bdry^{k,n-k}_{can}$ of type $\lambda = n-2k$
is constructed again by placing $k+m$ points on the~left and $n-k+m$
points on the~right platform, except that now we order the~points in a~way,
such that, when read from left to right, positive points on each platform
appear first. Moreover, we want the~minimal number of negative
(resp.\ positive) points on the~left (resp.\ right) platform. This leads
to one of the~following distributions, depending on the~total weight
$w = w(\bdry)$ of the~sequence $\bdry$.

\begingroup
\def\pointswhen#1:#2x#3:#4x#5:{%
	\par\medskip\noindent\emph{Case $#1$}\nopagebreak
	\par\noindent\ \hskip 1.5em\begin{tabular}{||ll}
		Left platform: & \ifx\relax#2\relax\else place $#2$ points of type $#3$,
			then \fi fill with $+$'s.\\
		Right platform:& \ifx\relax#4\relax\else place $#4$ points of type $#5$,
			then \fi fill with $-$'s.
	\end{tabular}
}%
\pointswhen
	|w| \geqslant |\lambda|:%
	{\frac 12(|w|-\lambda)}x{-\sgn(w)}:%
	{\frac 12(|w|+\lambda)}x{-\sgn(w)}:%
\pointswhen
	|w| < \lambda:%
	x:%
	{\frac 12(\lambda-w)}x+:%
\pointswhen
	|w| < -\lambda:%
	{\frac 12(w-\lambda)}x-:%
	x:%
\endgroup

\medskip\noindent
We check directly that in each case we obtain a~balanced sequence
with at most $k+m$ negative and at most $n-k+m$ positive points on
the~left and right platform respectively.

\begin{rmk}
	The~distribution of points on platforms in $\bdry^{k,n-k}_{can}$
	depends only on the~total weight $w$ of the~sequence and the~type
	$\lambda$ of the~balancing, but not directly on the~number of points
	nor their orientation. This is why we call it \emph{canonical}.
\end{rmk}

\subsection{Webs and foams with platforms}

We now allow foams to meet the~side vertical facets of the~ambient cube in
collections of horizontal blue lines. More precisely, fix a~web $\web \colon
\bdry_0 \to \bdry_1$ together with balanced collections $\bdry_0^\circ$ and
$\bdry_1^\circ$, such that the~first $\ell$ and last $r$ points of both
$\bdry_1^\circ$ and $\bdry_1^\circ$ are blue, oriented the~same way,
and removing them recovers $\bdry_1$ and $\bdry_2$ respectively.
Given cup webs $a$ and $b$ bounded by $\bdry_0^\circ$ and $\bdry_1^\circ$
respectively, we write $\cat{pFoam}^{(\ell,r)}(a,\web,b)$ for the~space of
foams embedded in a~cube with the~following boundary:
\begin{itemize}
	\item the~web $\web$ at the~top facet of the~cube,
	\item $\ell$ and $r$ horizontal blue lines at the~vertical facets
			to the~left and to the~right of $\web$ respectively, and
	\item the~cup webs $a$ and $b$ at the~vertical facets attached to the~input
			and output of $\web$.
\end{itemize}
Figure~\ref{fig:cob with platforms} provides examples of such foams.
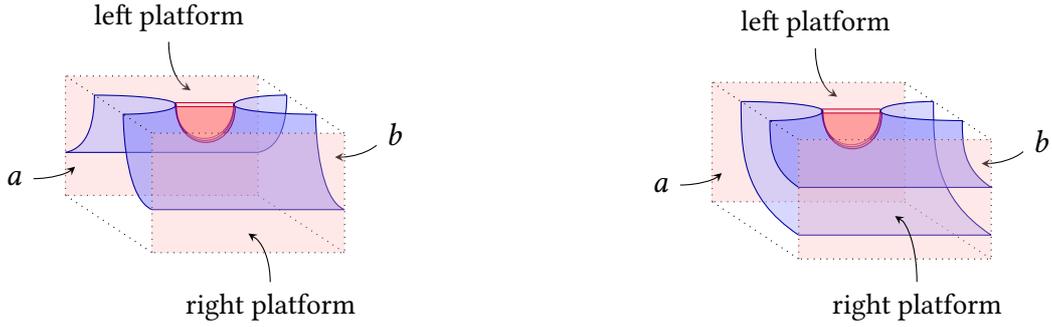
\begin{figure}[ht]%
	\centering
	\begin{tikzpicture}[x=1.5em,y=1.5em]

		\draw[stealth-] ( 2.6,2.9) -- ++(-3pt,2pt)
			arc[x radius=0.5, y radius=1, start angle=245, end angle=180]
			node[above] {\small left platform};
		\draw[dotted]
			(0,0.7) -- ++(1.8,-1.2)
			(4,0.7) -- ++(1.8,-1.2)
			(0,0.7) rectangle ++(4,2.5);
		\fill[red!30,opacity=0.3]
			(0,0.7) rectangle ++(4,2.5);
		
		\fill[1facetBack]
			(0,1.6) .. controls ++(0.5,0.1) and ++(0,-0.5) .. (0.6,2.8)
			arc[x radius=1.7, y radius=0.2, start angle=90, end angle=0]
			.. controls ++(0,-1) and ++(0,-1) .. (3.5,2.6)
			arc[x radius=1.1, y radius=0.2, start angle=180, end angle=90]
			.. controls ++(0,-0.5) and ++( 0.5,0.1) .. (4,1.6) -- (0,1.6);
		\draw[1facetLine]
			(4.6,2.8) .. controls ++(0,-0.5) and ++( 0.5,0.1) .. (4,1.6) -- (0,1.6);
		
		\draw[2facetInner, draw=seamColor]
			(2.3,2.6) ++(-0.5pt,0.75pt)
			.. controls ++(0,-1) and ++(0,-1) .. ([yshift=0.75pt]3.5,2.6);
		\draw[2facetLine]
			(2.3,2.6) ++(-0.5pt,0.75pt) -- ([yshift=0.75pt]3.5,2.6);

		\fill[1facetFront]
			(1.2,2.4) .. controls ++(0,-0.5) and ++(-0.5,0.2) .. (1.8,0.4) --
			(5.8,0.4) .. controls ++(-0.5,0.2) and ++(0,-0.5) .. (5.2,2.4)
			arc[x radius=1.7, y radius=0.2, start angle=270, end angle=180]
			.. controls ++(0,-1) and ++(0,-1) .. (2.3,2.6)
			arc[x radius=1.1, y radius=0.2, start angle=0, end angle=-90];
		\draw[1facetLine]
			(0,1.6) .. controls ++(0.5,0.1) and ++(0,-0.5) .. (0.6,2.8)
			arc[x radius=1.7, y radius=0.2, start angle=90, end angle=0]
			arc[x radius=1.1, y radius=0.2, start angle=0, end angle=-90]
			.. controls ++(0,-0.5) and ++(-0.5,0.2) .. (1.8,0.4) -- (5.8,0.4)
			(5.8,0.4) .. controls ++(-0.5,0.2) and ++(0,-0.5) .. (5.2,2.4)
			arc[x radius=1.7, y radius=0.2, start angle=270, end angle=180]
			arc[x radius=1.1, y radius=0.2, start angle=180, end angle= 90]
			(2.3,2.6) .. controls ++(0,-1) and ++(0,-1) .. (3.5,2.6);
			
		\draw[2facetFront, draw=seamColor]
			(3.5,2.6) ++(0.5pt,-0.75pt)
			.. controls ++(0,-1) and ++(0,-1) .. ([yshift=-0.75pt]2.3,2.6);
		\draw[2facetLine]
			(3.5,2.6) ++(0.5pt,-0.75pt) -- ([yshift=-0.75pt]2.3,2.6);

		\draw[stealth-] ( 5.6,1.5)
			arc[x radius=1, y radius=0.5, start angle=270, end angle=330]
			node[anchor=210] {$b$};
		\draw[dotted]
			(0,3.2) -- ++(1.8,-1.2)
			(4,3.2) -- ++(1.8,-1.2)
			(1.8,-0.5) rectangle ++(4,2.5);
		\fill[red!30,opacity=0.3]
			(1.8,-0.5) rectangle ++(4,2.5);
		\draw[stealth-] ( 0.2,1.3)
			arc[x radius=1, y radius=0.5, start angle=330, end angle=270]
			node[left] {$a$};
		\draw[stealth-] ( 3.8,-0.1) -- ++(3pt,-2pt)
			arc[x radius=0.5, y radius=1, start angle=65, end angle=0]
			node[below] {\small right platform};
\end{tikzpicture}
\hskip \the\dimexpr 0.5\columnwidth - 12em\relax
\begin{tikzpicture}[x=1.5em,y=1.5em]

		\draw[stealth-] ( 2.6,2.9) -- ++(-3pt,2pt)
			arc[x radius=0.5, y radius=1, start angle=245, end angle=180]
			node[above] {\small left platform};
		\draw[dotted]
			(0,0.7) -- ++(1.8,-1.2)
			(4,0.7) -- ++(1.8,-1.2)
			(0,0.7) rectangle ++(4,2.5);
		\fill[red!30,opacity=0.3]
			(0,0.7) rectangle ++(4,2.5);
		
		\fill[1facetBack]
			(1.8,0) .. controls ++(-1.2,0.8) and ++(0,-1) .. (0.6,2.8)
			arc[x radius=1.7, y radius=0.2, start angle=90, end angle=0]
			.. controls ++(0,-1) and ++(0,-1) .. (3.5,2.6)
			arc[x radius=1.1, y radius=0.2, start angle=180, end angle=90]
			.. controls ++(0,-1) and ++(-1.2,0.8) .. (5.8,0) -- (1.8,0);
		\draw[1facetLine] (5.8,0) -- 
			(1.8,0) .. controls ++(-1.2,0.8) and ++(0,-1) .. (0.6,2.8)
			(4.6,2.8) .. controls ++(0,-1) and ++(-1.2,0.8) .. (5.8,0);
		
		\draw[2facetInner, draw=seamColor]
			(2.3,2.6) ++(-0.5pt,0.75pt)
			.. controls ++(0,-1) and ++(0,-1) .. ([yshift=0.75pt]3.5,2.6);
		\draw[2facetLine]
			(2.3,2.6) ++(-0.5pt,0.75pt) -- ([yshift=0.75pt]3.5,2.6);

		\fill[1facetFront]
			(1.8,1) .. controls ++(-0.6,0.4) and ++(0,-0.5) .. (1.2,2.4)
			arc[x radius=1.1, y radius=0.2, start angle=-90, end angle=0]
			.. controls ++(0,-1) and ++(0,-1) .. (3.5,2.6)
			arc[x radius=1.7, y radius=0.2, start angle=180, end angle=270]
			.. controls ++(0,-0.5) and ++(-0.6,0.4) .. (5.8,1) -- (1.8,1);
		\draw[1facetLine] (0.6,2.8)
			arc[x radius=1.7, y radius=0.2, start angle=90, end angle=0]
			arc[x radius=1.1, y radius=0.2, start angle=0, end angle=-90]
			.. controls ++(0,-0.5) and ++(-0.6,0.4) .. (1.8,1) -- (5.8,1)
			.. controls ++(-0.6,0.4) and ++(0,-0.5) .. (5.2,2.4)
			arc[x radius=1.7, y radius=0.2, start angle=270, end angle=180]
			arc[x radius=1.1, y radius=0.2, start angle=180, end angle=90]
			(2.3,2.6) .. controls ++(0,-1) and ++(0,-1) .. (3.5,2.6);
			
		\draw[2facetFront, draw=seamColor]
			(3.5,2.6) ++(0.5pt,-0.75pt)
			.. controls ++(0,-1) and ++(0,-1) .. ([yshift=-0.75pt]2.3,2.6);
		\draw[2facetLine]
			(3.5,2.6) ++(0.5pt,-0.75pt) -- ([yshift=-0.75pt]2.3,2.6);

		\draw[stealth-] ( 5.6,1.5)
			arc[x radius=1, y radius=0.5, start angle=270, end angle=330]
			node[anchor=210] {$b$};
		\draw[dotted]
			(0,3.2) -- ++(1.8,-1.2)
			(4,3.2) -- ++(1.8,-1.2)
			(1.8,-0.5) rectangle ++(4,2.5);
		\fill[red!30,opacity=0.3]
			(1.8,-0.5) rectangle ++(4,2.5);
		\draw[stealth-] ( 0.2,1.3)
			arc[x radius=1, y radius=0.5, start angle=330, end angle=270]
			node[left] {$a$};
		\draw[stealth-] ( 3.8,0.4) -- ++(3pt,-2pt)
			arc[x radius=0.5, y radius=1.4, start angle=65, end angle=0]
			node[below] {\small right platform};
\end{tikzpicture}
	\caption{Examples of foams with platforms. The~right surface is killed,
	because it is connected and intersects the~right platform (the~front facet
	in the~picture) in two lines. The~left surface is not killed unless it
	carries a~dot.}%
	\label{fig:cob with platforms}%
\end{figure}
We say that such a~foam is \emph{violating} if it has a~connected
component that either
\begin{itemize}
	\item meets a~platform in more than one lines, or
	\item intersects a~platform and carries a~dot.
\end{itemize}
It is straightforward to check that the~property of being a~violating
foam is preserved by foam relations. Hence, violating foams generate
a~linear subspace of $\cat{pFoam}^{(\ell,r)}(a,\web,b)$.
We write $\cat{Foam}^{(\ell,r)}(a,\web,b)$ for the~quotient space,
or simply $\cat{Foam}^{(\ell,r)}(a,b)$ when $\web$ is the~identity web.

Gluing foams horizontally results in a~linear map
\[
	\cat{pFoam}^{(\ell,r)}(a,\web_0,b) \otimes
	\cat{pFoam}^{(\ell,r)}(b,\web_1,c) \longrightarrow
	\cat{pFoam}^{(\ell,r)}(a,\web_0\web_1,c)
\]
and it is straightforward to notice that a~foam $\foam'\foam$ is violating
when either $\foam$ or $\foam'$ is a~violating foam. Hence, there is an~induced
linear map
\[
	\cat{Foam}^{(\ell,r)}(a,\web_0,b) \otimes
	\cat{Foam}^{(\ell,r)}(b,\web_1,c) \longrightarrow
	\cat{Foam}^{(\ell,r)}(a,\web_0\web_1,c).
\]

Consider now \emph{webs with platforms} as discussed in
Section~\ref{sec:balancing}. Their blue arcs fall into three
families visualized in Figure~\ref{fig:blue-arc-types}:
\begin{itemize}
	\item\label{arc:inner} \emph{inner arcs}, with at least one
	endpoint not on a~platform,
	\item\label{arc:outer} \emph{outer arcs}, with each endpoint
	on a~different platform, and,
	\item\label{arc:bad} \emph{violating arcs}, with both endpoints
	on the~same platform.
\end{itemize}
Webs with no violating arcs and no red endpoints on platforms are \emph{admissible}. 
Outer arcs of an~admissible web are nested one in another and the~most nested
one of them encloses all inner arcs. Notice that $\cat{Foam}^{(\ell,r)}(a,\web,b)
= 0$ when either $a$ or $b$ has a~violating arc.
\begin{figure}[ht]%
	\centering
		\begin{tikzpicture}[x=1.25em,y=1.25em]
		\def\platforms[#1,#2,#3]{%
			\draw[arc hline] (#1,0) -- (#2,0);
			\draw[arc platform] (0,0) -- (#1,0) (#2,0) -- (#3,0);
		}
		\def\description{%
			\node[anchor=north, font=\small, align=center, text width=5cm]
			at (3.5,-2)
		}
		\begin{scope}[shift={(-9,0)}]
			\platforms[2,5.7,7]
			\foreach \x in {1,3,4,5} do
				\fill[V1lineColor] (\x,0) circle[radius=2pt];
			\draw[V1]
				(1,0) arc[start angle=180, end angle=360, x radius=2, y radius=1.5]
				(3,0) arc[start angle=180, end angle=360, radius=0.5];
			\description {Inner arcs};
		\end{scope}
		\begin{scope}
			\platforms[2.5,4.5,7]
			\fill[V1lineColor]
				(1,0) circle[radius=2pt]
				(6,0) circle[radius=2pt];
			\draw[V1]
				(1,0) arc[start angle=180, end angle=360, x radius=2.5, y radius=1.5];
			\description {An~outer arc};
		\end{scope}
		\begin{scope}[shift={(9,0)}]
			\platforms[4,6,7]
			\fill[V1lineColor]
				(1,0) circle[radius=2pt]
				(3,0) circle[radius=2pt];
			\draw[V1]
				(1,0) arc[start angle=180, end angle=360, radius=1];
			\description {A~violating arc};
		\end{scope}
	\end{tikzpicture}
	\caption{Three types of blue arcs in a~cup diagram with platforms.}%
	\label{fig:blue-arc-types}%
\end{figure}

\begin{lem}\label{lem:count-outer-arcs}
	Let\/ $\bdry^\circ$ be a~balancing of\/ $\bdry$ with $\ell$ and $r$ points
	on the~left and on the~right platform, and let $n$ count the~blue points of\/
	$\bdry$. Then an~admissible cup web bounded by $\bdry^\circ$ has at least
	$(\ell+r-n)/2$ outer arcs.
\end{lem}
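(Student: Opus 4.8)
The plan is a direct counting argument on the blue arcs of the cup web. First I would record what admissibility buys us: by the construction of a balancing in Section~\ref{sec:balancing} the $\ell+r$ points on the two platforms are all blue, and since the cup web is admissible none of them is the endpoint of a red edge and none is joined to another platform point by a violating arc. Because the blue part of a cup web is a $1$-manifold whose boundary is exactly the blue points of $\bdry^\circ$, every platform point is therefore an endpoint of a unique blue arc, and that arc is either an outer or an inner arc (never a violating one).

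Next I would split the arcs touching the platforms according to their type. An outer arc has one endpoint on the left platform and one on the right, so it accounts for two platform endpoints; an inner arc that meets a platform meets it in exactly one point, since two endpoints on the same platform would make it a violating arc, and its other endpoint is then one of the $n$ blue points of $\bdry$. Writing $o$ for the number of outer arcs and $p$ for the number of inner arcs having a single endpoint on a platform, counting platform endpoints in two ways yields the identity $2o+p=\ell+r$.

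Finally, since distinct arcs have disjoint sets of endpoints, the $p$ inner arcs just described use $p$ distinct blue points of $\bdry$, so $p\leqslant n$. Combining the two facts, $2o=\ell+r-p\geqslant\ell+r-n$, that is $o\geqslant(\ell+r-n)/2$, which is exactly the claimed bound. I do not expect any real obstacle here; the only point that deserves a moment's attention is the very first step, namely that a platform point is always an honest endpoint of a blue arc rather than a red endpoint or a trivalent vertex, and this is precisely what admissibility together with the definition of a cup web guarantees.
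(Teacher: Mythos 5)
Your argument is correct and is essentially the paper's own proof, merely unpacked: the paper observes that there are at most $n$ inner arcs (since each has an endpoint among the $n$ blue points of $\bdry$), hence at least $\ell+r-n$ platform points lie on outer arcs, while you phrase the same count as $2o+p=\ell+r$ with $p\leqslant n$. The extra preliminary check that platform points are genuine endpoints of blue arcs is sound and implicit in the paper's use of admissibility.
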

\begin{proof}
	An~admissible web bounded by $\bdry^\circ$ has at most $n$ inner
	arcs, so that at least $(\ell+r) - n$ points from the~platforms
	must be connected by outer arcs.
\end{proof}

Given a~cup basis $\mathcal B$ of webs bounded by $\bdry^\circ$, we shall
write $\mathcal B^{\ell,r}$ for the~subset of admissible webs with
$\ell$ and $r$ points on the~left and right platform respectively.

\subsection{Stabilization}

We say that a~foam $\widehat\foam$ is a~\emph{stabilization} of a~foam $\foam$,
when it is obtained by placing a~blue horizontal rectangle below $\foam$.
Likewise, \emph{stabilizing a~web} means adding an~additional outer arc.
It follows that $\widehat\foam$ is a~violating foam
if and only if the~foam $\foam$ is violating. Hence, there is
a~well-defined injection
\begin{equation}\label{map:stabilization}
	\cat{Foam}^{(\ell,r)}(a,\web,b) \xrightarrow{\ \widehat{(\blank)}\ }
	\cat{Foam}^{(\ell+1,r+1)}(\hat a, \web, \hat b)
\end{equation}
where $\hat a$ and $\hat b$ are appropriate stabilizations of the~webs.
It is also surjective: by applying the~neck
cutting relation \eqref{rel:neck-cutting} we can write every foam
$\foam \in \cat{Foam}^{(\ell,r)}(\hat a, \web, \hat b)$
as a~sum $\foam_0 + \foam_1$, such that the~lowest blue boundary curve bounds
a~blue disk in each $\foam_i$, see Figure~\ref{fig:destabilization}.
Furthermore, stabilization is natural with
respect to placing foams on top as well as to the~horizontal composition
of foams, i.e.
\[
	W \cup_\web \widehat\foam = \widehat{W \cup_\web \foam}
\qquad\text{and}\qquad
	\widehat\foam' \widehat\foam = \widehat{\foam'\foam}
\]
for any $S\in\cat{Foam}^{(\ell,r)}(a,\web,b)$, $S'\in\cat{Foam}^{(\ell,r)}
(b,\web',c)$ and $W\colon \web \to \web''$.

\begin{figure}[ht]%
	\centering
	\begin{tikzpicture}[x=1.25em,y=1.25em]
\begin{scope}
	
	\fill[1facetFront] (-3,0.5) --
		(-2.5,0  ) .. controls ++(1,0.2) and ++(-2,0) ..
		(-0.25,0.4) .. controls ++(2,0) and ++(-1,0.2) ..
		(2.5,0) -- (2,0.5) -- cycle;
	\draw[1facetLine] (2.5,0) -- (2,0.5) -- (-3,0.5) -- (-2.5,0);
	
	\fill[1facetBack]
		(-1.6 , 1  ) .. controls ++( 0.2,-0.6) and ++( 0.5, 0.1) .. 
		(-2.5 , 0  ) .. controls ++( 1  , 0.2) and ++(-2  , 0  ) ..
		(-0.25, 0.4) .. controls ++( 2  , 0  ) and ++(-1  , 0.2) ..
		( 2.5 , 0  ) .. controls ++(-0.5, 0.1) and ++(-0.2,-0.6) ..
		( 1.4 , 1  ) arc[x radius=1.5, y radius=0.3, start angle=0, end angle=180];
	\draw[1facetLine]
		(-2.5 , 0  ) .. controls ++(1  , 0.2) and ++(-2  , 0  ) ..
		(-0.25, 0.4) .. controls ++(2  , 0  ) and ++(-1  , 0.2) ..
		( 2.5 , 0  );
	
	\draw[1facetFront]
		(-1.6 , 1  ) .. controls ++( 0.2,-0.6) and ++( 0.5, 0.1) .. 
		(-2.5 , 0  ) -- (-2,-0.5) -- (3,-0.5) --
		( 2.5 , 0  ) .. controls ++(-0.5, 0.1) and ++(-0.2,-0.6) ..
		( 1.4 , 1  ) arc[x radius=1.5, y radius=0.3, start angle=360, end angle=180];
	\draw[1facetLine]
		(-1.6, 1) .. controls ++( 0.2,-0.6) and ++( 0.5, 0.1) .. 
		(-2.5, 0) -- (-2,-0.5) -- (3,-0.5) --
		( 2.5, 0) .. controls ++(-0.5, 0.1) and ++(-0.2,-0.6) .. (1.4, 1)
		(-0.1,1) ellipse[x radius=1.5, y radius=0.3];
	
\end{scope}
\begin{scope}[shift={(9,0)}]
	\draw[1facetFront]
		(-2,-0.5) -- (3,-0.5) -- (2,0.5) -- (-3,0.5) -- cycle;
	\fill[1facetBack] (-1.6,1.5)
		arc[x radius=1.5, y radius=1.2, start angle=180, end angle=360]
		arc[x radius=1.5, y radius=0.3, start angle=0,   end angle=180];
	\fill[1facetFront] (-1.6,1.5)
		arc[x radius=1.5, y radius=1.2, start angle=180, end angle=360]
		arc[x radius=1.5, y radius=0.3, start angle=360, end angle=180];
	\draw[1facetLine]
		(-0.1,1.5) ellipse[x radius=1.5, y radius=0.3]
		(-1.6,1.5) arc[x radius=1.5, y radius=1.2, start angle=180, end angle=360];
	\fill (0.1,0.7) circle[radius=0.5ex];
\end{scope}
\begin{scope}[shift={(17,0)}]
	\draw[1facetFront]
		(-2,-0.5) -- (3,-0.5) -- (2,0.5) -- (-3,0.5) -- cycle;
	\fill[1facetBack] (-1.6,1.5)
		arc[x radius=1.5, y radius=1.2, start angle=180, end angle=360]
		arc[x radius=1.5, y radius=0.3, start angle=0,   end angle=180];
	\fill[1facetFront] (-1.6,1.5)
		arc[x radius=1.5, y radius=1.2, start angle=180, end angle=360]
		arc[x radius=1.5, y radius=0.3, start angle=360, end angle=180];
	\draw[1facetLine]
		(-0.1,1.5) ellipse[x radius=1.5, y radius=0.3]
		(-1.6,1.5) arc[x radius=1.5, y radius=1.2, start angle=180, end angle=360];
	\fill (0.75,0) circle[radius=0.5ex];
\end{scope}
	\node at ( 4.3,0.5) {$=$};
	\node at (12.7,0.5) {$+$};
	\node[below,font=\small] at ( 9.3,-0.75) {stabilized};
	\node[below,font=\small] at (17.3,-0.75) {violating};
	\draw[<-] (-1.2,-0.75)
		arc[x radius=0.5, y radius=1.2, start angle=180, end angle=270]
		node[right,font=\small,align=left] {the~lowest boundary\\ component of the~foam};
\end{tikzpicture}%
	\caption{A~way to destabilize a~foam bounded by stabilized webs.}%
	\label{fig:destabilization}%
\end{figure}
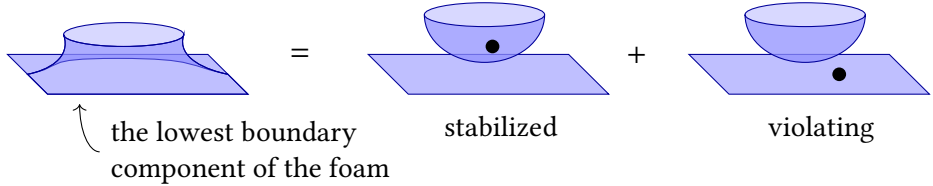

Let $\mathcal B$ be a~cup basis of webs bounded by $\bdry^\circ$ and
$\mathcal B^{\ell,r}$ the~subset of admissible webs. We write
$\widehat{\mathcal B}^{\ell,r}$ for the~set of stabilized basic webs;
they are bounded by a~bigger collection $\widehat{\bdry^\circ}$.
It is in general only a~subset of a~basis of admissible webs bounded
by $\widehat{\bdry^\circ}$. However, it is a~basis when platforms
carry sufficiently many points.

\begin{lem}\label{lem:stabilizing webs}
	Let $\ell$ and $r$ count points of\/ $\bdry^\circ$ on the~left and
	on the~right platform, whereas $n$---the~number of the~remaining blue
	points. Then $\widehat{\mathcal B}^{\ell,r}$ is a~cup basis if\/
	$\ell+r \geqslant n$.
\end{lem}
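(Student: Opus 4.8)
The plan is to prove the lemma on the level of underlying cup diagrams: I will show that \emph{stabilization} is a bijection between the admissible cup diagrams bounded by $\bdry^\circ$ and those bounded by $\widehat{\bdry^\circ}$, and then transport this to webs. Since a basic web in $\mathcal B$ is admissible precisely when its underlying cup diagram has no violating arc (the platforms of a balancing carry no red points, so the other admissibility condition is vacuous), the set $\mathcal B^{\ell,r}$ is an irredundant complete system of representatives for admissible cup diagrams bounded by $\bdry^\circ$; and $\widehat{\mathcal B}^{\ell,r}=\{\hat a : a\in\mathcal B^{\ell,r}\}$. First I would record the easy half: stabilizing a web adds only one blue outer arc together with two blue platform points, so it creates no violating arc and no red platform endpoint, hence every $\hat a$ is again an admissible cup web bounded by $\widehat{\bdry^\circ}$; moreover destabilization — erasing the globally outermost arc — is a two-sided inverse on cup webs, which disposes of injectivity and of the fact that distinct elements of $\widehat{\mathcal B}^{\ell,r}$ carry distinct cup diagrams.

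The substance is surjectivity: every admissible cup web $\widehat a$ bounded by $\widehat{\bdry^\circ}$ must be a stabilization. Here the hypothesis $\ell+r\geqslant n$ enters through Lemma~\ref{lem:count-outer-arcs}. The platforms of $\widehat{\bdry^\circ}$ carry $\ell+1$ and $r+1$ points, so an admissible web bounded by it has at least $\tfrac12\big((\ell+1)+(r+1)-n\big)=\tfrac12(\ell+r-n)+1\geqslant 1$ outer arcs. Since in an admissible web the outer arcs are linearly nested and the most nested of them already encloses every inner arc — and there are no violating arcs — the outermost outer arc $\alpha$ of $\widehat a$ encloses every arc of $\widehat a$. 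As every point of $\widehat{\bdry^\circ}$ lies on exactly one arc, this forces the endpoints of $\alpha$ to be the globally leftmost and globally rightmost points of $\widehat{\bdry^\circ}$, which are exactly the two points introduced by the stabilization move \eqref{map:stabilization}. Erasing $\alpha$ therefore yields a cup web $a$ bounded by $\bdry^\circ$ with $\hat a=\widehat a$, and $a$ is admissible because each platform of $\bdry^\circ$ is contained in a platform of $\widehat{\bdry^\circ}$, so a violating arc of $a$ would already violate $\widehat a$. Replacing $a$ by the representative of its cup diagram in $\mathcal B^{\ell,r}$ exhibits the cup diagram of $\widehat a$ in $\widehat{\mathcal B}^{\ell,r}$, which completes the proof.

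The only delicate point — and the sole place where the bound $\ell+r\geqslant n$ is used — is guaranteeing that $\widehat a$ has at least one outer arc. Without this, the globally outermost arc of $\widehat a$ could be an inner arc, its erasure would not be a destabilization, and $\widehat a$ need not be a stabilization at all; this is exactly the situation in which (as the statement warns) $\widehat{\mathcal B}^{\ell,r}$ is only a proper subset of a basis. Everything else is a routine unwinding of the definitions of inner, outer, and violating arcs and of the stabilization map.
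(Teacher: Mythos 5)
Your argument is correct and follows the same route as the paper: the bound $\ell+r\geqslant n$ is fed into Lemma~\ref{lem:count-outer-arcs} to conclude that every admissible cup web bounded by $\widehat{\bdry^\circ}$ has at least one outer arc, and hence is a stabilization. The paper's own proof stops at ``has an outer arc'' and leaves the rest to the reader; you merely spell out the implicit step that the outermost such arc must join the two newly added platform points, so that erasing it is an honest destabilization.
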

\begin{proof}
	The~collection $\widehat{\bdry^\circ}$ has $\ell+1$ points on the~left
	and $r+1$ points on the~right platform. Hence, by
	Lemma~\ref{lem:count-outer-arcs}, every admissible web bounded
	by $\widehat{\bdry^\circ}$ has an~outer arc.
\end{proof}

\subsection{Subquotient algebras and bimodules}

We are now ready to construct a~foam version of the~subquotient algebras
and bimodules from \cite{ChenKhov}. Let $\bdry$ be a~sequence of $n$ blue
and $m$ red points, and pick $\lambda = n-2k$ with $0\leqslant k\leqslant n$.
Choose a~balancing $\bdry^\circ$ with $k+m$ and $n-k+m$ points on the~left
and right platform respectively and a~cup basis $\mathcal B$ for webs
bounded by it; the~admissible webs form a~subset $\mathcal B^{k+m,n-k+m}$.

\begin{defn}
	The~\emph{extended Blanchet--Khovanov algebra} $\qtwebalg{\mathcal B,\lambda}$
	is the~direct sum of spaces of foams with platforms
	\begin{equation}\label{eq:subalg-for-subqnt}
		\qtwebalg{\mathcal B,\lambda} :=
		\bigoplus_{a,b\in\mathcal B^{k+m,n-k+m}}
			\cat{Foam}^{(k+m,n-k+m)}(a, b)
	\end{equation}
	with multiplication given by the~composition (and zero if foams cannot
	be composed).
\end{defn}

It follows from the~definition that $\qtwebalg{\mathcal B,\lambda}$
is a~quotient of a~subalgebra of $\webalg{\mathcal B}$. In particular,
it inherits the~description in terms of dotted completed webs with
the~following modifications:
\begin{itemize}
	\item the~horizontal line, along which cup webs are glued, has platform
	      sections on its sides,
	\item we allow only completions of admissible cup foams, and
	\item such a~diagram vanishes when it contains a~blue loop
	      intersecting a~platform at least twice or a~blue loop
			intersecting a~platform and carrying a~dot at the~same time.
\end{itemize}
In particular, $\qtwebalg{\mathcal B,\lambda}$ is isomorphic to the~Chen--%
Khovanov algebra $\CKalg{k,n-k}$ when $\bdry$ consists of $n$ blue points
that are oriented in an~alternating way and $\bdry^\circ$ is the~mirror
balancing. With the~help of the~stabilization map \eqref{map:stabilization}
we can find such isomorphisms for all extended Blanchet--Khovanov algebras.

\begin{thm}\label{thm:qtwebalg=CKalg}
	Let $\bdry$ be a~collection of $m$ red and $n$ blue points
	with a~balancing $\bdry^\circ$ of type $\lambda = n-2k$, where
	$0\leqslant k\leqslant n$.
	Then there is an~algebra isomorphism
	$\qtwebalg{\mathcal B,\lambda} \cong \CKalg{k,n-k}$ for any
	cup basis $\mathcal B$ of webs bounded by $\bdry^\circ$.
	When $\mathcal B$ is a~red-over-blue basis, then the~isomorphism
	simply forgets red facets of basic cup foams and drops $m$ lowest
	blue rectangles.
\end{thm}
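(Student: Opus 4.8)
The plan is to reduce the general statement to the red-over-blue case, then establish that case by combining the stabilization machinery with Theorem~\ref{thm:webalg=arcalg}, mirroring the structure of the proof of Theorem~\ref{thm:webalg=arcalg} itself.

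First I would treat the red-over-blue case directly. Assume $\mathcal B$ is a red-over-blue basis of type $\EqSymb_{\bdry^\circ}$. By definition, $\qtwebalg{\mathcal B,\lambda}$ is a quotient of a subalgebra of $\webalg{\mathcal B}$, and by Theorem~\ref{thm:webalg=arcalg} the latter is isomorphic to $\arcalg N$ (where $2N$ is the number of blue points in $\bdry^\circ$, i.e. $N = n + k + m$), with the isomorphism simply forgetting red facets. Under this forgetful isomorphism, admissible cup webs correspond to cup diagrams whose arcs are of the three types described in Section~\ref{sec:balancing}: inner, outer, and violating. The subalgebra/quotient structure defining $\qtwebalg{\mathcal B,\lambda}$ translates, after forgetting red, into exactly the subquotient of $\arcalg N$ that Chen--Khovanov use to define $\CKalg{k,n-k}$ (their construction passes to the span of cup diagrams with the correct number of through-strands/platform arcs, killing cup diagrams with violating arcs and dotted components touching platforms). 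So the content here is: (i) identify the image of $\mathcal B^{k+m,n-k+m}$ under forgetting red with Chen--Khovanov's admissible diagrams, and (ii) check that the violating-foam relations match Chen--Khovanov's defining relations for the subquotient. Both are combinatorial matching lemmas once the dictionary ``forget red facets, drop the bottom $m$ blue rectangles (these are the doubled red points turned blue)'' is set up. The phrase ``drops $m$ lowest blue rectangles'' in the statement records precisely step~(ii)'s bookkeeping: the $m$ original red points of $\bdry$ become $2m$ blue points in $\bdry^\circ$, split $m$ to each platform, so after forgetting red one must destabilize $m$ times to land in the genuine Chen--Khovanov algebra on $2n$ blue points.

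For the general case, I would reduce to the red-over-blue case exactly as in the proof of Theorem~\ref{thm:webalg=arcalg}: given an arbitrary cup basis $\mathcal B'$ of webs bounded by $\bdry^\circ$, pick a red-over-blue basis $\mathcal B$ (of type $\EqSymb_{\bdry^\circ}$), and for each admissible $a'\in(\mathcal B')^{k+m,n-k+m}$ select the isomorphic $a\in\mathcal B^{k+m,n-k+m}$ together with an invertible foam $I_{a}\in\cat{Foam}(a,a')$ and sign $s(a)=\pm1$ with $\flip{I_a}\,I_a = s(a)\,\web_a\times[0,1]$. The maps $S\mapsto s(b)\,\flip{I_b}\,S\,I_a$ then descend to the violating quotients — one must check that conjugation by an invertible foam with underlying surface $\web_a\times[0,1]$ preserves the property of being violating, which is clear since it changes neither the set of connected components meeting a platform nor their dot counts — giving an algebra isomorphism $\qtwebalg{\mathcal B',\lambda}\cong\qtwebalg{\mathcal B,\lambda}\cong\CKalg{k,n-k}$.

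The main obstacle is the combinatorial matching in step~(ii) of the red-over-blue case: verifying that the space of violating foams, after forgetting red facets and destabilizing $m$ times, corresponds exactly to the ideal/subspace that Chen and Khovanov quotient by, and that no extra relations are introduced or lost in the passage through the platforms. This requires carefully tracking how Lemma~\ref{lem:count-outer-arcs} (the count $(\ell+r-n)/2$ of forced outer arcs) interacts with Chen--Khovanov's parametrization of $\CKalg{k,n-k}$-weights, and confirming via Lemma~\ref{lem:stabilizing webs} that the chosen platform sizes $k+m$ and $n-k+m$ are large enough ($\ell+r = n+2m \geq n$) for stabilization to be a bijection on bases, so that destabilizing $m$ times is well-defined on the nose. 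Once this dictionary is pinned down, the isomorphism of algebras (compatibility with multiplication) is immediate because multiplication on both sides is composition of foams/cobordisms and the forgetful map on the red-over-blue basis is multiplicative by Theorem~\ref{thm:webalg=arcalg}.
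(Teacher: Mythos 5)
Your proposal takes essentially the same route as the paper's proof: reduce to the red-over-blue case by conjugation with invertible foams, exactly as in Theorem~\ref{thm:webalg=arcalg} (your observation that conjugation preserves violating-ness because $I_a$ has underlying surface a product cylinder is the right check), then use Lemma~\ref{lem:count-outer-arcs} to recognize admissible cup webs as $m$-fold stabilizations of Chen--Khovanov cup diagrams and invoke compatibility of stabilization with horizontal composition to destabilize down to $\CKalg{k,n-k}$. One arithmetic slip worth fixing: $\bdry^\circ$ has $n + (k+m) + (n-k+m) = 2(n+m)$ blue points, so $N = n+m$ rather than $n+k+m$; correspondingly, the subquotient of $\arcalg{N}$ you first land on (platforms of sizes $k+m$ and $n-k+m$) is $\CKalg{k+m,\,n-k}$ rather than $\CKalg{k,n-k}$, which is precisely why the $m$ destabilizations you flag at the end are doing real work rather than mere bookkeeping.
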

\begin{proof}
	We assume that $\mathcal B$ is a~red-over-blue basis---the~general
	case is proven the~same way as in Theorem~\ref{thm:webalg=arcalg}.
	Let $\mathcal B_0$ be the~collection of admissible cup diagrams with $2n$
	blue endpoints, $k$ of which are on the~left and $n-k$ on the~right platform.
	It follows from Lemma~\ref{lem:count-outer-arcs} that each cup web from
	$\mathcal B$ has $m$ outer arcs, so that it is constructed by placing
	an~invertible web $\EqSymb$ on top of cup diagrams from
	$\mathcal B_0$ stabilized $m$ times. Hence, as a~$\scalars$-module,
	$\qtwebalg{\mathcal B,\lambda}$ is isomorphic to
	$\mathrm{stab}^{(m)}(\CKalg{k,n-k})$,
	the~algebra $\CKalg{k,n-k}$ stabilized $m$ times. Because stabilization
	is compatible with horizontal composition of foams, we obtain an~algebra
	isomorphism
	\[
		\CKalg{k,n-k}
			\xrightarrow{\ \widehat{(\blank)}^{(m)}\ }
		\mathrm{stab}^{(m)}(\CKalg{k,n-k})
			\xrightarrow{\ (\EqSymb\times[0,1]) \cup (\blank)\ }
		\qtwebalg{\mathcal B,\lambda}
	\]
	which takes a~dotted surface with platforms, adds $m$ extra horizontal
	rectangles below, and then glues $\EqSymb\times[0,1]$ to it along
	the~top and platforms. The~inverse of this map simply forgets red facets
	and drops the~extra $m$ blue rectangles as desired.
\end{proof}

We follow the~same ideas to construct a~collection of bimodules
for a~web $\web \colon \bdry_0 \to \bdry_1$. Let $n_i$ be the~number
of blue points in $\bdry_i$. Choose $0\leqslant k_i\leqslant n_i$
for $i=0,1$ such that $n_0-2k_0 = n_1-2k_1 =: \lambda$, and let $\bdry_0^\circ$
and $\bdry_1^\circ$ be the~\emph{canonical} balancings of type $\lambda$,
except that we stabilize one of them, so that both sequences have
the~same numbers of points on platforms: $\ell$ on the~left and $r$
on the~right one. Notice that $\bdry_0$ and $\bdry_1$ have same weight,
so that their balancings agree on platforms. Choose cup bases
$\mathcal B_0$ and $\mathcal B_1$ for $\bdry_0^\circ$ and $\bdry_1^\circ$
respectively. We assign to the~web $\web$ the~$(\qtwebalg{\mathcal B_0,\lambda},
\qtwebalg{\mathcal B_1,\lambda})$-bimodule
\[
	\Fweb^\lambda(\web) := \bigoplus_{\substack{
		a\in\mathcal B_0^{\ell,r} \\
		b\in\mathcal B_1^{\ell,r}
	}} \cat{Foam}^{(\ell,r)}(a,\web,b),
\]
which we call the~\emph{extended Blanchet--Khovanov bimodule of weight $\lambda$}.
The~algebras $\qtwebalg{\mathcal B_0,\lambda}$ and $\qtwebalg{\mathcal B_1,\lambda}$
act on the~left and right by composing foams horizontally, stabilized sufficiently
many times when necessary. It should be also clear that placing a~foam
$W\colon \web \to \web'$ on top induces a~bimodule map
$\Fweb^\lambda(W)\colon \Fweb^\lambda(\web) \to \Fweb^\lambda(\web')$,
and that taking tensor products over the~algebras corresponds to composing webs:
\[
	\Fweb^\lambda(\web)
		\qtwebtimes{\mathcal B,\lambda}
	\Fweb^\lambda(\web')
		\xrightarrow{\ \cong\ }
	\Fweb^\lambda(\web'\web).
\]
As before, Theorem~\ref{thm:qtwebalg=CKalg} allows us to think of
$\Fweb^\lambda(\web)$ as a~bimodule over the~algebras $\CKalg{k_i,n_i-k_i}$,
so that we can compare it with the~bimodule $\CKmod{\web_b; \lambda}$
assigned to the~flat tangle $\web_b$ in \cite{ChenKhov}.
We leave the~following as an~easy exercise.

\begin{thm}\label{thm:qtwebmod=CKmod}
	Let $\web\colon\bdry\to \bdry'$ be a~web between collections of points
	with $n$ and $n'$ blue points respectively and choose $0\leqslant k\leqslant n$
	and $0\leqslant k'\leqslant n'$, such that $n-2k = n'-2k' = \lambda$.
	Then there is an~isomorphism of $(\CKalg{k,n-k}, \CKalg{k',n'-k'})$-bimodules
	$\Fweb^\lambda(\web) \cong \CKmod{\web_b,\lambda}$, which---up to stabilization%
	---forgets red facets of cup foams when $\Fweb^\lambda(\web)$
	is constructed using red-over-blue web bases.
\end{thm}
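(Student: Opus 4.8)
The strategy is to mirror the proof of Theorem~\ref{thm:webmod=arcmod} almost verbatim, inserting the stabilization map \eqref{map:stabilization} at the appropriate place to account for the $m$ red points of $\bdry$ (equivalently, the $m$ outer arcs forced by Lemma~\ref{lem:count-outer-arcs}). First I would reduce to the case where both $\mathcal B$ and $\mathcal B'$ are red-over-blue cup bases, of types $\EqSymb_{\bdry^\circ}$ and $\EqSymb_{\bdry'^\circ}$ respectively: the general case follows exactly as in Theorem~\ref{thm:webalg=arcalg} by gluing a chosen family of invertible foams (one per basic cup web on each side) to the sides of the foams generating $\CKmod{\web_b,\lambda}$, and the compatibility of stabilization with horizontal composition, already recorded in Section~\ref{sec:subquotients}, guarantees that this change of basis is compatible with the bimodule structures.

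**Main construction.** In the red-over-blue case I would proceed in two steps, as in Theorem~\ref{thm:qtwebalg=CKalg}. Step one: by Lemma~\ref{lem:count-outer-arcs} every admissible cup web in $\mathcal B^{\ell,r}$ (resp. in $\mathcal B'^{\ell,r}$) is obtained by placing the invertible web $\EqSymb_{\bdry^\circ}$ (resp. $\EqSymb_{\bdry'^\circ}$) on top of a cup diagram from the Chen--Khovanov side, stabilized $m$ (resp. $m'$) times. Iterating \eqref{map:stabilization} therefore gives a $\scalars$-linear isomorphism
\[
	\CKmod{\web_b,\lambda}
		\xrightarrow{\ \widehat{(\blank)}^{(m,m')}\ }
	\mathrm{stab}^{(m,m')}(\CKmod{\web_b,\lambda}),
\]
where we stabilize $m$ times on the $a$-side and $m'$ times on the $b$-side. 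Step two: fix a foam $I_\web$ in a cube with vertical rectangles as blue facets, bounded by $\web_b$ and $\web$ at the bottom and top facets and carrying $\EqSymb_{\bdry^\circ}$, $\EqSymb_{\bdry'^\circ}$ at the appropriate side facets; placing $I_\web$ on top of a stabilized element of $\cat{pFoam}(a_0,\web_b,b_0)$ and gluing $\EqSymb \times[0,1]$ along the top and the platforms produces a $\scalars$-linear isomorphism
\[
	\mathrm{stab}^{(m,m')}(\CKmod{\web_b,\lambda})
		\xrightarrow{\ (\EqSymb\times[0,1])\cup(\blank)\ }
	\Fweb^\lambda(\web).
\]
Composing the two maps gives the desired isomorphism, whose inverse forgets red facets of basic cup foams and drops the $m$ (resp. $m'$) lowest blue rectangles. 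Compatibility with the left and right actions of $\qtwebalg{\mathcal B_0,\lambda}$ and $\qtwebalg{\mathcal B_1,\lambda}$ follows because, under the isomorphisms of Theorem~\ref{thm:qtwebalg=CKalg}, the action is realized by exactly the same "stabilize, then glue $\EqSymb\times[0,1]$" recipe, so the two sides agree by the naturality of stabilization with respect to horizontal composition; this is the step the paper flags as "an easy exercise."

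**The main obstacle.** The only genuinely non-routine point is checking that the stabilization isomorphism \eqref{map:stabilization}, applied simultaneously on both web boundaries, really descends to an isomorphism of $\cat{Foam}^{(\ell,r)}$-spaces (i.e. that violating foams go to violating foams and conversely), and that the destabilization (surjectivity) argument of Figure~\ref{fig:destabilization}---using the neck cutting relation \eqref{rel:neck-cutting} to split off the lowest blue disk---goes through uniformly for the bimodule $\web$-version rather than just for algebras. Both facts were essentially established in Section~\ref{sec:subquotients} for the identity web, and since the web $\web$ sits at the top facet it does not interact with the bottom rectangles, so the extension is formal; but this is where the one-line "easy exercise" needs to be unpacked. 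Everything else---the reduction to red-over-blue bases, the grading bookkeeping, and the identification with $\CKmod{\web_b,\lambda}$ on the Chen--Khovanov side---is a direct transcription of the proofs of Theorems~\ref{thm:webmod=arcmod} and~\ref{thm:qtwebalg=CKalg}.
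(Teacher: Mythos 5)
The paper gives no proof of this theorem (it is declared ``an~easy exercise'' immediately before the statement), so there is no paper proof to compare against; your high-level strategy --- run the proofs of Theorems~\ref{thm:webmod=arcmod} and~\ref{thm:qtwebalg=CKalg} in parallel, using the stabilization map \eqref{map:stabilization} to align the foam side with the Chen--Khovanov side --- is surely the intended route, and most of your reductions (to red-over-blue bases, and the compatibility of the change-of-basis with the bimodule structure) are correct.

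However, your stabilization bookkeeping contains a genuine error. The map \eqref{map:stabilization} adds a blue rectangle under the \emph{entire} foam, spanning both platforms, and therefore increments the outer-arc count of $a$ \emph{and} $b$ by one simultaneously: its source and target are $\cat{Foam}^{(\ell,r)}(a,\web,b)$ and $\cat{Foam}^{(\ell+1,r+1)}(\hat a,\web,\hat b)$, with both cup webs stabilized. Consequently, the asymmetric operation $\widehat{(\blank)}^{(m,m')}$ ``stabilize $m$ times on the $a$-side and $m'$ times on the $b$-side'' is not available, and your description in Step~one is inconsistent with your own remark later (``applied simultaneously on both web boundaries''). Moreover, the count is off: once $\bdry_0^\circ$ or $\bdry_1^\circ$ is stabilized to equalize the platforms at $(\ell,r)$, Lemma~\ref{lem:count-outer-arcs} forces an admissible cup web $a\in\mathcal B_0^{\ell,r}$ to have $(\ell+r-n)/2$ outer arcs, which equals $m$ only when no platform-matching stabilization was needed on the $\bdry$ side; in general it is $m$ plus the number of extra stabilizations applied to $\bdry_0^\circ$. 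And when $n\neq n'$ the outer-arc counts of $a$ and $b$ genuinely differ (by $(n'-n)/2$), so this mismatch is not created by the stabilization at all --- it must already be present on the Chen--Khovanov side, coming from the fact that $\CKalg{k,n-k}$ and $\CKalg{k',n'-k'}$ use cup diagrams with platforms of different sizes, and $\CKmod{\web_b,\lambda}$ interpolates between them. Your proof needs to identify the single stabilization exponent $s$ determined by $\ell$ and $r$, show that $s$ applications of \eqref{map:stabilization} carry the Chen--Khovanov cube (with its sloped platforms) to the $\cat{Foam}^{(\ell,r)}$ cube, and only then glue $I_\web$; the asymmetric count $m$ versus $m'$ is a red herring. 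This is precisely the point you flag as ``where the one-line `easy exercise' needs to be unpacked,'' but your unpacking does not resolve it.
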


\subsection{A~functorial homology for all tangles}

The~above sections describe a~family of 2-functors
$\Fweb^\lambda\colon \cat{Foam} \to \cat{Bimod}$ parametrized
with $\lambda\in\Z$, which---as before--we extend to the~bicategory
of formal complexes $\HCom(\cat{Foam})$. Applying $\Fweb^\lambda$ to
the~bracket $\wKhBracket{T}$ of a~tangle $T$ results in a~chain complexes
of bimodules, the~homotopy type of which is an~invariant of $T$ and
which is functorial with respect to tangle cobordisms.

\insertpretheorem{thm:FCKh-vs-Fweb}
\begin{proof}
	The~equivalence of $\Fweb^\lambda$ and $\FKh^\lambda\circ\EqFunc^\vee$
	follows from Theorems~\ref{thm:qtwebalg=CKalg} and \ref{thm:qtwebmod=CKmod}
	along the~same lines as in the~proof of Theorem~\ref{thm:FKh-vs-Fweb}.
	The~second statement is a~direct consequence of Theorem~\ref{thm:strictification}.
\end{proof}

\bibliography{references}

\begin{thebibliography}{10}

\bibitem{APS}
M.~Asaeda, J.~H. Przytycki, and A.~S. Sikora.
\newblock Categorification of the~{K}auffman bracket skein module of
  ${I}$-bundles over surfaces.
\newblock {\em Algebr. Geom. Topol.}, 4(2):1177--1210, 2004.

\bibitem{DrorCob}
D.~Bar-Natan.
\newblock Khovanov's homology for tangles and cobordisms.
\newblock {\em Algebr. Geom. Topol.}, 9(3):1443--1499, 2005.

\bibitem{QntColored}
A.~Beliakova, M.~Hogancamp, K.~K. Putyra, and S.~M. Wehrli.
\newblock Quantum link homology via trace functor {II}: colored homology.
\newblock Work in progress.

\bibitem{QntHom}
A.~Beliakova, K.~K. Putyra, and S.~M. Wehrli.
\newblock Quantum link homology via trace functor {I}.
\newblock {\em Inventiones mathematicae}, 215(2):383--492, 2019.

\bibitem{Blanchet}
C.~Blanchet.
\newblock An oriented model for {K}hovanov homology.
\newblock {\em Journal of Knot Theory and its Ramifications}, 19(02):291--312,
  2010.

\bibitem{BrunStroII}
J.~Brundan and C.~Stroppel.
\newblock Highest weight categories arising from {K}hovanov's diagram algebra
  {II}: {K}oszulity.
\newblock {\em Transformation Groups}, 15(1):1--45, 2010.

\bibitem{BrunStroI}
J.~Brundan and C.~Stroppel.
\newblock Highest weight categories arising from {K}hovanov's diagram algebra
  {I}: cellularity.
\newblock {\em Mosc. Math. J.}, 11:685--722, 2011.

\bibitem{CaprauFoams}
C.~Caprau.
\newblock An $\mathfrak{sl}(2)$ tangle homology and seamed cobordisms.
\newblock \\Preprint: \arXiv{0806.1532v3}.

\bibitem{WebsHowe}
S.~Cautis, J.~Kamnitzer, and S.~Morrison.
\newblock Webs and quantum skew {H}owe duality.
\newblock {\em Mathematische Annalen}, 360(1):351--390, 2014.

\bibitem{ChenKhov}
Y.~Chen and M.~Khovanov.
\newblock An invariant of tangle cobordisms via subquotients of arc rings.
\newblock {\em Fundamenta Mathematicae}, 225:23--44, 2014.

\bibitem{ClarkMorrisonWalker}
D.~Clark, S.~Morrison, and K.~Walker.
\newblock Fixing the functoriality of {K}hovanov homology.
\newblock {\em Geom. Topol.}, 13(3):1499--1582, 2009.

\bibitem{CooperKrushkal}
B.~Cooper and V.~Krushkal.
\newblock Categorification of the {Jones--Wenzl} projectors.
\newblock {\em Quantum Topol.}, 3(2):139--180, 2012.

\bibitem{WebVsArcAlgebra}
M.~Ehrig, C.~Stroppel, and D.~Tubbenhauer.
\newblock Generic $\mathfrak{gl}_2$-foams, web and arc algebras.
\newblock Preprint: \arXiv{1601.08010v2}.

\bibitem{WebAlgebras}
M.~Ehrig, C.~Stroppel, and D.~Tubbenhauer.
\newblock {B}lanchet--{K}hovanov algebras.
\newblock In {\em Categorification and Higher Representation Theory}, volume
  683 of {\em Contemp. Math.}, pages 183--226, Providence, RI, 2017. Amer.
  Math. Soc.

\bibitem{KhRozFunctorial}
M.~Ehrig, D.~Tubbenhauer, and P.~Wedrich.
\newblock Functoriality of colored link homologies.
\newblock {\em Proceedings of the London Mathematical Society},
  117(5):996--1040, 2018.

\bibitem{DiffTop}
M.~W. Hirsch.
\newblock {\em Differential {T}opology}, volume~33 of {\em Graduate Texts in
  Mathematics}.
\newblock Springer-Verlag New York, 1976.

\bibitem{Jacobsson}
M.~Jacobsson.
\newblock An invariant of link cobordisms from {K}hovanov's homology.
\newblock {\em Algebr. Geom. Topol.}, 4(2):1211--1251, 2004.

\bibitem{KhHom}
M.~Khovanov.
\newblock A categorification of the {J}ones polynomial.
\newblock {\em Duke Math. J.}, 101(3):359--426, 2000.

\bibitem{KhArcAlgebras}
M.~Khovanov.
\newblock A functor-valued invariant of tangles.
\newblock {\em Algebr. Geom. Topol.}, 2(2):665--741, 2002.

\bibitem{KhColored}
M.~Khovanov.
\newblock Categorifications of the colored {J}ones polynomial.
\newblock {\em Journal of Knot Theory and Its Ramifications}, 14(01):111--130,
  2005.

\bibitem{KhFunct}
M.~Khovanov.
\newblock An invariant of tangle cobordisms.
\newblock {\em Transactions of the American Mathematical Society},
  358(1):315--327, 2006.

\bibitem{KhViaHowe}
A.~D. Lauda, H.~Queffelec, and D.~E.~V. Rose.
\newblock Khovanov homology is a~skew {H}owe 2-representation of categorified
  quantum {$\mathfrak{sl}_m$}.
\newblock {\em Algebr. Geom. Topol.}, 15(5):2517--2608, 2015.

\bibitem{BasicBicats}
T.~Leinster.
\newblock Basic bicategories.
\newblock Preprint: \arXiv{9810.017}.

\bibitem{OddKh}
P.~Ozsv\'ath, J.~Rasmussen, and Z.~Szab\'o.
\newblock Odd {K}hovanov homology.
\newblock {\em Algebr. Geom. Topol.}, 13(3):1465--1488, 2013.

\bibitem{ViennaTalk}
K.~K. Putyra.
\newblock A quantum colored {$\mathfrak{sl}(2)$} knot homology: three
  approaches, same invariant.
\newblock \textit{Categorification in quantum topology and beyond}, ESI,
  Vienna, 2019.\\
  \textsc{url:~\small\url{http://www.categorification.net/esi19}}.

\bibitem{ChCob}
K.~K. Putyra.
\newblock A 2-category of chronological cobordisms and odd {K}hovanov homology.
\newblock {\em Banach Center Publications}, 103:291--355, 2014.

\bibitem{Hoel}
H.~Queffelec.
\newblock Skein modules from skew {H}owe duality and affine extensions.
\newblock {\em {SIGMA}}, 11(030):1--36, 2015.

\bibitem{KhSurfaces}
H.~Queffelec and P.~Wedrich.
\newblock Khovanov homology and categorification of skein modules.
\newblock \\Preprint: \arXiv{1806.03416}.

\bibitem{SliceGenusBound}
J.~Rasmussen.
\newblock Khovanov homology and the slice genus.
\newblock {\em Inventiones mathematicae}, 182(2):419--447, 2010.

\bibitem{StableKh}
L.~Rozansky.
\newblock A categorification of the stable $\mathfrak{su}(2)$
  {W}itten--{R}eshteikhin--{T}uraev invariant of links in
  $\mathbb{S}^2\times\mathbb{S}^1$.
\newblock Preprint: \arXiv{1011.1958}.

\bibitem{ParabolicO}
C.~Stroppel.
\newblock Parabolic category {$\mathcal O$}, perverse sheaves on
  {G}rassmanians, {S}pringer fibers and {K}hovanov homology.
\newblock {\em Composition {M}ath.}, 145(4):954--992, 2009.

\bibitem{Vogel}
P.~Vogel.
\newblock Functoriality of {K}hovanov homology.
\newblock Preprint: \arXiv{1505.04545v3}.

\end{thebibliography}
\bibliographystyle{plain}

\end{document}